\newtheorem{theorem}{Theorem}
\newtheorem{lemma}[theorem]{Lemma}
\newtheorem{definition}[theorem]{Definition}
\newtheorem{remark}[theorem]{Remark}
\numberwithin{theorem}{section}
\numberwithin{equation}{section}
\newcommand{\mint}{- \mskip-19,5mu \int}
\def\N{\mathbb{N}}
\def\R{\mathbb{R}}
\renewcommand{\d}{\:\! \mathrm{d}}
\newcommand{\dx}{\mathrm{d}x}
\newcommand{\dt}{\mathrm{d}t}
\newcommand{\lbbracket}{\boldsymbol{(}}
\newcommand{\rbbracket}{\boldsymbol{)}}
\newcommand{\dtau}{\mathrm{d}\tau}
\renewcommand{\epsilon}{\varepsilon}
\DeclareMathOperator{\Div}{div}
\DeclareMathOperator{\dist}{dist}
\renewcommand{\epsilon}{\varepsilon}
\newcommand{\eps}{\varepsilon}
\renewcommand{\rho}{\varrho}
\def\eqn#1$$#2$${\begin{equation}\label#1#2\end{equation}}
\newcommand{\power}[2]{\bm{#1^{\mbox{\unboldmath{\scriptsize$#2$}}}}}
\newcommand{\babs}[1]{\big|#1\big|}
\def\Xint#1{\mathchoice
    {\XXint\displaystyle\textstyle{#1}}%
    {\XXint\textstyle\scriptstyle{#1}}%
    {\XXint\scriptstyle\scriptscriptstyle{#1}}%
    {\XXint\scriptscriptstyle\scriptscriptstyle{#1}}%
    \!\int}
\def\XXint#1#2#3{\setbox0=\hbox{$#1{#2#3}{\int}$}
    \vcenter{\hbox{$#2#3$}}\kern-0.5\wd0}
\def\bint{\Xint-}
\def\dashint{\Xint{\raise4pt\hbox to7pt{\hrulefill}}}
\def\Xiint#1{\mathchoice
    {\XXiint\displaystyle\textstyle{#1}}%
    {\XXiint\textstyle\scriptstyle{#1}}%
    {\XXiint\scriptstyle\scriptscriptstyle{#1}}%
    {\XXiint\scriptscriptstyle\scriptscriptstyle{#1}}%
    \!\iint}
\def\XXiint#1#2#3{\setbox0=\hbox{$#1{#2#3}{\iint}$}
    \vcenter{\hbox{$#2#3$}}\kern-0.5\wd0}
\def\biint{\Xiint{-\!-}}
\renewcommand{\b}{\mathfrak{b}}
\renewcommand{\u}{\boldsymbol{u}}
\renewcommand{\a}{\boldsymbol{a}}
\newcommand{\g}{\boldsymbol{g}}
\newcommand{\A}{\mathbf{A}}
\newcommand{\ca}{\operatorname{cap}}
\newcommand{\G}{\mathbf{G}}
\subjclass[2010]{35B65,35K65,35K40,35K55}
\keywords{Porous medium type systems,  higher integrability, gradient estimates}
\author[K. Moring]{Kristian Moring}
\address{Kristian Moring\\
Department of Mathematics and Systems Analysis, Aalto University\\
P.~O.~Box 11100, FI-00076 Aalto, Finland}
\email{kristian.moring@aalto.fi}
\author[C. Scheven]{Christoph Scheven}
\address{Christoph Scheven\\ Fakult\"at f\"ur Mathematik, 
Universit\"at Duisburg-Essen\\45117 Essen, Germany}
\email{christoph.scheven@uni-due.de}
\author[S. Schwarzacher]{Sebastian Schwarzacher}
\address{Sebastian Schwarzacher\\Katedra matematick\'e analy\'zy, Matematicko-fyzik\'aln\'\i\
  fakulta Univerzity Karlovy, Sokolovsk\'a 83, 186 75 Praha 8, Czech Republic}
  \email{schwarz@karlin.mff.cuni.cz}
\author[T. Singer]{Thomas Singer}
\address{Thomas Singer\\
Department Mathematik, Friedrich-Alexander-Universit\"at  Er\-lang\-en-N\"urnberg\\
Cauerstrasse 11, 91058 Erlangen, Germany}
\email{thomas.singer@fau.de}
\begin{document}
\title[Global higher integrability of weak solutions of porous medium systems]{Global higher integrability of weak solutions of\\ porous medium systems}
\date{August 22, 2019}

%****************************************************************

\maketitle

\begin{abstract}
  We establish higher integrability up to the
  boundary for the gradient of solutions to
  porous medium type systems, whose model case is given by 
  \begin{equation*}
    \partial_t u-\Delta(|u|^{m-1}u)=\mathrm{div}\,F\,,
  \end{equation*}
  where $m>1$. More precisely, we prove that under suitable
  assumptions the spatial gradient
  $D(|u|^{m-1}u)$ of any weak solution is integrable to a larger power
  than the natural power $2$.
  Our analysis includes both the case of the lateral boundary and 
  the initial boundary. 
\end{abstract}

\maketitle

\section{Introduction}

We are concerned with the boundary regularity of solutions to
Cauchy-Dirichlet problems of the form
\begin{equation*}
  \left\{
  \begin{array}{ll}
    \partial_t
    u-\mathrm{div}\,\mathbf{A}(x,t,u,D(|u|^{m-1}u))=\mathrm{div}\,
    F&\mbox{in }\Omega_T:=\Omega\times(0,T),\\[1ex]
    u=g&\mbox{on }\partial_{\mathrm{par}}\Omega_T,
  \end{array}
  \right.
\end{equation*}
for vector-valued solutions $u:\Omega_T\to\R^N$, where $m>1$,
the domain $\Omega\subset\R^n$ is bounded with dimension $n\ge2$,
the dimension of the target space is $N\in\N$, 
and $\partial_{\mathrm{par}}\Omega_T$ denotes the parabolic
boundary of the space-time cylinder $\Omega_T$, where $T>0$.
We cover a large class of vector fields
$\mathbf{A}$ that we only require to satisfy growth and ellipticity
conditions corresponding to the model case
$\mathbf{A}(x,t,u,\zeta)=\zeta$ of the porous medium
system. The assumptions on the data are made precise in Section~\ref{sec:statement} below. Our
starting point are weak solutions, by which we mean in particular that
the spatial gradient satisfies $D(|u|^{m-1}u)\in L^2(\Omega_T)$. Our
goal is to establish the self-improving property of integrability up
to the boundary in the sense that
$D(|u|^{m-1}u)\in L^{2+\varepsilon}(\Omega_T)$ holds true for some
$\varepsilon>0$.

The question for higher integrability of solutions has a long history 
that starts with the classical work by 
Elcrat \& Meyers \cite{Meyers-Elcrat} on 
elliptic systems of $p$-Laplace type, which in turn is based on the
work of Gehring \cite{Gehring}. 
Since then, similar results have been established for a variety of
other elliptic problems, and the higher integrability of solutions has
proved to be a very useful tool for the derivation of further
regularity results. We refer to
\cite{Giaquinta-Modica:0, Giaquinta-Modica,Giaquinta:book,Giusti:book}
and the references therein. The question of higher integrability up to
the boundary for equations of $p$-Laplace type has been answered
positively by Kilpel\"ainen \& Koskela
\cite{Kilpelainen-Koskela}. They observed that the natural condition 
to impose on the regularity of the domain $\Omega\subset\R^n$ is the
property of uniform $p$-thickness of the complement
$\R^n\setminus\Omega$, see \cite[Rem. 3.3]{Kilpelainen-Koskela}.  

The first higher integrability result for a parabolic problem is due
to Giaquinta \& Struwe \cite{Giaquinta-Struwe}, who treated the
quasilinear case. However, it turned out that the techniques of Elcrat
\& Meyers could not directly be extended to the case of the parabolic
$p$-Laplace system due to the anisotropic scaling behaviour of this
system. This problem was solved by Kinnunen \& Lewis in
\cite{Kinnunen-Lewis:1} for weak solutions to $p$-Laplace type systems.
The much more intricate case of very weak solutions was settled by the
same authors in \cite{Kinnunen-Lewis:very-weak}.
Their approach relies on the idea of intrinsic cylinders by
DiBenedetto, see \cite{DbF2,DiBe,DBGV-book}. The heuristic idea is to
compensate for the inhomogeneity of the parabolic $p$-Laplace operator
$\partial_tu-\mathrm{div}\,(|Du|^{p-2}Du)$
by working with cylinders that depend on the size of $|Du|$. More
precisely, for a parameter $\lambda>0$ that is in some sense comparable
to $|Du|$, the idea by DiBenedetto is to consider 
cylinders of the form $Q_r^{(\lambda)}(x_o,t_o)=B_r(x_o)\times
(t_o-\lambda^{2-p}r^2,t_o+\lambda^{2-p}r^2)$.

The boundary version of the
higher integrability result for the parabolic $p$-Laplacian has been
established by Parviainen \cite{Parviainen,Parviainen-singular}, see
also B\"ogelein \& Parviainen \cite{Boegelein:1,Boegelein-Parviainen} for the
higher order case. The required regularity of the boundary is the same
as in the case of the elliptic $p$-Laplacian, i.e. the complement of
the domain is assumed to be uniformly $p$-thick. Finally, we note that
Adimurthi \& Byun \cite{AdimurthiByun:2018} proved global higher
integrability even for very weak solutions of parabolic $p$-Laplace
equations. 

Even after the case of the parabolic $p$-Laplace equation had been
quite well understood, the corresponding question for porous medium type
equations stayed open for a long time. This case turned out to pose
additional challenges, which stem from the fact that the differential
operator $\partial_t u-\Delta u^m=\partial_t
u-m\mathrm{div}(u^{m-1}Du)$ can degenerate depending on the size of
$u$, and not on the size of the gradient as for 
the parabolic $p$-Laplace. This type of degeneracy makes it much more
involved to derive gradient estimates, because both the size of the
solution and of the gradient have to be taken into account.
In particular, it is natural to work
with intrinsic cylinders of the type
\begin{equation}\label{intrinsic-cylinder-PME}
  Q_\rho^{(\theta)}(x_o,t_o)
  =
  B_\rho(x_o)\times
  \big(t_o-\theta^{1-m}\rho^{\frac{m+1}m},
       t_o+\theta^{1-m}\rho^{\frac{m+1}m}\big),
\end{equation}
where $\theta^m$ corresponds to $\frac1\rho u^m$. The construction of a
family of such intrinsic cylinders that is suitable for the derivation
of gradient estimates has first been established by Gianazza and the
third author in \cite{Gianazza-Schwarzacher},
using an idea from \cite{Schwarzacher}. The article
\cite{Gianazza-Schwarzacher} contains the first result
on higher integrability of the gradient for porous medium type
equations and opened the path to further results in this direction.
The higher integrability result was already extended to systems in 
\cite{Boegelein-Duzaar-Korte-Scheven}, to singular
porous medium equations and systems, i.e. the case $m<1$, in
\cite{Gianazza-Schwarzacher-singular,Boegelein-Duzaar-Scheven:2018},
and to a doubly nonlinear system in \cite{BDKS:2018}. All of the
mentioned results are restricted to the interior case. 
The present article is devoted to the question whether the higher
integrability of the gradient can be extended up to the boundary.
As to be expected from the
$p$-Laplacian case, we have to assume that the complement of the
domain is uniformly $2$-thick. However, it turns out that we need a
further assumption on the domain in the case of the
porous medium equation.
The additional problem stems from the fact that the degeneracy of the
porous medium equation depends on the values of the solution itself
rather than on the gradient. This means that close to the boundary,
the degeneracy also depends on the value of the boundary values. In
order to rebalance this nonlinearity with the help of intrinsic
cylinders of the type \eqref{intrinsic-cylinder-PME}, we need to
estimate the difference of the boundary values and the constant
$\theta$ by means of a suitable Poincar\'e inequality on cylinders
centred on the boundary. In order to obtain such an 
inequality for arbitrary boundary data, we have to restrict ourselves
to Sobolev extension domains.
The exact assumptions will be
given in the following section.

\medskip
 
\noindent
{\bf Acknowledgments.} T.~Singer has been supported by the DFG-Project  SI 2464/1-1 ``Highly nonlinear evolutionary problems". K.~Moring has been supported by the Magnus Ehrnrooth foundation. 

\subsection{Statement of the result}
\label{sec:statement}

We consider Cauchy-Dirichlet problems of the form
\begin{align}
\label{equation:PME}
\left\{
\begin{array}{cl}
\partial_t u- \Div \A(x,t,u, D\u^m) =\Div F &\text{ in } \Omega_T, \vspace{1mm}\\
u=g & \text{ on } \partial_{\mathrm{par}}\Omega_T,
\end{array}
\right. 
\end{align}
with $u\colon \Omega_T \to \R^N$, where
$\A:\Omega_T\times\R^N\times\R^{Nn}\to\R^{Nn}$ is a Carath\'eodory
function satisfying
\begin{align}
\label{assumption:A}
\begin{aligned}
\left\{
\begin{array}{c}
\A(x,t,u,\zeta)\cdot \zeta \geq \nu |\zeta|^2 \\
|\A(x,t,u, \zeta)| \leq L|\zeta|
\end{array}
\right.
\end{aligned}
\end{align}
for a.e.\ $(x,t)\in \Omega_T$ and any $(u,\zeta)\in \R^n \times \R^{Nn}$. Note that for $u\in \R^N$ we used the short hand notation
$$
\u^\alpha = |u|^{\alpha-1}u
$$
for $\alpha >0$, where we interpret $\u^\alpha$ as zero if $u$ is zero. 
For the inhomogeneity $F\colon \Omega_T \to \R^{Nn}$ we assume that
\begin{equation}\label{assumption:F}
F\in L^{2+\eps}(\Omega_T,\R^{Nn}),
\end{equation}
and for the boundary datum $g \colon \Omega_T\to \R^N$ we suppose that 
\begin{align}
  \label{assumption:g}
  \left\{
\begin{aligned}
 \g^m\in L^{2+\eps}&\big( 0,T;W^{1,2+\eps}(\Omega,\R^N)\big), ~g\in C^0\big([0,T],L^{m+1}(\Omega,\R^N)\big),\\ 
&~\text{ and }~  \partial_t\power{g}{m}\in L^{\frac{m(2+\eps)}{2m-1}}(\Omega_T,\R^N) 
\end{aligned}\right.
\end{align}
for some $\eps>0$.

We consider weak solutions in the following sense.

\begin{definition}
\label{global_solution}
A measurable map $u\colon \Omega_T \to \R^N$ in the class 
$$
\u^m \in L^2\big(0,T;W^{1,2}(\Omega,\R^N)\big) ~\text{ with }~ u \in C^0\big([0,T],L^{m+1}(\Omega,\R^N)\big)   
$$
is called a global weak solution to the Cauchy-Dirichlet problem \eqref{equation:PME} if 
\begin{align}
 \label{weak:PME}
\iint_{\Omega_T} \big[ u \cdot \partial_t \varphi -\A(x,t,u,D\u^m) \cdot D \varphi \big] \, \d x \d t= \iint_{\Omega_T} F \cdot D\varphi\, \d x\d t
\end{align}
holds true for every test-function $\varphi\in C^\infty_0(\Omega_T,\R^N)$ and, moreover
\begin{align*}
(\u^m-\g^m)(\cdot,t) \in W^{1,2}_0(\Omega,\R^N)\quad \text{ for almost every } t \in (0,T) 
\end{align*}
and 
\begin{align}
\label{assumption:initial_datum}
\frac 1 h \int_0^h \int_\Omega \big|\u^{\frac{m+1}{2}}(x,t)-\g^{\frac{m+1}{2}}(x,0)\big|^{2} \d x\d t \to 0 ~~\text{ as } h \downarrow 0
%\frac 1 h \int_0^h \int_\Omega \b [u(\cdot,t),g(\cdot,0)]\d x\d t \to 0 ~~\text{ as } h \downarrow 0
%\frac 1 h \int_0^h \int_\Omega |u(x,t)-g(x,0)|^{m+1} \d x\d t \to 0 ~~\text{ as } h \downarrow 0
\end{align}
for a given function $g$ satisfying \eqref{assumption:g}.
\end{definition}

In order to state our assumptions on the boundary of the domain, we
recall the following two definitions. The first one is already
familiar from corresponding results for $p$-Laplace equations.

\begin{definition}\label{def:p-thick}
A set $E \subset \R^n$ is uniformly $p$-thick if there exist constants $\mu,\rho_o>0$ such that 
$$
\ca_p(E\cap \overline B_\rho(x_o),B_{2\rho}(x_o)) \geq \mu \ca_p(\overline B_\rho(x_o),B_{2\rho}(x_o))
$$
for all $x_o\in E$ and for all $0<\rho<\rho_o$.
\end{definition}

For the treatment of the porous medium equation, we rely on a suitable
Poincar\'e inequality for the boundary values, see Lemma
\ref{lem:poincare_g}. In order to achieve our main result for arbitrary
boundary values, we need to assume that $\Omega$ is a Sobolev
extension domain in the following sense.

\begin{definition}\label{def:sobolev-extension}
  A domain $\Omega\subset\R^n$ is called a $W^{1,p}$-extension domain
  if there exists a linear operator $E:W^{1,p}(\Omega)\to
  W^{1,p}(\R^n)$ such that $Eu(x)=u(x)$ for a.e. $x\in\Omega$ and
  \begin{equation}\label{bounded-extension}
    \|Eu\|_{W^{1,p}(\R^n)}\le c_E\|u\|_{W^{1,p}(\Omega)}
  \end{equation}
  for any $u\in W^{1,p}(\Omega)$ and a constant $c_E\in\R_{\ge 0}$. 
\end{definition}

In \cite{HaKoTu} it was shown that every $W^{1,p}$-extension domain satisfies the measure density condition, i.e.\ there exists $\alpha>0$ such that for all $x_o \in \overline \Omega$ and $0 < \rho \le 1$
\begin{align}
\label{measure_density}
|\Omega \cap B_\rho(x_o)|\geq \alpha |B_\rho(x_o)|
\end{align} 
holds true.

This allows us to formulate the main result of our paper. In order to
state the local estimate, we consider parabolic cylinders of the form
$$
  Q_R(x_o,t_o):=B_R(x_o)\times(t_o-R^{\frac{m+1}m},t_o+R^{\frac{m+1}m}).
$$ 

\begin{theorem}\label{thm:main}
  Let $m> 1$.
  There exist constants $\eps_o\in(0,1]$ and $c\ge1$ so that the
  following holds. Assume that for some $\eps\in(0,\eps_o]$, the
  assumptions \eqref{assumption:A},
  \eqref{assumption:F}, and
  \eqref{assumption:g} are in force and that
  $\Omega\subset\R^n$ is a
  bounded $W^{1,2+\eps}$-extension domain for which the complement
  $\R^n\setminus\Omega$ is uniformly $2$-thick.
  Then any
  global weak solution $u$ to the Cauchy-Dirichlet problem \eqref{equation:PME} in the sense of
  Definition \ref{global_solution} satisfies
  \begin{equation*}
    D\power um\in L^{2+\eps}\big(\Omega_T,\R^{Nn}\big).
  \end{equation*}
  %\textcolor{blue}{
    Moreover, for any parabolic cylinder $Q_{2R}(z_o)\subset\R^n\times(-T,T)$ with $z_o \in \Omega_T\cup\partial_{\mathrm{par}}\Omega_T$ we have 
\begin{align} \label{local-estimate}
&\iint_{Q_R \cap \Omega_T}\babs{D\u^m}^{2+\eps} \, \d x \d t \\\nonumber
&\qquad\leq c  \Bigg( 1 + \biint_{Q_{2R}\cap\Omega_T}
 \frac{|\u^m - \g^m|^2 }{R^2}\,\d x\d t
 \Bigg)^\frac{\eps m}{m+1}
 \iint_{Q_{2R} \cap \Omega_T} \babs{D\u^m}^2 \, \d x\d t \\\nonumber
 &\qquad\qquad+
 c  \Bigg(\biint_{Q_{2R}\cap\Omega_T} G_R^{2+\eps}  \, \d x \d
 t\Bigg)^{\frac{2\eps m}{(2+\eps)(m+1)}}
 \iint_{Q_{2R} \cap \Omega_T} \babs{D\u^m}^2 \, \d x\d t \\\nonumber
 &\qquad\qquad+
 c \iint_{Q_{2R}\cap\Omega_T} G_R^{2+\eps} \, \d x \d t,
\end{align}
where we abbreviated
\begin{equation*}
  G_R^2:=|\partial_t \g^m|^{\frac{2m}{2m-1}}+|D\g^m|^2+\frac{|g|^{2m}}{R^2}+|F|^{2}.
\end{equation*}
The constant $\eps_o$ depends
  at most on $m,n,N,\nu,L,\mu,\rho_o$, and $\alpha$, and $c$ depends
  on the same data and additionally on $c_E$. Here, the parameters
  $\mu,\rho_o$ are introduced in Definition~\ref{lem:p-thick} with
  $p=2$, $c_E$ is the constant from
  Definition~\ref{def:sobolev-extension} with $p=2+\eps$ and $\alpha$
  is given by \eqref{measure_density}.
  %}
\end{theorem}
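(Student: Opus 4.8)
The plan is to follow the established route to higher integrability for degenerate parabolic systems — reverse H\"older inequalities on a family of intrinsic parabolic cylinders, followed by a Vitali-type covering and a Gehring-type iteration — the point being to extend this machinery, known in the interior from \cite{Gianazza-Schwarzacher,Boegelein-Duzaar-Korte-Scheven}, up to the lateral and initial parts of $\partial_{\mathrm{par}}\Omega_T$. After a scaling reduction to unit-size cylinders $Q_{2R}(z_o)$ and a normalization of $\nu,L$, the first task is the construction, via a two-sided stopping-time argument, of a family of intrinsic cylinders $Q_\rho^{(\theta)}(z)$ of the shape \eqref{intrinsic-cylinder-PME} on which $\theta$ is coupled to the solution by $\theta^{m+1}\sim\biint_{Q_\rho^{(\theta)}(z)}|D\u^m|^2\,\dxt + \big(\tfrac1\rho\u^m\big)^{\frac{m+1}m}+\text{(data)}$; centred at an interior point this is available, and it must be carried out equally for cylinders touching the lateral boundary and the initial slice $\{t=0\}$. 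On each such cylinder one then needs a Caccioppoli inequality: in the interior it is standard; at the lateral boundary one exploits $(\u^m-\g^m)(\cdot,t)\in W^{1,2}_0(\Omega,\R^N)$ so that the comparison test function need not vanish on $\partial\Omega$, the defect being collected into $G_R$; and at the initial boundary one uses the strong convergence \eqref{assumption:initial_datum}. The time derivative is treated with Steklov averages throughout.

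The crux is the step that is genuinely new for the porous medium operator near the boundary. To run the reverse H\"older argument on a boundary cylinder one must compare the mean of $\u^m$ with $\theta$ and with the boundary datum; since the degeneracy of $\partial_t u-\Div\A(\cdot,D\u^m)$ sits in $|u|$ rather than in $|Du|$, near $\partial\Omega$ it is governed by $g$, and this has to be rebalanced by the intrinsic geometry. For the lateral boundary this uses two ingredients: first, the Sobolev--Poincar\'e inequality with vanishing boundary values on the uniformly $2$-thick complement $\R^n\setminus\Omega$, which controls $\u^m-\g^m$ by $\rho\,D(\u^m-\g^m)$ in the appropriate average — this is exactly where uniform $2$-thickness enters, as in the $p$-Laplacian theory; second, Lemma~\ref{lem:poincare_g}, a Poincar\'e-type inequality for $\g^m$ on boundary cylinders valid for \emph{arbitrary} boundary data, whose proof uses the extension operator of Definition~\ref{def:sobolev-extension} together with the measure density condition \eqref{measure_density} to push $\g^m$ across $\partial\Omega$ and apply an interior Poincar\'e inequality. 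Combining the two, the intrinsic-coupling term $\big(\tfrac1\rho\u^m\big)^{\frac{m+1}m}$ on a boundary cylinder is dominated by $G_R$ and a sub-$2$ power of $|D\u^m|$, which is precisely what makes the stopping-time construction close up to the boundary.

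With these ingredients in hand the remainder is a careful but by-now routine machine. Combining the Caccioppoli inequality of the previous step with a parabolic Sobolev--Poincar\'e inequality for $\u^m$ (using $u\in C^0([0,T];L^{m+1})$) and with the boundary Poincar\'e inequalities yields, on each cylinder of the constructed family, a reverse H\"older estimate of the form $\biint_{Q_{\rho/2}^{(\theta)}(z)}|D\u^m|^2\,\dxt\le c\big(\biint_{Q_\rho^{(\theta)}(z)}|D\u^m|^{2q}\,\dxt\big)^{1/q}+c\,\biint_{Q_\rho^{(\theta)}(z)}G_R^2\,\dxt$ for some $q\in(0,1)$, uniformly over interior, lateral and initial cylinders; the exponents $\tfrac{\eps m}{m+1}$ and $\tfrac{2\eps m}{(2+\eps)(m+1)}$ that surface in \eqref{local-estimate} are forced by the $\theta$-dependence of $|Q_\rho^{(\theta)}|$ and the defining relation for $\theta$. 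One then fixes a level $\lambda$ above a fixed multiple of the total energy on $Q_{2R}\cap\Omega_T$, covers the super-level set of an intrinsic maximal function of $|D\u^m|^2+G_R^2$ by cylinders of the family on which the intrinsic energy equals $\lambda$, extracts a Vitali subcover — here \eqref{measure_density} guarantees that each covering cylinder, once intersected with $\Omega_T$, retains a definite fraction of its mass, so the covering stays efficient up to the boundary — splits the super-level set into gradient-dominated and data-dominated parts, and applies the reverse H\"older estimate on the former. Multiplying the resulting level-set inequality by $\lambda^{\eps_o}$, integrating in $\lambda$, using Fubini to recognise $L^{2+\eps}$-norms over super-level sets, and absorbing the gradient term for $\eps\le\eps_o$ small (with $\eps_o$ depending only on the listed data) gives \eqref{local-estimate}; summing such local estimates over a finite covering of $\Omega_T$ together with \eqref{assumption:F}--\eqref{assumption:g} yields $D\u^m\in L^{2+\eps}(\Omega_T,\R^{Nn})$.

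The principal obstacle is the boundary step described in the second paragraph: producing a Poincar\'e inequality for arbitrary boundary data $\g^m$ on cylinders centred on $\partial_{\mathrm{par}}\Omega_T$ that is compatible with the intrinsic scaling — which is what forces the Sobolev-extension hypothesis on $\Omega$ — and then feeding it, together with the $2$-thickness-based Sobolev--Poincar\'e inequality, into a two-sided intrinsic stopping-time construction that must operate uniformly across interior, lateral and initial cylinders. The bookkeeping of the intrinsic exponents and the need to run all estimates simultaneously at the lateral and the initial boundary is the technically heaviest part.
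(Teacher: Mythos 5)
Your proposal follows essentially the same route as the paper: Caccioppoli estimates obtained by testing with $\u^m-\g^m$ (resp.\ $\u^m-\a^m$), a capacity-based Sobolev--Poincar\'e inequality from uniform $2$-thickness, the extension-domain Poincar\'e inequality for arbitrary boundary data (Lemma~\ref{lem:poincare_g}), reverse H\"older inequalities on a stopping-time family of sub-intrinsic cylinders with separate interior/lateral/initial couplings, and then Vitali covering plus a Fubini--Gehring iteration on super-level sets. The only sketch-level deviations (a slightly different phrasing of the intrinsic coupling, which the paper splits into a sub-intrinsic $\theta_\rho$ and a separate energy stopping radius, and the reflection $\hat u$ across $\{t=0\}$) do not change the argument.
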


%\textcolor{blue}{
  \begin{remark}
    A close inspection of the proof shows that the constants
    in the preceding theorem actually depend continuously on $m>1$ and remain
    bounded when $m\downarrow1$. 
  \end{remark}
%}
\subsection{Technical novelties and plan of the paper}

It has been observed by Gianazza and the third author in
\cite{Gianazza-Schwarzacher} that higher integrability in the interior
of the domain can be derived by working with cylinders
$Q_\rho^{(\theta)}(z_o)$ that are intrinsic in the sense
\begin{equation}\label{scaling-interior}
  \biint_{Q_\rho^{(\theta)}(z_o)}\frac{|u|^{2m}}{\rho^2}\d x\d t
  \approx\theta^{2m}.
\end{equation}
A coupling of this type is necessary in order to deal with the
degeneracy of the porous medium equation. 
This already becomes apparent in
the Caccioppoli type inequality, which is the first step towards any higher
integrability result. The interior version of this inequality is stated in
Lemma \ref{lem:initialcacciop} below. The time
derivative in the porous medium equation leads to an integral that is
comparable to 
\begin{align}\label{time-term}
  \biint_{Q_\rho^{(\theta)}(z_o)}\theta^{m-1}\frac{\big|\power{u}{\frac{m+1}2}-\power{a}{\frac{m+1}2}\big|^2}{\rho^{\frac{m+1}m}}\d  x\d t, 
\end{align}
where we choose the constant $a$ according to
$\power{a}{m}:=\biint_{Q_\rho^{(\theta)}(z_o)}\power{u}{m}\,\d x\,\d t$,
while the diffusion term results in an integral of the form
\begin{align}\label{diff-term}
  \biint_{Q_\rho^{(\theta)}(z_o)}\frac{\big|\power{u}{m}-\power{a}{m}\big|^2}{\rho^2}\d
  x\d t
  \approx
  \biint_{Q_\rho^{(\theta)}(z_o)}\frac{(|u|+|a|)^{m-1}\big|\power{u}{\frac{m+1}2}-\power{a}{\frac{m+1}2}\big|^2}{\rho^2}\d  x\d t.
\end{align}
The occurence of these two integrals in the Caccioppoli type
inequality is a natural consequence of the inhomogeneity of the porous
medium equation.
Heuristically, on a cylinder that satisfies an
intrinsic coupling of the type \eqref{scaling-interior},
the two integrals \eqref{time-term} and
\eqref{diff-term} are comparable, which makes it possible to deal
with the inhomogeneous form of the Caccioppoli inequality.
More precisely, for the estimate of \eqref{diff-term} by a Sobolev-Poincar\'e type
inequality, it is sufficient to work
with cylinders that are
\emph{sub-intrinsic} in the sense that the integral in
\eqref{scaling-interior} is only bounded from above by $\theta^{2m}$.
However, in order to estimate \eqref{time-term} by \eqref{diff-term},
it is necessary to bound $\theta$ from above. To this end, in
\cite{Gianazza-Schwarzacher}, Gianazza and the third author 
distinguished between the degenerate case, in which $\theta$ can be
bounded by an integral of the spatial derivative, and the
non-degenerate case, in which an intrinsic coupling of the type
\eqref{scaling-interior} can be achieved. A key step in their proof is the
construction of a suitable system of sub-intrinsic cylinders on which
either the degenerate or the non-degenerate case applies. The
combination of the Caccioppoli and the Sobolev-Poincar\'e inequalities
then leads to a reverse H\"older inequality on these cylinders, and a Vitali
type covering argument yields the desired higher integrability result
in the interior.

In the Caccioppoli inequality close to the lateral boundary, it is
more natural to subtract the boundary values from the solution rather
than the mean value. As a consequence, 
the suitable choice of the scaling parameter $\theta$ has to depend 
on the boundary values as well.
In the boundary situation, we thus work
with cylinders that satisfy a coupling of the type
\begin{equation}\label{scaling-boundary}
    \biint_{Q_\rho^{(\theta)}(z_o)}
    2\frac{|\power um-\power gm|^{2}+|g|^{2m}}{\rho^2}\d x\d t
    \approx\theta^{2m}.
\end{equation}
Both of the coupling conditions \eqref{scaling-interior} and \eqref{scaling-boundary} have to be taken into account for
the construction of a system of sub-intrinsic cylinders as in
\cite{Gianazza-Schwarzacher}. In fact, when considering a point $z_o$
close to the lateral boundary, it is not clear a priori if the
mentioned construction yields a cylinder for which the doubled
cylinder $Q_{2\rho}^{(\theta)}(z_o)$ touches
the boundary or not. This is the reason why both the interior scaling
\eqref{scaling-interior} and the boundary scaling
\eqref{scaling-boundary} enter in the construction of the cylinders, cf. Section~\ref{construction_cylinder}.    
As a matter of course, the derivation of the desired reverse H\"older
inequalities on these cylinders requires a much more
extensive case-by-case analysis than in the interior case.

At the initial boundary, we use an extension argument in order to
avoid the occurrence of a third type of coupling condition. More
precisely, we extend the solution by the reflected boundary values,
cf. \eqref{def:hatu} below. Then we use a scaling as in
\eqref{scaling-interior} with $u$ replaced by its extension. This
enables us to treat the initial boundary case with a 
coupling condition analogous to the interior.

This article is organized as follows. In the preliminary Section~\ref{sec:preliminaries},
we collect some technical tools that will be crucial for the proof. In
Section~\ref{sec:caccioppoli} , we derive suitable Caccioppoli type estimates and Section~\ref{sec:poincare} is devoted to Sobolev-Poincar\'e type inequalities for the
solutions. Both estimates are combined in Section~\ref{sec:reverse-holder} to establish
reverse H\"older type inequalities on sub-intrinsic cylinders. Each of
the three last-mentioned sections is subdivided into one subsection
that is concerned with the case of the lateral boundary and another
one that deals with the initial boundary. Moreover, for the derivation
of the reverse H\"older inequality, we have to consider two different
types of coupling conditions for the sub-intrinsic cylinders that can
be understood as the non-degenerate case
(see \eqref{intrinsic_coupling} for the lateral boundary and
\eqref{initial_intrinsic_coupling} for the initial boundary)
and the degenerate case (cf. \eqref{sub-intrinsic_coupling} and
\eqref{initial_intrinsic_coupling_2}, respectively).
The final Section~\ref{sec:hi-int} contains the construction of a suitable system
of cylinders, which can be shown to satisfy one of the mentioned
coupling conditions that lead to a reverse H\"older inequality. By a
Vitali type covering argument, the reverse H\"older estimates on the
cylinders can be extended to estimates on the super-level sets. Then,
a standard Fubini type argument yields the result.

\section{Preliminaries}
\label{sec:preliminaries}

\subsection{Notation}
For $z_o=(x_o,t_o)\in \Omega_T$ we set
$$
Q_\rho^{(\theta)}(z_o) :=B_\rho(x_o)\times \Lambda_\rho^{(\theta)}(t_o),
$$
where $B_\rho(x_o)$ denotes the open ball with radius $\rho>0$ and center
$x_o$ and 
$$
\Lambda_\rho^{(\theta)}(t_o) :=(t_o-\theta^{1-m}\rho^{\frac{m+1}{m}},t_o+\theta^{1-m}\rho^{\frac{m+1}{m}}).
$$
In the case $\theta=1$ we use the shorter notation
$Q_\rho(z_o):=Q_\rho^{(1)}(z_o)$.
From the definition of the cylinders it becomes clear that the
parabolic dimension associated to our problem is
\begin{equation*}
  d:=n+1+\tfrac1m.
\end{equation*}
Moreover, we will use the notations
$$
Q_{\rho,s}(z_o) := B_\rho(x_o) \times (t_o-s,t_o+s)
$$
as well as 
$$
Q_{\rho,+}^{(\theta)}(z_o):= Q_\rho^{(\theta)}(z_o) \cap \{t>0\}
\qquad\mbox{and}\qquad
Q_{\rho,-}^{(\theta)}(z_o):= Q_\rho^{(\theta)}(z_o) \cap \{t<0\}.
$$
For the mean value of a function $f\in L^1(A)$ over a set
$A\subset\R^k$ of finite positive measure we write
$(f)_A:=\bint_Af\,\dx$, 
and for a function $v\in L^1(\Omega_T)$, we abbreviate moreover
\begin{equation*}
  (v)_{z_o;\rho}^{(\theta)}:=\biint_{Q_\rho^{(\theta)}(z_o)}v\,\d x\d t
  \qquad\mbox{and}\qquad
  (v)_{x_o;\rho}(t):=\bint_{B_\rho(x_o)}v(x,t)\,\d x,
\end{equation*}
where $t\in[0,T]$. 
Finally, we define the boundary term as
\begin{align*}
\b[\u^m,\a^m]:= \tfrac{m}{m+1} \big(|a|^{m+1}-|u|^{m+1} \big) -u \cdot \big(\a^m-\u^m \big).
\end{align*}

\subsection{Auxiliary tools}

In order to prove energy estimates we have to use a mollification in time. For this purpose we define for $v \in L^1(\Omega_T,\R^N)$ the mollification
$$
 \llbracket v\rrbracket_h (x,t):=\tfrac 1 h\int_0^t e^{\frac{s-t}{h}}v(x,s)\d s.
$$
For the basic properties of the mollification $\llbracket \cdot \rrbracket_h$ we refer to~\cite[Lemma 2.2]{Kinnunen-Lindqvist} and~\cite[Appendix B]{BDM:pq}.

% \textcolor{red}{Should we collect the used properties of the mollification in a lemma?} 

The next three Lemmas are helpful to estimate certain boundary terms,
and can be found in \cite[Lemmas 2.2, 2.3, 2.7]{Boegelein-Duzaar-Korte-Scheven}. 

\begin{lemma}
\label{lem:estimates}
Let $\alpha>1$. There exists a constant $c=c(\alpha)$ such that for any $u,a\in \R^N$ the following holds true:
\begin{itemize}
\item[(i)] $|u-a|^\alpha \leq c |\u^\alpha-\a^\alpha|$
\item[(ii)] $\frac{1}{c} \babs{\a^{\alpha} - \boldsymbol{b}^{\alpha}} \leq \left[|a|^{\alpha-1} + |b|^{\alpha-1} \right] | a - b | \leq c \babs{\a^{\alpha} - \boldsymbol{b}^{\alpha}}$
\end{itemize}

\end{lemma}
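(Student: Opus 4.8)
The plan is to prove part~(ii) first and then deduce part~(i) from the lower bound it contains. Throughout, $\alpha>1$ is essential only through the fact that $t\mapsto t^{\alpha-1}$ is nondecreasing on $[0,\infty)$.

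\emph{Lower bound in~(ii).} Fix $a,b\in\R^N$. The starting point I would use is the algebraic identity
$$\a^\alpha-\boldsymbol{b}^\alpha
 =\tfrac{|a|^{\alpha-1}+|b|^{\alpha-1}}2\,(a-b)
 +\tfrac{|a|^{\alpha-1}-|b|^{\alpha-1}}2\,(a+b),$$
which is checked by expanding the right-hand side and recalling $\a^\alpha=|a|^{\alpha-1}a$. Abbreviating $P:=\tfrac12(|a|^{\alpha-1}+|b|^{\alpha-1})\ge0$ and $Q:=\tfrac12(|a|^{\alpha-1}-|b|^{\alpha-1})$ and taking squared norms, the relation $(a-b)\cdot(a+b)=|a|^2-|b|^2$ yields
$$\babs{\a^\alpha-\boldsymbol{b}^\alpha}^2
 =P^2|a-b|^2+2PQ\big(|a|^2-|b|^2\big)+Q^2|a+b|^2.$$
Here $Q$ and $|a|^2-|b|^2$ share the sign of $|a|-|b|$, so the cross term is nonnegative and hence $\babs{\a^\alpha-\boldsymbol{b}^\alpha}\ge P|a-b|=\tfrac12(|a|^{\alpha-1}+|b|^{\alpha-1})|a-b|$, which is the left-hand inequality in~(ii) with $c=2$.

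\emph{Upper bound in~(ii).} I would argue by the fundamental theorem of calculus along the segment joining $b$ to $a$. Setting $\intf(\xi):=|\xi|^{\alpha-1}\xi$, one has $\intf\in C^1(\R^N,\R^N)$ for every $\alpha>1$, with $D\intf(\xi)=|\xi|^{\alpha-1}I+(\alpha-1)|\xi|^{\alpha-3}\,\xi\otimes\xi$ for $\xi\ne0$ and $D\intf(0)=0$, so in particular $|D\intf(\xi)|\le\alpha|\xi|^{\alpha-1}$ everywhere. Therefore
$$\babs{\a^\alpha-\boldsymbol{b}^\alpha}
 =\Big|\int_0^1 D\intf\big(b+s(a-b)\big)\,(a-b)\ds\Big|
 \le\alpha\sup_{s\in[0,1]}\babs{b+s(a-b)}^{\alpha-1}\,|a-b|,$$
and since $s\mapsto|b+s(a-b)|$ is convex its supremum over $[0,1]$ is attained at an endpoint and equals $\max\{|a|,|b|\}$; thus the right-hand side is at most $\alpha\max\{|a|,|b|\}^{\alpha-1}|a-b|\le\alpha(|a|^{\alpha-1}+|b|^{\alpha-1})|a-b|$, giving the right-hand inequality in~(ii) with $c=\alpha$. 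As a fallback, one could instead insert the algebraic identity into $\babs{\a^\alpha-\boldsymbol{b}^\alpha}\le P|a-b|+|Q|(|a|+|b|)$ and estimate the second term via the elementary one-variable inequality $\babs{x^{\alpha-1}-y^{\alpha-1}}(x+y)\le c(\alpha)(x^{\alpha-1}+y^{\alpha-1})|x-y|$ for $x,y\ge0$.

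\emph{Part~(i) and the main obstacle.} Applying the lower bound of~(ii) to the pair $(u,a)$ and using $|u-a|^{\alpha-1}\le(|u|+|a|)^{\alpha-1}\le2^{\alpha-1}(|u|^{\alpha-1}+|a|^{\alpha-1})$ gives
$$|u-a|^\alpha=|u-a|^{\alpha-1}|u-a|
 \le2^{\alpha-1}\big(|u|^{\alpha-1}+|a|^{\alpha-1}\big)|u-a|
 \le2^\alpha\babs{\u^\alpha-\a^\alpha},$$
which is~(i) with $c=2^\alpha$. The only genuinely delicate point I anticipate is the exponent range $1<\alpha<2$: there $\intf$ is still $C^1$ (its derivative merely vanishes at the origin instead of being bounded away from a singularity, so no care beyond this remark is needed in the main route), while $t\mapsto t^{\alpha-1}$ becomes concave rather than convex — which is precisely where the fallback route for the upper bound would require a case split $y\le x/2$ versus $x/2<y$. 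All other steps are routine.
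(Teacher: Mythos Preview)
Your proof is correct and complete. The paper does not supply its own argument for this lemma; it merely cites \cite[Lemmas 2.2, 2.3]{Boegelein-Duzaar-Korte-Scheven}, so there is no in-paper proof to compare against. Your approach --- the algebraic decomposition for the lower bound in~(ii), the $C^1$ line-integral argument for the upper bound, and the deduction of~(i) from the lower bound --- is clean and self-contained, and even yields explicit constants ($c=2$, $c=\alpha$, and $c=2^\alpha$ respectively).
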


\begin{lemma}
  \label{lem:b}
  Let $m\ge 1$. 
There exists a constant $c=c(m)$ such that for every $u,a \in \R^N$ we have
\begin{itemize}\item[(i)] $\tfrac 1 c \big|\u^{\frac{m+1}{2}}-\a^{\frac{m+1}{2}} \big|^2 \leq \b[\u^m,\a^m] \leq c\big|\u^{\frac{m+1}{2}}-\a^{\frac{m+1}{2}} \big|^2 $
\item[(ii)] $\b[\u^m,\a^m] \leq c |\u^m-\a^m|^{\frac{m+1}{m}}$
\item[(iii)] $\frac 1 c |\u^m-\a^m|^2 \leq \left[|u|^{m-1}+|a|^{m-1} \right]\b[\u^m,\a^m]\leq  c |\u^m-\a^m|^2$
\end{itemize}
\end{lemma}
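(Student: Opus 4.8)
\textbf{Proof plan for Lemma~\ref{lem:b}.}

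The plan is to reduce everything to the scalar function $s\mapsto s^{(m+1)/2}$ and to Lemma~\ref{lem:estimates}. Write $A:=|a|$, $U:=|u|$ and observe that $\b[\u^m,\a^m]$ depends on $u,a$ only through $U$, $A$, and the scalar product $u\cdot a$; in fact, since $\u^m=U^{m-1}u$ and $\a^m=A^{m-1}a$,
\begin{align*}
  \b[\u^m,\a^m]
  =\tfrac{m}{m+1}\big(A^{m+1}-U^{m+1}\big)-\big(A^{m-1}-U^{m-1}\big)\,u\cdot a\,.
\end{align*}
For fixed $U,A$ this is an affine function of $u\cdot a$, hence monotone in $u\cdot a$; since $-UA\le u\cdot a\le UA$, the extreme values are attained when $u$ and $a$ are parallel or antiparallel. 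This reduces (i) and (iii) to the one-dimensional statements for $u,a\in\R$, and (ii) will follow from (i) together with Lemma~\ref{lem:estimates}(i) applied with exponent $\tfrac{m+1}2$.

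For (i), after the reduction it suffices to prove $\tfrac1c\,\babs{\,|u|^{(m+1)/2}\mathrm{sgn}(u)-|a|^{(m+1)/2}\mathrm{sgn}(a)}^2\le \b[u^m,a^m]\le c\,|\cdots|^2$ for real $u,a$; by homogeneity (degree $m+1$ on both sides) and symmetry we may set $a=1$ and let $u$ range over $\R$, and then check the two continuous functions agree up to constants by examining $u\to\pm\infty$ and $u\to1$ (where a Taylor expansion shows both sides vanish to second order with a nonzero, comparable leading coefficient). Here the key structural fact is that $\b$ is nonnegative: it is the Bregman-type remainder of the convex function $w\mapsto \tfrac{m}{m+1}|w|^{(m+1)/m}$ evaluated between the points $\u^m$ and $\a^m$, so $\b[\u^m,\a^m]\ge0$ always, with equality iff $u=a$. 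For (iii), using the reduction to parallel/antiparallel vectors one rewrites the middle quantity as $(U^{m-1}+A^{m-1})\b$ and, via the identity $U^{m-1}\b+A^{m-1}\b$ and Lemma~\ref{lem:estimates}(ii) with $\alpha=m$ (so that $|\u^m-\a^m|^2\approx(U^{m-1}+A^{m-1})^2|u-a|^2$ in the parallel case, resp.\ the analogous bound in the antiparallel case), one matches it with $|\u^m-\a^m|^2$; alternatively (iii) follows by combining (i) with Lemma~\ref{lem:estimates}(ii) applied to the exponents $m$ and $\tfrac{m+1}2$, since $(U^{m-1}+A^{m-1})\big||u|^{(m+1)/2}-|a|^{(m+1)/2}\big|^2$ is comparable to $\big|\,\power{u}{m}-\power{a}{m}\big|^2$ by elementary manipulation of powers.

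The main obstacle is making the ``reduce to parallel vectors'' step fully rigorous for the two-sided bound (iii), because in the antiparallel case $U^{m-1}-A^{m-1}$ and $A^{m+1}-U^{m+1}$ can interact with opposite signs, so one must verify that $\b$ still stays comparable to $\babs{\,\u^{(m+1)/2}-\a^{(m+1)/2}}^2$ there; this is handled by noting that in the antiparallel case $|u-a|=U+A$ and $\babs{\,\u^{(m+1)/2}-\a^{(m+1)/2}}=U^{(m+1)/2}+A^{(m+1)/2}$, so both sides of (i) are comparable to $(U+A)^{m+1}$, and similarly both sides of (iii) are comparable to $(U^{m-1}+A^{m-1})(U+A)^{m+1}\approx(U+A)^{2m}\approx|\u^m-\a^m|^2$. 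Since these are exactly the elementary inequalities catalogued in \cite[Lemmas 2.2, 2.3, 2.7]{Boegelein-Duzaar-Korte-Scheven}, I would in the write-up simply cite that reference for the scalar estimates and give only the monotonicity-in-$u\cdot a$ argument that upgrades them to the vectorial statement, which is the single genuinely new point.
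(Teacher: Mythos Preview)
The paper does not prove this lemma at all; it simply records it together with Lemmas~\ref{lem:estimates} and~\ref{lem:boundaryave} and refers to \cite[Lemmas~2.2, 2.3, 2.7]{Boegelein-Duzaar-Korte-Scheven} for the proof. Your plan to cite the same reference for the scalar inequalities and add only the reduction from the vectorial to the scalar case is therefore entirely consistent with the paper, and in fact more informative.

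Your reduction argument is sound: each of the quantities $\b[\u^m,\a^m]$, $|\u^{(m+1)/2}-\a^{(m+1)/2}|^2$, $|\u^m-\a^m|^2$, and $(|u|^{m-1}+|a|^{m-1})\b[\u^m,\a^m]$ is affine in $u\cdot a$ once $|u|$ and $|a|$ are fixed, so a two-sided comparison between affine quantities (and in (ii) an upper bound of an affine function by a concave one) is equivalent to the same comparison at the endpoints $u\cdot a=\pm|u||a|$, i.e.\ the parallel and antiparallel scalar cases. One correction: your displayed identity for $\b$ is wrong. Since $u\cdot\u^m=|u|^{m+1}$ (not $|u|^{m-1}u\cdot a$), the correct expression is
\[
  \b[\u^m,\a^m]=\tfrac{m}{m+1}A^{m+1}+\tfrac{1}{m+1}U^{m+1}-A^{m-1}\,u\cdot a,
\]
not $\tfrac{m}{m+1}(A^{m+1}-U^{m+1})-(A^{m-1}-U^{m-1})\,u\cdot a$. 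This does not affect your argument, because the only feature you use is affinity in $u\cdot a$, which still holds; but fix the formula before writing it up.
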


\begin{lemma}
\label{lem:boundaryave}
There exists a constant $c = c(m)$ such that for any bounded $A \in \R^n$, any $u \in L^{m+1}(A, \R^N)$, and any $a \in \R^N$ there holds
\begin{align*}
\bint_A \b[u,(u)_A]\, \dx \leq c \bint_A \b[u,a]\, \dx.
\end{align*}

\end{lemma}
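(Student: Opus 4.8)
\textbf{Plan of proof of Lemma~\ref{lem:boundaryave}.}
The strategy is to reduce the claim to a pointwise/scalar comparison and to exploit the quadratic-like structure of $\b$ provided by Lemma~\ref{lem:b}~(i). First I would recall the definition
\[
  \b[u,a]=\tfrac{m}{m+1}\big(|a|^{m+1}-|u|^{m+1}\big)-u\cdot(\a^m-\u^m),
\]
and observe that, for fixed $u$, the map $a\mapsto\b[u,a]$ is, up to the constant $\tfrac{m}{m+1}|u|^{m+1}+u\cdot\u^m$, equal to the convex function $a\mapsto\tfrac{m}{m+1}|a|^{m+1}-u\cdot\a^m$, whose gradient vanishes precisely at $a=u$; hence $\b[u,a]\ge0$ with equality iff $a=u$. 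The key quantitative input is Lemma~\ref{lem:b}~(i), which says $\b[u,a]\approx|\u^{(m+1)/2}-\a^{(m+1)/2}|^2$ with constants depending only on $m$. Thus it suffices to prove the corresponding averaged inequality for the function $w:=\u^{(m+1)/2}$, namely
\[
  \bint_A\babs{w-(\text{"mean"})}^2\,\dx\le c\,\bint_A\babs{w-w(a)}^2\,\dx,
\]
where, however, the ``mean'' on the left is not the Euclidean average of $w$ but the quantity $(u)_A$ re-expressed through the map $v\mapsto\boldsymbol v^{(m+1)/2}$. This mismatch is exactly the point where some care is needed.

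Concretely, I would argue as follows. Write $b:=(u)_A\in\R^N$. By Lemma~\ref{lem:b}~(i) applied with $a$ replaced by $b$,
\[
  \bint_A\b[u,(u)_A]\,\dx=\bint_A\b[u,b]\,\dx
  \le c(m)\bint_A\babs{\u^{\frac{m+1}2}-\b^{\frac{m+1}2}}^2\,\dx.
\]
Now insert an arbitrary $a\in\R^N$: by the triangle inequality in $L^2(A)$ followed by $|x+y|^2\le2|x|^2+2|y|^2$,
\[
  \bint_A\babs{\u^{\frac{m+1}2}-\b^{\frac{m+1}2}}^2\,\dx
  \le 2\bint_A\babs{\u^{\frac{m+1}2}-\a^{\frac{m+1}2}}^2\,\dx
      +2\,\babs{\a^{\frac{m+1}2}-\b^{\frac{m+1}2}}^2,
\]
the last term being constant in $x$. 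It remains to bound $\babs{\a^{\frac{m+1}2}-\b^{\frac{m+1}2}}^2$ by $c(m)\bint_A\b[u,a]\,\dx$. For this I would use Lemma~\ref{lem:estimates}: by part~(ii) with $\alpha=\tfrac{m+1}2$ we have $\babs{\a^{\frac{m+1}2}-\b^{\frac{m+1}2}}\le c\big(|a|^{\frac{m-1}2}+|b|^{\frac{m-1}2}\big)|a-b|$, and by part~(i) with $\alpha=1$ trivially $|a-b|=|a-b|$; more usefully, since $b=(u)_A$ is an average, $|a-b|=\babs{\bint_A(a-u)\,\dx}\le\bint_A|u-a|\,\dx$, and then part~(i) with $\alpha=\tfrac{m+1}2$ gives $|u-a|^{\frac{m+1}2}\le c\babs{\u^{\frac{m+1}2}-\a^{\frac{m+1}2}}$. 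Combining these with Jensen's inequality to move the averages outside the powers, and using Lemma~\ref{lem:b}~(i) once more in the form $\babs{\u^{\frac{m+1}2}-\a^{\frac{m+1}2}}^2\le c(m)\,\b[u,a]$, yields
\[
  \babs{\a^{\frac{m+1}2}-\b^{\frac{m+1}2}}^2\le c(m)\bint_A\b[u,a]\,\dx,
\]
after absorbing the factors $|a|^{\frac{m-1}2}+|b|^{\frac{m-1}2}$ by splitting $|a-b|^2$ appropriately (one power of $|a-b|$ carries the weight, the other is estimated by the $\b$-term). Together with the two displays above, this gives the claimed bound with a constant $c=c(m)$.

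The step I expect to be the main obstacle is the control of the ``correction term'' $\babs{\a^{\frac{m+1}2}-\b^{\frac{m+1}2}}^2$ by $\bint_A\b[u,a]\,\dx$: one must handle the $|a|,|b|$-dependent weights coming from Lemma~\ref{lem:estimates}~(ii) without introducing an $a$- or $u$-dependent constant. The clean way is to note that $|b|=|(u)_A|\le\bint_A|u|\,\dx$, so $|b|^{\frac{m-1}2}\le\big(\bint_A|u|\,\dx\big)^{\frac{m-1}2}$, and that any occurrence of $|a|^{\frac{m-1}2}$ combined with a power of $|a-b|$ can be rewritten, via $|a|\le|a-b|+|b|\le|a-b|+\bint_A|u|\,\dx$ and Young's inequality, into terms that are either constant in $x$ of lower order (hence absorbable) or already of the form $\bint_A|u-a|^{m+1}\,\dx\le c\bint_A\babs{\u^{\frac{m+1}2}-\a^{\frac{m+1}2}}^2\,\dx\le c\bint_A\b[u,a]\,\dx$. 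Once this bookkeeping is done, the proof is complete. Throughout, all constants depend only on $m$, matching the statement.
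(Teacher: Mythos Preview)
The paper does not prove this lemma itself; it merely cites \cite[Lemma~2.7]{Boegelein-Duzaar-Korte-Scheven}. So there is no ``paper's own proof'' to compare against, and your task reduces to making your sketch watertight.

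Your overall strategy is correct and standard: use Lemma~\ref{lem:b}\,(i) to pass to the quadratic quantity $|\u^{\frac{m+1}{2}}-\cdot|^2$, split off the constant correction $|\a^{\frac{m+1}{2}}-\b^{\frac{m+1}{2}}|^2$ with $b=(u)_A$, and bound the latter by $\bint_A\b[u,a]\,\dx$. The first two steps are fine.

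The gap is in your final paragraph. You propose to control the weight via $|a|\le|a-b|+|b|\le|a-b|+\bint_A|u|\,\dx$. This goes the wrong way: $\bint_A|u|\,\dx$ is \emph{not} controlled by $\bint_A\b[u,a]\,\dx$ (take $u$ constant and equal to $a$; the right-hand side vanishes while $\bint_A|u|$ does not). The term that needs work is $|b|^{m-1}$, not $|a|^{m-1}$. The clean fix is to reverse the triangle inequality: write $|b|\le|a|+|a-b|$, whence
\[
\big(|a|^{m-1}+|b|^{m-1}\big)|a-b|^2
\;\le\; c\,|a|^{m-1}|a-b|^2 + c\,|a-b|^{m+1}.
\]
For the first term, use $|a-b|\le\bint_A|u-a|\,\dx$, Jensen, and the lower bound in Lemma~\ref{lem:estimates}\,(ii) with $\alpha=\tfrac{m+1}{2}$ in the form $|a|^{\frac{m-1}{2}}|u-a|\le c\,|\u^{\frac{m+1}{2}}-\a^{\frac{m+1}{2}}|$; squaring and averaging gives the bound by $c\bint_A|\u^{\frac{m+1}{2}}-\a^{\frac{m+1}{2}}|^2\,\dx$. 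For the second term, $|a-b|^{m+1}\le\big(\bint_A|u-a|\,\dx\big)^{m+1}\le\bint_A|u-a|^{m+1}\,\dx$ by Jensen, and then Lemma~\ref{lem:estimates}\,(i) with $\alpha=\tfrac{m+1}{2}$ gives $|u-a|^{m+1}\le c\,|\u^{\frac{m+1}{2}}-\a^{\frac{m+1}{2}}|^2$. Both pieces are then $\le c\bint_A\b[u,a]\,\dx$ by Lemma~\ref{lem:b}\,(i), and the proof closes with a constant depending only on $m$.
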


%%%%% below commented
\iffalse
\begin{lemma}
\label{lem:AtoBave}
Let $\alpha \geq 1$. Then there exists a universal constant $c = c(\alpha)$ such that
whenever $A \subset B \subset \mathbb{R}^k$ , $k \in \mathbb{N}$, are two bounded domains and  $u \in L^{2\alpha}(B,\mathbb{R}^N)$, there holds
%
\begin{align*}
\bint_B \babs{\u^{\alpha} - \lbbracket \u \rbbracket_{\bf{A}}^{\alpha}}^2\, \dx \leq \frac{ c |B|}{|A|} \bint_B \babs{\u^{\alpha} - \lbbracket \u \rbbracket_{\bf{B}}^{\alpha}}^2\, \dx.
\end{align*}
\end{lemma}
%
\begin{lemma}
\label{lem:uavetoa}
Let $\alpha > 1$. Then there exists $c = c(\alpha)$ such that for any bounded $A \subset \R^n$, $u \in L^{2\alpha}(A,\R^N)$ and $a \in \R^N$ there holds 
%
\begin{align*}
\bint_A \babs{\u^{\alpha} - \lbbracket \u \rbbracket_{\bf{A}}^{\alpha}}^2\, \dx \leq c \bint_A \babs{\u^{\alpha} - \a^{\alpha}}^2\, \dx
\end{align*}
\end{lemma}
\fi
%%%%% above commented
The proof of the following lemma can be found in \cite[Lemma~3.5]{BDKS:2018}, see also \cite[Lemma
6.2]{Diening-Kaplicky-Schwarzacher} for an earlier version in a
special case.
\begin{lemma}
\label{lem:uavetoa}
Let $p \geq 1$ and $\alpha \geq \frac1p$. Then there exists a constant $c = c(\alpha,p)$ such that for any bounded sets of positive measure satisfying $A \subset B \subset \R^k$, $k \in \N$ and any $u \in L^{\alpha p}(B,\R^N)$ and constant $a \in \R^N$ there holds 
\begin{align*}
\bint_B \babs{\u^{\alpha} - \lbbracket \u \rbbracket_{\bf{A}}^{\alpha}}^p\, \dx \leq \frac{c|B|}{|A|} \bint_B \babs{\u^{\alpha} - \a^{\alpha}}^p\, \dx
\end{align*}
\end{lemma}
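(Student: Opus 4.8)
The plan is to reduce the assertion, via the triangle inequality, to the pointwise-in-$A$ bound $\babs{\lbbracket \u \rbbracket_{\bf{A}}^{\alpha}-\a^\alpha}^p\le c\bint_A\babs{\u^\alpha-\a^\alpha}^p\dx$, and then to exploit $|A|\le|B|$ to pass from the average over $A$ to the average over $B$.

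\textbf{Step 1 (reduction).} Applying $|\xi-\eta|^p\le 2^{p-1}(|\xi-\zeta|^p+|\zeta-\eta|^p)$ with $\xi=\u^\alpha$, $\eta=\lbbracket \u \rbbracket_{\bf{A}}^{\alpha}$, $\zeta=\a^\alpha$ and averaging over $B$ gives $\bint_B\babs{\u^\alpha-\lbbracket \u \rbbracket_{\bf{A}}^{\alpha}}^p\dx\le 2^{p-1}\bint_B\babs{\u^\alpha-\a^\alpha}^p\dx+2^{p-1}\babs{\lbbracket \u \rbbracket_{\bf{A}}^{\alpha}-\a^\alpha}^p$. The first summand already has the claimed form since $|B|/|A|\ge1$; for the second, nonnegativity of $\babs{\u^\alpha-\a^\alpha}^p$ together with $A\subset B$ yields $\bint_A\babs{\u^\alpha-\a^\alpha}^p\dx\le\frac{|B|}{|A|}\bint_B\babs{\u^\alpha-\a^\alpha}^p\dx$, so it remains to prove the pointwise bound displayed above.

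\textbf{Step 2 (pointwise bound).} Write $\bar u:=(u)_A=\lbbracket \u \rbbracket_{\bf{A}}$ and distinguish two cases. If $\alpha\ge1$, then Lemma~\ref{lem:estimates}(ii) applied to $\bar u$ and $a$ gives $\babs{\bar u^\alpha-\a^\alpha}\le c(|\bar u|^{\alpha-1}+|a|^{\alpha-1})|\bar u-a|$; inserting $|\bar u|\le(|u-a|)_A+|a|$, $|\bar u-a|\le(|u-a|)_A$ and using Jensen's inequality for the convex function $t\mapsto t^\alpha$ leads to $\babs{\bar u^\alpha-\a^\alpha}\le c\big(|u-a|^\alpha\big)_A+c|a|^{\alpha-1}\big(|u-a|\big)_A$. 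Both terms are controlled by $c\bint_A\babs{\u^\alpha-\a^\alpha}\dx$: the first by Lemma~\ref{lem:estimates}(i), the second because Lemma~\ref{lem:estimates}(ii) applied to $u$ and $a$ gives the pointwise inequality $|a|^{\alpha-1}|u-a|\le c\babs{\u^\alpha-\a^\alpha}$. A final use of Jensen's inequality (valid since $p\ge1$) converts $\big(\bint_A\babs{\u^\alpha-\a^\alpha}\dx\big)^p$ into $\bint_A\babs{\u^\alpha-\a^\alpha}^p\dx$. If instead $\tfrac1p\le\alpha<1$, then $u\mapsto\u^\alpha$ is $\alpha$-Hölder and the comparison $|u-a|^\alpha\le c\babs{\u^\alpha-\a^\alpha}$ still holds (both following from the subadditivity of $t\mapsto t^\alpha$), so $\babs{\bar u^\alpha-\a^\alpha}\le c|\bar u-a|^\alpha\le c\big((|u-a|)_A\big)^\alpha$; since $\alpha p\ge1$, Jensen gives $\big((|u-a|)_A\big)^{\alpha p}\le\big(|u-a|^{\alpha p}\big)_A\le c\bint_A\babs{\u^\alpha-\a^\alpha}^p\dx$, which is again the required estimate.

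\textbf{Main obstacle.} The only delicate point is the regime $\alpha<1$, where Lemma~\ref{lem:estimates} (stated for $\alpha>1$) is unavailable and must be replaced by the elementary Hölder-type equivalence $\babs{\u^\alpha-\a^\alpha}\simeq|u-a|^\alpha$; the hypothesis $\alpha\ge\tfrac1p$ enters precisely here, guaranteeing $\alpha p\ge1$ so that Jensen's inequality applies. Alternatively, one may simply invoke \cite[Lemma~3.5]{BDKS:2018}, where this statement is carried out in detail.
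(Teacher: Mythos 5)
The paper gives no proof of this lemma at all; it simply refers to \cite[Lemma~3.5]{BDKS:2018}, so there is no internal argument to compare against. Your Step~1 and the case $\alpha\ge1$ of Step~2 are correct and self-contained: the reduction to the constant term $\babs{\power{(u)_A}{\alpha}-\a^{\alpha}}^p$, the use of Lemma~\ref{lem:estimates}\,(ii) together with $|(u)_A|\le(|u-a|)_A+|a|$ and $|(u)_A-a|\le(|u-a|)_A$, and the two applications of Jensen's inequality all go through. Since every application of the lemma in this paper has $\alpha>1$, your argument covers what is actually needed here.

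Nevertheless, your treatment of the range $\tfrac1p\le\alpha<1$ contains a genuine error. You claim that $|u-a|^{\alpha}\le c\,\babs{\u^{\alpha}-\a^{\alpha}}$ ``still holds, following from the subadditivity of $t\mapsto t^{\alpha}$''. Subadditivity only yields the opposite direction $\babs{\u^{\alpha}-\a^{\alpha}}\le c\,|u-a|^{\alpha}$ (the H\"older continuity you also quote); the inequality you actually invoke in the last step, namely $|u-a|^{\alpha p}\le c\,\babs{\u^{\alpha}-\a^{\alpha}}^p$, is false for $\alpha<1$. Indeed, take $N=1$, $a=1$, $u=1+\eps$: then $|u-a|^{\alpha}=\eps^{\alpha}$ while $|\u^{\alpha}-\a^{\alpha}|=(1+\eps)^{\alpha}-1\approx\alpha\eps$, and $\eps^{\alpha}/\eps\to\infty$ as $\eps\downarrow0$, so no constant works. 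The correct substitute, valid for \emph{every} $\alpha>0$, is the two-sided comparison $\babs{\u^{\alpha}-\a^{\alpha}}\simeq(|u|+|a|)^{\alpha-1}|u-a|$, with which one can run an argument parallel to your $\alpha\ge1$ case; this is essentially what \cite[Lemma~3.5]{BDKS:2018} does. Alternatively, one must restrict the statement to $\alpha\ge1$, which would still suffice for the present paper but is weaker than what the lemma asserts.
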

Finally, we state a well-known absorption Lemma, that can be found in \cite[Lemma 6.1]{Giusti:book} for instance.
\begin{lemma}
\label{lem:iteration} 
Let $0<\vartheta<1$, $A,C\geq 0$ and $\alpha,\beta>0$. Then, there
exists a constant $c=c(\beta,\vartheta)$ such that there holds: For any $0<r<\rho$ and any nonnegative bounded function $\phi\colon [r,\rho] \to \R_{\geq 0}$ satisfying
$$
\phi(t) \leq \vartheta \phi(s) +A(s^\alpha-t^\alpha)^{-\beta}+C \quad \text{ for all } r\leq t <s \leq \rho,
$$
we have
$$
\phi(r) \leq c \left[A(\rho^\alpha-r^\alpha)^{-\beta}+C \right].
$$
\end{lemma}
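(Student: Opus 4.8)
The plan is to run a standard hole-filling iteration over a sequence of levels that is geometric in the $\alpha$-th powers. First I would fix a parameter $\tau\in(0,1)$, to be pinned down at the end, and define a sequence $(t_i)_{i\ge 0}$ by $t_0:=r$ and
\[
  t_i^\alpha:=r^\alpha+(1-\tau^i)(\rho^\alpha-r^\alpha),
\]
so that $(t_i)$ is strictly increasing with $t_i\uparrow\rho$, every $t_i$ lies in $[r,\rho)$, and the consecutive gaps are
\[
  t_{i+1}^\alpha-t_i^\alpha=(1-\tau)\,\tau^i(\rho^\alpha-r^\alpha).
\]

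Next I would apply the assumed inequality with the admissible pair $(t,s)=(t_i,t_{i+1})$, using $r\le t_i<t_{i+1}\le\rho$, to get
\[
  \phi(t_i)\le\vartheta\,\phi(t_{i+1})+A(1-\tau)^{-\beta}\tau^{-i\beta}(\rho^\alpha-r^\alpha)^{-\beta}+C .
\]
Iterating this $k$ times and recalling $\phi(t_0)=\phi(r)$ yields
\[
  \phi(r)\le\vartheta^k\phi(t_k)+A(1-\tau)^{-\beta}(\rho^\alpha-r^\alpha)^{-\beta}\sum_{i=0}^{k-1}(\vartheta\tau^{-\beta})^i+C\sum_{i=0}^{k-1}\vartheta^i .
\]

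The one genuine choice is the value of $\tau$: since $0<\vartheta<1$ we have $\vartheta^{1/\beta}<1$, so I may fix any $\tau\in(\vartheta^{1/\beta},1)$; this guarantees $\vartheta\tau^{-\beta}<1$, hence both geometric series above converge, and $\tau$ depends only on $\beta$ and $\vartheta$. Since $\phi$ is bounded on $[r,\rho]$ and $\vartheta^k\to0$, the term $\vartheta^k\phi(t_k)$ tends to $0$; letting $k\to\infty$ gives
\[
  \phi(r)\le\frac{(1-\tau)^{-\beta}}{1-\vartheta\tau^{-\beta}}\,A(\rho^\alpha-r^\alpha)^{-\beta}+\frac{1}{1-\vartheta}\,C ,
\]
which is the claimed estimate with $c:=\max\bigl\{(1-\tau)^{-\beta}(1-\vartheta\tau^{-\beta})^{-1},\,(1-\vartheta)^{-1}\bigr\}$, depending only on $\beta$ and $\vartheta$. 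There is no real obstacle; the only point requiring care is keeping the hole-filling factor $\vartheta\tau^{-\beta}$ strictly below $1$, which dictates the admissible range of $\tau$ and thereby the dependence of the constant $c$.
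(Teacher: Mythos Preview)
Your proof is correct and is precisely the standard hole-filling iteration; the paper does not actually prove this lemma but only cites \cite[Lemma~6.1]{Giusti:book}, where the same argument is carried out. Nothing needs to be added.
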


\subsection{Variational \texorpdfstring{$p$}{}-capacity}
\label{sec:capacity}

Let $1<p<\infty$ and $D\subset \R^n$ be an open set. The variational $p$-capacity of a compact set $C\subset D$ is defined by
$$
\ca_p(C,D) =\inf_f \int_{D}|Df|^p \d x,
$$
where the infimum is taken over all functions $f \in C^\infty_0(D)$
such that $f\equiv 1$ in $C$. In order to define the variational
$p$-capacity of an open set $U\subset E$, we are taking the supremum
over the capacities of compact sets contained in $U$. The variational $p$-capacity for an arbitrary set $E$ is defined by taking the infimum over the capacities of the open sets containing $E$.
The capacity of a ball is
\begin{align}
\label{cap:ball}
\ca_p(\overline B_\rho(x_o),B_{2\rho}(x_o)) = c \rho^{n-p}.
\end{align}
For more details we refer to  \cite[Ch.\ 4]{Evans-Gariepy} or \cite[Ch.\ 2]{Heinonen-Kilpelainen-Martio}.

At this point we introduce the uniform capacity density condition,
which is essential for proving a boundary version of a
Sobolev-Poincar\'e type inequality, where we note that this condition
is essentially sharp in the context of higher integrability. For the elliptic setting we see \cite{Kilpelainen-Koskela}, whereas the equations of parabolic $p$-Laplacian type were treated in \cite{Kinnunen-Parviainen}.

  We recall the definition of uniform $p$-thickness introduced in
  Definition~\ref{def:p-thick}. The following consequences of this
  property are well-known, see e.g. \cite[Lemma 3.8]{Parviainen}.

\begin{lemma}
\label{lem:est_cap}
Let $\Omega \subset \R^n$ be a bounded open set and assume that $\R^n\setminus \Omega$ is uniformly $p$-thick. Choose $y\in \Omega$ such that $B_{4\rho/3}(y)\setminus \Omega \neq \emptyset$. Then there exists a constant $\tilde \mu =\tilde \mu(n,\mu,\rho_o,p)>0$ such that 
$$
\ca_p\big(\overline B_{2\rho}(y)\setminus \Omega,B_{4\rho}(y)\big) \geq \tilde \mu \ca_p \big(\overline B_{2\rho}(y),B_{4\rho}(y)\big).
$$
\end{lemma}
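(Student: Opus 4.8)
\textbf{Proof plan for Lemma~\ref{lem:est_cap}.}
The plan is to reduce the comparison of capacities to the uniform $p$-thickness hypothesis applied at a suitably chosen point of $\R^n\setminus\Omega$, combined with the scaling identity \eqref{cap:ball} and elementary monotonicity properties of the variational capacity. First I would pick a point $x_o\in\R^n\setminus\Omega$ inside the ball $B_{4\rho/3}(y)$, which exists by the assumption $B_{4\rho/3}(y)\setminus\Omega\ne\emptyset$. Since $x_o\in E:=\R^n\setminus\Omega$, uniform $p$-thickness gives, for every $0<r<\rho_o$,
\begin{equation*}
  \ca_p\big(E\cap\overline B_r(x_o),B_{2r}(x_o)\big)\ge\mu\,\ca_p\big(\overline B_r(x_o),B_{2r}(x_o)\big)=c\,\mu\,r^{n-p}.
\end{equation*}
The idea is to choose $r$ comparable to $\rho$, say $r=\tfrac23\rho$ (assuming $\tfrac23\rho<\rho_o$; the case $\rho\ge\tfrac32\rho_o$ is handled separately by a trivial lower bound, or one simply restricts to $\rho<\rho_o$ up to renaming the constant), so that by the triangle inequality $B_r(x_o)\subset B_{2\rho}(y)$ and $B_{2r}(x_o)\subset B_{4\rho}(y)$; indeed $|x_o-y|<\tfrac43\rho$ forces $B_{2\rho/3}(x_o)\subset B_{2\rho}(y)$ and $B_{4\rho/3}(x_o)\subset B_{4\rho}(y)$, and one enlarges the outer radius to $B_{4\rho/3}(x_o)\subset B_{4\rho}(y)$ as needed.

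Next I would invoke the two standard monotonicity properties of variational $p$-capacity: it is monotone nondecreasing in the condenser set (so $\ca_p(E\cap\overline B_r(x_o),\cdot)\le\ca_p(\overline B_{2\rho}(y)\setminus\Omega,\cdot)$ once $E\cap\overline B_r(x_o)\subset\overline B_{2\rho}(y)\setminus\Omega$, which holds by the inclusion above) and monotone nonincreasing in the ambient open set (so enlarging $B_{2r}(x_o)$ to $B_{4\rho}(y)$ only decreases the capacity, giving $\ca_p(E\cap\overline B_r(x_o),B_{4\rho}(y))\le\ca_p(E\cap\overline B_r(x_o),B_{2r}(x_o))$ — wait, the direction: a larger ambient set admits more competitors, hence \emph{smaller} capacity, so in fact $\ca_p(\overline B_{2\rho}(y)\setminus\Omega,B_{4\rho}(y))\ge\ca_p(E\cap\overline B_r(x_o),B_{4\rho}(y))$ needs the \emph{set} monotonicity, while passing from $B_{2r}(x_o)$ to $B_{4\rho}(y)$ goes the wrong way). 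The clean way around this is to compare both sides with the ball capacities via \eqref{cap:ball}: one has $\ca_p(\overline B_{2\rho}(y)\setminus\Omega,B_{4\rho}(y))\ge\ca_p(E\cap\overline B_r(x_o),B_{4\rho}(y))$ by set-monotonicity, and the right-hand side is bounded below by a dimensional constant times $\ca_p(\overline B_r(x_o),B_{2r}(x_o))$ because shrinking a condenser's inner ball and outer ball proportionally only changes the capacity by a controlled factor — more precisely, $\ca_p(\overline B_r(x_o),B_{4\rho}(y))\ge\ca_p(\overline B_r(x_o),B_{4r}(x_o))$ by ambient-monotonicity once $B_{4r}(x_o)\supset$ is replaced appropriately, and $\ca_p(\overline B_r(x_o),B_{4r}(x_o))=c'r^{n-p}$ by scaling. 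Here I must be careful with the chain of inclusions $B_r(x_o)\subset B_{2\rho}(y)$ and $B_{4\rho}(y)\subset B_{C\rho}(x_o)$ for an absolute $C$ (say $C=16/3$), so that $\ca_p(\overline B_r(x_o),B_{4\rho}(y))\le\ca_p(\overline B_r(x_o),B_{C\rho}(x_o))$ — no, again the direction; the usable inequality is $\ca_p(\overline B_r(x_o),B_{4\rho}(y))\ge\ca_p(\overline B_r(x_o),B_{C\rho}(x_o))=c''(C)r^{n-p}$, which follows from ambient-monotonicity since $B_{4\rho}(y)\subset B_{C\rho}(x_o)$. Assembling: $\ca_p(\overline B_{2\rho}(y)\setminus\Omega,B_{4\rho}(y))\ge\ca_p(E\cap\overline B_r(x_o),B_{4\rho}(y))\ge\mu\,\ca_p(\overline B_r(x_o),B_{4\rho}(y))\ge c\mu\,r^{n-p}=c'\mu\,\rho^{n-p}$, and finally $\ca_p(\overline B_{2\rho}(y),B_{4\rho}(y))=c\rho^{n-p}$ by \eqref{cap:ball}, so the ratio is bounded below by $\tilde\mu=\tilde\mu(n,\mu,\rho_o,p)>0$, as claimed.

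The main obstacle, as the parenthetical groping above indicates, is bookkeeping the monotonicity directions of the variational capacity correctly — capacity \emph{increases} when the condenser set grows and \emph{decreases} when the ambient domain grows — together with the geometric inclusions among the balls $B_r(x_o)$, $B_{2\rho}(y)$, $B_{4\rho}(y)$ that follow from $|x_o-y|<\tfrac43\rho$. One also has to dispose of the range $\rho\ge\rho_o$ (or $\tfrac32\rho_o$), where uniform thickness is not directly available; there the conclusion either follows by a separate elementary argument using the measure-density-type lower bound, or — as is standard and as the constant $\tilde\mu$ depending on $\rho_o$ suggests — one absorbs this into the statement by noting the inequality is scale-invariant up to the threshold and handling large $\rho$ by covering. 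Everything else is the scaling formula \eqref{cap:ball} and the two monotonicity principles, both of which may be quoted from \cite{Heinonen-Kilpelainen-Martio} or \cite{Evans-Gariepy}.
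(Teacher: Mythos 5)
The paper itself does not prove this lemma (it is quoted from Parviainen), so I am judging your argument on its own terms; your skeleton — pick $x_o\in E\cap B_{4\rho/3}(y)$ with $E:=\R^n\setminus\Omega$, apply uniform thickness at $x_o$ at a scale $r\approx\rho$, and transfer the lower bound to the condenser $\big(\overline B_{2\rho}(y)\setminus\Omega,\,B_{4\rho}(y)\big)$ via monotonicity and \eqref{cap:ball} — is indeed the standard one. The problem is that the chain you finally assemble contains one inequality that nothing you set up justifies, and it is exactly where the work sits: you write $\ca_p\big(E\cap\overline B_r(x_o),B_{4\rho}(y)\big)\ge\mu\,\ca_p\big(\overline B_r(x_o),B_{4\rho}(y)\big)$, but Definition~\ref{def:p-thick} gives this only for the ambient ball $B_{2r}(x_o)$. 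As you yourself note, enlarging the ambient set from $B_{2r}(x_o)$ to $B_{4\rho}(y)$ makes $\ca_p\big(E\cap\overline B_r(x_o),\cdot\big)$ \emph{decrease}, so monotonicity cannot convert the hypothesis into the inequality you use. The missing ingredient is the comparison lemma for condensers with comparable ambient domains: if $K\subset\overline B_r(x_o)$ and $B_{2r}(x_o)\subset D\subset B_{Cr}(x_o)$, then $\ca_p(K,D)\ge c(n,p,C)\,\ca_p\big(K,B_{2r}(x_o)\big)$; one proves this by multiplying a competitor $f\in C^\infty_0(D)$ by a cutoff $\eta\in C^\infty_0(B_{2r}(x_o))$ with $\eta\equiv1$ near $\overline B_r(x_o)$, $|D\eta|\le c/r$, and controlling $r^{-p}\int_D|f|^p$ by $c(C)\int_D|Df|^p$ with Poincar\'e's inequality on $B_{Cr}(x_o)$. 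You do gesture at a statement of this kind, but you apply it only to round condensers, where it collapses to the explicit formula \eqref{cap:ball}; the place it is genuinely needed is the irregular compact set $K=E\cap\overline B_r(x_o)$, for which with your choice $r=\tfrac23\rho$ one has $B_{2r}(x_o)\subset B_{4\rho}(y)\subset B_{8r}(x_o)$. With that lemma inserted the proof closes: $\ca_p\big(\overline B_{2\rho}(y)\setminus\Omega,B_{4\rho}(y)\big)\ge\ca_p\big(K,B_{4\rho}(y)\big)\ge c(n,p)\,\ca_p\big(K,B_{2r}(x_o)\big)\ge c(n,p)\,\mu\,\ca_p\big(\overline B_r(x_o),B_{2r}(x_o)\big)=c\mu\rho^{n-p}$, which is comparable to $\ca_p\big(\overline B_{2\rho}(y),B_{4\rho}(y)\big)$ by \eqref{cap:ball}.

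A secondary loose end: the argument needs $r=\tfrac23\rho<\rho_o$, and you only wave at the range of larger $\rho$. Since $\tilde\mu$ is allowed to depend on $\rho_o$ this is repairable — apply thickness at the fixed scale $\rho_o/2$ at $x_o$ and use the same comparison lemma, invoking the boundedness of $\Omega$ for very large $\rho$ — but as written it is not carried out. The ball inclusions you derive from $|x_o-y|<\tfrac43\rho$ and the two monotonicity directions are otherwise handled correctly.
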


\begin{lemma}
\label{lem:p-thick}
If a compact set $E$ is uniformly $p$-thick, then $E$ is uniformly $\vartheta$-thick for any $\vartheta \geq p$.
\end{lemma}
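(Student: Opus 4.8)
The plan is to reduce the claim to a pointwise comparison between the $p$-capacity and the $\vartheta$-capacity of one and the same compact set, and then to plug in the scaling behaviour of the capacity of a ball from \eqref{cap:ball}. Fix $x_o\in E$ and $0<\rho<\rho_o$, and write $K:=E\cap\overline B_\rho(x_o)$, which is compact since $E$ is (and satisfies $K\subset\overline B_\rho(x_o)\subset B_{2\rho}(x_o)$). The starting observation is that the admissible class of test functions $f\in C_0^\infty(B_{2\rho}(x_o))$ with $f\equiv1$ on $K$ is the \emph{same} in the definitions of $\ca_p(K,B_{2\rho}(x_o))$ and $\ca_\vartheta(K,B_{2\rho}(x_o))$. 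For any such $f$, Hölder's inequality with exponents $\vartheta/p$ and $\vartheta/(\vartheta-p)$ (the case $\vartheta=p$ being trivial) gives
\begin{equation*}
  \int_{B_{2\rho}(x_o)}|Df|^p\,\dx
  \le
  \bigg(\int_{B_{2\rho}(x_o)}|Df|^\vartheta\,\dx\bigg)^{\frac p\vartheta}\,
  |B_{2\rho}(x_o)|^{1-\frac p\vartheta},
\end{equation*}
and taking the infimum over all such $f$ (using that $t\mapsto t^{p/\vartheta}$ is increasing on the right-hand side) yields
\begin{equation*}
  \ca_p\big(K,B_{2\rho}(x_o)\big)
  \le
  \ca_\vartheta\big(K,B_{2\rho}(x_o)\big)^{\frac p\vartheta}\,|B_{2\rho}(x_o)|^{1-\frac p\vartheta}.
\end{equation*}

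Next I would insert the hypothesis of uniform $p$-thickness together with \eqref{cap:ball} and $|B_{2\rho}(x_o)|=c(n)\rho^n$. From $\ca_p(K,B_{2\rho}(x_o))\ge\mu\,\ca_p(\overline B_\rho(x_o),B_{2\rho}(x_o))=\mu\,c_p\,\rho^{n-p}$ and the previous display, solving for the $\vartheta$-capacity produces
\begin{equation*}
  \ca_\vartheta\big(K,B_{2\rho}(x_o)\big)
  \ge
  \big(\mu\,c_p\,\rho^{n-p}\big)^{\frac\vartheta p}\,\big(c(n)\,\rho^{n}\big)^{1-\frac\vartheta p}
  =
  c(n,p,\vartheta,\mu)\,\rho^{n-\vartheta},
\end{equation*}
where the only point to verify is the bookkeeping of the $\rho$-powers, which combine to exactly $n-\vartheta$ because $\tfrac\vartheta p(n-p)+\big(1-\tfrac\vartheta p\big)n=n-\vartheta$. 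Comparing once more with \eqref{cap:ball}, this time for the $\vartheta$-capacity, $\ca_\vartheta(\overline B_\rho(x_o),B_{2\rho}(x_o))=c_\vartheta\,\rho^{n-\vartheta}$, we conclude
\begin{equation*}
  \ca_\vartheta\big(E\cap\overline B_\rho(x_o),B_{2\rho}(x_o)\big)
  \ge
  \tilde\mu\,\ca_\vartheta\big(\overline B_\rho(x_o),B_{2\rho}(x_o)\big),
  \qquad
  \tilde\mu:=\frac{c(n,p,\vartheta,\mu)}{c_\vartheta},
\end{equation*}
with $\tilde\mu$ depending only on $n,p,\vartheta,\mu$ and independent of $x_o$ and $\rho$. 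As $x_o\in E$ and $0<\rho<\rho_o$ were arbitrary, this is precisely the uniform $\vartheta$-thickness of $E$, with the same radius $\rho_o$ and the new constant $\tilde\mu$.

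I do not expect a genuine obstacle here, since this is a classical fact. The only two places that deserve a moment of care are (i) that the same family of cut-off functions is admissible for both capacities, so that the Hölder estimate may be used uniformly in $f$ before passing to the infimum, and (ii) the exponent bookkeeping, which is exactly what makes the scaling of the capacity of the annulus in \eqref{cap:ball} line up on both sides. If one wishes to avoid invoking the explicit constant $c(n)$ in $|B_{2\rho}(x_o)|=c(n)\rho^n$, one may instead work with the scaling identity $\ca_q(\overline B_\rho(x_o),B_{2\rho}(x_o))=\rho^{n-q}\,\ca_q(\overline B_1(0),B_2(0))$ obtained from the dilation $x\mapsto x_o+\rho x$ in the definition of the capacity, which is all that \eqref{cap:ball} encodes.
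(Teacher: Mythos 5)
Your proof is correct, and the exponent bookkeeping $\tfrac{\vartheta}{p}(n-p)+(1-\tfrac{\vartheta}{p})n=n-\vartheta$ checks out. The paper itself offers no proof of this lemma, citing it as well known; the standard argument in the cited references is precisely the one you give, namely the H\"older comparison $\ca_p(K,B_{2\rho})\le \ca_\vartheta(K,B_{2\rho})^{p/\vartheta}\,|B_{2\rho}|^{1-p/\vartheta}$ over the common class of test functions combined with the scaling $\ca_q(\overline B_\rho,B_{2\rho})=c\,\rho^{n-q}$ from \eqref{cap:ball}.
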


The next theorem shows that a uniformly $p$-thick set has a self-improving property, see \cite{Lewis}.

\begin{theorem}
\label{theo:p-thick}
Let $1<p\leq n$. If a set $E$ is uniformly $p$-thick, then there exists a $\gamma=\gamma(n,p,\mu)\in (1,p)$ for which $E$ is uniformly $\gamma$-thick.
\end{theorem}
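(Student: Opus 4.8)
The plan is to pass through the equivalence between uniform $p$-thickness and a capacitary Poincar\'e inequality, and then to self-improve the latter. It is classical that uniform $p$-thickness of $E$ is equivalent to the validity, at all small scales on balls centred on $E$, of a Maz'ya type inequality: there is $c=c(n,p,\mu)$ so that
\[
  \bint_{B_\rho(x_o)}|v|^p\,\dx\ \le\ c\,\rho^p\bint_{B_{2\rho}(x_o)}|Dv|^p\,\dx
\]
for all $x_o\in E$, $0<\rho<\rho_o$ and $v\in W^{1,p}(B_{2\rho}(x_o))$ vanishing $p$-q.e.\ on $\overline B_\rho(x_o)\cap E$; this rests on Maz'ya's capacitary Sobolev--Poincar\'e inequality, Definition~\ref{def:p-thick}, and \eqref{cap:ball} (see e.g.\ \cite{Heinonen-Kilpelainen-Martio}). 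Hence it suffices to show that this family of inequalities is an open ended condition in $p$, i.e.\ that it continues to hold, with a constant depending only on $n,p,\mu$, for some $\gamma=\gamma(n,p,\mu)\in(1,p)$ in place of $p$; by the same equivalence read with exponent $\gamma$, this is precisely uniform $\gamma$-thickness of $E$, with the same $\rho_o$.

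The engine of the self-improvement will be a reverse H\"older inequality for the gradient of the $p$-capacitary potential, uniform up to the set $E$. For fixed $x_o\in E$ and $0<\rho<\rho_o$, let $u\in W^{1,p}_0(B_{2\rho}(x_o))$ be the $p$-capacitary potential of $\overline B_\rho(x_o)\cap E$ in $B_{2\rho}(x_o)$, so that $0\le u\le1$, $u=1$ $p$-q.e.\ on $\overline B_\rho(x_o)\cap E$, $u$ minimises $\int|Du|^p\,\dx$ among such functions, $u$ is $p$-harmonic in $B_{2\rho}(x_o)\setminus(\overline B_\rho(x_o)\cap E)$, and $\int|Du|^p\,\dx=\ca_p(\overline B_\rho(x_o)\cap E,B_{2\rho}(x_o))\ge c\,\mu\,\rho^{n-p}$. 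With $v:=1-u$ (so $Dv=-Du$ and $v=0$ $p$-q.e.\ on $\overline B_\rho(x_o)\cap E$) I would prove that there are $s=s(n,p)\in[1,p)$ and $C=C(n,p,\mu)$ with
\[
  \Big(\bint_{B_r(y)}|Du|^p\,\dx\Big)^{1/p}\le C\Big(\bint_{B_{2r}(y)}|Du|^{s}\,\dx\Big)^{1/s}
  \qquad\text{whenever }B_{2r}(y)\subset B_{2\rho}(x_o).
\]
If $B_{2r}(y)$ does not meet $E$, this is the classical Meyers--Gehring reverse H\"older estimate for the $p$-harmonic function $v$, obtained from the Caccioppoli inequality for $v-(v)_{B_{2r}(y)}$ together with Sobolev--Poincar\'e. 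If $B_{2r}(y)$ meets $E$, then an argument in the spirit of Lemma~\ref{lem:est_cap} shows that $v$ vanishes on a subset of $B_r(y)$ of $p$-capacity at least $c\,\mu\,r^{n-p}$ in $B_{2r}(y)$; combining the Caccioppoli inequality $\bint_{B_r(y)}|Dv|^p\,\dx\le c\,r^{-p}\bint_{B_{2r}(y)}v^p\,\dx$ with a Sobolev inequality for functions vanishing on a uniformly $p$-thick set --- which gains a little integrability on the gradient side --- yields the same bound, now with a constant depending also on $\mu$. The global ball $B_{2\rho}(x_o)$ itself is treated directly, using that $u$ is a minimiser.

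Finally, I would upgrade this reverse H\"older inequality by a Gehring-type iteration run so as to \emph{decrease} the integrability exponent, which forces the Maz'ya inequalities of the first paragraph to hold for some $\gamma\in(1,p)$; combined with the equivalence quoted there, this gives uniform $\gamma$-thickness of $E$ with $\tilde\mu=\tilde\mu(n,p,\mu)>0$, as claimed. The hard part is precisely this last step: since it lowers the exponent, it cannot be read off from a standard higher-integrability lemma, but instead requires a careful Lipschitz (Whitney-type) truncation argument that preserves, quantitatively in $\mu$, the vanishing of the truncated competitor on $E$. This is carried out in \cite{Lewis}, to which we refer for the details.
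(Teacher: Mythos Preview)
The paper does not prove this theorem at all; it simply records the statement and cites \cite{Lewis}. Your proposal is therefore strictly more informative than what the paper provides.

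Your sketch is broadly correct as an outline of one valid route to the result: the equivalence between uniform $p$-thickness and a scale-invariant capacitary (Maz'ya) Poincar\'e inequality, the reverse H\"older inequality for $|Du|$ with $u$ the $p$-capacitary potential (interior balls via the usual Caccioppoli--Sobolev--Poincar\'e argument for $p$-harmonic functions, balls meeting $E$ via the capacitary Poincar\'e inequality supplied by the thickness hypothesis itself), and then a self-improvement step. Two small remarks. First, the standard Gehring lemma applied to your reverse H\"older inequality produces \emph{higher} integrability $|Du|\in L^{p+\eps}$, and it is this gain (together with potential-theoretic estimates for $u$) that is leveraged to bound $\ca_\gamma$ from below for $\gamma$ slightly less than $p$; there is no ``downward'' Gehring iteration as such. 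Second, the reverse-H\"older/Gehring packaging you describe is closer in spirit to Mikkonen's later proof than to Lewis's original argument, which proceeds more directly through Harnack and H\"older estimates for the $p$-capacitary potential near $E$. Since you, like the paper, ultimately defer the technical core to \cite{Lewis}, either description is acceptable here.
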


Before we proceed, let us recall that $u \in W^{1,p}(\Omega)$ is called $p$-quasicontinuous if for each $\varepsilon >0$ there exists an open set $U \subset \Omega\subset B_{R}$ such that $\ca_p(U,B_{2R}) \leq \varepsilon$ and the restriction of $u$ to the set $\Omega\setminus U$ is finite valued and continuous. Note that every function $u\in W^{1,p}(\Omega)$ has a $p$-quasicontinuous representative. A proof of the next lemma can be found in \cite{Hedberg}.

\begin{lemma}
\label{lem:quasicont}
Let $B_\rho(x_o)$ be a ball in $\R^n$ and fix a $q$-quasicontinuous
representative of $u \in W^{1,q}(B_\rho(x_o))$. Denote 
$$
N_{B_{\rho/2}(x_o)}(u):=\{x\in \overline B_{\rho/2}(x_o):u(x)=0\}.
$$
Then there exists a constant $c=c(n,q)>0$ such that
$$
\mint_{B_\rho(x_o)} |u|^q \d x \leq \frac{c}{\ca_q(N_{B_{\rho/2}(x_o)}(u),B_\rho)} \int_{B_\rho(x_o)} |Du|^q \d x.
$$
\end{lemma}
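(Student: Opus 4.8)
The plan is to reduce the estimate to the unit ball by scaling and then to recast it as an upper bound for the variational $q$-capacity of the set $N:=N_{B_{\rho/2}(x_o)}(u)$ from the statement, by feeding a truncation of the ``linear part'' of $u$ into the capacity as an admissible competitor; the oscillation of $u$ about its mean is then absorbed by the ordinary Poincar\'e inequality. For definiteness I take $1<q<n$, which is the only range needed in the applications. First I would note that all three quantities transform in the same way under $x=x_o+\rho y$: with $u_\rho(y):=u(x_o+\rho y)$ one has $\bint_{B_\rho(x_o)}|u|^q\,\d x=\bint_{B_1}|u_\rho|^q\,\d y$, $\int_{B_\rho(x_o)}|Du|^q\,\d x=\rho^{n-q}\int_{B_1}|Du_\rho|^q\,\d y$, and, by the homogeneity $\ca_q(\lambda E,\lambda D)=\lambda^{n-q}\ca_q(E,D)$ of the variational capacity (\cite{Heinonen-Kilpelainen-Martio}), $\ca_q(N,B_\rho(x_o))=\rho^{n-q}\ca_q(N_{B_{1/2}}(u_\rho),B_1)$; since the common factor $\rho^{n-q}$ cancels, it suffices to treat $\rho=1$ and $x_o=0$. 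From now on I write $B:=B_1$, $N:=N_{B_{1/2}}(u)$ and $\bar u:=(u)_B$.

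Next I would distinguish whether $\bar u$ vanishes. If $\bar u\ne0$, I set $h:=1-\frac{u\cdot\bar u}{|\bar u|^2}=\frac{(\bar u-u)\cdot\bar u}{|\bar u|^2}$, a scalar $q$-quasicontinuous function which equals $1$ on $N$ (there $u=0$ for the fixed representative) and obeys $|h|\le|u-\bar u|/|\bar u|$ and $|Dh|\le|Du|/|\bar u|$. Fixing a cutoff $\eta\in C_0^\infty(B)$ with $0\le\eta\le1$, $\eta\equiv1$ on $B_{3/4}$ and $|D\eta|\le c(n)$, and setting $v:=\min\{\max\{\eta h,0\},1\}\in W^{1,q}_0(B)$, the $q$-quasicontinuous representative of $v$ equals $1$ on $N$; since the variational $q$-capacity agrees with the capacity defined through $W^{1,q}_0$-functions (cf.\ \cite{Heinonen-Kilpelainen-Martio}), $v$ is admissible in $\ca_q(N,B)$, and using $|Dv|\le|D(\eta h)|$ a.e.\ together with the classical Poincar\'e inequality $\int_B|u-\bar u|^q\,\d x\le c(n,q)\int_B|Du|^q\,\d x$ I obtain
\[
\ca_q(N,B)\le\int_B|Dv|^q\,\d x\le c\int_B\big(|D\eta|^q|h|^q+\eta^q|Dh|^q\big)\,\d x\le\frac{c}{|\bar u|^q}\int_B|Du|^q\,\d x,
\]
that is, $|\bar u|^q\,\ca_q(N,B)\le c\int_B|Du|^q\,\d x$.

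To conclude I would combine this with the triangle inequality and Poincar\'e once more so that, in both cases $\bar u\ne0$ and $\bar u=0$,
\[
\bint_B|u|^q\,\d x\le c\Big(\bint_B|u-\bar u|^q\,\d x+|\bar u|^q\Big)\le c\int_B|Du|^q\,\d x+\frac{c}{\ca_q(N,B)}\int_B|Du|^q\,\d x,
\]
and then use that, by monotonicity of the capacity and \eqref{cap:ball}, $\ca_q(N,B)\le\ca_q(\overline B_{1/2},B)=c_0(n,q)$, hence $1\le c_0\,\ca_q(N,B)^{-1}$, so that the first term on the right is absorbed into the second; undoing the scaling then yields the assertion. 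I expect the only genuinely delicate point — and probably the reason the authors simply refer to \cite{Hedberg} — to be the verification that $v$ is an admissible competitor for $\ca_q(N,B)$: this is exactly where the standing choice of a $q$-quasicontinuous representative of $u$ is used, as it makes $N$ well defined up to a set of $q$-capacity zero and guarantees that the representative of $v$ equals $1$ $q$-quasi-everywhere on $N$, together with the classical identification of the variational $q$-capacity with its $W^{1,q}_0$-version; the remaining steps are routine.
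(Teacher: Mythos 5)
Your argument is correct. Note that the paper does not prove this lemma at all -- it simply refers to Hedberg \cite{Hedberg} -- so there is no in-paper proof to match; what you have written is the classical capacitary (Maz'ya-type) Poincar\'e inequality argument, and it is sound: the scaling reduction is consistent (mean value on the left, and both $\int|Du|^q$ and $\ca_q$ scale like $\rho^{n-q}$ on the right, so the factors cancel), the competitor $v=\min\{\max\{\eta h,0\},1\}$ built from $h=1-\frac{u\cdot \bar u}{|\bar u|^2}$ gives $|\bar u|^q\,\ca_q(N,B)\le c\int_B|Du|^q$ via Poincar\'e, and the absorption of the plain Poincar\'e term uses $\ca_q(N,B)\le\ca_q(\overline B_{1/2},B)=c_0(n,q)$ by monotonicity and \eqref{cap:ball} (with the convention that the claim is vacuous when $\ca_q(N,B)=0$). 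The one step that genuinely needs the standard potential theory is the one you flag: for the non-closed set $N$ defined through the quasicontinuous representative, admissibility of a quasicontinuous $W^{1,q}_0$ function that equals $1$ only $q$-quasi-everywhere on $N$ (rather than on a neighbourhood of a compact set) requires the identification of the variational capacity with its $W^{1,q}_0$/quasi-everywhere version, e.g.\ \cite[Ch.~2]{Heinonen-Kilpelainen-Martio}; citing that is appropriate. Two cosmetic remarks: your restriction to $1<q<n$ is not quite "the only range needed", since Lemma~\ref{lem:Poincare} uses $\vartheta=2$ in dimension $n=2$, but nothing in your argument actually breaks for $q=n$ (the ball still has positive finite capacity and the scaling exponent is $0$); and since the lemma is applied to the $\R^N$-valued function $\power um-\power gm$, it is worth observing that your construction of $h$ already covers vector-valued $u$ with a constant independent of $N$.
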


The following Lemma can be found for instance in \cite[Lemma 3.13]{Parviainen}.

\begin{lemma}
\label{lem:quasicont_2}
Let $B_\rho(x_o)$ be a ball in $\R^n$ and suppose that $u \in W^{1,q}(B_\rho(x_o))$ is $q$-quasicontinuous. Denote 
$$
N_{B_{\rho/2}(x_o)}(u):=\{x\in \overline B_{\rho/2}(x_o):u(x)=0\}.
$$
Then, for $\tilde q \in[q,q^\ast]$ with $q^\ast=\frac{nq}{n-q}$ there exists a constant $c=c(n,q)>0$ such that
$$
\left(\mint_{B_\rho(x_o)} |u|^{\tilde q} \d x \right)^{\frac{1}{\tilde q}}\leq  \left( \frac{c}{\ca_q(N_{B_{\rho/2}(x_o)}(u),B_\rho)} \int_{B_\rho(x_o)} |Du|^q \d x\right)^{\frac 1 q}.
$$
\end{lemma}

\section{Energy estimates}
\label{sec:caccioppoli}

In this section, we will prove energy estimates that are required to prove a reverse H\"older inequality.

\subsection{Estimates near the lateral boundary}

We begin with a Caccioppoli type estimate at the lateral boundary. 

\begin{lemma} 
\label{lem:lateralcacciop}
Let $m>1$ and $u$ be a weak solution to \eqref{equation:PME} where the vector field $\A$ satisfies \eqref{assumption:A} and the Cauchy-Dirichlet datum $g$ fulfills \eqref{assumption:g}. Then there exists a constant $c=c(m,\nu,L)$ such that for any cylinder $Q_\rho^{(\theta)}(z_o)\subset \R^{n+1}$ with $0<\rho \leq 1$ and $\theta>0$ and for any $r\in [\rho/2,\rho)$ the following energy estimate
\begin{align*}
\sup_{t\in \Lambda_r^{(\theta)}(t_o) \cap(0,T)}& \int_{B_r(x_o)\cap \Omega}\b[ \u^m(t),\g^m(t)] \d x + \iint_{Q_r^{(\theta)}(z_o)\cap \Omega_T} |D\u^m|^2 \d x\d t \\
& \leq c \iint_{Q_\rho^{(\theta)}(z_o)\cap \Omega_T} \left[ \frac{\big|\u^m-\g^m\big|^2}{(\rho-r)^2}+ \theta^{m-1} \frac{\b[\u^m,\g^m]}{\rho^{\frac{m+1}{m}}-r^{\frac{m+1}{m}}} \right] \d x\d t \\
&\hspace{3mm} + c\iint_{Q_\rho^{(\theta)}(z_o)\cap \Omega_T}\left[ |F|^2 + |D\g^m|^2+|\partial_t \g^m|^{\frac{2m}{2m-1}} \right] \d x\d t
\end{align*}
holds true.
\end{lemma}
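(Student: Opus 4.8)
This is a Caccioppoli estimate obtained by testing the weak formulation \eqref{weak:PME} with a localization of $\u^m-\g^m$; near the lateral boundary one subtracts the boundary values precisely because $(\u^m-\g^m)(\cdot,t)\in W^{1,2}_0(\Omega,\R^N)$ for a.e.\ $t$, which makes $\phi^2\zeta(\u^m-\g^m)$ an admissible test function without requiring the cutoff to vanish at $\partial\Omega$. Fix $\phi\in C_0^\infty(B_\rho(x_o))$ with $0\le\phi\le1$, $\phi\equiv1$ on $B_r(x_o)$, $|D\phi|\le c/(\rho-r)$, and a piecewise linear time cutoff $\zeta$ with $\zeta\equiv1$ on $\Lambda_r^{(\theta)}(t_o)$ and $\spt\zeta\subset\Lambda_\rho^{(\theta)}(t_o)$; since the transition region $\Lambda_\rho^{(\theta)}\setminus\Lambda_r^{(\theta)}$ has length comparable to $\theta^{1-m}\big(\rho^{\frac{m+1}m}-r^{\frac{m+1}m}\big)$, we have $|\partial_t\zeta|\le c\,\theta^{m-1}\big(\rho^{\frac{m+1}m}-r^{\frac{m+1}m}\big)^{-1}$. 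We use $\varphi=\phi^2\zeta\,(\u^m-\g^m)$, and to produce the supremum term we run the argument also with $\zeta$ replaced, for each $\tau\in\Lambda_r^{(\theta)}(t_o)\cap(0,T)$, by a cutoff $\zeta_\tau$ equal to $\zeta$ for $t\le\tau$ and decaying linearly to $0$ on $(\tau,\tau+\delta)$, letting $\delta\downarrow0$ at the end. Because $\partial_t u$ is not a function of sufficient integrability, all of this is carried out for the mollified equation obtained via $\llbracket\cdot\rrbracket_h$, using the convergence properties from \cite{Kinnunen-Lindqvist} and passing to the limit $h\downarrow0$, exactly as in the interior estimates of \cite{Boegelein-Duzaar-Korte-Scheven,BDKS:2018}.

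\textbf{Parabolic term.} The term involving $\partial_t u$ is handled by the chain-rule identity
\[
  (\u^m-\g^m)\cdot\partial_t u=\partial_t\b[\u^m,\g^m]+(u-g)\cdot\partial_t\g^m,
\]
which follows by differentiating $\b[\u^m,\g^m]=\tfrac1{m+1}|u|^{m+1}+\tfrac m{m+1}|g|^{m+1}-u\cdot\g^m$ separately in $u$ and in $\g^m$. Integrating $\partial_t\b$ by parts in time against $\phi^2\zeta_\tau$ (no space boundary terms appear since $\phi$ is compactly supported), the spike of $\partial_t\zeta_\tau$ near $\tau$ contributes, as $\delta\downarrow0$, the quantity $\int_{B_\rho(x_o)\cap\Omega}\b[\u^m(\tau),\g^m(\tau)]\phi^2\,\dx$, which by $\b\ge0$ (Lemma~\ref{lem:b}(i)) and $\phi\equiv1$ on $B_r$ dominates $\int_{B_r(x_o)\cap\Omega}\b[\u^m(\tau),\g^m(\tau)]\,\dx$; the remaining part of $\partial_t\zeta_\tau$ contributes at most $c\,\theta^{m-1}\big(\rho^{\frac{m+1}m}-r^{\frac{m+1}m}\big)^{-1}\iint_{Q_\rho^{(\theta)}(z_o)\cap\Omega_T}\b[\u^m,\g^m]\,\dx\dt$. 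When $Q_\rho^{(\theta)}(z_o)$ reaches below $\{t=0\}$, the boundary term at the initial time produced by the integration by parts vanishes thanks to the initial condition \eqref{assumption:initial_datum}, Lemma~\ref{lem:b}(i), and the continuity $g\in C^0([0,T],L^{m+1})$. Finally, $\iint\phi^2\zeta_\tau\,(u-g)\cdot\partial_t\g^m\,\dx\dt$ is estimated via $|u-g|\le c|\u^m-\g^m|^{1/m}$ (Lemma~\ref{lem:estimates}(i)) and Young's inequality with exponents $2m$ and $\tfrac{2m}{2m-1}$, giving, since $\rho\le1$ and $r\ge\rho/2$, a bound by $\iint_{Q_\rho^{(\theta)}\cap\Omega_T}\big[(\rho-r)^{-2}|\u^m-\g^m|^2+c\,|\partial_t\g^m|^{\frac{2m}{2m-1}}\big]\,\dx\dt$.

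\textbf{Elliptic and force terms.} With $D\varphi=\phi^2\zeta(D\u^m-D\g^m)+2\phi\zeta\,D\phi\otimes(\u^m-\g^m)$, the ellipticity $\A\cdot D\u^m\ge\nu|D\u^m|^2$ on the leading part of $\iint\A\cdot D\varphi$ produces $\nu\iint\phi^2\zeta|D\u^m|^2\,\dx\dt$, which controls $\nu\iint_{Q_r^{(\theta)}\cap\Omega_T}|D\u^m|^2\,\dx\dt$; the growth bound $|\A|\le L|D\u^m|$, Young's inequality, and $|D\phi|\le c/(\rho-r)$ bound the remaining part of $\iint\A\cdot D\varphi$ and all of $\iint F\cdot D\varphi$ by an absorbable multiple of $\iint\phi^2\zeta|D\u^m|^2$ plus $c\iint_{Q_\rho^{(\theta)}\cap\Omega_T}\big[(\rho-r)^{-2}|\u^m-\g^m|^2+|D\g^m|^2+|F|^2\big]\,\dx\dt$. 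Absorbing the small multiple of $\iint\phi^2\zeta|D\u^m|^2$ into the left-hand side, taking the supremum over $\tau$, and adding the estimate obtained with the symmetric cutoff $\zeta$ yields the claimed inequality.

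\textbf{Main obstacle.} The delicate point is the rigorous justification of the parabolic term: since $\partial_t u$ lacks the required integrability, the chain-rule manipulation for $\b[\u^m,\g^m]$ must be performed through the exponential mollification $\llbracket\cdot\rrbracket_h$ with a careful limit passage, and the initial-time boundary contribution must be disposed of via \eqref{assumption:initial_datum} whenever the intrinsic cylinder crosses $\{t=0\}$; once the correct intrinsically scaled cutoffs are set up, the remaining estimates are routine applications of ellipticity, growth, and Young's inequality.
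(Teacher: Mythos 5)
Your proposal is correct and follows essentially the same route as the paper: testing the exponentially mollified weak formulation with $\eta^2\zeta\psi\,(\u^m-\g^m)$ (admissible precisely because $(\u^m-\g^m)(\cdot,t)\in W^{1,2}_0(\Omega,\R^N)$), the chain-rule identity $(\u^m-\g^m)\cdot\partial_t u=\partial_t\b[\u^m,\g^m]+(u-g)\cdot\partial_t\g^m$, disposal of the initial-time contribution via \eqref{assumption:initial_datum}, Young's inequality with exponents $2m$ and $\tfrac{2m}{2m-1}$ together with Lemma~\ref{lem:estimates}\,(i) for the $\partial_t\g^m$ term, and the standard ellipticity/growth/absorption argument for the diffusion and force terms. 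The only cosmetic difference is your family of cutoffs $\zeta_\tau$ collapsing at each time $\tau$ versus the paper's fixed one-sided $\zeta$ combined with the approximate characteristic function $\psi_\varepsilon$ of $(0,t_1)$; these are interchangeable.
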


\begin{proof}[Proof]
The mollified version of the system \eqref{weak:PME} reads as
\begin{align}
\label{weak:mollified}
\begin{aligned}
\iint_{\Omega_T} &\Big[ \partial_t \llbracket u\rrbracket_h \cdot \varphi + \llbracket \A(x,t,u,D\u^m)\rrbracket_h \cdot D \varphi \Big]\d x \d t \\
& =\iint_{\Omega_T} -\llbracket F\rrbracket_h \cdot D \varphi \d x \d t+ \tfrac 1 h \int_\Omega u(0) \cdot \int_0^T e^{-\frac sh} \varphi \d s \d x
\end{aligned}
\end{align}
for any $\varphi\in L^2(0,T;W^{1,2}_0(\Omega,\R^N))$. 
For $t_1 \in \Lambda_{r}^{(\theta)}(t_o) \cap (0,T)$ approximate the characteristic function of the interval $(0,t_1)$ by 
\begin{align*}
\psi_\varepsilon(t) := \left\{
\begin{array}{cl}
\frac{t-\varepsilon}{\varepsilon},& \text{for } t\in (\varepsilon,2\varepsilon] \\
1, & \text{for } t\in (2\varepsilon,t_1-2\varepsilon]\\
\frac{t_1-\varepsilon-t}{\varepsilon},& \text{for } t \in (t_1-2\varepsilon,t_1-\varepsilon]\\
0,& \text{otherwise } 
\end{array}
\right.
\end{align*}
Furthermore, let $\eta \in C^\infty_0(B_\rho(x_o),[0,1])$ be the standard cut off function with $\eta \equiv 1$ in $B_r(x_o)$ and $|D\eta|\leq \frac{2}{\rho-r}$ and $\zeta \in W^{1,\infty}\left( \Lambda_\rho^{(\theta)}(t_o),[0,1] \right)$ be defined by
\begin{align*}
\zeta(t):= \left\{
\begin{array}{cl}
1,& \text{for } t \geq t_o-\theta^{1-m} r^{\frac{m+1}{m}} \\
\frac{(t-t_o)\theta^{m-1}+\rho^{\frac{m+1}{m}}}{\rho^{\frac{m+1}{m}}-r^{\frac{m+1}{m}}}, & \text{for } t \in (t_o-\theta^{1-m}\rho^{\frac{m+1}{m}},t_o-\theta^{1-m}r^{\frac{m+1}{m}})
\end{array}
\right.
\end{align*}
We choose 
$$
\varphi(x,t)=\eta^2(x)\zeta(t) \psi_\varepsilon(t)\left(\u^m(x,t)-\g^m(x,t)\right)
$$
as testing function in the mollified weak formulation \eqref{weak:mollified}. We start with the parabolic part of the equation and estimate
\begin{align*}
\iint_{\Omega_T}& \partial_t \llbracket u \rrbracket_h \cdot \varphi \d x \d t \\
&=\iint_{ \Omega_T} \eta^2 \zeta \psi_\varepsilon \partial_t \llbracket u \rrbracket_h \cdot \left( \boldsymbol{\llbracket u\rrbracket_h}^m-\g^m \right) \d x \d t\\
& \hspace{3mm}+\iint_{ \Omega_T} \eta^2 \zeta \psi_\varepsilon \partial_t \llbracket u \rrbracket_h \cdot \left(\u^m- \boldsymbol{\llbracket u\rrbracket_h}^m \right) \d x \d t\\
&\geq \iint_{ \Omega_T} \eta^2 \zeta \psi_\varepsilon \partial_t \left( \tfrac{1}{m+1} |\llbracket u\rrbracket_h|^{m+1} -\g^m \cdot \llbracket u \rrbracket_h+ \tfrac{m}{m+1}|g|^{m+1} \right) \d x \d t \\
& \hspace{3mm}+  \iint_{\Omega_T} \eta^2 \zeta \psi_\varepsilon \partial_t \g^m \cdot (\llbracket u \rrbracket_h-g) \d x\d t \\
&=\iint_{ \Omega_T} \left[ \eta^2 \zeta \psi_\varepsilon \partial_t \b\big[\boldsymbol{\llbracket u\rrbracket_h}^m,\g^m\big] +\eta^2 \zeta \psi_\varepsilon \partial_t \g^m \cdot (\llbracket u\rrbracket_h-g) \right] \d x\d t \\
&=\iint_{ \Omega_T} \left[ -\eta^2 (\zeta \partial_t \psi_ \varepsilon+\partial_t\zeta \psi_\varepsilon)  \b\big[\boldsymbol{\llbracket u\rrbracket_h}^m,\g^m\big] +\eta^2 \zeta \psi_\varepsilon \partial_t \g^m \cdot (\llbracket u\rrbracket_h-g) \right] \d x\d t,
\end{align*}
where we also used that $\partial_t \llbracket u \rrbracket_h=\frac 1h (u-\llbracket u \rrbracket_h)$. We are now able to pass to the limit $h\downarrow 0$ in the right-hand side of the previous estimate and obtain
\begin{align*}
\liminf_{h \downarrow 0} \iint_{\Omega_T} &\partial_t \llbracket u \rrbracket_h \cdot \varphi \d x \d t\\
& \geq \iint_{\Omega_T} \left[ -\eta^2(\zeta \partial_t \psi_\varepsilon +\psi_\varepsilon \partial_t \zeta) \b[\u^m,\g^m]+\eta^2 \zeta \psi_\varepsilon \partial_t \g^m \cdot ( u-g) \right] \d x\d t \\
&=: \mathrm{I}_\varepsilon +\mathrm{II}_\varepsilon +\mathrm{III}_\varepsilon.
\end{align*}
Now, we pass to the limit $\varepsilon \downarrow 0$  and obtain for the first term
\begin{align*}
\lim_{\varepsilon \downarrow 0} \mathrm{I}_\varepsilon = \int_{\Omega} \eta^2 \b[\u^m(t_1),\g^m(t_1)] \d x,
\end{align*}
for any $t_1 \in \Lambda_\rho^{(\theta)}(t_o)\cap (0,T)$, where we note that the integral at the time $t=0$ vanishes by assumption \eqref{assumption:initial_datum} in connection with Lemma \ref{lem:b}. The second term can be estimated as follows
\begin{align*}
|\mathrm{II}_\varepsilon| \leq \iint_{Q_\rho^{(\theta)}(z_o)\cap \Omega_T} \theta^{m-1} \frac{\b[\u^m,\g^m]}{\rho^{\frac{m+1}{m}}-r^{\frac{m+1}{m}}} \d x\d t,
\end{align*}
whereas the third term is estimated with the help of Young's inequality and Lemma \ref{lem:estimates}\,(i)
  \begin{align*}
|\mathrm{III}_\varepsilon|& \leq \iint_{Q_\rho^{(\theta)}(z_o)\cap
  \Omega_T} \left[
  (\rho-r)^{\frac2{2m-1}}|\partial_t \g^m|^{\frac{2m}{2m-1}}+ \frac{|u-g|^{2m}}{(\rho-r)^2} \right]  \d x\d t \\
&  \leq c\iint_{Q_\rho^{(\theta)}(z_o)\cap \Omega_T} \left[ |\partial_t \g^m|^{\frac{2m}{2m-1}} + \frac{|\u^m-\g^m|^2}{(\rho-r)^2} \right] \d x\d t,
\end{align*}
since $\rho\le1$.

Next we will treat the diffusion term. After passing to the limit
$h\downarrow 0$ we use the ellipticity and growth condition~\eqref{assumption:A} and Young's inequality and hence we arrive at
\begin{align*}
&\iint_{\Omega_T} \A(x,t,u,D\u^m) \cdot D\varphi \d x\d t \\
&=\iint_{\Omega_T} \A(x,t,u,D\u^m) \cdot \left[ \eta^2 \zeta \psi_\varepsilon (D\u^m-D\g^m) +2\eta \zeta \psi_\varepsilon (\u^m-\g^m)\otimes D\eta  \right] \d x \d t \\
&\geq  \iint_{\Omega_T} \nu \eta^2\zeta\psi_\varepsilon |D\u^m|^2 \d x\d t \\
&\hspace{3mm}- \iint_{\Omega_T}\left[ 2L \eta |D\eta|\zeta \psi_\varepsilon |D\u^m| |\u^m-\g^m|+ L|D\u^m|\eta^2\zeta \psi_\varepsilon |D\g^m| \right] \d x\d t \\
&\geq \tfrac \nu 2 \iint_{\Omega_T} \eta^2 \zeta\psi_\varepsilon |D\u^m|^2 \d x\d t -c \iint_{Q_\rho^{(\theta)}(z_o)\cap\Omega_T} \left[\frac{|\u^m-\g^m|^2}{(\rho-r)^2} +|D\g^m|^2 \right]\d x\d t
\end{align*}
for a constant depending on $m$, $\nu$ and $L$. Let us now consider
the right hand side in \eqref{weak:mollified}. Note that the
second term vanishes in the limit $h \downarrow 0$, since
  \begin{equation*}
    \lim_{h\downarrow0}\mint_0^h\int_\Omega |\power um-\power gm|^{\frac{m+1}{m}}\dx\dt=0,
  \end{equation*}
 which follows from \eqref{assumption:initial_datum}, Lemma   \ref{lem:estimates}(ii) and H\"older's inequality.
In the term containing $F$ we also pass to the limit $h \downarrow 0$ and use Young's inequality afterwards to obtain
\begin{align*}
\iint_{\Omega_T}& F\cdot D\varphi\, \d x\d t \\
&= \iint_{\Omega_T} \left[\eta^2 \zeta \psi_\varepsilon F\cdot (D\u^m - D\g^m) + 2 \eta \zeta \psi_\varepsilon F\cdot (\u^m-\g^m) \otimes D\eta \right] \d x\d t\\
&\leq \tfrac \nu 4\iint_{\Omega_T} \eta^2\zeta \psi_\varepsilon |D\u^m|^2 \d x\d t\\
&\quad+ c \iint_{Q_\rho^{(\theta)}(z_o)\cap\Omega_T} \left[ \frac{|\u^m-\g^m|^2}{(\rho-r)^2}+ |D\g^m|^2+ |F|^2 \right] \d x \d t.
\end{align*}
We combine all these estimates and pass to the limit $\varepsilon \downarrow 0$. This shows
\begin{align*}
\int_{B_r(x_o)\cap \Omega} &\b[\u^m(t_1),\g^m(t_1)] \d x + \int_{(t_o-\theta^{1-m}r^{\frac{m+1}{m}},\,t_1)\cap (0,T)}\int_{B_r(x_o)\cap\Omega} |D\u^m|^2 \d x\d t \\
& \leq c \iint_{Q_{\rho}^{(\theta)}(z_o)\cap \Omega_T} \left[ \frac{|\u^m-\g^m|^2}{(\rho-r)^2} +\theta^{m-1} \frac{\b[\u^m,\g^m]}{\rho^{\frac{m+1}{m}}-r^{\frac{m+1}{m}}} \right] \d x\d t \\
&\hspace{3mm} + c\iint_{Q_{\rho}^{(\theta)}(z_o)\cap \Omega_T} \left[|F|^2 +|D\g^m|^2 +|\partial_t \g^m|^{\frac{2m}{2m-1}} \right] \d x \d t
\end{align*}
for any $t_1 \in \Lambda_r^{(\theta)}(t_o)\cap (0,T)$ and a constant $c=c(m,\nu,L)$. Finally, we take the supremum over all $t_1\in \Lambda_r^{(\theta)}(t_o)\cap (0,T)$ in the first term on the left-hand side and then pass to the limit $t_1 \uparrow t_o+\theta^{1-m}r^{\frac{m+1}{m}}$ in the second term. This proves the lemma.
\end{proof}

\subsection{Estimates near the initial boundary and in the interior}

Up next we prove the corresponding Caccioppoli estimate near the
initial boundary $\Omega \times \{0\}$. For the initial datum we use the abbreviation 
\begin{align*}
g_0(x) := g(x,0) \quad \text{for } x\in \Omega.
\end{align*}
We do not impose an additional regularity assumption on the initial
datum except from $g_0\in L^{m+1}(\Omega,\R^N)$. However, we exploit the
fact that there is an extension $g:\Omega_T\to\R^N$ with
$g(\cdot,0)=g_0$ and $\power gm\in
L^{2+\eps}(0,T;W^{1,2+\eps}(\Omega,\R^N))$ as well as
$\partial_t\power gm\in L^{\frac{m(2+\eps)}{2m-1}}(\Omega_T,\R^N)$. 
At the initial boundary,
we begin with a Caccioppoli type estimate for the extended function $\hat u:\Omega\times(-T,T)\to\R^N$, defined by
\begin{equation}\label{def:hatu}
  \hat u(x,t):=
  \begin{cases}
    u(x,t),&t>0,\\
    g(x,-t),&t\le 0.
  \end{cases}
\end{equation}
We note that the following result also contains the interior case
$Q_\rho^{(\theta)}(z_o)\subset\Omega_T$.

\begin{lemma}\label{lem:initialcacciop}
Let $m>1$ and $u$ be a weak solution to \eqref{equation:PME} where
the vector field $\A$ satisfies \eqref{assumption:A} and the
Cauchy-Dirichlet datum $g$ fulfills \eqref{assumption:g}. Then there
exists a constant $c=c(n,m,\nu,L)$ such that for every cylinder
$Q_\rho^{(\theta)}(z_o)\subset \Omega\times(-T,T)$ with
$z_o\in\Omega\times[0,T)$, $0<\rho \leq 1$ and $\theta>0$,  the following
holds.
For every $r\in[\rho/2,\rho)$ and every $a\in\R^N$, the energy estimate
\begin{align*}
\sup_{t\in \Lambda_r^{(\theta)}(t_o)}&
\int_{B_r(x_o)}\b\big[\power{\hat
  u}{m}(t),\power{a}{m}\big] \d x
  +
  \iint_{Q_{r}^{(\theta)}(z_o)} |D\power{\hat u}{m}|^2 \d x\d t \\
& \leq c \iint_{Q_{\rho}^{(\theta)}(z_o)} \left[
  \frac{\big|\power{\hat u}{m}-\power am\big|^2}{(\rho-r)^2}+ \theta^{m-1}
  \frac{\b\big[\power{\hat u}{m},\power am\big]}{\rho^{\frac{m+1}{m}}-r^{\frac{m+1}{m}}}  \right] \d x\d t \\
&\hspace{3mm} + c \iint_{Q_{\rho,+}^{(\theta)}(z_o)}
\big(|F|^2 +|D\power{g}{m}|^2+|\partial_t\power gm|^{\frac{2m}{2m-1}}\big)\d x \d t
\end{align*}
holds true, where $\hat u$ is defined according to~\eqref{def:hatu}.
\end{lemma}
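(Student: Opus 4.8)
The plan is to mimic the proof of Lemma~\ref{lem:lateralcacciop}, but with the roles of the boundary datum $\g^m$ and the mean value $\power{a}{m}$ interchanged, and applied to the extended function $\hat u$ on the symmetric time slab $\Omega\times(-T,T)$. First I would record the mollified weak formulation of the equation for $u$ on $\Omega_T$ — this is already \eqref{weak:mollified} — and observe that, since all cut-off functions below are supported in $B_\rho(x_o)\Subset\Omega$, the spatial boundary of $\Omega$ plays no role and there is no need for the solution to vanish on $\partial\Omega$. The subtlety is that the test function will be built from $\hat u$, which on $\{t\le 0\}$ equals the reflected datum $g(x,-t)$; since the weak formulation only sees $t>0$, the contributions from $\{t<0\}$ must be produced by hand and will land in the last term of the claimed estimate (the integral over $Q_{\rho,+}^{(\theta)}$ of $|F|^2+|D\g^m|^2+|\partial_t\g^m|^{2m/(2m-1)}$, after a reflection $t\mapsto -t$).

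Concretely, for a time level $t_1\in\Lambda_r^{(\theta)}(t_o)$ I would take cut-offs $\eta\in C_0^\infty(B_\rho(x_o),[0,1])$ with $\eta\equiv 1$ on $B_r(x_o)$ and $|D\eta|\le 2/(\rho-r)$, a Lipschitz temporal cut-off $\zeta$ that rises from $0$ to $1$ across the "outer'' time annulus $\Lambda_\rho^{(\theta)}\setminus\Lambda_r^{(\theta)}$ exactly as in Lemma~\ref{lem:lateralcacciop} (so $|\partial_t\zeta|\le c\,\theta^{m-1}/(\rho^{(m+1)/m}-r^{(m+1)/m})$), and the usual $\psi_\eps$ cutting off near $t=t_1$. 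The test function is $\varphi=\eta^2\zeta\psi_\eps(\power um-\power am)$ on $\{t>0\}$. On $\{t>0\}$ the parabolic term is handled verbatim as before: writing $\partial_t\llbracket u\rrbracket_h=\tfrac1h(u-\llbracket u\rrbracket_h)$, one produces $\partial_t\b[\boldsymbol{\llbracket u\rrbracket_h}^m,\power am]$ up to an error that vanishes as $h\downarrow 0$, integrates by parts in time to move the derivative onto $\zeta\psi_\eps$, then lets $\eps\downarrow 0$: the $\partial_t\psi_\eps$ part yields $\int_{B_r}\b[\power um(t_1),\power am]\,\dx$ and a boundary term at $t=0$ equal to $\int_{B_\rho}\eta^2\zeta(0)\b[\power um(0),\power am]\,\dx$, which by \eqref{assumption:initial_datum} and Lemma~\ref{lem:b} equals $\int_{B_\rho}\eta^2\zeta(0)\b[\power{g_0}m,\power am]\,\dx$; the $\partial_t\zeta$ part is bounded by $\iint_{Q_{\rho,+}^{(\theta)}}\theta^{m-1}\b[\power um,\power am]/(\rho^{(m+1)/m}-r^{(m+1)/m})\,\dx\dt$. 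For the diffusion term and the $F$-term I would use \eqref{assumption:A} and Young's inequality exactly as in the lateral case, absorbing the $\nu$-coercive term, which gives $\tfrac\nu2\iint\eta^2\zeta\psi_\eps|D\power um|^2$ on the good side and $c\iint_{Q_{\rho,+}^{(\theta)}}\big(|\power um-\power am|^2/(\rho-r)^2+|F|^2\big)\,\dx\dt$ on the bad side (no $D\g^m$ appears here because we subtract $\power am$, a constant).

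The new ingredient is the $\{t<0\}$ part. For $\hat u$ on the reflected slab, $\b[\power{\hat u}m(t),\power am]=\b[\power{g(\cdot,-t)}m,\power am]$ and $D\power{\hat u}m(x,t)=D\power{g}m(x,-t)$, so the left-hand side quantities over $Q_{r,-}^{(\theta)}$ are, after the change of variables $t\mapsto -t$, simply $\sup_t\int_{B_r}\b[\power{g}m(t),\power am]\,\dx+\iint_{Q_{r,+}^{(\theta)}}|D\power gm|^2\,\dx\dt$. Using Lemma~\ref{lem:b}(i),(ii), the supremum term is $\le c\iint_{Q_{\rho,+}^{(\theta)}}|\partial_t\b[\power gm,\power am]|+\text{(value at }t_o-\ldots)$; one handles it by the fundamental theorem of calculus in $t$ together with $\partial_t\b[\power gm,\power am]=\partial_t\power gm\cdot(\power gm-\power am)\cdot(\ldots)$ — more cleanly, bound $\b[\power gm(t),\power am]\le c|\power gm(t)-\power am|^{(m+1)/m}$ via Lemma~\ref{lem:b}(ii) and then use that $t\mapsto \power gm(x,t)$ is absolutely continuous with derivative in the right Lebesgue space, estimating against $\theta^{1-m}\rho^{(m+1)/m}$ times averages of $|\partial_t\power gm|^{2m/(2m-1)}$ and $|D\power gm|^2$; since $0<\rho\le1$ and $\theta>0$ are as in the statement, all powers of $\rho$ and $(\rho-r)$ come out with the right sign. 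Collecting the $\{t>0\}$ and $\{t<0\}$ contributions, replacing $\b[\power um(0),\power am]$ on the boundary-at-$t=0$ term against $\iint_{Q_{\rho,+}^{(\theta)}}$-type quantities (the time slab $\{t>0\}$ adjacent to $t=0$ controls it), taking the supremum over $t_1$ in the first left-hand term and letting $t_1\uparrow t_o+\theta^{1-m}r^{(m+1)/m}$ in the second, and finally applying the absorption Lemma~\ref{lem:iteration} in the radial variable $r\in[\rho/2,\rho]$ to swallow the term $c\iint\theta^{m-1}\b[\power{\hat u}m,\power am]/(\ldots)$ if needed, yields the asserted inequality.

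I expect the main obstacle to be the bookkeeping for the reflected half $\{t<0\}$: one must verify carefully that reflecting $\hat u$ does not introduce a spurious jump in the time derivative across $t=0$ that would spoil the distributional identity — but this is precisely why the test function is only plugged into the equation on $\{t>0\}$ and the $\{t<0\}$ terms are estimated separately using only the explicit formula $\hat u(x,t)=g(x,-t)$ and the regularity \eqref{assumption:g} of $g$. The matching of the boundary value $\b[\power um(0),\power am]$ on one side with $\b[\power{g_0}m,\power am]$ coming from the reflected side (so that the slab-at-$t=0$ terms cancel or are absorbed) is the delicate point and relies on \eqref{assumption:initial_datum} together with Lemma~\ref{lem:b}. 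Apart from that, every step is a routine adaptation of the proof of Lemma~\ref{lem:lateralcacciop}, with $\g^m\rightsquigarrow\power am$ in the main part and $\g^m$ reappearing only through the reflection on the initial side.
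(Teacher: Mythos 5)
Your proposal is correct and follows essentially the same route as the paper: test with $\eta^2\zeta\psi_\eps(\power um-\power am)$ on $\{t>0\}$, identify the $t=0$ boundary term as $\zeta(0)\int\b[\power{g_0}{m},\power am]\,\dx$ via \eqref{assumption:initial_datum}, and control all contributions from $\{t<0\}$ by hand using $\hat u(t)=g(-t)$, the fundamental theorem of calculus in time, the identity $\partial_t\b[\power am,\power{\hat u}{m}]=\partial_t\power{\hat u}{m}\cdot(\hat u-a)$ and Young's inequality with exponents $\tfrac{2m}{2m-1}$ and $2m$. One small correction: the $t=0$ term is absorbed from the \emph{negative}-time slab (where $\zeta$ vanishes at the left endpoint of $\Lambda_\rho^{(\theta)}$ and $\partial_t\power gm$ lies in the right space), not from $\{t>0\}$ as your parenthetical suggests, but your own FTC-over-$\{t<0\}$ strategy already accomplishes exactly this.
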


\begin{proof}[Proof]
We start with arguments similar to the proof of Lemma~\ref{lem:lateralcacciop}. We
consider the mollified version~(\ref{weak:mollified}) of the equation
and use now the test-function
\begin{align*}
\varphi = \eta^2 \zeta \psi_\eps \big( \u^m - \power am \big)
\end{align*}
with $\eta$, $\zeta$, and $\psi_{\eps}$ defined as in 
Lemma~\ref{lem:lateralcacciop} and $\g^m$ replaced by $\power am$. Observe that $\partial_t \power am
= 0$ and $D \power am = 0$. For the parabolic part we obtain
\begin{align*}
\iint_{\Omega_T}& \partial_t \llbracket u \rrbracket_h \cdot \varphi \d x \d t \\
&\geq \iint_{ \Omega_T} \eta^2 \zeta \psi_\varepsilon \partial_t \left( \tfrac{1}{m+1} |\llbracket u\rrbracket_h|^{m+1} -\power am \cdot \llbracket u \rrbracket_h\right) \d x \d t \\
&= - \iint_{ \Omega_T} \eta^2 (\zeta \partial_t \psi_ \varepsilon+\partial_t\zeta \psi_\varepsilon)  \b\big[\boldsymbol{\llbracket u\rrbracket_h}^m,\power am \big] \d x\d t.
\end{align*}
By first passing to the limit $h \downarrow 0$, then $\eps \downarrow
0$ and using the same estimates as in Lemma~\ref{lem:lateralcacciop} we arrive at
\begin{align*}
 \liminf_{\eps \downarrow 0} \bigg( \liminf_{h \downarrow 0} \iint_{\Omega_T} &\partial_t \llbracket u \rrbracket_h \cdot \varphi \d x \d t \bigg) \\
& \geq \int_{\Omega} \eta^2 \b[\u^m(t_1),\power am] \d x - \zeta(0)\int_{\Omega} \eta^2\b[\power{g_0}{m},\power am] \d x   \\
&\hspace{3mm} - c \iint_{Q_{\rho,+}^{(\theta)}(z_o)} \theta^{m-1} \frac{\b[\u^m,\power am]}{\rho^{\frac{m+1}{m}}-r^{\frac{m+1}{m}}} \d x\d t,
\end{align*}
for any $t_1 \in \Lambda_r^{(\theta)}(t_o)\cap (0,T)$. Here we also used the fact that
\begin{align*}
\frac{1}{h} \int_0^h \int_{\Omega} \b[ \u^m(t), \power am] \d x \d t \to \int_{\Omega} \b[ \power{g_0}{m}, \power am] \d x \quad \text{as } h\downarrow 0,
\end{align*}
which follows from Lemma~\ref{lem:estimates}\,(i) and
assumption~(\ref{assumption:initial_datum}).
The diffusion term and the term containing $F$ are treated exactly in
the same way as in Lemma~\ref{lem:lateralcacciop} with $\power am$
instead of $\g^m$ (with obvious simplifications as $D \power am = 0$). The
second integral on the right-hand side of the mollified
equation~(\ref{weak:mollified}) vanishes in the limit $h \downarrow 0$
because of assumption~\eqref{assumption:initial_datum}. By combining these estimates we
obtain the bound
\begin{align}\label{pre-energy}
\sup_{t\in \Lambda_r^{(\theta)}\cap(0,T)}& \int_{B_r}\b\big[
\u^m(t),\power am\big] \d x + \iint_{Q_{r,+}^{(\theta)}} |D\u^m|^2 \d x\d t \\\nonumber
& \leq c \iint_{Q_{\rho,+}^{(\theta)}} \left[
  \frac{\big|\u^m-\power am \big|^2}{(\rho-r)^2}+ \theta^{m-1}
  \frac{\b[\u^m,\power am]}{\rho^{\frac{m+1}{m}}-r^{\frac{m+1}{m}}}  \right] \d x\d t \\\nonumber
&\hspace{3mm} + c \iint_{Q_{\rho,+}^{(\theta)}} |F|^2 \d x \d
t
+
c\,\zeta(0) \int_{B_\rho}
\b[\power{g_0}{m},\power am] \d x.
\end{align}
It remains to estimate the last integral. We start with the
observation that two applications of Lemma~\ref{lem:b}\,(i) imply
$\b[\power{g_0}{m},\power am]\le c\b[\power am,\power{g_0}{m}]$ and
moreover, we have the identity
$$
\partial_t\b[\power am,\power{\hat u}{m}]=\partial_t\power {\hat
  u}m\cdot(\hat u-a)
  \qquad\mbox{on $\Omega\times(-T,0]$}.
$$
This enables us to estimate
\begin{align*}
  &\zeta(0) \int_{B_\rho}
  \b[\power{g_0}{m},\power am] \d x
  \le
  c\,\zeta(0) \int_{B_\rho}
  \b[\power am,\power{g_0}{m}] \d x\\
  &\qquad=
  c\int^0_{t_o-\theta^{1-m}\rho^{\frac{m+1}{m}}}\int_{B_\rho}\partial_t\Big(\zeta(t)\b[\power
  am,\power {\hat u}m]\Big)\d x\d t\\
  &\qquad\le
  c\iint_{Q_{\rho,-}^{(\theta)}}\Big(\big|\partial_t\b[\power
  am,\power {\hat u}m]\big|
  +
  |\partial_t\zeta|\,
  \b[\power am,\power {\hat u}m]\Big)\d x\d t\\
  &\qquad\le
  c\iint_{Q_{\rho,-}^{(\theta)}}\bigg(|\partial_t\power {\hat
    u}m|\,|\hat u-a|
  +
  \theta^{m-1}\frac{\b[\power am,\power
    {\hat u}m]}{\rho^{\frac{m+1}m}-r^{\frac{m+1}m}}\bigg)\d x\d t,
\end{align*}
where we have abbreviated ${Q_{\rho,-}^{(\theta)}}:=
Q_\rho^{(\theta)}\cap\{t<0\}$.
Next, we use Young's inequality, the facts $\rho\le1$ and $\hat
u(t)=g(-t)$ for $t<0$, as well as Lemmas
\ref{lem:estimates} and \ref{lem:b}, with the result 
\begin{align*}
  &\zeta(0) \int_{B_\rho}
  \b[\power{g_0}{m},\power am] \d x\\
  &\qquad\le
  c\iint_{Q_{\rho,-}^{(\theta)}}\bigg(|\partial_t\power {\hat
    u}m|^{\frac{2m}{2m-1}}+\frac{|{\hat u}-a|^{2m}}{(\rho-r)^2}
  +
  \theta^{m-1}\frac{\b[\power am,\power {\hat u}m]}{\rho^{\frac{m+1}m}-r^{\frac{m+1}m}}\bigg)\d x\d t\\
  &\qquad\le
  c\iint_{Q_{\rho,+}^{(\theta)}}|\partial_t\power
  gm|^{\frac{2m}{2m-1}}\d x \dt\\
  &\qquad\qquad+
  c\iint_{Q_{\rho,-}^{(\theta)}}\bigg(\frac{|\power {\hat u}m-\power am|^{2}}{(\rho-r)^2}
  +\theta^{m-1}\frac{\b[\power {\hat u}m,\power am]}{\rho^{\frac{m+1}m}-r^{\frac{m+1}m}}\bigg)\d x\d t.
\end{align*}
Plugging this estimate into \eqref{pre-energy}, we arrive at 
\begin{align}\label{pre-energy-2}
\sup_{t\in \Lambda_r^{(\theta)}\cap(0,T)}& \int_{B_r}\b\big[
\u^m(t),\power am\big] \d x + \iint_{Q_{r,+}^{(\theta)}} |D\u^m|^2 \d x\d t \\\nonumber
& \leq c \iint_{Q_{\rho}^{(\theta)}} \left[
  \frac{\big|\power{\hat u}{m}-\power am \big|^2}{(\rho-r)^2}+ \theta^{m-1}
  \frac{\b[\power{\hat u}{m},\power am]}{\rho^{\frac{m+1}{m}}-r^{\frac{m+1}{m}}}  \right] \d x\d t \\\nonumber
&\hspace{3mm} + c \iint_{Q_{\rho,+}^{(\theta)}} \Big(|F|^2
+|\partial_t\power gm|^{\frac{2m}{2m-1}}\Big)\d x \d t.
\end{align}
It remains to estimate the terms on the left-hand side for negative
times $t\in\Lambda_r^{(\theta)}\cap(-T,0)$. Note that this case only
occurs if $t_o<\theta^{1-m}r^{\frac{m+1}m}$. In this situation, we estimate
\begin{align*}
  &\int_{B_r}\b\big[\power{\hat u}{m}(t),\power am\big] \d x
  \le
  c\int_{B_\rho}\b\big[\power am,\power{\hat u}{m}(t)\big] \d x\\
  &\qquad\le
  c\,\bint_{\Lambda_\rho^{(\theta)}\cap(-T,0)}\int_{B_\rho}
  \bigg[
  \b\big[\power am,\power{\hat u}{m}(\tau)\big]
  +\int_\tau^{t}\partial_t\b\big[\power am,\power{\hat u}{m}(s)\big]\d s
  \bigg]\d x\d\tau\\
  &\qquad\le
  c\,\bint_{\Lambda_\rho^{(\theta)}\cap(-T,0)}\int_{B_\rho}
  \bigg[
  \b\big[\power{\hat u}{m}(\tau),\power am\big]
  +\int_\tau^{t}|\partial_t\power{\hat u}{m}(s)|\,|\hat u(s)-a|\d s
  \bigg]\d x\d\tau.
\end{align*}
For the estimate of the first term, we observe that
 $|\Lambda_\rho^{(\theta)}\cap(-T,0)|\ge
 \theta^{1-m}(\rho^{\frac{m+1}{m}}-r^{\frac{m+1}{m}})$, which is a
consequence of $t_o-\theta^{1-m}r^{\frac{m+1}{m}}<0$. To the remaining
term, we apply Young's inequality and Fubini's theorem, which leads to
the estimate 
\begin{align}\label{est-sup-g}
   &\sup_{t\in\Lambda_r^{(\theta)}\cap(-T,0)}\,\int_{B_r}\b\big[\power{\hat u}{m}(t),\power am\big] \d x  \\\nonumber
   &\qquad\le
   \iint_{Q_{\rho,-}^{(\theta)}}\bigg(\theta^{m-1}\frac{\b\big[\power{\hat u}{m}(\tau),\power
     am\big]}{\rho^{\frac{m+1}{m}}-r^{\frac{m+1}{m}}}
   +
   |\partial_t\power{\hat u}{m}|^{\frac{2m}{2m-1}}
   +
   |\hat u-a|^{2m}\bigg)\d x\d\tau\\\nonumber
   &\qquad\le
   \iint_{Q_{\rho,-}^{(\theta)}}\bigg(\theta^{m-1}\frac{\b\big[\power{\hat u}{m}(\tau),\power
     am\big]}{\rho^{\frac{m+1}{m}}-r^{\frac{m+1}{m}}}
   +
   \frac{|\power{\hat u}{m}-\power am|^{2}}{(\rho-r)^2}\bigg)\d x\d\tau\\\nonumber
   &\qquad\qquad+
   \iint_{Q_{\rho,+}^{(\theta)}}|\partial_t\power{g}{m}|^{\frac{2m}{2m-1}}\d x\d\tau.
\end{align}
In the last step, we used Lemma~\ref{lem:estimates}\,(i), the fact
$\rho\le1$ and the definition of $\hat u$. Moreover, from the
definition of $\hat u$, we immediately obtain the estimate
\begin{equation}
  \label{est-Dg}
  \iint_{Q_{r,-}^{(\theta)}}|D\power{\hat u}{m}|^2\d x\d t
  \le
  \iint_{Q_{r,+}^{(\theta)}}|D\power{g}{m}|^2\d x\d t.
\end{equation}
Combining the estimates~\eqref{est-sup-g} and \eqref{est-Dg} with \eqref{pre-energy-2}, we deduce the claim. 
\end{proof}

Next we prove a lemma that allows us to compare slice-wise values of
the solution between the initial time and any given point of
time. This type of lemma is termed {\it gluing lemma}, and we will use
it later in the proof of a Sobolev-type inequality near the initial boundary.

We start by recalling the gluing lemma
from the interior case, see~\cite[Lemma
3.2]{Boegelein-Duzaar-Korte-Scheven}. By applying this result to the cylinder
$Q_{\rho,+}^{(\theta)}(z_o)$ and using initial
condition~\eqref{assumption:initial_datum} in the case $t=0$, we infer the following lemma.

\begin{lemma} \label{lem:gluing-interior}
Let $m > 1$ and $u$ be a global weak solution to~\eqref{weak:PME} in the
sense of Definition~\ref{global_solution}. We consider a cylinder
$Q_{\rho}^{(\theta)}(z_o) \subset \Omega\times(-T,T)$ with
$z_o\in\Omega\times[0,T)$, $0<\rho \leq 1$ and $\theta > 0$.
Then, there exists $\hat{\rho} \in [\frac{\rho}{2}, \rho]$ such that
for any $t,\tau \in \Lambda_{\rho}^{(\theta)}(t_o)$ with $t,\tau\ge0$ we have
\begin{align*}
| (u)_{x_o;\hat{\rho}}(\tau) - (u)_{x_o;\hat{\rho}}(t)|
&\leq
\frac{c \rho^{\frac{1}{m}}}{\theta^{m-1}}
\biint_{Q_{\rho,+}^{(\theta)}(z_o)}
\Big[ \babs{D\u^m} + |F|\Big] \, \d x \d t,
\end{align*}
with  a constant $c=c(L)$. 
\end{lemma}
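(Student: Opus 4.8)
The plan is to deduce this lemma directly from the interior gluing lemma \cite[Lemma~3.2]{Boegelein-Duzaar-Korte-Scheven}, which bounds the oscillation in time of the slice-wise mean $(u)_{x_o;\hat\rho}$ of a weak solution in terms of its spatial gradient and the inhomogeneity on an interior cylinder; accordingly I would only spell out the reduction. The first step is to check that $Q_{\rho,+}^{(\theta)}(z_o)$ is a genuine interior cylinder to which that lemma applies: since $z_o=(x_o,t_o)\in\Omega\times[0,T)$ and, by hypothesis, $Q_\rho^{(\theta)}(z_o)\subset\Omega\times(-T,T)$, one has $\overline{B_\rho(x_o)}\subset\Omega$, while the time slice of $Q_{\rho,+}^{(\theta)}(z_o)$ is the open interval $\big(\max\{0,\,t_o-\theta^{1-m}\rho^{\frac{m+1}{m}}\},\,t_o+\theta^{1-m}\rho^{\frac{m+1}{m}}\big)\subset(0,T)$. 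Hence $Q_{\rho,+}^{(\theta)}(z_o)\subset\Omega_T$; moreover it is of the form $B_\rho(x_o)\times(\bar t-\bar s,\bar t+\bar s)$ with $0<\bar s\le\theta^{1-m}\rho^{\frac{m+1}{m}}$, so \cite[Lemma~3.2]{Boegelein-Duzaar-Korte-Scheven} is applicable.

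Applying that lemma to $Q_{\rho,+}^{(\theta)}(z_o)$ produces a radius $\hat\rho\in[\tfrac\rho2,\rho]$ and a constant $c=c(L)$ such that
\begin{align*}
| (u)_{x_o;\hat{\rho}}(\tau) - (u)_{x_o;\hat{\rho}}(t)|
\leq
\frac{c \rho^{\frac{1}{m}}}{\theta^{m-1}}
\biint_{Q_{\rho,+}^{(\theta)}(z_o)}
\Big[ \babs{D\u^m} + |F|\Big] \, \d x \d t
\end{align*}
holds for all $t,\tau$ in the time slice of $Q_{\rho,+}^{(\theta)}(z_o)$, i.e. for all $t,\tau\in\Lambda_\rho^{(\theta)}(t_o)$ with $t,\tau>0$. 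Here one uses that truncating the time interval only shrinks its length, so the intrinsic scaling parameter of $Q_{\rho,+}^{(\theta)}(z_o)$ is no smaller than $\theta$; this is precisely why the factor $\rho^{\frac1m}/\theta^{m-1}$ and the dependence of $c$ on $L$ alone come out as claimed, even though $Q_{\rho,+}^{(\theta)}(z_o)$ is in general a proper subcylinder of $Q_\rho^{(\theta)}(z_o)$.

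It remains to include the endpoint $t=0$, which is the only time in $\{t\in\Lambda_\rho^{(\theta)}(t_o):t\ge0\}$ not already covered and which occurs precisely when $t_o<\theta^{1-m}\rho^{\frac{m+1}{m}}$. Since $u\in C^0([0,T],L^{m+1}(\Omega,\R^N))$ and averaging over $B_{\hat\rho}(x_o)$ is a bounded linear functional on $L^{m+1}(B_{\hat\rho}(x_o))\hookrightarrow L^1(B_{\hat\rho}(x_o))$, the map $t\mapsto(u)_{x_o;\hat\rho}(t)$ is continuous on $[0,T]$; alternatively, \eqref{assumption:initial_datum} together with Lemma~\ref{lem:estimates}\,(i) identifies $\lim_{t\downarrow0}(u)_{x_o;\hat\rho}(t)=(g_0)_{x_o;\hat\rho}$. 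In either case one passes to the limit $t\downarrow0$ in the estimate above to obtain it also for $t=0$, with the same $\hat\rho$ and $c$, which gives the assertion for all $t,\tau\in\Lambda_\rho^{(\theta)}(t_o)$ with $t,\tau\ge0$. I expect the only point requiring a little care to be the verification that \cite[Lemma~3.2]{Boegelein-Duzaar-Korte-Scheven} indeed applies to the possibly time-asymmetric truncated cylinder $Q_{\rho,+}^{(\theta)}(z_o)$ and that passing to it from $Q_\rho^{(\theta)}(z_o)$ (which itself need not lie inside $\Omega_T$) is harmless; everything else is a direct quotation of the cited result combined with the time continuity of $u$.
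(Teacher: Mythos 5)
Your proposal is correct and follows exactly the route the paper takes: the paper derives this lemma in one sentence by applying the interior gluing lemma of B\"ogelein--Duzaar--Korte--Scheven to the truncated cylinder $Q_{\rho,+}^{(\theta)}(z_o)$ and invoking the initial condition \eqref{assumption:initial_datum} for the endpoint $t=0$. Your additional verifications (that the truncated cylinder lies in $\Omega_T$ and that shrinking the time interval is harmless for the scaling factor) are the right points to check and are consistent with the paper's intent.
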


We extend this result to a version adapted to the initial boundary. 

\begin{lemma} \label{lem:gluing-new}
Let $m > 1$ and $u$ be a global weak solution to~\eqref{weak:PME} in the
sense of Definition~\ref{global_solution}. We consider a cylinder
$Q_{\rho}^{(\theta)}(z_o) \subset \Omega\times(-T,T)$ with
$z_o\in\Omega\times[0,T)$, $0<\rho\le 1$ and $\theta > 0$.
Then, there exists $\hat{\rho} \in [\frac{\rho}{2}, \rho]$ such that
for any $t,\tau \in \Lambda_{\rho}^{(\theta)}(t_o)$ we have
\begin{align*}
&\frac1\rho\big| \power{(\hat u)_{x_o;\hat{\rho}}}{m}(\tau) - \power{(\hat u)_{x_o;\hat{\rho}}}{m}(t)\big|\\
&\qquad\leq
c\frac{\Theta_{\tau,t}^{m-1}}{\theta^{m-1}}
\biint_{Q_{\rho,+}^{(\theta)}(z_o)}
\Big[ \babs{D\u^m} + |F|\Big] \, \d x \d t\\
&\qquad\qquad+
c\biint_{Q_{\rho,+}^{(\theta)}(z_o)}|D\power gm|\d x\d t
+
\bigg(\frac{c\,\Theta_{\tau,t}^{m-1}}{\theta^{m-1}}\biint_{Q_{\rho,+}^{(\theta)}(z_o)}|\partial_t \power gm|\d x\d t\bigg)^{\frac{m}{2m-1}},
\end{align*}
with a constant $c=c(m,L)$, where we abbreviated
\begin{equation*}
  \Theta_{\tau,t}
  :=
  \bigg(\mint_{B_\rho(x_o)}\frac{|\hat u(\tau)|^m+|\hat u(t)|^m}{\rho}\d x\bigg)^{\frac1m}.
\end{equation*}

\end{lemma}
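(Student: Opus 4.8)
The plan is to reduce the assertion to the interior gluing lemma (Lemma~\ref{lem:gluing-interior}) applied on $Q_{\rho,+}^{(\theta)}(z_o)$, and to absorb the contribution of the reflected boundary datum on $\{t<0\}$ by means of the regularity of $g$ encoded in~\eqref{assumption:g}. First I would call upon Lemma~\ref{lem:gluing-interior} to produce the radius $\hat\rho\in[\tfrac\rho2,\rho]$ together with the estimate
\[
\babs{(u)_{x_o;\hat\rho}(\tau')-(u)_{x_o;\hat\rho}(t')}\le\frac{c\rho^{\frac1m}}{\theta^{m-1}}\biint_{Q_{\rho,+}^{(\theta)}(z_o)}\big(\babs{D\u^m}+\abs{F}\big)\d x\d t,
\]
valid for all $t',\tau'\in\Lambda_\rho^{(\theta)}(t_o)$ with $t',\tau'\ge0$, where the endpoint $t'=0$ is read as $(u)_{x_o;\hat\rho}(0)=(g_0)_{x_o;\hat\rho}$; this is legitimate by the initial condition~\eqref{assumption:initial_datum} together with Lemma~\ref{lem:estimates}\,(i). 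If $t_o\ge\theta^{1-m}\rho^{\frac{m+1}m}$ then $\hat u=u$ on the whole cylinder and only this step and the reduction below are needed, so one may assume $t_o<\theta^{1-m}\rho^{\frac{m+1}m}$, so that negative times do occur.

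Next I would pass from the $m$-th powers of the averages to the averages themselves. Writing $a:=(\hat u)_{x_o;\hat\rho}(\tau)$ and $b:=(\hat u)_{x_o;\hat\rho}(t)$, Lemma~\ref{lem:estimates}\,(ii) yields $\babs{\power am-\power bm}\le c\big(\abs a^{m-1}+\abs b^{m-1}\big)\abs{a-b}$, while by Jensen's inequality and $\abs{B_\rho}\le2^n\abs{B_{\hat\rho}}$ one has $\abs a,\abs b\le c\rho^{1/m}\Theta_{\tau,t}$; consequently
\[
\tfrac1\rho\babs{\power{(\hat u)_{x_o;\hat\rho}}m(\tau)-\power{(\hat u)_{x_o;\hat\rho}}m(t)}\le c\,\rho^{-\frac1m}\Theta_{\tau,t}^{m-1}\,\babs{(\hat u)_{x_o;\hat\rho}(\tau)-(\hat u)_{x_o;\hat\rho}(t)}.
\]
Thus it remains to estimate the increment $\babs{(\hat u)_{x_o;\hat\rho}(\tau)-(\hat u)_{x_o;\hat\rho}(t)}$. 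Assuming $\tau\le t$ without loss of generality and inserting the value $(g_0)_{x_o;\hat\rho}=(u)_{x_o;\hat\rho}(0)$, I would split it as $\big[(\hat u)_{x_o;\hat\rho}(\tau)-(g_0)_{x_o;\hat\rho}\big]-\big[(\hat u)_{x_o;\hat\rho}(t)-(g_0)_{x_o;\hat\rho}\big]$. Whenever the relevant time is $\ge0$, the corresponding bracket is controlled by $\tfrac{c\rho^{1/m}}{\theta^{m-1}}\biint_{Q_{\rho,+}^{(\theta)}}(\babs{D\u^m}+\abs F)\d x\d t$ via the first step, and after multiplication by $c\rho^{-1/m}\Theta_{\tau,t}^{m-1}$ this produces precisely the first term on the right-hand side of the claim.

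For a negative time, say $\tau<0$ (the case $t<0$ being analogous, and if $\tau,t<0$ the whole increment is treated this way), one has $\hat u(\cdot,\tau)=g(\cdot,-\tau)$, so the bracket equals $(g)_{x_o;\hat\rho}(-\tau)-(g)_{x_o;\hat\rho}(0)$. To estimate this I would convert pointwise increments of $g$ into increments of $\power gm$ by Lemma~\ref{lem:estimates}\,(i) and subtract, at each fixed time, the linear spatial mean $(\power gm)_{x_o;\hat\rho}$ of $\power gm$ over $B_{\hat\rho}$; the resulting two ``spatial oscillation'' contributions are then controlled by the Poincar\'e inequality applied to $\power gm$ on $B_{\hat\rho}$ --- this is the channel through which the term $c\biint_{Q_{\rho,+}^{(\theta)}}\babs{D\power gm}$ appears --- while the remaining purely time-dependent term $\babs{(\power gm)_{x_o;\hat\rho}(-\tau)-(\power gm)_{x_o;\hat\rho}(0)}$ is handled by the fundamental theorem of calculus in time (admissible since $\power gm(x,\cdot)$ is absolutely continuous for a.e.~$x$ by~\eqref{assumption:g}), Fubini's theorem and Jensen's inequality, followed by an enlargement of the domain of integration from the time interval $(0,-\tau)$ and $B_{\hat\rho}$ to $Q_{\rho,+}^{(\theta)}(z_o)$ (legitimate since $t_o<\theta^{1-m}\rho^{\frac{m+1}m}$ forces $(0,-\tau)$ into the time-interval of $Q_{\rho,+}^{(\theta)}$ and $\abs{Q_{\rho,+}^{(\theta)}}\ge\tfrac12\abs{Q_\rho^{(\theta)}}$). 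After multiplying by $c\rho^{-1/m}\Theta_{\tau,t}^{m-1}$ and discarding a favourable power of $\rho$ (allowed since $\rho\le1$), this leaves a single term of the shape $c\,\Theta_{\tau,t}^{m-1}\theta^{\frac{1-m}m}\big(\biint_{Q_{\rho,+}^{(\theta)}}\babs{\partial_t\power gm}\big)^{\frac1m}$.

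Finally I would absorb this last term using Young's inequality with the conjugate exponents $\big(\tfrac m{m-1}\big)^2$ and $\tfrac{m^2}{2m-1}$, splitting the prefactor as $\Theta_{\tau,t}^{m-1}=\Theta_{\tau,t}^{(m-1)^2/m}\cdot\Theta_{\tau,t}^{(m-1)/m}$: the second factor combines with $\theta^{(1-m)/m}$ and $\big(\biint\babs{\partial_t\power gm}\big)^{1/m}$ to produce exactly $\big(\tfrac{c\,\Theta_{\tau,t}^{m-1}}{\theta^{m-1}}\biint_{Q_{\rho,+}^{(\theta)}}\babs{\partial_t\power gm}\big)^{\frac m{2m-1}}$, while the contribution of the first factor is kept together with the Poincar\'e terms and reabsorbed into $c\biint_{Q_{\rho,+}^{(\theta)}}\babs{D\power gm}$. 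I expect the main obstacle to be precisely this bookkeeping near the initial time: the conversions between $g$ and $\power gm$ each cost a power of $\abs g$, and one must arrange them --- in particular the weighting against the quantity $\Theta_{\tau,t}$ coming from Lemma~\ref{lem:estimates}\,(ii) and against the intrinsic factor $\theta^{1-m}$ --- so that no spurious term survives and so that all sign configurations of $\tau$ and $t$, including the case where both lie below $t=0$, are covered uniformly.
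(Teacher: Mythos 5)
Your overall architecture is the same as the paper's: produce $\hat\rho$ from Lemma~\ref{lem:gluing-interior} on $Q_{\rho,+}^{(\theta)}(z_o)$, treat $t<0$ via the reflected datum, and split at $t=0$ in the mixed case. However, there is a genuine gap in how you handle every contribution coming from a negative time. Your uniform reduction
\begin{equation*}
\tfrac1\rho\babs{\power{(\hat u)_{x_o;\hat\rho}}{m}(\tau)-\power{(\hat u)_{x_o;\hat\rho}}{m}(t)}
\le c\,\rho^{-\frac1m}\Theta_{\tau,t}^{m-1}\,\babs{(\hat u)_{x_o;\hat\rho}(\tau)-(\hat u)_{x_o;\hat\rho}(t)}
\end{equation*}
works perfectly for the flux part (positive times), because Lemma~\ref{lem:gluing-interior} bounds the difference of the averages of $\hat u$ itself. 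But for negative times the natural quantity controlled by \eqref{assumption:g} is an increment of $\power gm$, and converting back to an increment of $g$ via Lemma~\ref{lem:estimates}\,(i) costs a $\tfrac1m$-th power. After your prefactor $c\rho^{-1/m}\Theta_{\tau,t}^{m-1}$ is applied, both the Poincar\'e contribution and the time-derivative contribution therefore come out as $c\,\Theta_{\tau,t}^{m-1}X^{1/m}$ rather than $X$ (respectively the asserted power $\tfrac{m}{2m-1}$ of $X$). The Young step you propose to repair this necessarily produces the complementary term $c\,\Theta_{\tau,t}^{m}$, which is \emph{not} controlled by any term on the right-hand side of the lemma and cannot be reabsorbed into the left-hand side either, since the left-hand side is only bounded \emph{above} by $\Theta_{\tau,t}^m$ (it can vanish while $\Theta_{\tau,t}$ is huge). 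Your remark that this factor is ``reabsorbed into $c\biint|D\power gm|$'' has no mechanism behind it.

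The paper closes exactly this hole with two devices you are missing. First, for $t,\tau\le0$ it never introduces a $\Theta$-factor: it estimates the difference of the \emph{means of} $\power gm$ directly by the fundamental theorem of calculus, and compares the mean of the power with the power of the mean via Lemma~\ref{lem:uavetoa} and Poincar\'e --- that is the true origin of the $\biint|D\power gm|$ term, with no residual exponent $\tfrac1m$. Second, to convert the resulting ``raw'' term $c\,\rho^{1/m}\theta^{1-m}\biint|\partial_t\power gm|$ into the asserted form, it multiplies the combined inequality by $L^{m-1}$, where $L$ denotes the left-hand side, uses $L\le\Theta_{\tau,t}^m$ to generate the factor $\Theta_{\tau,t}^{m-1}$ where needed, applies Young so that the leftover is $\tfrac12L^m$, absorbs it into $L^m$ on the left, and takes the $m$-th root. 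Without this absorption-at-the-level-of-$L^m$ step (or an equivalent), your argument does not yield the stated estimate in the cases $\tau\le 0$ or $t\le 0$.
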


\begin{proof}
  Throughout the proof, we omit the reference to the center $z_o$ in
  the notation. We choose the radius $\hat{\rho} \in [\frac{\rho}{2}, \rho]$ that is
  provided by Lemma \ref{lem:gluing-interior}. We follow different
  strategies depending on whether the considered times are positive or negative. 
  In the case $t,\tau\ge0$, we combine Lemma~\ref{lem:gluing-interior}
  with Lemma~\ref{lem:estimates}\,(ii) to obtain
  \begin{align}\label{gluing-positive-times}
    &\frac1\rho\big| \power{(\hat u)_{\hat{\rho}}}{m}(\tau) -
    \power{(\hat u)_{\hat{\rho}}}{m}(t)\big|\\\nonumber
    &\qquad\le
    \frac c\rho (\big|(\hat u)_{\hat{\rho}}(\tau)|+|(\hat
    u)_{\hat{\rho}}(t)|\big)^{m-1}\big|(\hat
    u)_{\hat{\rho}}(\tau)-(\hat u)_{\hat{\rho}}(t)\big|\\\nonumber
    &\qquad\le \frac{c\,\Theta_{\tau,t}^{m-1}}{\theta^{m-1}}
    \biint_{Q_{\rho,+}^{(\theta)}}\Big[ \babs{D\u^m} + |F|\Big] \, \d x \d t.
  \end{align}
  Next, we consider the case
  $t,\tau\le0$, in which we can estimate
  \begin{align}\label{gluing-g-1}
    \frac1\rho\mint_{B_\rho}\big|\power{\hat u}{m}(t)-\power{\hat
      u}{m}(\tau)\big|\d x
    &=
    \frac1\rho
    \mint_{B_\rho}\big|\power{g}{m}(-t)-\power{g}{m}(-\tau)\big|\d x\\\nonumber
    &\le
    \frac1\rho\mint_{B_\rho}\bigg|\int_{-t}^{-\tau}\partial_t\power gm(s)\d
    s\bigg|\d x\\\nonumber
    &\le
    \frac{\rho^{\frac{1}{m}}}{\theta^{m-1}}
    \biint_{Q_{\rho,+}^{(\theta)}}|\partial_t\power
      gm|\d x\d s.
  \end{align}
  We apply Lemma \ref{lem:uavetoa}, the definition of $\hat u$ and Poincar\'e's inequality
  in order to estimate
  \begin{align*}
    &\frac1\rho
    \big|(\power{\hat u}{m})_{\hat{\rho}}(t)-\power{(\hat u)_{\hat{\rho}}}{m}(t)\big|\\
    &\qquad\le
    \frac c\rho\mint_{B_\rho}\big|\power{\hat u}{m}(t)-(\power{\hat u}{m})_{\rho}(t)\big|\d x
    =
    \frac c\rho\mint_{B_\rho}\big|\power{g}{m}(-t)-(\power{g}{m})_{\rho}(-t)\big|\d x\\
    &\qquad\le
    \frac c\rho\bint_{\Lambda_{\rho,+}^{(\theta)}}
    \Bigg[\mint_{B_\rho}\big|\power{g}{m}(s)-(\power{g}{m})_{\rho}(s)\big|\d
      x
      +
      \int_{s}^{-t}\mint_{B_\rho}|\partial_t\power gm|(\sigma)\d x\d 
     \sigma\Bigg]\d s\\
    &\qquad\le
    c\,\biint_{Q_{\rho,+}^{(\theta)}}|D\power gm|\d x\d t
    +
    c\frac{\rho^{\frac1m}}{\theta^{m-1}}\biint_{Q_{\rho,+}^{(\theta)}}|\partial_t\power
    gm|\d x\d t.
  \end{align*}
  Obviously, the same estimate holds true for $\tau$ in place of
  $t$. From the two preceding estimates, we deduce 
  \begin{align}\label{gluing-negative-times}
    &\frac1\rho\big| \power{(\hat u)_{\hat{\rho}}}{m}(\tau) -
    \power{(\hat u)_{\hat{\rho}}}{m}(t)\big|\\\nonumber
    &\qquad\le
    \frac1\rho\big| (\power{\hat u}{m})_{\hat{\rho}}(\tau) -
    (\power{\hat u}{m})_{\hat{\rho}}(t)\big|\\\nonumber
    &\qquad\qquad
    +\frac1\rho\big| (\power{\hat u}{m})_{\hat{\rho}}(\tau) -
    \power{(\hat u)_{\hat{\rho}}}{m}(\tau)\big|
    +\frac1\rho\big| (\power{\hat u}{m})_{\hat{\rho}}(t) -
    \power{(\hat u)_{\hat{\rho}}}{m}(t)\big|\\\nonumber
    &\qquad\le
    c\,\biint_{Q_{\rho,+}^{(\theta)}}|D\power gm|\d x\d t
    +
    c\frac{\rho^{\frac1m}}{\theta^{m-1}}\biint_{Q_{\rho,+}^{(\theta)}}|\partial_t\power
    gm|\d x\d t
  \end{align}
  for any $\tau,t\in\Lambda_\rho^{(\theta)}$ with $\tau,t\le0$. 
  It remains to consider the case $t<0<\tau$. In this case we combine
  the estimates \eqref{gluing-positive-times} with $t=0$ and
  \eqref{gluing-negative-times} with $\tau=0$ and deduce
  \begin{align*}
    &\frac1\rho\big| \power{(\hat u)_{\hat{\rho}}}{m}(\tau) -
    \power{(\hat u)_{\hat{\rho}}}{m}(t)\big|\\
    &\qquad\le
    \frac1\rho\big| \power{(\hat u)_{\hat{\rho}}}{m}(\tau) -
    \power{(g_0)_{\hat{\rho}}}{m}\big|
    +
    \frac1\rho\big| \power{(g_0)_{\hat{\rho}}}{m} -
    \power{(\hat u)_{\hat{\rho}}}{m}(t)\big|\\
    &\qquad\le
    \frac{c\,\Theta_{\tau,0}^{m-1}}{\theta^{m-1}}
    \biint_{Q_{\rho,+}^{(\theta)}}
    \Big[ \babs{D\u^m} + |F|\Big] \, \d x \d t\\
    &\qquad\qquad+
    c\,\biint_{Q_{\rho,+}^{(\theta)}}|D\power gm|\d x\d t
    +
    c\frac{\rho^{\frac1m}}{\theta^{m-1}}\biint_{Q_{\rho,+}^{(\theta)}}|\partial_t\power
    gm|\d x\d t.
  \end{align*}
  Using \eqref{gluing-g-1} with $\tau=0$, the term $\Theta_{\tau,0}^{m-1}$ can be bounded as follows.
  \begin{align*}
    \Theta_{\tau,0}^{m-1}
    &\le
    c\Theta_{\tau,t}^{m-1}+
    \bigg(\frac c\rho\mint_{B_\rho}|\power{g_0}{m}-\power{\hat u}{m}(t)|\d x\bigg)^{\frac{m-1}{m}}\\
    &\le
    c\Theta_{\tau,t}^{m-1}+
    c\bigg(\frac{\rho^{\frac{1}{m}}}{\theta^{m-1}}\biint_{Q_{\rho,+}^{(\theta)}}|\partial_t\power{g}{m}|\d x\d t\bigg)^{\frac{m-1}{m}}.
  \end{align*}
  Plugging this into the preceding estimate and applying Young's
  inequality with exponents $\frac{m}{m-1}$ and $m$, we arrive at
    \begin{align*}
    &\frac1\rho\big| \power{(\hat u)_{\hat{\rho}}}{m}(\tau) -
    \power{(\hat u)_{\hat{\rho}}}{m}(t)\big|\\
    &\le
    \frac{c\,\Theta_{\tau,t}^{m-1}}{\theta^{m-1}}
    \biint_{Q_{\rho,+}^{(\theta)}}
    \Big[ \babs{D\u^m} + |F|\Big] \, \d x \d t
    +
    \bigg(\frac{c}{\theta^{m-1}}
    \biint_{Q_{\rho,+}^{(\theta)}}
    \Big[ \babs{D\u^m} + |F|\Big] \, \d x \d t\bigg)^m\\
    &\qquad+
    c\,\biint_{Q_{\rho,+}^{(\theta)}}|D\power gm|\d x\d t
    +
    c\frac{\rho^{\frac1m}}{\theta^{m-1}}\biint_{Q_{\rho,+}^{(\theta)}}|\partial_t\power
    gm|\d x\d t.
  \end{align*}
  In view of Estimates~\eqref{gluing-positive-times} and
  \eqref{gluing-negative-times}, this estimates holds in any case,
  i.e. for arbitrary times $t,\tau\in\Lambda_\rho^{(\theta)}$.
  We multiply the preceding estimate with
  \begin{equation*}
    \bigg(\frac1\rho\big| \power{(\hat u)_{\hat{\rho}}}{m}(\tau) -
    \power{(\hat u)_{\hat{\rho}}}{m}(t)\big|\bigg)^{m-1}
  \end{equation*}
  and use the estimates
  $\frac1\rho| \power{(\hat u)_{\hat{\rho}}}{m}(\tau) -
  \power{(\hat u)_{\hat{\rho}}}{m}(t)|\le\Theta_{\tau,t}^m$ and $\rho\le1$. This
  leads to the bound
  \begin{align*}
    &\bigg(\frac1\rho\big| \power{(\hat u)_{\hat{\rho}}}{m}(\tau) -
    \power{(\hat u)_{\hat{\rho}}}{m}(t)\big|\bigg)^m\\
    &\le
    c\bigg(\frac1\rho\big| \power{(\hat u)_{\hat{\rho}}}{m}(\tau) -
    \power{(\hat u)_{\hat{\rho}}}{m}(t)\big|\bigg)^{m-1}\\
    &\qquad\qquad\qquad\cdot\Bigg[\frac{\Theta_{\tau,t}^{m-1}}{\theta^{m-1}}
    \biint_{Q_{\rho,+}^{(\theta)}}
    \Big[ \babs{D\u^m} + |F|\Big] \, \d x \d t
    +
    \biint_{Q_{\rho,+}^{(\theta)}}|D\power gm|\d x\d t\Bigg]\\
    &\qquad+
    c\bigg(\frac{\Theta_{\tau,t}^{m-1}}{\theta^{m-1}}
    \biint_{Q_{\rho,+}^{(\theta)}}
    \Big[ \babs{D\u^m} + |F|\Big] \, \d x \d t\bigg)^m\\
    &\qquad +
    c\bigg(\frac1\rho\big| \power{(\hat u)_{\hat{\rho}}}{m}(\tau) -
    \power{(\hat u)_{\hat{\rho}}}{m}(t)\big|\bigg)^{\frac{(m-1)^2}{m}}
    \frac{\Theta_{\tau,t}^{m-1}}{\theta^{m-1}}\biint_{Q_{\rho,+}^{(\theta)}}|\partial_t\power
    gm|\d x\d t\\
    &\le \frac12\bigg(\frac1\rho\big| \power{(\hat u)_{\hat{\rho}}}{m}(\tau) -
    \power{(\hat u)_{\hat{\rho}}}{m}(t)\big|\bigg)^{m}\\
    &\qquad+
    c\Bigg[\frac{\Theta_{\tau,t}^{m-1}}{\theta^{m-1}}
    \biint_{Q_{\rho,+}^{(\theta)}}
    \Big[ \babs{D\u^m} + |F|\Big] \, \d x \d t
    +
    \biint_{Q_{\rho,+}^{(\theta)}}|D\power gm|\d x\d t\Bigg]^m\\
    &\qquad+
    c\bigg(\frac{\Theta_{\tau,t}^{m-1}}{\theta^{m-1}}\biint_{Q_{\rho,+}^{(\theta)}}|\partial_t\power
    gm|\d x\d t\bigg)^{\frac{m^2}{2m-1}},
  \end{align*}
  where in the last step we applied Young's inequality, once with
  exponents $\frac{m}{m-1}$ and $m$ and a second time with
  $\frac{m^2}{(m-1)^2}$ and $\frac{m^2}{2m-1}$. We re-absorb the first
  term of the right-hand side into the left and take the $m$th root of
  both sides. This yields the asserted estimate. 
\end{proof}

\section{Sobolev Poincar\'e type inequalities}
\label{sec:poincare}

\subsection{Estimates near the lateral boundary}

The next lemma is an adoption of Lemma 4.2 of \cite{Boegelein-Parviainen}. However, for the sake of completeness we will state a proof.
	
\begin{lemma}
\label{lem:Poincare}
Let $u$ be a global weak solution in the sense of Definition \ref{global_solution} and assume that $\R^n\setminus \Omega$ is uniformly $2$-thick. Moreover, consider a cylinder $Q_{\rho,s}(z_o) \subset \R^{n+1}$ with $B_{\rho/3}(x_o)\setminus \Omega \neq \emptyset$. Then there exists  $\gamma=\gamma(n,\mu) \in (1,2)$ such that for any $\gamma \leq \vartheta \leq 2$ we have
$$
\iint_{Q_{\rho,s}(z_o)\cap \Omega_T} |\u^m-\g^m|^\vartheta \d x \d t \leq c \rho^\vartheta \iint_{Q_{\rho,s}(z_o)\cap \Omega_T} |D(\u^m-\g^m)|^\vartheta \d x\d t,
$$
where $c=c(n,N,\mu,\rho_o,\vartheta)$.
\end{lemma}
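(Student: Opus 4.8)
\noindent\emph{Proof proposal.}
The plan is to derive the space--time estimate from a slice-wise capacitary Sobolev--Poincar\'e inequality on the ball $B_\rho(x_o)$, applied for a.e.\ time, and then to integrate in $t$. Abbreviate $v:=\u^m-\g^m$. By Definition~\ref{global_solution}, $v(\cdot,t)\in W^{1,2}_0(\Omega,\R^N)$ for a.e.\ $t\in(0,T)$; since $\Omega$ is bounded, the extension of $v(\cdot,t)$ by zero to $\R^n$ lies in $W^{1,2}(\R^n,\R^N)$, with weak gradient equal to the zero extension of $D(\u^m-\g^m)(\cdot,t)$, and because $\vartheta\le2$ its restriction to $B_\rho(x_o)$ belongs to $W^{1,\vartheta}(B_\rho(x_o),\R^N)$. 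Passing to the $\vartheta$-quasicontinuous representative of each component, this extension vanishes $\vartheta$-q.e.\ on $\R^n\setminus\Omega$. To fix the admissible exponents: since $2\le n$, Theorem~\ref{theo:p-thick} provides $\gamma=\gamma(n,\mu)\in(1,2)$ for which $\R^n\setminus\Omega$ is uniformly $\gamma$-thick, and Lemma~\ref{lem:p-thick} then yields uniform $\vartheta$-thickness for every $\vartheta\in[\gamma,2]$; fix such a $\vartheta$.

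The central step is the capacity density estimate
\[
  \ca_\vartheta\big(\overline{B}_{\rho/2}(x_o)\setminus\Omega,\,B_\rho(x_o)\big)\ \ge\ c\,\rho^{\,n-\vartheta},\qquad c=c(n,\mu,\rho_o,\vartheta),
\]
which I would establish by distinguishing the position of $x_o$. If $x_o\in\Omega$, the hypothesis $B_{\rho/3}(x_o)\setminus\Omega\neq\emptyset$ allows an application of Lemma~\ref{lem:est_cap} with centre $x_o$ and radius $\rho/4$, which outputs directly $\ca_\vartheta(\overline{B}_{\rho/2}(x_o)\setminus\Omega,B_\rho(x_o))\ge\tilde\mu\,\ca_\vartheta(\overline{B}_{\rho/2}(x_o),B_\rho(x_o))$. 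If instead $x_o\in\R^n\setminus\Omega$, which is closed, I would apply the defining property of uniform $\vartheta$-thickness from Definition~\ref{def:p-thick} with centre $x_o$ and radius $\rho/2$, using $(\R^n\setminus\Omega)\cap\overline{B}_{\rho/2}(x_o)=\overline{B}_{\rho/2}(x_o)\setminus\Omega$. In both cases \eqref{cap:ball} identifies $\ca_\vartheta(\overline{B}_{\rho/2}(x_o),B_\rho(x_o))$ with a constant multiple of $\rho^{n-\vartheta}$, which proves the claim in the range of radii for which the thickness condition is effective; larger radii are reduced to this case using the boundedness of $\Omega$.

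With this at hand, I would invoke Lemma~\ref{lem:quasicont}. For a.e.\ $t$ and each component $v^i(\cdot,t)$, $i=1,\dots,N$, the set $N_{B_{\rho/2}(x_o)}(v^i(\cdot,t))$ contains $\overline{B}_{\rho/2}(x_o)\setminus\Omega$ up to a set of $\vartheta$-capacity zero, so monotonicity and subadditivity of the variational capacity combined with the previous step give $\ca_\vartheta(N_{B_{\rho/2}(x_o)}(v^i(\cdot,t)),B_\rho(x_o))\ge c\,\rho^{n-\vartheta}$. Lemma~\ref{lem:quasicont}, summed over $i$ and combined with $|B_\rho(x_o)|\simeq\rho^n$, then yields
\[
  \int_{B_\rho(x_o)}|v(\cdot,t)|^\vartheta\,\dx\ \le\ c\,\rho^{\vartheta}\int_{B_\rho(x_o)}|Dv(\cdot,t)|^\vartheta\,\dx .
\]
Because the zero extension of $v(\cdot,t)$ is supported in $\overline{\Omega}$ and its gradient is the zero extension of $D(\u^m-\g^m)(\cdot,t)$, both integrals reduce to integrals over $B_\rho(x_o)\cap\Omega$. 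Integrating over $t\in(t_o-s,t_o+s)\cap(0,T)$ and applying Fubini's theorem then produces the asserted inequality.

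I expect the only delicate point to be the bookkeeping in the capacity density estimate: arranging the radii so that the monotonicity of the variational capacity in both the condenser set and the ambient ball points in the favourable direction, and correctly handling the position of $x_o$ relative to $\Omega$ together with the radius threshold $\rho_o$ entering the thickness condition. The remaining ingredients are a direct assembly of the quoted lemmas and an application of Fubini's theorem.
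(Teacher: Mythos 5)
Your proposal is correct and follows essentially the same route as the paper: extend $\u^m-\g^m$ by zero, apply the capacitary Sobolev inequality of Lemma~\ref{lem:quasicont} on each time slice, bound the capacity of the zero set from below via the uniform $\vartheta$-thickness (Theorem~\ref{theo:p-thick}, Lemmas~\ref{lem:p-thick} and \ref{lem:est_cap} together with \eqref{cap:ball}), and integrate in time. The extra care you take with the position of $x_o$ relative to $\Omega$ and with quasicontinuity only makes explicit details the paper leaves implicit.
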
	

\begin{proof}[Proof]
Let $\gamma=\gamma(n,\mu)\in (1,2)$ be the constant from Theorem \ref{theo:p-thick}. Then, by Lemma \ref{lem:p-thick} we know that $\R^n\setminus \Omega$ is uniformly $\vartheta$-thick for any $\gamma\leq \vartheta \leq 2$. 

We can extend $\u^m-\g^m$ outside of $\Omega_T$ by zero (still denoted in the same way) and define for fixed $t \in (t_o-s,t_o+s)\cap (0,T)$ the set
$$
N_{B_{\rho/2}(x_o)} :=\{x \in \overline B_{\rho/2}(x_o): (\u^m-\g^m)(x,t)=0\} .
$$
Using Lemma \ref{lem:quasicont} shows
\begin{align*}
\int_{B_\rho(x_o)\cap \Omega}& \big|(\u^m-\g^m)(\cdot,t)\big|^\vartheta \d x \\
 &=\int_{B_\rho(x_o)} \big|(\u^m-\g^m)(\cdot,t)\big|^\vartheta \d x \\
& \leq \frac{c\rho^n}{\ca_\vartheta(N_{B_{\rho/2}(x_o)},B_\rho(x_o))} \int_{B_\rho(x_o)} \big|D(\u^m-\g^m)\big|^\vartheta \d x
\end{align*}
for a.e. $t\in (t_o-s,t_o+s)\cap (0,T)$, with a constant $c$ depending only on $n,N,\vartheta$. Since $\R^n\setminus \Omega$ is uniformly $\vartheta$-thick, Lemma \ref{lem:est_cap} and \eqref{cap:ball} imply
$$
\ca_\vartheta (N_{B_{\rho/2}(x_o)},B_\rho(x_o)) \geq \tilde \mu \ca_\vartheta (\overline B_{\rho/2}(x_o),B_\rho(x_o))=c\rho^{n-\vartheta},
$$
where $\tilde \mu =\tilde \mu (n,\mu,\rho_o,\vartheta)$. Combining the previous estimates leads to 
$$
\int_{B_\rho(x_o)\cap\Omega} \big|(\u^m-\g^m)(\cdot,t) \big|^\vartheta \d x \leq c \rho^\vartheta \int_{B_\rho(x_o)\cap \Omega} \big|D(\u^m-\g^m)(\cdot,t)\big|^\vartheta \d x.
$$
Finally, integrating this inequality with respect to $t$ over $(t_o-s,t_o+s)\cap(0,T)$ finishes the proof of the Lemma.
\end{proof}

Next, we are going to prove a different version of a Sobolev-type
inequality.
To this end, we assume that the boundary values are extended to a
function $g\in L^{2+\eps}(0,T;W^{1,2+\eps}(\Omega,\R^N))$,
which is possible since $\Omega$ is an extension domain.
Moreover, we extend the solution and the boundary values across the initial
boundary by letting 
\begin{equation}\label{def:uhat}
\hat u = \left\{
\begin{array}{cl}
u, & t\geq 0 \\
g(-t), & t<0
\end{array}
\right.
\qquad\mbox{and}\qquad
\hat g = \left\{
\begin{array}{cl}
g, & t\geq 0 \\
g(-t), & t<0
\end{array}
\right.
\end{equation}
Note that $\hat \u^m -\hat \g^m=0$ outside of $\Omega_T$. For the
proof of the Sobolev-type inequality, we assume that the cylinders $Q_\rho^{(\theta)}$ satisfy the sub-intrinsic scaling
\begin{align}
\label{sub-intrinsic}
\biint_{Q_{\rho}^{(\theta)}(z_o)} 2\frac{|\hat\u^m-\hat\g^m|^2+|\hat g|^{2m}}{\rho^2} \d x\d t\leq 2^{d+2} \theta^{2m}.
\end{align}
 We observe that \eqref{sub-intrinsic} implies that
\begin{align}
\label{sub-intrinsic_2}
\frac{1}{|Q_{\rho}^{(\theta)}(z_o)|}\iint_{Q_{\rho}^{(\theta)}(z_o)\cap \Omega_T} 2\frac{|\u^m-\g^m|^2+|g|^{2m}}{\rho^2} \d x\d t\leq 2^{d+2} \theta^{2m}
\end{align}
holds true.

\begin{lemma}
\label{lem:Poincare-Sobolev}
Let $u$ be a global weak solution in the sense of Definition
\ref{global_solution} and assume that $\R^n\setminus \Omega$ is
uniformly $2$-thick. Moreover, consider a cylinder
$Q_{\rho}^{(\theta)}(z_o)\subset \R^{n+1}$ with
$\dist(B_{\rho}(x_o), \partial\Omega)=0$ that satisfies the sub-intrinsic scaling \eqref{sub-intrinsic}. Then there exists  $q=q(n,\mu) \in (1,2)$ such that for every $\varepsilon \in (0,1)$
\begin{align*}
\frac{1}{|Q_\rho^{(\theta)}(z_o)|}&\iint_{Q_{\rho}^{(\theta)}(z_o)\cap \Omega_T} \frac{|\u^m-\g^m|^2}{\rho^2} \d x \d t \\
& \leq   \varepsilon \sup_{t\in \Lambda_\rho^{(\theta)} \cap (0,T)} \frac{1}{|B_\rho(x_o)|} \int_{B_\rho(x_o)\cap \Omega} \theta^{m-1} \frac{\b[\u^m(t),\g^m(t)]}{\rho^{\frac{m+1}{m}}} \d x \\
&\quad+  c\varepsilon^{-\frac{4m-2mq}{mq+q}}\left[\frac{1}{|Q_{4\rho}^{(\theta)}(z_o)|} \iint_{Q_{4\rho}^{(\theta)}(z_o)\cap \Omega_T} |D(\u^m-\g^m)|^q \d x\d t\right]^{\frac 2 q} ,
\end{align*}
where $c=c(n,m,N,\mu,\rho_o)$.
\end{lemma}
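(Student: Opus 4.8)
The plan is to interpolate the $L^2$-norm of $\u^m - \g^m$ on the left-hand side between a zeroth-order quantity controlled by $\theta$ (via the sub-intrinsic scaling) and a Sobolev-type gradient bound in the lower exponent $q$ coming from Lemma~\ref{lem:Poincare}. First I would fix the exponent $q = \gamma(n,\mu) \in (1,2)$ from Theorem~\ref{theo:p-thick} (equivalently $\gamma$ from Lemma~\ref{lem:Poincare}), so that the uniform $2$-thickness of $\R^n\setminus\Omega$ upgrades to uniform $q$-thickness and Lemma~\ref{lem:Poincare} applies on slices. I extend $\u^m-\g^m$ by zero outside $\Omega_T$ (legitimate since $(\u^m-\g^m)(\cdot,t)\in W^{1,2}_0(\Omega)$), which is what makes the capacitary Poincaré inequality available.

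\medskip\noindent\textbf{Main interpolation step.}
The key is a pointwise-in-time multiplicative interpolation: for a.e.\ $t$,
\begin{align*}
  \int_{B_\rho\cap\Omega}|\u^m-\g^m|^2\d x
  &\le
  \Big(\sup_{B_\rho}|\u^m-\g^m|^{2-q}(\cdot,t)\Big)^{?}\cdots,
\end{align*}
but more robustly I would split $|\u^m-\g^m|^2 = |\u^m-\g^m|^{2-q}\cdot|\u^m-\g^m|^{q}$ and estimate the first factor using Lemma~\ref{lem:estimates}(ii), which gives $|\u^m-\g^m|\le c(|u|+|g|)^{m-1}|u-g|$, hence
\begin{align*}
  |\u^m-\g^m|^{2-q}
  &\le
  c\,(|u|+|g|)^{(m-1)(2-q)}\,|u-g|^{2-q}.
\end{align*}
Rather than chase this directly, the cleaner route is: bound $\frac{1}{|Q_\rho^{(\theta)}|}\iint_{Q_\rho^{(\theta)}\cap\Omega_T}\frac{|\u^m-\g^m|^2}{\rho^2}$ by first extracting a factor controlled by
$\sup_t\frac{1}{|B_\rho|}\int_{B_\rho\cap\Omega}\theta^{m-1}\frac{\b[\u^m(t),\g^m(t)]}{\rho^{(m+1)/m}}\d x$
via Lemma~\ref{lem:b}(iii) (which converts $|\u^m-\g^m|^2$ into $(|u|^{m-1}+|g|^{m-1})\b[\u^m,\g^m]$) together with the sub-intrinsic bound \eqref{sub-intrinsic_2} (which controls averages of $(|u|+|g|)^{2m}/\rho^2$ by $\theta^{2m}$, hence, by Hölder, averages of lower powers of $|u|+|g|$). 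The remaining factor, carrying the small exponent, is then estimated by the slice-wise Poincaré inequality of Lemma~\ref{lem:Poincare} on $Q_{4\rho}^{(\theta)}$, producing $\rho^q$ times an average of $|D(\u^m-\g^m)|^q$, raised to the power $2/q$. Young's inequality with a parameter $\varepsilon$ then splits the product into the $\varepsilon$-term with the supremum and the $\varepsilon^{-\frac{4m-2mq}{mq+q}}$-term with the gradient; the exact exponent of $\varepsilon^{-1}$ is dictated by the conjugate exponents in that Young step, namely $\frac{mq+q}{2m}$ paired with its conjugate $\frac{mq+q}{2m(q-1)+\cdots}$ — I would compute these so the bookkeeping matches $\frac{4m-2mq}{mq+q}$.

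\medskip\noindent\textbf{Why $Q_{4\rho}$ and the main obstacle.}
The enlargement from $Q_\rho^{(\theta)}$ to $Q_{4\rho}^{(\theta)}$ is needed because Lemma~\ref{lem:Poincare} (via Lemma~\ref{lem:est_cap}) requires a ball $B_{\rho'}$ with $B_{4\rho'/3}\setminus\Omega\neq\emptyset$ to quantify the capacity of the zero set; since we only know $\dist(B_\rho(x_o),\partial\Omega)=0$, we pass to a slightly larger ball to guarantee that the complement is genuinely met with room to spare, and the time-scale $\theta^{1-m}(4\rho)^{(m+1)/m}$ only grows, so $Q_\rho^{(\theta)}\subset Q_{4\rho}^{(\theta)}$. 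I expect the main obstacle to be the careful combination of Lemma~\ref{lem:b}(iii) with the sub-intrinsic scaling: one must isolate exactly the right power of $(|u|+|g|)$ so that after Hölder against \eqref{sub-intrinsic_2} the leftover gradient term appears with exponent $q$ (not $2$) and with the correct scaling in $\rho$ and $\theta$, while simultaneously the $\b$-term appears linearly under the supremum with the weight $\theta^{m-1}\rho^{-(m+1)/m}$. Tracking the powers of $\theta$ through this — they must cancel so that $\theta$ does not appear explicitly in the final gradient term — is the delicate part; everything else is Hölder, Young, and the slice-wise Poincaré inequality.
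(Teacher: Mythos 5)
Your overall architecture is the right one (split $|\u^m-\g^m|^2$ into a factor that goes to the $\b$-supremum via Lemma~\ref{lem:b}\,(iii), absorb the leftover powers of $|u|+|g|$ by H\"older against the sub-intrinsic scaling \eqref{sub-intrinsic_2}, finish with Young), but there is a genuine gap at the central step: you propose to produce the gradient term \emph{raised to the power $2/q$} from the slice-wise Poincar\'e inequality of Lemma~\ref{lem:Poincare}. That lemma only gives $\int|v|^q\le c\rho^q\int|Dv|^q$ with the \emph{same} exponent on both sides, and with it the interpolation is circular: after bounding $|\u^m-\g^m|^{2-\alpha}$ (or $|\u^m-\g^m|^{2-q}$) by the $\b$-factor, the spatial H\"older conjugate that isolates $\b$ to the first power under the supremum forces the remaining factor of $\u^m-\g^m$ to appear with an exponent strictly larger than $q$ — in fact with the Sobolev exponent $q^*=\frac{nq}{n-q}>2$. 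What is needed is therefore the capacitary \emph{Sobolev--}Poincar\'e inequality of Lemma~\ref{lem:quasicont_2}, which controls the $L^{q^*}$-norm of $(\u^m-\g^m)(\cdot,t)$ on $B_{4\rho}$ by $\rho^{q^*}$ times the $L^q$-norm of the gradient raised to $\frac{n}{n-q}$; this embedding gain is exactly what lets the gradient factor end up with total power $\frac{2}{(2-\alpha)p'}=\frac2q$ after the parameter $\alpha=\frac{8m-4mq}{4m-q(m-1)}$ is fixed by the requirement $(2-\alpha)p'=q$. Your write-up never invokes a Sobolev gain, so the exponent bookkeeping you defer ("I would compute these so the bookkeeping matches") cannot in fact be made to close.

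A second, smaller point: you fix $q=\gamma(n,\mu)$ from Theorem~\ref{theo:p-thick} at the outset, but the proof additionally needs $q$ chosen close to $2$ (within the admissible range $[\vartheta,2)$ granted by the self-improvement of thickness) so that the exponent $\tfrac{\alpha}{2}p'r'$ on $\b$ is at most $1$ and can be reduced to the first power under the supremum by H\"older; the smallest admissible $q$ need not satisfy this. Your explanation of the enlargement to $Q_{4\rho}^{(\theta)}$ (so that Lemma~\ref{lem:est_cap} sees the complement of $\Omega$ with room to spare) is correct.
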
	

\begin{proof}[Proof]
To shorten the notation, we will omit $z_o$ as the reference point for the cylinder. Note that the condition $\dist(B_{\rho}(x_o), \partial\Omega)=0$ implies that $B_{\frac 43 \rho}(x_o)\setminus \Omega \neq \emptyset$.
With a similar argument as in the proof of Lemma \ref{lem:Poincare},
where we use Lemma \ref{lem:quasicont_2} instead of Lemma
\ref{lem:quasicont}, we obtain an exponent
$\vartheta=\vartheta(n,\mu)\in(1,2)$ so that for every
$q\in[\vartheta,2)$ we have 
\begin{align}\label{pre-Sobolev}
  \begin{aligned}
    \frac{1}{| B_{4\rho}|}&\int_{B_{4\rho}\cap\Omega} \big|(\u^m-\g^m)(\cdot,t) \big|^{\frac{nq}{n-q}} \d x \\
    & \leq c \rho^{\frac{nq}{n-q}} \left( \frac{1}{| B_{4\rho}|}
      \int_{B_{4\rho}\cap \Omega} \big|D(\u^m-\g^m)(\cdot,t)\big|^q \d x
    \right)^{\frac{n}{n-q}},
  \end{aligned}
\end{align}
for a constant $c=c(n,N,\mu,\rho_o,q)$. For $\alpha = \alpha(m,q) \in (0,2)$ to be chosen later we estimate with the help of Lemma \ref{lem:b} (iii), H\"older's inequality and the sub-intrinsic scaling \eqref{sub-intrinsic_2} 
\begin{align*}
 &\frac{1}{|Q_\rho^{(\theta)}|}\iint_{Q_\rho^{(\theta)}\cap\Omega_T} \frac{|\u^m-\g^m|^2}{\rho^2} \d x\d t \\
& =\frac{1}{\rho^2|Q_\rho^{(\theta)}|}\iint_{Q_\rho^{(\theta)}\cap \Omega_T} |\u^m-\g^m|^\alpha|\u^m-\g^m|^{2-\alpha} \d x\d t \\ 
&\leq \frac{c}{\rho^2|Q_\rho^{(\theta)}|}  \iint_{Q_\rho^{(\theta)}\cap \Omega_T} \b[\u^m,\g^m]^{\frac\alpha 2} [|u|^{m-1}+|g|^{m-1}]^{\frac\alpha 2} |\u^m-\g^m|^{2-\alpha} \d x\d t \\
&\leq \frac{c}{\rho^2|Q_\rho^{(\theta)}|} \left( \iint_{Q_\rho^{(\theta)}\cap \Omega_T}[|u|^{m-1}+|g|^{m-1}]^{\frac{2m}{m-1}} \d x \d t \right)^{\frac{\alpha(m-1)}{4m}} \\
& \qquad \cdot \left(\iint_{Q_\rho^{(\theta)}\cap \Omega_T} \left[ \b[\u^m,\g^m]^{\frac \alpha 2} |\u^m-\g^m|^{2-\alpha} \right]^{\frac{4m}{4m-\alpha(m-1)}} \d x\d t\right)^{\frac{4m-\alpha(m-1)}{4m}} \\
&\leq \frac{c}{\rho^2|Q_\rho^{(\theta)}|} \big( |Q_\rho^{(\theta)}|  \theta^{2m}\rho^2 \big)^{\frac{\alpha(m-1)}{4m}} \\
& \qquad \cdot \left(\iint_{Q_\rho^{(\theta)}\cap \Omega_T} \left[ \b[\u^m,\g^m]^{\frac \alpha 2} |\u^m-\g^m|^{2-\alpha} \right]^{\frac{4m}{4m-\alpha(m-1)}} \d x\d t\right)^{\frac{4m-\alpha(m-1)}{4m}} \\
&=c \left( \frac{1}{ |Q_\rho^{(\theta)}|}  \iint_{Q_\rho^{(\theta)}\cap \Omega_T} \left[ \left(\theta^{m-1}\frac{\b[\u^m,\g^m]}{\rho^{\frac{m+1}{m}}}\right)^{\frac \alpha 2} \frac{|\u^m-\g^m|^{2-\alpha}}{\rho^{2-\alpha}} \right]^{p^\prime} \d x\d t\right)^{\frac{1}{p^\prime}} ,
\end{align*}
where we used the short-hand notation for exponents 
$$
p:= \frac{4m}{\alpha(m-1)}  \qquad\text{ and } \qquad r:= \frac{nq}{n-q} \frac{1}{p^\prime(2-\alpha)},
$$
and $p^\prime,r^\prime$ are the H\"older conjugates of $p$ and
$r$. Let us note that $p,r>1$ holds true when we choose $\alpha$
suitably, as we do below. Next, we apply H\"older's inequality and then 
estimate \eqref{pre-Sobolev}. In this way, we deduce 
\begin{align*}
\frac{1}{|Q_\rho^{(\theta)}|} &\iint_{Q_\rho^{(\theta)}\cap \Omega_T}
\frac{|\u^m-\g^m|^2}{\rho^2} \d x\d t  \\
&\leq c  \left(\frac{1}{|\Lambda_\rho^{(\theta)}|} \int_{\Lambda_\rho^{(\theta)}\cap(0,T)} \left[\frac{1}{|B_\rho|} \int_{B_\rho \cap \Omega} \left(\theta^{m-1}\frac{\b[\u^m,\g^m]}{\rho^{\frac{m+1}{m}}}\right)^{\frac\alpha2 p^\prime r^\prime} \d x \right]^{\frac{1}{r^\prime}} \right.\\
&\qquad\left.\quad\cdot\left[\frac{1}{|B_{4\rho}|} \int_{B_{4\rho} \cap \Omega} \left(\frac{|\u^m-\g^m|}{\rho}\right)^{\frac{nq}{n-q}}\d x \right]^{\frac 1 r}\d t\right)^{\frac{1}{p^\prime}} \\
& \leq c  \left(\frac{1}{|\Lambda_\rho^{(\theta)}|} \int_{\Lambda_\rho^{(\theta)}\cap(0,T)} \left[\frac{1}{|B_\rho|} \int_{B_\rho \cap \Omega} \left(\theta^{m-1}\frac{\b[\u^m,\g^m]}{\rho^{\frac{m+1}{m}}}\right)^{\frac\alpha2 p^\prime r^\prime} \d x \right]^{\frac{1}{r^\prime}} \right.\\
&\qquad\left.\quad\cdot\left[\frac{1}{|B_{4\rho}|} \int_{B_{4\rho }\cap \Omega} |D(\u^m-\g^m)|^q\d x \right]^{\frac 1 r\frac{n}{n-q}}\d t\right)^{\frac{1}{p^\prime}}
\end{align*}
At this point we are choosing $\alpha$ such that
\begin{align}
\label{exponents}
\frac 1 r\cdot \frac{n}{n-q} =1\quad \Leftrightarrow \quad \frac{p^\prime(2-\alpha)}{q}=1  \quad \Leftrightarrow \quad \alpha = \frac{8m-4mq}{4m-q(m-1)} \in (0,2),
\end{align}
what implies 
\begin{align*}
\frac{1}{|Q_\rho^{(\theta)}|} &\iint_{Q_\rho^{(\theta)}\cap\Omega_T} \frac{|\u^m-\g^m|^2}{\rho^2} \d x\d t  \\
&\leq c \left(\sup_{t\in \Lambda_\rho^{(\theta)}\cap(0,T)}\frac{1}{|B_\rho|} \int_{B_\rho \cap \Omega} \left(\theta^{m-1}\frac{\b[\u^m,\g^m]}{\rho^{\frac{m+1}{m}}}\right)^{\frac\alpha2 p^\prime r^\prime} \d x \right)^{\frac{1}{r^\prime p^\prime}} \\
&\quad \cdot \left(\frac{1}{|Q_{4\rho}^{(\theta)}|} \iint_{Q_{4\rho}^{(\theta)}\cap\Omega_T} |D(\u^m-\g^m)|^q \d x\d t \right)^{\frac{1}{p^\prime}}.
\end{align*}
Next, we observe that we obtain in the limit $q\uparrow 2$ that
$\alpha \to 0$, $p^\prime \to 1$  and $r^\prime\to \frac
n2$. Therefore, we can choose $q$ close to $2$ such that $p>1$, $r>1$,
and $\frac\alpha2 p^\prime r^\prime <1$ hold true and we are able to
use H\"older's and Young's inequalities to deduce 
\begin{align*}
\frac{1}{|Q_\rho^{(\theta)}|} &\iint_{Q_\rho^{(\theta)}\cap\Omega_T} \frac{|\u^m-\g^m|^2}{\rho^2} \d x\d t  \\
&\leq c \left(\sup_{t\in \Lambda_\rho^{(\theta)}\cap(0,T)}\frac{1}{|B_\rho|} \int_{B_\rho \cap \Omega} \theta^{m-1}\frac{\b[\u^m,\g^m]}{\rho^{\frac{m+1}{m}}} \d x \right)^{\frac \alpha 2} \\
&\quad \cdot \left(\frac{1}{|Q_{4\rho}^{(\theta)}|} \iint_{Q_{4\rho}^{(\theta)}\cap\Omega_T} |D(\u^m-\g^m)|^q \d x\d t \right)^{\frac{1}{p^\prime}} \\
&\leq \varepsilon \sup_{t\in \Lambda_\rho^{(\theta)}\cap(0,T)}\frac{1}{|B_\rho|} \int_{B_\rho \cap \Omega} \theta^{m-1}\frac{\b[\u^m,\g^m]}{\rho^{\frac{m+1}{m}}} \d x  \\
&\quad +c\varepsilon^{-\frac{4m-2mq}{mq+q}} \left(\frac{1}{|Q_{4\rho}^{(\theta)}|} \iint_{Q_{4\rho}^{(\theta)}\cap \Omega_T} |D(\u^m-\g^m)|^q \d x\d t \right)^{\frac{2}{(2-\alpha)p^\prime}}.
\end{align*}
Noting that $\frac{2}{(2-\alpha)p^\prime}=\frac 2q$ finishes the proof.
\end{proof}

Next, we are going to prove a Poincar\'e inequality for the boundary
function $g$ that will be very useful in the course of the paper.
This is the point in the proof at which the Sobolev extension property
of the domain is crucial. We recall that the extension property in
particular implies the measure density
condition~\eqref{measure_density}, which in turn implies the lower
bound 
\begin{align}
\label{measure_density_cylinders}
|\Omega_T \cap Q_\rho^{(\theta)}(z_o)| \geq \alpha |Q_\rho^{(\theta)}(z_o)|
\end{align}
for any cylinder $Q_\rho^{(\theta)}(z_o)$ with center
$z_o\in\Omega_T\cup\partial_{\mathrm{par}}\Omega_T$, where 
$\alpha>0$ is a constant depending only on $\Omega$.

\begin{lemma}
\label{lem:poincare_g}
Let $m>1$, $z_o \in \Omega_T \cup \partial_{\mathrm{par}} \Omega_T$, $0<\rho\leq 1$ and $g$ satisfy \eqref{assumption:g}. Then for every
sub-intrinsic cylinder $Q^{(\theta)}_\rho(z_o)\subset\R^n\times(-T,T)$, i.e.\
\eqref{sub-intrinsic} holds true, we have
\begin{align}
\label{poincare_sub_intrinsic}
\biint_{Q_{\rho}^{(\theta)}(z_o)}& \frac{ |\hat\g^m-(\hat\g^m)_{Q_\rho^{(\theta)}(z_o)}|^2}{\rho^2} \d x \d t  \notag\\
&\leq c \biint_{Q_{\rho,+}^{(\theta)}(z_o)} \left[ |D\g^m|^2 + |\partial_t \g^m|^{\frac{2m}{2m-1}}\chi_{\Omega_T} \right] \d x\d t  %+c\rho^{\frac{2}{2m-1}} \biint_{Q_\rho^{(\theta)}(z_o)} |\partial_t \g^m|^{\frac{2m}{2m-1}}\chi_{\Omega_T} \d x\d t
\end{align}
for a constant $c$ depending only on $m$, $n$ and $\alpha$.
\end{lemma}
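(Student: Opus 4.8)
The plan is to separate the space--time oscillation of $\hat\g^m$ over $Q_\rho^{(\theta)}(z_o)$ into a spatial part, to be handled by a ball Poincar\'e inequality for the extension $\g^m$ together with the measure density condition~\eqref{measure_density}, and a temporal part, to be handled by the fundamental theorem of calculus in time applied to spatial averages taken over $B_\rho(x_o)\cap\Omega$ --- so that only $\partial_t\g^m$ restricted to $\Omega_T$ ever appears.

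\textbf{Reduction to positive times.} Since $\hat\g^m(x,t)=\g^m(x,|t|)$ by~\eqref{def:uhat} and $t_o\ge0$, the reflection $t\mapsto-t$ maps $Q_{\rho,-}^{(\theta)}(z_o)$ into $Q_{\rho,+}^{(\theta)}(z_o)$. Replacing the cylinder mean $(\hat\g^m)_{Q_\rho^{(\theta)}(z_o)}$ by the constant $\bar g:=(\g^m)_{Q_{\rho,+}^{(\theta)}(z_o)\cap\Omega_T}$ can only enlarge the left-hand side, and after using the reflection it suffices to bound $\rho^{-2}\biint_{Q_{\rho,+}^{(\theta)}(z_o)}|\g^m-\bar g|^2\,\d x\d t$. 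Writing $Q_{\rho,+}^{(\theta)}(z_o)=B_\rho(x_o)\times J$ with $|J|\le|\Lambda_\rho^{(\theta)}(t_o)|=2\theta^{1-m}\rho^{\frac{m+1}m}$, and setting $w(t):=\bint_{B_\rho(x_o)\cap\Omega}\g^m(\cdot,t)\,\d x$ (so that $\bar g=\bint_J w(t)\,\d t$ by Fubini), I split $\g^m-\bar g=(\g^m-w(t))+(w(t)-\bar g)$.

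\textbf{Spatial part.} For a.e.\ $t\in J$, the classical Poincar\'e inequality on $B_\rho(x_o)$ applied to $\g^m(\cdot,t)\in W^{1,2}(B_\rho(x_o))$ bounds $\bint_{B_\rho(x_o)}|\g^m(\cdot,t)-(\g^m)_{x_o;\rho}(t)|^2\,\d x$ by $c(n)\rho^2\bint_{B_\rho(x_o)}|D\g^m(\cdot,t)|^2\,\d x$, while $|(\g^m)_{x_o;\rho}(t)-w(t)|\le\alpha^{-1}\bint_{B_\rho(x_o)}|\g^m(\cdot,t)-(\g^m)_{x_o;\rho}(t)|\,\d x$ by~\eqref{measure_density}, which by Jensen's inequality is again controlled by the same right-hand side. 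Integrating over $t\in J$ yields the term $c(n,\alpha)\rho^2\biint_{Q_{\rho,+}^{(\theta)}(z_o)}|D\g^m|^2\,\d x\d t$; note that $D\g^m$ here is the gradient of the extension, so $c_E$ does not enter.

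\textbf{Temporal part, and the main difficulty.} The map $t\mapsto w(t)$ is absolutely continuous with $w'(t)=\bint_{B_\rho(x_o)\cap\Omega}\partial_t\g^m(\cdot,t)\,\d x$, hence $|w(t)-\bar g|\le\int_J\bint_{B_\rho(x_o)\cap\Omega}|\partial_s\g^m|\,\d x\d s$ uniformly in $t\in J$. It remains to bound $\rho^{-2}\bigl(\int_J\bint_{B_\rho(x_o)\cap\Omega}|\partial_s\g^m|\,\d x\d s\bigr)^2$ by $c\,\biint_{Q_{\rho,+}^{(\theta)}(z_o)}|\partial_t\g^m|^{\frac{2m}{2m-1}}\chi_{\Omega_T}\,\d x\d t$, and this is where the proof is most delicate. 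I would apply H\"older's inequality with exponents $\tfrac{2m}{2m-1}$ and $2m$, use~\eqref{measure_density} to pass to the power $\tfrac{2m}{2m-1}$, and insert the length bound $|J|\le2\theta^{1-m}\rho^{\frac{m+1}m}$; this produces the factor $\theta^{2(1-m)}\rho^{2/m}$, whose negative power of $\theta$ is reabsorbed by means of the sub-intrinsic condition~\eqref{sub-intrinsic} --- which furnishes $\theta^{2m}\ge2^{-(d+1)}\biint_{Q_\rho^{(\theta)}(z_o)}\rho^{-2}|\hat g|^{2m}\,\d x\d t$ --- together with $\rho\le1$. The point is that the duration of the intrinsic time interval is exactly calibrated so that the quadratic weight on $\partial_t\g^m$ coming from the time integration turns into the correct exponent $\tfrac{2m}{2m-1}$; carrying out this exponent bookkeeping is the main obstacle. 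Collecting the spatial and temporal estimates and undoing the reduction to positive times then yields~\eqref{poincare_sub_intrinsic}.
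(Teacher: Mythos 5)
Your decomposition into a spatial part (Poincar\'e on $B_\rho(x_o)$ for the extended $\g^m$, combined with the measure density condition \eqref{measure_density}) and a temporal part (fundamental theorem of calculus for the slice-wise averages $w(t)$) is exactly the paper's decomposition, and those two pieces are sound. The gap sits in the temporal part, precisely at the step you flag as ``the main obstacle'': the exponent bookkeeping does not close along the route you describe. After the fundamental theorem of calculus you must control
$\rho^{-2}|J|^2\big(\biint_{Q_{\rho,+}^{(\theta)}(z_o)\cap\Omega_T}|\partial_t\g^m|\,\d x\d t\big)^2
\le 4\,\theta^{2(1-m)}\rho^{\frac2m}\big(\biint|\partial_t\g^m|\,\d x\d t\big)^2$.
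Applying H\"older with exponent $\tfrac{2m}{2m-1}$ at this stage produces $\big(\biint|\partial_t\g^m|^{\frac{2m}{2m-1}}\big)^{\frac{2m-1}{m}}$, i.e.\ the target quantity raised to the power $\tfrac{2m-1}{m}>1$, while the sub-intrinsic condition \eqref{sub-intrinsic} only furnishes $\theta^{2(1-m)}\le c\,\rho^{\frac{2(m-1)}{m}}\big(\biint|\hat g|^{2m}\big)^{\frac{1-m}{m}}$ (the exponent $\tfrac{1-m}{m}$ is negative, so the inequality reverses as you use it). What remains is a factor of the form $\rho^{2}\big(\biint|\partial_t\g^m|^{\frac{2m}{2m-1}}\big)^{\frac{m-1}{m}}\big(\biint|\hat g|^{2m}/\rho^2\big)^{\frac{1-m}{m}}$, and this is \emph{not} bounded by a universal constant: $g$ can be arbitrarily small in size while $\partial_t\g^m$ is large. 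So ``reabsorbed by the sub-intrinsic condition together with $\rho\le1$'' fails as stated.

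The missing idea is that the negative power of $\theta$ must be traded for a negative power of the \emph{left-hand side} of \eqref{poincare_sub_intrinsic}, not of the data. From \eqref{sub-intrinsic} one has $\theta^{2m}\ge c^{-1}\biint|\hat g|^{2m}/\rho^2\ge c^{-1}\biint|\hat\g^m-(\hat\g^m)_{Q_\rho^{(\theta)}}|^2/\rho^2$, whence $\big(\rho^{\frac{m+1}{m}}\theta^{1-m}\big)^2\le c\,\rho^4\big(\biint|\hat\g^m-(\hat\g^m)_{Q_\rho^{(\theta)}}|^2\big)^{\frac{1-m}{m}}$. Denoting the left-hand side of \eqref{poincare_sub_intrinsic} (without the $\rho^{-2}$) by $\mathrm{L}$, the estimate becomes self-referential: $\mathrm{L}\le(\text{gradient term})+c\,\rho^4\,\mathrm{L}^{\frac{1-m}{m}}\big(\biint|\partial_t\g^m|\big)^2$. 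One then multiplies by $\mathrm{L}^{\frac{m-1}{m}}$, raises both sides to the power $\tfrac{m}{2m-1}$, applies Young's inequality and absorbs $\tfrac12\mathrm{L}$ into the left. It is exactly this absorption that generates the exponent $\tfrac{2m}{2m-1}$ on $|\partial_t\g^m|$; H\"older/Jensen is applied only at the very end, to pass from $\big(\biint|\partial_t\g^m|\big)^{\frac{2m}{2m-1}}$ to $\biint|\partial_t\g^m|^{\frac{2m}{2m-1}}$. Without this absorption step your argument cannot be completed.
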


\begin{proof}[Proof.]
For simplicity we omit the center of the cylinder in the notation.  Adding and subtracting the slice-wise mean value integral leads to 
\begin{align*}
 \biint_{Q_{\rho}^{(\theta)}}& |\hat \g^m-(\hat\g^m)_{Q_{\rho}^{(\theta)}}|^2 \d x \d t \\
 &\leq \biint_{Q_{\rho}^{(\theta)}} |\hat\g^m-(\hat\g^m)_{Q_\rho^{(\theta)}\cap \Omega_T}|^2 \d x \d t  \\
&\leq 2 \biint_{Q_{\rho}^{(\theta)}} \left[ |\hat\g^m-(\hat\g^m(t))_{B_\rho \cap \Omega}|^2 +|(\hat\g^m (t))_{B_\rho \cap \Omega}-(\hat\g^m)_{Q_\rho^{(\theta)}\cap \Omega_T}|^2 \right] \d x\d t\\
& =: \mathrm{I} +\mathrm{II}.
\end{align*}
Using Lemma \ref{lem:uavetoa}, the measure density condition \eqref{measure_density} and the Sobolev-inequality shows for the first term
\begin{align*}
\mathrm{I} &\leq  \frac{|B_\rho|}{|\Omega \cap B_\rho|} \biint_{Q_{\rho}^{(\theta)}}|\hat\g^m -(\hat\g^m(t))_{B_\rho}|^2 \d x\d t \\
&\leq c \rho^2 \biint_{Q_{\rho}^{(\theta)}} |D\hat\g^m|^2 \d x\d t \leq c \rho^2 \biint_{Q_{\rho,+}^{(\theta)}} |D\g^m|^2 \d x\d t 
\end{align*}
for a constant $c=c(\alpha,m,n)$.
In order to treat the second term we may assume that $t>0$ because otherwise we use the identity $\hat g(t)=\hat g(-t)$. This allows us to estimate
\begin{align*}
|(\hat\g^m(t))_{B_\rho \cap \Omega}&-(\hat\g^m)_{Q_\rho^{(\theta)}\cap\Omega_T}|^2 \\
&= \left| \biint_{Q_\rho^{(\theta)}\cap \Omega_T}   \g^m(t)-\g^m(\tau) \d x\d \tau \right|^2 \\
  &\leq\left( \biint_{Q_\rho^{(\theta)}\cap \Omega_T} \left| \int_\tau^t \partial_t\g^{m}(s)\d s \right| \d x\d t \right)^2\\
& \leq  \left( 2\biint_{Q_\rho^{(\theta)}\cap \Omega_T} \rho^{\frac{m+1}{m}} \theta^{1-m}|\partial_t \g^m| \d x\d \tau \right)^2.
\end{align*}
This proves the following estimate 
\begin{align*}
 \biint_{Q_{\rho}^{(\theta)}}& |\hat\g^m-(\hat\g^m)_{Q_{\rho}^{(\theta)}}|^2 \d x \d t \\ 
&\leq c\biint_{Q_{\rho,+}^{(\theta)}} \rho^2 |D\g^m|^2 \d x\d t +c \left( \biint_{Q_\rho^{(\theta)}\cap \Omega_T} \rho^{\frac{m+1}{m}} \theta^{1-m}|\partial_t \g^m| \d x\d t\right)^2.
\end{align*}
Using the sub-intrinsic scaling of the cylinders and $m>1$, we obtain
\begin{align*}
(\rho^{\frac{m+1}{m}}\theta^{1-m})^2& \leq c \rho^{2\frac{m+1}{m}} \left( \biint_{Q_{\rho}^{(\theta)}} \frac{|\hat\u^m-\hat\g^m|^2+|\hat g|^{2m}}{\rho^2} \d x\d t \right)^{\frac{1-m}{m}} \\
& \leq \rho^4 \left( \biint_{Q_{\rho}^{(\theta)}} | \hat g|^{2m} \d x\d t \right)^{\frac{1-m}{m}}\\
&\leq c\rho^4 \left( \biint_{Q_{\rho}^{(\theta)}}
  |\hat\g^m-(\hat\g^m)_{Q_\rho^{(\theta)}}|^2\d x\d t
\right)^{\frac{1-m}{m}}.
\end{align*}
This in connection with the last estimate shows
\begin{align*}
\biint_{Q_{\rho}^{(\theta)}}& |\hat\g^m-(\hat\g^m)_{Q_{\rho}^{(\theta)}}|^2 \d x \d t \leq c\biint_{Q_{\rho,+}^{(\theta)}} \rho^2 |D\g^m|^2 \d x\d t\\
&\quad +c \rho^4 \left( \biint_{Q_{\rho}^{(\theta)}} |\hat\g^m-( \hat\g^m)_{Q_{\rho}^{(\theta)}}|^2\d x\d t \right)^{\frac{1-m}{m}}  \left( \biint_{Q_\rho^{(\theta)}\cap \Omega_T} |\partial_t \g^m| \d x\d t\right)^2.
\end{align*}
We multiply this estimate with
$$
  \left[\biint_{Q_{\rho}^{(\theta)}}|\hat\g^m-(\hat\g^m)_{Q_{\rho}^{(\theta)}}|^2 \d x \d t\right]^{\frac{m-1}{m}},
$$
and take both sides to the power $\frac{m}{2m-1}$, which leads to the
bound 
\begin{align*}
\biint_{Q_{\rho}^{(\theta)}}& |\hat\g^m-(\hat\g^m)_{Q_{\rho}^{(\theta)}}|^2 \d x \d t\\
& \leq c\left(\biint_{Q_{\rho,+}^{(\theta)}} \rho^2 |D\g^m|^2 \d x\d t\right)^{\frac{m}{2m-1}} \left(  \biint_{Q_{\rho}^{(\theta)}} |\hat\g^m-(\hat\g^m)_{Q_{\rho}^{(\theta)}}|^2 \d x \d t\right)^{\frac{m-1}{2m-1}}\\
&\quad+\rho^{4\frac{m}{2m-1}}\left( \biint_{Q_\rho^{(\theta)}\cap \Omega_T} |\partial_t \g^m| \d x\d t\right)^{\frac{2m}{2m-1}} \\
&\leq \tfrac 12  \biint_{Q_{\rho}^{(\theta)}} |\hat\g^m-(\hat\g^m)_{Q_{\rho}^{(\theta)}}|^2 \d x \d t+ c\biint_{Q_{\rho,+}^{(\theta)}} \rho^2 |D\g^m|^2 \d x\d t \\
&\quad+\rho^{\frac{4m}{2m-1}}\left( \biint_{Q_\rho^{(\theta)}\cap \Omega_T} |\partial_t \g^m| \d x\d t\right)^{\frac{2m}{2m-1}}.
\end{align*}
Absorbing the first term on the right-hand side, using the measure density condition \eqref{measure_density_cylinders} and applying H\"older's inequality finishes the proof of the lemma.
\end{proof}

\subsection{Estimates near the initial boundary}

Here we prove a Sobolev-type inequality near the initial
boundary. First we prove one auxiliary lemma, since we cannot use the
Sobolev inequality in both space and time directions
simultaneously. That is due to the fact that less regularity is
assumed in the time direction. That is why we first use the gluing
lemma to treat the time direction and then use the Sobolev inequality
slice-wise in space. In this section we assume that the considered
cylinders $Q_{\rho}^{(\theta)}(z_o) \subset \Omega\times(-T,T)$ with
$z_o\in\Omega_T$, $0 < \rho \leq
1$ and $\theta > 0$  satisfy a sub-intrinsic coupling of the form 
\begin{align}
\label{initial-sub-intrinsic-2}
\biint_{Q_{\rho}^{(\theta)}(z_o)} \frac{|\hat u|^{2m}}{\rho^2} \d x\d t\leq 2^{d+2} \theta^{2m},
\end{align}
where $\hat u:\Omega\times(-T,T)\to\R^N$ is defined as in \eqref{def:hatu}.
\begin{lemma}
\label{lem:initial_presobolev}
Let $m>1$ and $u$ be a weak solution to \eqref{equation:PME} where the vector field $\A$ satisfies \eqref{assumption:A}. %and the Cauchy-Dirichlet datum $g$ fulfills \eqref{assumption:g} and \eqref{assumption:initial_datum}. 
Then there exists a constant $c=c(n,m,\nu,L)$ such that for any
sub-cylinder $Q_\rho^{(\theta)}(z_o)\subset \Omega\times(-T,T)$ with
$z_o\in\Omega\times[0,T)$, $0<\rho \leq 1$ and $\theta>0$, which is
sub-intrinsic in the sense of \eqref{initial-sub-intrinsic-2}, the
inequality
\begin{align*}
&\biint_{Q_{\rho}^{(\theta)}(z_o)} \frac{\big|\power{\hat
    u}{m} - (\power{\hat u}{m})_{z_o;\rho}^{(\theta)}\big|^2}{\rho^2}\, \dx \dt \\
&\leq c \biint_{Q_{\rho}^{(\theta)}(z_o)} \frac{\big|\power{\hat u}{m} -
  (\power{\hat u}{m})_{x_o;\rho}(t) \big|^2}{\rho^2}\, \dx \dt \\
&\quad +  c\left( \biint_{Q_{\rho,+}^{(\theta)}(z_o)}
  \Big[ \babs{D\u^m} + |F| +\babs{D\g^m} \Big] \, \dx \dt \right)^2 \\
&\quad + c \left( \biint_{Q_{\rho,+}^{(\theta)}(z_o)} \babs{\partial_t \g^m} \, \dx \dt \right)^\frac{2m}{2m-1}
\end{align*}
holds true.
\end{lemma}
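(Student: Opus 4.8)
The plan is to mimic the structure of the proof of Lemma~\ref{lem:poincare_g}, with the Poincar\'e inequality for the boundary values replaced by the gluing Lemma~\ref{lem:gluing-new}. Omitting the centre $z_o$ from the notation, I would first insert the slice-wise mean value $(\power{\hat u}{m})_{x_o;\rho}(t)$, use $(\power{\hat u}{m})_{z_o;\rho}^{(\theta)}=\bint_{\Lambda_{\rho}^{(\theta)}}(\power{\hat u}{m})_{x_o;\rho}(\tau)\,\dtau$ (Fubini) and Jensen's inequality to obtain
\begin{align*}
  \biint_{Q_{\rho}^{(\theta)}} \frac{\babs{\power{\hat u}{m} - (\power{\hat u}{m})_{z_o;\rho}^{(\theta)}}^2}{\rho^2}\,\dx\dt
  &\le 2\biint_{Q_{\rho}^{(\theta)}} \frac{\babs{\power{\hat u}{m} - (\power{\hat u}{m})_{x_o;\rho}(t)}^2}{\rho^2}\,\dx\dt \\
  &\quad + \frac{2}{\rho^2}\bint_{\Lambda_{\rho}^{(\theta)}}\bint_{\Lambda_{\rho}^{(\theta)}} \babs{(\power{\hat u}{m})_{x_o;\rho}(t) - (\power{\hat u}{m})_{x_o;\rho}(\tau)}^2\,\dtau\dt .
\end{align*}
The first term is already the first term on the right-hand side of the assertion, so it remains to control the double time average of the oscillation of the slice-wise means.

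Next I would introduce the radius $\hat\rho\in[\tfrac\rho2,\rho]$ provided by Lemma~\ref{lem:gluing-new} and interpolate $\power{(\hat u)_{x_o;\hat\rho}}{m}$, writing $(\power{\hat u}{m})_{x_o;\rho}(t)-(\power{\hat u}{m})_{x_o;\rho}(\tau)$ as the sum of $\power{(\hat u)_{x_o;\hat\rho}}{m}(t)-\power{(\hat u)_{x_o;\hat\rho}}{m}(\tau)$ and the two correction terms $(\power{\hat u}{m})_{x_o;\rho}(s)-\power{(\hat u)_{x_o;\hat\rho}}{m}(s)$ at $s=t,\tau$. Via the triangle inequality through $(\power{\hat u}{m})_{x_o;\hat\rho}(s)$, the bound $\babs{B_\rho}/\babs{B_{\hat\rho}}\le 2^n$, Jensen's inequality, and Lemma~\ref{lem:uavetoa} applied with $A=B=B_{\hat\rho}(x_o)$, $p=2$, $\alpha=m$ together with the choice of $a\in\R^N$ satisfying $\power am=(\power{\hat u}{m})_{x_o;\rho}(s)$, one gets
\[
  \babs{(\power{\hat u}{m})_{x_o;\rho}(s) - \power{(\hat u)_{x_o;\hat\rho}}{m}(s)}^2 \le c\bint_{B_\rho(x_o)}\babs{\power{\hat u}{m}(\cdot,s) - (\power{\hat u}{m})_{x_o;\rho}(s)}^2\,\dx,
\]
so that, after integrating in $s$ and dividing by $\rho^2$, the correction terms are absorbed into a constant times the first term on the right-hand side of the assertion. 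It thus remains to estimate $\rho^{-2}\bint_{\Lambda_{\rho}^{(\theta)}}\bint_{\Lambda_{\rho}^{(\theta)}}\babs{\power{(\hat u)_{x_o;\hat\rho}}{m}(t) - \power{(\hat u)_{x_o;\hat\rho}}{m}(\tau)}^2\,\dtau\dt$.

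To this end I would square the estimate of Lemma~\ref{lem:gluing-new}. Writing $I := \biint_{Q_{\rho,+}^{(\theta)}}\big[\babs{D\u^m} + \abs{F} + \babs{D\g^m}\big]\,\dx\dt$ and $J := \biint_{Q_{\rho,+}^{(\theta)}}\babs{\partial_t \g^m}\,\dx\dt$, and bounding the first two terms of that estimate by $c(1+\Theta_{\tau,t}^{m-1}\theta^{1-m})I$, one arrives at
\[
  \frac1{\rho^2}\babs{\power{(\hat u)_{x_o;\hat\rho}}{m}(t) - \power{(\hat u)_{x_o;\hat\rho}}{m}(\tau)}^2 \le c\Big(1 + \tfrac{\Theta_{\tau,t}^{2(m-1)}}{\theta^{2(m-1)}}\Big)I^2 + c\Big(\tfrac{\Theta_{\tau,t}^{m-1}}{\theta^{m-1}}J\Big)^{\frac{2m}{2m-1}},
\]
with $\Theta_{\tau,t}$ as in Lemma~\ref{lem:gluing-new}. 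The decisive point is that $\Theta_{\tau,t}^{2m}\le \tfrac c{\rho^2}\big(\bint_{B_\rho}\abs{\hat u(\tau)}^{2m}\,\dx + \bint_{B_\rho}\abs{\hat u(t)}^{2m}\,\dx\big)$, whence the sub-intrinsic coupling~\eqref{initial-sub-intrinsic-2} yields
\[
  \bint_{\Lambda_{\rho}^{(\theta)}}\bint_{\Lambda_{\rho}^{(\theta)}}\Theta_{\tau,t}^{2m}\,\dtau\dt \le c\biint_{Q_{\rho}^{(\theta)}}\frac{\abs{\hat u}^{2m}}{\rho^2}\,\dx\dt \le c\,\theta^{2m},
\]
and since both exponents $2(m-1)$ and $\tfrac{2m(m-1)}{2m-1}$ are at most $2m$, Jensen's inequality upgrades this to the corresponding bounds with those exponents. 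Hence the $\Theta_{\tau,t}$-prefactors average out to absolute constants, the remaining double time integral is bounded by $cI^2 + cJ^{\frac{2m}{2m-1}}$, and collecting everything proves the claim.

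The main obstacle is the one already present in the interior and lateral analogues: the Sobolev-type quantity is formulated through the slice-wise averages $(\power{\hat u}{m})_{x_o;\rho}$ of the $m$-th power, whereas the gluing Lemma~\ref{lem:gluing-new} only controls the $m$-th power $\power{(\hat u)_{x_o;\hat\rho}}{m}$ of the slice-wise average at the special radius $\hat\rho$. Bridging these two requires Lemma~\ref{lem:uavetoa} together with careful tracking of the volume ratios $\babs{B_\rho}/\babs{B_{\hat\rho}}$, and one has to estimate the factors $\Theta_{\tau,t}$ only \emph{after} averaging them against the sub-intrinsic bound~\eqref{initial-sub-intrinsic-2}, using Jensen's inequality to reduce all occurring powers to the single exponent $2m$.
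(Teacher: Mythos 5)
Your proposal is correct and follows essentially the same route as the paper: both decompose the left-hand side into the slice-wise spatial oscillation, the time oscillation of the slice-wise averages at the radius $\hat\rho$ from the gluing lemma, and bridging terms handled via Lemma~\ref{lem:uavetoa}, and both use the sub-intrinsic scaling \eqref{initial-sub-intrinsic-2} together with Jensen's inequality to average the $\Theta_{\tau,t}$-prefactors (with exponents $2(m-1)$ and $\tfrac{2m(m-1)}{2m-1}$, both $\le 2m$) into absolute constants. The only cosmetic difference is the order in which the intermediate quantities are inserted.
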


\begin{proof}
Let $\hat{\rho} \in [\frac{\rho}{2}, \rho]$ be the radius in Lemma~\ref{lem:gluing-new}. For simplicity we omit the reference point $z_o$ in the notation. We start by decomposing
\begin{align*}
 \biint_{Q_{\rho}^{(\theta)}} \frac{\big|\power{\hat u}{m} - (
   \power{\hat u}{m})^{(\theta)}_{\rho}\big|^2}{\rho^2}\, \dx \dt  
  &\le
	3
	\Bigg[\biint_{Q_\rho^{(\theta)}}
	\frac{\big|\power{\hat u}{m} - \power{(\hat u)_{\hat\rho}}{m}(t)\big|^{2}}{\rho^2} \dx\dt\nonumber\\
	&\phantom{\le 3\Bigg[\,}
  	+
	\frac{1}{\rho^2}\mint_{\Lambda_\rho^{(\theta)}}
    	\bigg| \mint_{\Lambda_\rho^{(\theta)}}
	\Big[ \power{(\hat u)_{\hat\rho}}{m}(t)- \power{(\hat u)_{\hat\rho}}{m}(\tau) \Big] \dtau
	\bigg|^{2}\! \dt \nonumber\\
    &\phantom{\le 3\Bigg[\,}
	+
    \frac{1}{\rho^2}\bigg| \mint_{\Lambda_\rho^{(\theta)}}
	\power{(\hat u)_{\hat\rho}}{m}(\tau) \dtau -
	(\power{\hat u}{m})_{\rho}^{(\theta)} \bigg|^{2}\Bigg] \nonumber\\
	&=:
	3 \big[\mbox{I} + \mbox{II} + \mbox{III}\big].
\end{align*}
The first integral we can estimate by using Lemma~\ref{lem:uavetoa}
slice-wise and the fact $\hat{\rho}
\in[\frac\rho2,\rho]$ to obtain
\begin{align*}
  \mathrm{I}
  \le
   c\,\biint_{Q_{\rho}^{(\theta)}} \frac{\babs{\power{\hat
        u}{m} - ( \power{\hat u}{m} )_{\rho} (t)}^2}{\rho^2}\, \dx \dt,
\end{align*}
in which $c = c(n,m)$. For the second term $\mathrm{II}$ we use Gluing
Lemma~\ref{lem:gluing-new}, H\"older's inequality and the
sub-intrinsic scaling~\eqref{initial-sub-intrinsic-2} such that we have
%
%%%%%% comment starts
\iffalse
\begin{align*}
\big|\power{(\hat u)_{\hat{\rho}}}{m} (t) - \power{(\hat
  u)_{\hat{\rho}}}{m} (\tau)\big|^2
&\leq
\Big( \babs{(\hat u)_{\hat{\rho}}(t)}+ \babs{(\hat
  u)_{\hat{\rho}}(\tau)}\Big)^{2(m-1)} \babs{(\hat u)_{\hat{\rho}}(t) -
  (\hat u)_{\hat{\rho}}(\tau)}^2 \\
&\leq c \Bigg( \bint_{B_{\rho}} |\hat u(t)|^{2m}+|\hat u(\tau)|^{2m}\, \dx \Bigg)^{\frac{m-1}{m}}\\
&\hspace{1cm} \cdot \left[\frac{c \rho^{\frac{1}{m}}}{\theta^{m-1}}
\biint_{Q_{\rho,+}^{(\theta)}}
\Big[ \babs{D\u^m} + |F|+\rho|\partial_tg|\Big] \, \d x \d t \right]^2.
\end{align*}

By integrating the previous inequality over
$\Lambda_{\rho}^{(\theta)}$ with respect to both $t$ and $\tau$, and dividing by $\rho^2\big|\Lambda_{\rho}^{(\theta)}\big|^2$ we get by H\"older inequality and sub-intrinsic scaling that
\fi
%%%% comment ends

\begin{align*}
  \mathrm{II}
  &\leq \frac{c}{ \theta^{2(m-1)} }
  \Bigg(\biint_{Q_\rho} \frac{ |\hat u|^{2m}}{\rho^2}  \, \dx\d t \Bigg)^{\frac{m-1}{m}} \left(
\biint_{Q_{\rho,+}^{(\theta)}}
\Big[ \babs{D\u^m} + |F|\Big] \, \d x \d t \right)^2\\
&\quad+ \frac{c}{ \theta^\frac{2m(m-1)}{2m-1} }\Bigg(\biint_{Q_\rho} \frac{ |\hat u|^{2m}}{\rho^2}  \, \dx\d t \Bigg)^{\frac{m-1}{2m-1}} \left(
\biint_{Q_{\rho,+}^{(\theta)}}
\babs{\partial_t \g^m} \, \d x \d t \right)^\frac{2m}{2m-1}\\
&\quad + c \left( \biint_{Q_{\rho,+}^{(\theta)}}
\babs{D\g^m} \, \d x \d t \right)^2\\
&\leq c \left(
\biint_{Q_{\rho,+}^{(\theta)}}
\Big[ \babs{D\u^m} + |F| + \babs{D\g^m} \Big] \, \d x \d t \right)^2\\
&\quad + c \left(
\biint_{Q_{\rho,+}^{(\theta)}}
\babs{\partial_t \g^m} \, \d x \d t \right)^\frac{2m}{2m-1}.
\end{align*}
For the third term, H\"older's inequality and the estimate for
$\mathrm{I}$ imply
\begin{align*}
  \mathrm{III}\le\mathrm{I}
  \le
   c\,\biint_{Q_{\rho}^{(\theta)}} \frac{\babs{\power{\hat
        u}{m} - ( \power{\hat u}{m} )_{\rho} (t)}^2}{\rho^2}\, \dx \dt,
\end{align*}
which completes the proof.
\end{proof}
Now we are able to prove a suitable Sobolev-type inequality near the initial boundary.
\begin{lemma}
\label{lem:initialSobolev}
Let $m>1$ and $u$ be a global weak solution to \eqref{equation:PME} in
the sense of Definition~\ref{global_solution}, where the vector field
$\A$ satisfies \eqref{assumption:A} and the Cauchy-Dirichlet datum $g$
fulfills \eqref{assumption:g}. Then there exists a constant
$c=c(n,m,\nu,L)$ such that for any sub-cylinder
$Q_\rho^{(\theta)}(z_o)\subset \Omega\times(-T,T)$ with
$z_o\in\Omega\times[0,T)$, $0<\rho \leq 1$ and $\theta>0$ the following
inequalities hold true.
We have the Poincar\'e type estimate
\begin{align*}
 &\biint_{Q_{\rho}^{(\theta)}(z_o)} \frac{ \babs{\power{\hat u}{m} - (\power{\hat u}{m})_\rho^{(\theta)}}^2}{\rho^2}\, \dx \dt \\
&\qquad\leq c\,\biint_{Q_{\rho,+}^{(\theta)}(z_o) }\left[ \babs{D\power{u}{m}}^{2}+ |F|^2 + \babs{D\g^m}^2 + \babs{\partial_t \g^m}^\frac{2m}{2m-1} \right] \, \dx \dt
\end{align*}
as well as the Sobolev-Poincar\'e inequality
\begin{align*}
&\biint_{Q_{\rho}^{(\theta)}(z_o)} \frac{ \babs{\power{\hat u}{m} - (\power{\hat u}{m})_\rho^{(\theta)}}^2}{\rho^2}\, \dx \dt \\
&\leq \eps \sup_{t\in \Lambda_{\rho}^{(\theta)}(t_o)} \bint_{B_{\rho}(x_o)}
\theta^{m-1} \frac{\b[\power{\hat u}{m}(\cdot,t), (\power{\hat u}{m})_{x_o;\rho}^{(\theta)}]}{\rho^{\frac{m+1}{m}}}\, \dx \\
&\hspace{0,5cm}+ \frac{c}{\eps^{\frac{2}{n}}} \Bigg[
\biint_{Q_{\rho}^{(\theta)}(z_o)} \babs{D\power{\hat u}{m}}^{q}\, \dx \dt \Bigg]^{\frac{2}{q}} \\
&\hspace{0,5cm}+ c \biint_{Q_{\rho,+}^{(\theta)}(z_o) } \left[ |F|^2 + \babs{D\g^m}^2 + \babs{\partial_t \g^m}^\frac{2m}{2m-1} \right]  \, \dx \dt
\end{align*}
for any $\eps \in (0,1]$, where $q:=\frac{2n}d<2$.
\end{lemma}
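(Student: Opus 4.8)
The plan is to derive both assertions from the slice-wise reduction furnished by Lemma~\ref{lem:initial_presobolev}, which already absorbs the time direction through the gluing Lemma~\ref{lem:gluing-new}. Applying Lemma~\ref{lem:initial_presobolev} bounds the left-hand side of \emph{both} inequalities by
\begin{multline*}
c\,\biint_{Q_{\rho}^{(\theta)}(z_o)} \frac{\babs{\power{\hat u}{m} - (\power{\hat u}{m})_{x_o;\rho}(t)}^2}{\rho^2}\,\dx\dt
+ c\Big(\biint_{Q_{\rho,+}^{(\theta)}(z_o)}\big[\babs{D\u^m}+|F|+\babs{D\g^m}\big]\dx\dt\Big)^{2}\\
+ c\Big(\biint_{Q_{\rho,+}^{(\theta)}(z_o)}\babs{\partial_t\g^m}\dx\dt\Big)^{\frac{2m}{2m-1}}.
\end{multline*}
The last two terms are harmless. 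By Jensen's inequality the $F$-, $D\g^m$- and $\partial_t\g^m$-contributions are dominated by $c\biint_{Q_{\rho,+}^{(\theta)}}[\,|F|^2+|D\g^m|^2+|\partial_t\g^m|^{\frac{2m}{2m-1}}\,]\dx\dt$, which suits both claims; and since $D\u^m$ enters only to the first power inside the average, Jensen with exponent $q\in[1,2]$ gives $\big(\biint_{Q_{\rho,+}^{(\theta)}}|D\u^m|\,\dx\dt\big)^2\le\big(\biint_{Q_{\rho,+}^{(\theta)}}|D\u^m|^q\,\dx\dt\big)^{2/q}\le c\big(\biint_{Q_{\rho}^{(\theta)}}|D\power{\hat u}{m}|^q\,\dx\dt\big)^{2/q}$, using $|Q_{\rho,+}^{(\theta)}|\ge\tfrac12|Q_\rho^{(\theta)}|$ (a consequence of $t_o\ge0$) and $|D\power{\hat u}{m}|\ge|D\u^m|\chi_{\{t>0\}}$. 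It therefore remains to estimate $T:=\biint_{Q_{\rho}^{(\theta)}}\rho^{-2}\babs{\power{\hat u}{m} - (\power{\hat u}{m})_{x_o;\rho}(t)}^2\,\dx\dt$.

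For the Poincar\'e estimate I would apply the classical Poincar\'e inequality on $B_\rho(x_o)$ in each time slice, obtaining $T\le c\biint_{Q_\rho^{(\theta)}}|D\power{\hat u}{m}|^2\,\dx\dt$, and split the integral at $t=0$. On $\{t>0\}$ one has $|D\power{\hat u}{m}|=|D\u^m|$, while on $\{t<0\}$ the definition \eqref{def:hatu} gives $|D\power{\hat u}{m}(x,t)|=|D\g^m(x,-t)|$; the substitution $t\mapsto-t$ together with $Q_{\rho,-}^{(\theta)}\subseteq B_\rho(x_o)\times(0,\theta^{1-m}\rho^{\frac{m+1}m})$ and $|Q_{\rho,+}^{(\theta)}|\ge\tfrac12|Q_\rho^{(\theta)}|$ bounds this contribution by $c\biint_{Q_{\rho,+}^{(\theta)}}|D\g^m|^2\,\dx\dt$. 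Combining with the error terms of the first paragraph yields the asserted Poincar\'e inequality.

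For the Sobolev--Poincar\'e inequality the estimate of $T$ is the heart of the matter, and I would mirror the scheme of Lemma~\ref{lem:Poincare-Sobolev}, but with the slice-wise mean $a(t):=(\power{\hat u}{m})_{x_o;\rho}(t)$ in place of the boundary datum and with the \emph{interior} Sobolev--Poincar\'e inequality on balls (no capacity density needed) used slice-wise. For a parameter $\alpha\in(0,2)$ I write $\babs{\power{\hat u}{m}-a(t)}^2=\babs{\power{\hat u}{m}-a(t)}^\alpha\babs{\power{\hat u}{m}-a(t)}^{2-\alpha}$ and estimate the first factor by $c\,\b[\power{\hat u}{m}(t),a(t)]^{\alpha/2}\big(|\hat u|^{m-1}+|a(t)|^{\frac{m-1}m}\big)^{\alpha/2}$ via Lemma~\ref{lem:b}\,(iii). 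A H\"older step in $(x,t)$ with exponents $p=\frac{4m}{\alpha(m-1)}$ and $p'$ peels off the weight, whose $L^{\frac{2m}{m-1}}$-norm is controlled via $|a(t)|^{\frac{m-1}m}\le\big(\bint_{B_\rho}|\hat u(t)|^m\big)^{\frac{m-1}m}$ and the sub-intrinsic scaling \eqref{initial-sub-intrinsic-2}, giving $\biint_{Q_\rho^{(\theta)}}\big(|\hat u|^{m-1}+|a(t)|^{\frac{m-1}m}\big)^{\frac{2m}{m-1}}\dx\dt\le c\,\theta^{2m}\rho^2$. In the remaining factor a further slice-wise H\"older in $x$ with exponents $r',r$, where $q^\ast:=\frac{nq}{n-q}$ and $r:=q^\ast/(p'(2-\alpha))$, isolates $\b[\power{\hat u}{m}(t),a(t)]$ from $\babs{\power{\hat u}{m}-a(t)}^{q^\ast}$, the latter being bounded by $c\rho^{q^\ast}\big(\bint_{B_\rho}|D\power{\hat u}{m}(t)|^q\big)^{q^\ast/q}$ through the slice-wise Sobolev--Poincar\'e inequality. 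Choosing $\alpha$ exactly as in \eqref{exponents}, i.e.\ $\alpha=\frac{8m-4mq}{4m-q(m-1)}$, aligns all exponents, and the \emph{critical} choice $q=\frac{2n}d=\frac{2mn}{mn+m+1}$ forces $\tfrac\alpha2 p'r'=1$, so $\b$ occurs precisely to the first power and $\sup_{t}\bint_{B_\rho}\b[\power{\hat u}{m}(t),a(t)]\dx$ factors out with exponent $\tfrac1{r'}=\tfrac qn$. After raising to the power $\tfrac1{p'}$ and using that the identity $p'=\frac{4m-q(m-1)}{2(m+1)}=\frac{q(n+2)}{2n}$ holds precisely because $q=2n/d$, the prefactor in $\rho$ and $\theta$ cancels identically, and a final Young inequality with exponents $\frac{np'}{q}$ and $\frac{np'}{np'-q}$ splits off $\eps\,\sup_t\bint_{B_\rho}\theta^{m-1}\rho^{-\frac{m+1}m}\b[\power{\hat u}{m}(t),a(t)]\,\dx$ together with $\frac{c}{\eps^{2/n}}\big(\biint_{Q_\rho^{(\theta)}}|D\power{\hat u}{m}|^q\,\dx\dt\big)^{2/q}$; the exponent ratio in this Young step is $\tfrac 2n$, which is exactly what produces $\eps^{-2/n}$. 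Finally Lemma~\ref{lem:boundaryave}, applied slice-wise with $A=B_\rho(x_o)$ and the constant $a=(\power{\hat u}{m})_{x_o;\rho}^{(\theta)}$, lets me replace the slice-wise mean inside $\b$ by the full cylinder mean at the cost of an absolute constant, absorbed by shrinking $\eps$. Adding the error terms of the first paragraph (the $D\u^m$-error being absorbed into the gradient term since $\eps\le1$) gives the claimed Sobolev--Poincar\'e inequality.

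I expect the exponent bookkeeping in the last step to be the main obstacle: one must check that the parabolic dimension $d=n+1+\tfrac1m$ makes the interpolation exactly critical, namely $\tfrac\alpha2 p'r'=1$ and the Young-exponent ratio $=\tfrac2n$, so that precisely the factor $\eps^{-2/n}$ survives and no spurious powers of $\rho$ or $\theta$ remain; any mismatch here would spoil the later reverse H\"older iteration. The reflection at $t=0$, by contrast, has already been confined to Lemma~\ref{lem:initial_presobolev} and causes no further difficulty.
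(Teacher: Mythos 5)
Your proposal is correct and takes essentially the same route as the paper, whose proof consists precisely of the two steps you describe: start from Lemma~\ref{lem:initial_presobolev}, get the Poincar\'e estimate by a slice-wise Poincar\'e inequality (handling $\{t<0\}$ via the reflection as in \eqref{est-Dg}), and get the Sobolev--Poincar\'e estimate by the interior interpolation argument of \cite[Lemma 4.3]{Boegelein-Duzaar-Korte-Scheven}, which you reconstruct by adapting Lemma~\ref{lem:Poincare-Sobolev}. Your exponent bookkeeping checks out: for $q=\tfrac{2n}{d}$ one indeed has $p'=\tfrac{q(n+2)}{2n}$, $r'=\tfrac nq$, hence $\tfrac{\alpha}{2}p'r'=1$ and a Young ratio of $\tfrac2n$, so the argument closes as claimed.
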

\begin{proof}
  We take Lemma~\ref{lem:initial_presobolev} as a starting point.
  The
first estimate simply follows by an application of Poincar\'e's
inequality on the time slices. For the second claim, we 
proceed as in~\cite[Lemma 4.3]{Boegelein-Duzaar-Korte-Scheven}.
\end{proof}

\section{Reverse H\"older inequalities}
\label{sec:reverse-holder}

In this section we will prove reverse H\"older inequalities. Since the construction of our cylinders does not ensure that we always have intrinsic coupling, we have to distinguish between two cases here. Additionally, we have to treat the lateral boundary in a different way than the initial boundary.

\subsection{The lateral boundary}

The preceding results bring us into position to prove the following reverse H\"older inequality.

\begin{lemma}
\label{lem:reverse_lateral_intrinsic}
Let $m>1$, $z_o \in \Omega_T \cup \partial_{\mathrm{par}} \Omega_T$ and $u$ be a weak solution to \eqref{equation:PME} where the vector field $\A$ satisfies \eqref{assumption:A} and the Cauchy-Dirichlet datum $g$ fulfills \eqref{assumption:g}. Then on any cylinder $Q_\rho^{(\theta)}(z_o)\subset\R^n\times(-T,T)$ with $\dist(B_\rho(x_o),\partial \Omega)=0$ which satisfies the intrinsic coupling
\begin{align}\label{intrinsic_coupling}
\biint_{Q_{2\rho}^{(\theta)}(z_o)}& 2\frac{|\hat\u^m-\hat\g^m|^2+|\hat g|^{2m}}{(2\rho)^2} \d x\d t\leq \theta^{2m}\notag \\
&\leq K\biint_{Q_{\rho}^{(\theta)}(z_o)}2\frac{ |\hat\u^m-\hat\g^m|^2+|\hat g|^{2m}}{\rho^2} \d x\d t 
\end{align}
for some $0< \rho\leq 1$, $\theta\geq 0$ and
  $K\ge1$, we have the following reverse H\"older inequality
\begin{align*}
&\frac{1}{|Q_\rho^{(\theta)}(z_o)|}\iint_{Q_\rho^{(\theta)}(z_o)\cap \Omega_T} |D\u^m|^2 \d x\d t \\
&\leq c\left(\frac{1}{|Q_{8\rho}^{(\theta)}(z_o)|}\iint_{Q_{8\rho}^{(\theta)}(z_o)\cap \Omega_T} |D\u^m|^q  \d x \d t \right)^{\frac2q}\\
&\quad
+c\biint_{Q_{8\rho,+}^{(\theta)}(z_o)} \left[  \left( |F|^2 +|\partial_t
  \g^m|^{\frac{2m}{2m-1}} \right) \chi_{\Omega_T} +|D\g^m|^2 \right] \d x\d
t,
\end{align*}
for a constant $c  = c(m,n,N,\alpha,\mu, \rho_o,\nu,L,K)$ and some $q = q(n,\mu)\in (1,2)$.
\end{lemma}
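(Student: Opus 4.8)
\medskip
\noindent\textbf{Proof plan.}
The strategy is to combine the lateral Caccioppoli inequality of Lemma~\ref{lem:lateralcacciop} with the boundary Sobolev--Poincar\'e inequality of Lemma~\ref{lem:Poincare-Sobolev} on a nested family of sub-cylinders, to absorb the arising terms by means of the iteration Lemma~\ref{lem:iteration}, and to trace the remaining boundary data back to the data integral with the help of Lemma~\ref{lem:poincare_g}. We suppress the centre $z_o$, abbreviate $\mathcal O_\tau:=\iint_{Q_\tau^{(\theta)}\cap\Omega_T}\babs{\u^m-\g^m}^2\dx\dt$ for $\rho\le\tau\le2\rho$, write $\mathcal D$ for the data integral on the right-hand side of the claim and $P_0:=\biint_{Q_\rho^{(\theta)}}2\big(\babs{\hat\u^m-\hat\g^m}^2+\babs{\hat g}^{2m}\big)\rho^{-2}\dx\dt$, and set
\[
\phi(\tau):=\sup_{t\in\Lambda_\tau^{(\theta)}(t_o)\cap(0,T)}\int_{B_\tau(x_o)\cap\Omega}\b[\u^m(t),\g^m(t)]\dx+\iint_{Q_\tau^{(\theta)}\cap\Omega_T}\babs{D\u^m}^2\dx\dt .
\]
By assumption~\eqref{assumption:g} and Lemma~\ref{lem:b}(i), $\phi$ is nonnegative and bounded on $[\rho,2\rho]$. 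Moreover, the first inequality in~\eqref{intrinsic_coupling} states precisely that $Q_{2\rho}^{(\theta)}$ is sub-intrinsic in the sense of~\eqref{sub-intrinsic}; since $\babs{Q_\tau^{(\theta)}}\approx\babs{Q_{2\rho}^{(\theta)}}$ for $\rho\le\tau\le2\rho$, the scaling~\eqref{sub-intrinsic} (with constant $2^{d+2}$) carries over to all these $Q_\tau^{(\theta)}$, on which $\dist(B_\tau(x_o),\partial\Omega)=0$, so that Lemmas~\ref{lem:Poincare-Sobolev} and~\ref{lem:poincare_g} apply throughout.

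\emph{Step 1: Caccioppoli and the time term.} For $\rho\le r<s\le2\rho$, Lemma~\ref{lem:lateralcacciop} gives
\[
\phi(r)\le \frac{c}{(s-r)^2}\,\mathcal O_s + \frac{c\,\theta^{m-1}}{s^{\frac{m+1}m}-r^{\frac{m+1}m}}\iint_{Q_s^{(\theta)}\cap\Omega_T}\b[\u^m,\g^m]\dx\dt + c\,\mathcal D .
\]
In the time integral we use $\b[\u^m,\g^m]\le c\babs{\u^m-\g^m}^{\frac{m+1}m}$ (Lemma~\ref{lem:b}(ii)) and H\"older's inequality to bound it by $c\babs{Q_s^{(\theta)}}^{\frac{m-1}{2m}}\mathcal O_s^{\frac{m+1}{2m}}$, while the second inequality in~\eqref{intrinsic_coupling} yields $\theta^{m-1}=(\theta^{2m})^{\frac{m-1}{2m}}\le(KP_0)^{\frac{m-1}{2m}}$. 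Writing $\mathcal O_s^{\frac{m+1}{2m}}=\big((s-r)^{-2}\mathcal O_s\big)^{\frac{m+1}{2m}}(s-r)^{\frac{m+1}m}$ and using the elementary bound $s^{\frac{m+1}m}-r^{\frac{m+1}m}\ge c\,(s-r)^{\frac{m+1}m}$ (valid since $0<s-r\le\rho\le r$), Young's inequality with exponents $\tfrac{2m}{m+1}$ and $\tfrac{2m}{m-1}$ makes all powers of $(s-r)$ cancel and produces
\[
\phi(r)\ \le\ \frac{c}{(s-r)^2}\,\mathcal O_s\ +\ cK\,P_0\,\babs{Q_{2\rho}^{(\theta)}}\ +\ c\,\mathcal D\qquad(\rho\le r<s\le2\rho).
\]

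\emph{Step 2: Sobolev--Poincar\'e, absorption and iteration.} We estimate $\mathcal O_s$ through Lemma~\ref{lem:Poincare-Sobolev} on $Q_s^{(\theta)}$ with the free parameter chosen as $\eps_1:=\eps_0(s-r)^2/s^2$, where $\eps_0\in(0,1)$ is still to be fixed. Using the algebraic identity $\babs{Q_s^{(\theta)}}\,s^2\,\theta^{m-1}\big(s^{\frac{m+1}m}\babs{B_s}\big)^{-1}=2s^2$, the bound $\sup_t\int_{B_s\cap\Omega}\b[\u^m(t),\g^m(t)]\le\phi(s)$, the inclusion $Q_{4s}^{(\theta)}\subset Q_{8\rho}^{(\theta)}$, and $s\le2\rho\le2$, this gives
\[
\frac{c}{(s-r)^2}\,\mathcal O_s\ \le\ 2c\,\eps_0\,\phi(s)\ +\ \frac{c\,\eps_0^{-\sigma}\,\rho^{\beta}}{(s-r)^{\beta}}\,\babs{Q_{2\rho}^{(\theta)}}\Big(\tfrac1{\babs{Q_{8\rho}^{(\theta)}}}\iint_{Q_{8\rho}^{(\theta)}\cap\Omega_T}\babs{D(\u^m-\g^m)}^q\dx\dt\Big)^{2/q}
\]
with $\sigma,\beta>0$ depending only on $n,m$. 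Hence $\phi(r)\le 2c\eps_0\,\phi(s)+\tilde A(s-r)^{-\beta}+C$ for all $\rho\le r<s\le2\rho$, with $\tilde A,C\ge0$ independent of $r,s$; fixing $\eps_0$ so that $2c\eps_0<1$ and applying Lemma~\ref{lem:iteration} on $[\rho,2\rho]$ (with $\alpha=1$), the $\rho$-dependence of $\tilde A$ cancels with the factor $(2\rho-\rho)^{-\beta}$ and we arrive at
\[
\phi(\rho)\ \le\ c\,\babs{Q_{2\rho}^{(\theta)}}\Big(\tfrac1{\babs{Q_{8\rho}^{(\theta)}}}\iint_{Q_{8\rho}^{(\theta)}\cap\Omega_T}\babs{D(\u^m-\g^m)}^q\dx\dt\Big)^{2/q}\ +\ cK\,P_0\,\babs{Q_{2\rho}^{(\theta)}}\ +\ c\,\mathcal D .
\]

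\emph{Step 3: clean-up and the main difficulty.} Dividing by $\babs{Q_\rho^{(\theta)}}\approx\babs{Q_{2\rho}^{(\theta)}}$ and using $\babs{D(\u^m-\g^m)}^q\le c(\babs{D\u^m}^q+\babs{D\g^m}^q)$, Jensen's inequality absorbs the $\babs{D\g^m}^q$-part into $c\,\mathcal D$. In the term $KP_0\babs{Q_{2\rho}^{(\theta)}}$ the $\babs{\hat\u^m-\hat\g^m}^2$-contribution is a fixed multiple of $\rho^{-2}\mathcal O_\rho$ and, by a further application of Lemma~\ref{lem:Poincare-Sobolev} on $Q_\rho^{(\theta)}$, is dominated by $cK\eps\,\phi(\rho)$ --- reabsorbed on the left for $\eps$ small, depending on $K$ --- plus a controlled multiple of the $q$-gradient term on $Q_{4\rho}^{(\theta)}\subset Q_{8\rho}^{(\theta)}$; the $\babs{\hat g}^{2m}=\babs{\hat\g^m}^2$-contribution is reduced to a multiple of $\mathcal D$ by the boundary Poincar\'e inequality Lemma~\ref{lem:poincare_g}. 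Collecting these estimates proves the asserted reverse H\"older inequality. The step I expect to be the genuine obstacle is ensuring that every $(s-r)$-dependent constant is compatible with the absorption Lemma~\ref{lem:iteration}, which only tolerates a fixed coefficient below $1$ in front of $\phi(s)$: this is what forces the choice $\eps_1\sim(s-r)^2/s^2$ in the Sobolev--Poincar\'e inequality and hinges on the cancellation of all powers of $(s-r)$ in the Young estimate of the time term in Step~1, which precisely isolates the $(s-r)$-independent quantity $cKP_0\babs{Q_{2\rho}^{(\theta)}}$. A secondary, purely technical, point is the bookkeeping needed to route the boundary data entering through the coupling --- in particular the $\babs{\hat g}^{2m}$-term --- back to $\mathcal D$ by means of Lemma~\ref{lem:poincare_g}.
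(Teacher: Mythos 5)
Your Steps 1--2 follow the paper's architecture (Caccioppoli, Sobolev--Poincar\'e with $\eps\sim(s-r)^2/s^2$, absorption via Lemma~\ref{lem:iteration}), but the way you dispose of the time term creates a gap that your Step 3 cannot close. By applying Young's inequality so as to \emph{fully decouple} $\theta^{m-1}$ from $\b[\u^m,\g^m]$, you produce the standalone additive term $cK\,P_0\,|Q_{2\rho}^{(\theta)}|$, i.e.\ (after dividing by $|Q_\rho^{(\theta)}|$) a term comparable to $K\theta^{2m}$. Its $|\hat g|^{2m}$-contribution contains, by Jensen, the quantity $|(\hat\g^m)_{Q_\rho^{(\theta)}}|^2/\rho^2$, and Lemma~\ref{lem:poincare_g} does \emph{not} control this: it only bounds the oscillation $\biint|\hat\g^m-(\hat\g^m)_{Q_\rho^{(\theta)}}|^2/\rho^2$ by the data, never the mean value itself. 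Since the asserted conclusion of the lemma contains no $|g|^{2m}/\rho^2$ term on the right-hand side (unlike $G_R$ in Theorem~\ref{thm:main}), this term is genuinely uncontrollable: take $g$ equal to a large constant vector, so that $D\g^m=0$, $\partial_t\g^m=0$, yet $P_0\gtrsim|g|^{2m}/\rho^2$ is arbitrarily large. In effect you have imported the strategy of the degenerate case (Lemma~\ref{lem:reverse_lateral_sub-intrinsic}, where a loose $\delta\theta^{2m}$ is harmless because the coupling \eqref{sub-intrinsic_coupling} bounds $\theta^{2m}$ by gradient and data terms) into the non-degenerate case, where the coupling \eqref{intrinsic_coupling} only bounds $\theta^{2m}$ by $K P_0$ and hence gives nothing back.

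The paper's proof avoids ever generating a standalone $\theta^{2m}$. It substitutes the coupling into the \emph{product} $\theta^{m-1}\b[\u^m,\g^m]$: the $|\hat\u^m-\hat\g^m|^2$-part of the coupling combines with $\b$ via Young and Lemma~\ref{lem:b} to give $|\u^m-\g^m|^2/s^2$ (term $\mathrm{II}_1$), while the $|\hat g|^{2m}$-part is split by Lemma~\ref{lem:poincare_g} into data plus $|(\hat\g^m)_{Q_s^{(\theta)}}|^2/s^2$, and the latter is kept as the weight $|(\hat\g^m)_{Q_s^{(\theta)}}|^{\frac{m-1}{m}}$ multiplying $\b$; Lemma~\ref{lem:b}\,(iii) with $\a^m=(\hat\g^m)_{Q_s^{(\theta)}}$ then converts $|a|^{m-1}\b[\u^m,\a^m]$ into $|\u^m-\a^m|^2\le 2|\u^m-\g^m|^2+2|\g^m-(\hat\g^m)_{Q_s^{(\theta)}}|^2$, so only differences of $\g^m$ survive and Lemma~\ref{lem:poincare_g} applies. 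To repair your argument you would have to postpone the Young step and reproduce this cancellation; as written, Step 3's claim that the $|\hat g|^{2m}$-contribution ``is reduced to a multiple of $\mathcal D$ by Lemma~\ref{lem:poincare_g}'' is false.
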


\begin{proof}[Proof]
Let $0<\rho \leq r<s\leq 2\rho$. To shorten the notation, we will again omit the reference point $z_o$. Utilizing Lemma \ref{lem:lateralcacciop} shows
\begin{align}\label{caccioppoli-boundary}
&\sup_{t\in \Lambda_r^{(\theta)}\cap(0,T)} \frac{1}{|B_r|} \int_{B_r\cap \Omega} \theta^{m-1}\frac{\b[ \u^m(t),\g^m(t)]}{r^{\frac{m+1}{m}}} \d x \\\notag
&\quad + \frac{1}{|Q_r^{(\theta)}|}\iint_{Q_r^{(\theta)}\cap \Omega_T} |D\u^m|^2 \d x\d t \\\notag
& \quad\leq  \frac{c}{|Q_s^{(\theta)}|}\iint_{Q_s^{(\theta)}\cap \Omega_T}  \frac{\big|\u^m-\g^m\big|^2}{(s-r)^2} \d x \d t\\\notag
&\qquad+  \frac{c}{|Q_s^{(\theta)}|}\iint_{Q_s^{(\theta)}\cap \Omega_T}\theta^{m-1} \frac{\b[\u^m,\g^m]}{s^{\frac{m+1}{m}}-r^{\frac{m+1}{m}}}  \d x\d t \\\notag
&\qquad +  c\biint_{Q_{s,+}^{(\theta)}} \left[ \left( |F|^2 +|\partial_t \g^m|^{\frac{2m}{2m-1}} \right) \chi_{\Omega_T}+ |D\g^m|^2 \right] \d x\d t \\\notag
&=: \mathrm{I} +\mathrm{II}+ \mathrm{III}
\end{align}
with the obvious meaning of $\mathrm{I}-\mathrm{III}$. Using the abbreviation
$$
\mathcal{R}_{r,s}:= \frac{s^{\frac{m+1}{2m}}}{s^{\frac{m+1}{2m}}-r^{\frac{m+1}{2m}}}
$$
as well as the estimate $(s^{\frac{m+1}{2m}}-r^{\frac{m+1}{2m}}) \leq (s-r)^{\frac{m+1}{2m}}$ implies
\begin{align*}
\mathrm{I} \leq  \frac{c\mathcal{R}_{r,s}^{\frac{4m}{m+1}}}{|Q_s^{(\theta)}|}\iint_{Q_s^{(\theta)}\cap \Omega_T}  \frac{\big|\u^m-\g^m\big|^2}{s^2} \d x \d t.
\end{align*} 
For the second term we use the intrinsic coupling \eqref{intrinsic_coupling}, noting that $(\hat\u^m -\hat \g^m)\chi_{Q_\rho^{(\theta)}}= (\u^m-\g^m)\chi_{Q_\rho^{(\theta)} \cap \Omega_T}$, and Lemma \ref{lem:b} to obtain
\begin{align*}
\mathrm{II} &\leq \frac{c\mathcal{R}_{r,s}^2}{|Q_s^{(\theta)}|}\iint_{Q_s^{(\theta)}\cap \Omega_T}\theta^{m-1} \frac{\b[\u^m,\g^m]}{s^{\frac{m+1}{m}}}  \d x\d t \\
&\leq\frac{c\mathcal{R}_{r,s}^2}{|Q_s^{(\theta)}|}\iint_{Q_s^{(\theta)}\cap \Omega_T}  \frac{\big|\u^m-\g^m\big|^2}{s^2} \d x \d t \\
&\quad+  \left(\biint_{Q_{s}^{(\theta)}}\frac{|\hat  g|^{2m}}{s^2} \d x\d t \right)^{\frac{m-1}{2m}}\frac{c\mathcal{R}_{r,s}^2}{|Q_s^{(\theta)}|}\iint_{Q_s^{(\theta)}\cap \Omega_T} \frac{\b[\u^m,\g^m]}{s^{\frac{m+1}{m}}}  \d x\d t\\
&=: \mathrm{II}_1+ \mathrm{II}_2.
\end{align*}
In order to estimate the second term on the right-hand side, we first use the  Poincar\'e inequality \eqref{poincare_sub_intrinsic} to obtain
\begin{align*}
\biint_{Q_{s}^{(\theta)}}& \frac{ |\hat g|^{2m}}{s^2}\d x\d t \\
&\leq \frac{2}{s^2}\biint_{Q_{s}^{(\theta)}} \left[ |\hat\g^m-(\hat\g^m)_{Q_{s}^{(\theta)}}|^2+|(\hat\g^m)_{Q_{s}^{(\theta)}}|^2 \right] \d x\d t \\
&\leq c \biint_{Q_{s,+}^{(\theta)}} \left[ |D\g^m|^2 +|\partial_t \g^m|^{\frac{2m}{2m-1}} \chi_{\Omega_T} \right] \d x\d t +c \frac{|(\hat\g^m)_{Q_{s}^{(\theta)}}|^2}{s^2} \\
 &=: \mathcal{G}_s +c \frac{|(\hat\g^m)_{Q_{s}^{(\theta)}}|^2}{s^2},
\end{align*}
where we abbreviated
\begin{equation*}
  \mathcal G_s:=\biint_{Q_{s,+}^{(\theta)}} \left[ |D\g^m|^2 +\Big(|\partial_t \g^m|^{\frac{2m}{2m-1}}+|F|^2\Big) \chi_{\Omega_T} \right] \d x\d t.
\end{equation*}
Using Lemma \ref{lem:b} and Young's inequality, we further estimate
\begin{align*}
\mathrm{II}_2 &\leq c  \mathcal{G}_s^{\frac{m-1}{2m}}\frac{c\mathcal{R}_{r,s}^2}{|Q_s^{(\theta)}|}\iint_{Q_s^{(\theta)}\cap \Omega_T} \frac{\b[\u^m,\g^m]}{s^{\frac{m+1}{m}}}  \d x\d t \\
&\quad +\frac{c\mathcal{R}_{r,s}^2}{|Q_s^{(\theta)}|}\iint_{Q_s^{(\theta)}\cap \Omega_T} |(\hat\g^m)_{Q_{s}^{(\theta)}}|^{\frac{m-1}{m}} \frac{\b[\u^m,(\hat\g^m)_{Q_{s}^{(\theta)}}]+\b[(\hat\g^m)_{Q_{s}^{(\theta)}},\g^m]}{s^2} \d x\d t \\
&\leq c \mathcal{R}_{r,s}^2 \left[ \mathcal G_s +\left(\biint_{Q_s^{(\theta)}\cap\Omega_T} \frac{|\u^m-\g^m|^{\frac{m+1}{m}}}{s^{\frac{m+1}{m}}}  \d x\d t \right)^{\frac{2m}{m+1}} \right] \\
&\quad+ \frac{c\mathcal{R}_{r,s}^2}{|Q_s^{(\theta)}|}\iint_{Q_s^{(\theta)}\cap \Omega_T} \frac{|\u^m-\g^m|^2+|\g^m-(\hat\g^m)_{Q_{s}^{(\theta)}}|^2}{s^2} \d x\d t \\
&\leq c \mathcal{R}_{r,s}^2 \left[ \mathcal G_s + \frac{1}{|Q_s^{(\theta)}|}\iint_{Q_s^{(\theta)}\cap \Omega_T} \frac{|\u^m-\g^m|^2}{s^2} \d x\d t \right.\\
&\hspace{35mm} \left.+\biint_{Q_s^{(\theta)}} \frac{|\hat \g^m-(\hat \g^m)_{Q_s^{(\theta)}}|^2}{s^2} \d x\d t \right]\\
&\leq c \mathcal{R}_{r,s}^2 \left[ \mathcal G_s + \frac{1}{|Q_s^{(\theta)}|}\iint_{Q_s^{(\theta)}\cap \Omega_T} \frac{|\u^m-\g^m|^2}{s^2} \d x\d t \right].
\end{align*}
Inserting the estimates for the terms $\mathrm{I}$ and $\mathrm{II}$, and using Lemma \ref{lem:Poincare-Sobolev} shows for every $\varepsilon\in(0,1)$ that
\begin{align*}
&\sup_{t\in \Lambda_r^{(\theta)} \cap(0,T)} \frac{1}{|B_r|} \int_{B_r\cap \Omega} \theta^{m-1}\frac{\b[ \u^m(t),\g^m(t)]}{r^{\frac{m+1}{m}}} \d x \\
&\quad + \frac{1}{|Q_r^{(\theta)}|}\iint_{Q_r^{(\theta)}\cap \Omega_T} |D\u^m|^2 \d x\d t \\
&\quad \leq c\mathcal{R}_{r,s}^{\frac{4m}{m+1}} \left[ \varepsilon \sup_{t\in \Lambda_s^{(\theta)} \cap(0,T)} \frac{1}{|B_s|} \int_{B_s\cap \Omega} \theta^{m-1}\frac{\b[ \u^m(t),\g^m(t)]}{s^{\frac{m+1}{m}}} \d x \right.\\
&\quad \left.+\varepsilon^{-\frac{4m-2mq}{mq+q}}\left(\frac{1}{|Q_{4s}^{(\theta)}|}\iint_{Q_{4s}^{(\theta)}\cap \Omega_T} |D(\u^m-\g^m)|^q  \d x \d t \right)^{\frac2q}+\mathcal G_s\right]
\end{align*}
holds true. Choosing $\varepsilon= \frac{1}{2c\mathcal{R}_{r,s}^{\frac{4m}{m+1}}}$ yields
\begin{align*}
&\sup_{t\in \Lambda_r^{(\theta)} \cap(0,T)} \frac{1}{|B_r|} \int_{B_r\cap \Omega} \theta^{m-1}\frac{\b[ \u^m(t),\g^m(t)]}{r^{\frac{m+1}{m}}} \d x \\
&\quad + \frac{1}{|Q_r^{(\theta)}|}\iint_{Q_r^{(\theta)}\cap \Omega_T} |D\u^m|^2 \d x\d t \\
&\quad \leq \frac 12  \sup_{t\in \Lambda_s^{(\theta)} \cap(0,T)} \frac{1}{|B_s|} \int_{B_s\cap \Omega} \theta^{m-1}\frac{\b[ \u^m(t),\g^m(t)]}{s^{\frac{m+1}{m}}} \d x \\
&\quad +c \mathcal{R}_{r,s}^{\frac{4m}{m+1}\big(\frac{4m-2mq}{(m+1)q}+1\big)}\left(\frac{1}{|Q_{8\rho}^{(\theta)}|}\iint_{Q_{8\rho}^{(\theta)}\cap \Omega_T} |D(\u^m-\g^m)|^q  \d x \d t \right)^{\frac2q}+c\mathcal{R}_{r,s}^{\frac{4m}{m+1}}\mathcal G_{2\rho}.
\end{align*}
We are now in position to apply Lemma \ref{lem:iteration} and obtain
\begin{align*}
&\sup_{t\in \Lambda_\rho^{(\theta)}\cap(0,T)} \frac{1}{|B_\rho|} \int_{B_\rho\cap \Omega} \theta^{m-1}\frac{\b[ \u^m(t),\g^m(t)]}{\rho^{\frac{m+1}{m}}} \d x \\
&\quad + \frac{1}{|Q_\rho^{(\theta)}|}\iint_{Q_\rho^{(\theta)}\cap \Omega_T} |D\u^m|^2 \d x\d t \\
&\leq c\left(\frac{1}{|Q_{8\rho}^{(\theta)}|}\iint_{Q_{8\rho}^{(\theta)}\cap \Omega_T} |D(\u^m-\g^m)|^q  \d x \d t \right)^{\frac2q} +\mathcal G_{2\rho}.
\end{align*}
This finishes the proof of the Lemma.
\end{proof}

\begin{lemma}
\label{lem:reverse_lateral_sub-intrinsic}
Let $m>1$, $z_o \in \Omega_T \cup \partial_{\mathrm{par}}\Omega_T$ and $u$ be a weak solution to \eqref{equation:PME} where the vector field $\A$ satisfies \eqref{assumption:A} and the Cauchy-Dirichlet datum $g$ fulfills \eqref{assumption:g}. Then on any cylinder $Q_\rho^{(\theta)}(z_o)\subset\R^n\times(-T,T)$ with  $\dist(B_\rho(x_o),\partial \Omega)=0$, which satisfies the intrinsic coupling
\begin{align}
\label{sub-intrinsic_coupling}
\biint_{Q_{2\rho}^{(\theta)}(z_o)}&2\frac{|\hat\u^m-\hat \g^m|^2+|\hat g|^{2m}}{(2\rho)^2} \d x\d t\leq \theta^{2m} \notag \\
&\leq  K\biint_{Q_{\rho,+}^{(\theta)}(z_o)} \left[ \left(|D\u^m|^2+|F|^2+|\partial_t \g^m|^{\frac{2m}{2m-1}} \right) \chi_{\Omega_T}+|D\g^m|^2 \right] \d x\d t 
\end{align}
for some $0< \rho\leq 1$, $\theta>0$ and $K \geq 1$ we have the following reverse H\"older inequality
\begin{align*}
&\frac{1}{|Q_\rho^{(\theta)}(z_o)|}\iint_{Q_\rho^{(\theta)}(z_o)\cap \Omega_T} |D\u^m|^2 \d x\d t \\
&\leq c\left(\frac{1}{|Q_{8\rho}^{(\theta)}(z_o)|}\iint_{Q_{8\rho}^{(\theta)}(z_o)\cap \Omega_T} |D\u^m|^q  \d x \d t \right)^{\frac2q}\\
&\quad +\frac{c}{|Q_{8\rho}^{(\theta)}(z_o)|}\iint_{Q_{8\rho,+}^{(\theta)}(z_o)} \left[ |D\g^m|^2+\left(|\partial_t\g^m|^{\frac{2m}{2m-1}}+|F|^2 \right) \chi_{\Omega_T} \right] \d x\d t.
\end{align*}
for a constant $c  = c(m,n,N,\alpha,\mu, \rho_o,\nu,L, K)$ and some $q = q(n,\mu)\in (1,2)$.
\end{lemma}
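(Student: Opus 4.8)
The plan is to follow the scheme of the proof of Lemma~\ref{lem:reverse_lateral_intrinsic}; the only genuinely new ingredient is the treatment of the time term of the Caccioppoli inequality, since here the scaling parameter $\theta$ is controlled through the spatial gradient by the degenerate coupling~\eqref{sub-intrinsic_coupling} rather than through the solution itself. So I would fix radii $\rho\le r<s\le2\rho$, drop the reference point $z_o$ from the notation, and abbreviate
\begin{equation*}
  \mathcal{E}(r):=\sup_{t\in\Lambda_r^{(\theta)}\cap(0,T)}\frac{1}{|B_r|}\int_{B_r\cap\Omega}\theta^{m-1}\frac{\b[\u^m(t),\g^m(t)]}{r^{\frac{m+1}{m}}}\d x+\frac{1}{|Q_r^{(\theta)}|}\iint_{Q_r^{(\theta)}\cap\Omega_T}|D\u^m|^2\d x\d t,
\end{equation*}
as well as $\mathcal{R}_{r,s}:=s^{\frac{m+1}{2m}}/\big(s^{\frac{m+1}{2m}}-r^{\frac{m+1}{2m}}\big)$ and $\mathcal{G}_s:=\biint_{Q_{s,+}^{(\theta)}}\big[|D\g^m|^2+\big(|\partial_t\g^m|^{\frac{2m}{2m-1}}+|F|^2\big)\chi_{\Omega_T}\big]\d x\d t$. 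First I would note that the first inequality in~\eqref{sub-intrinsic_coupling} forces every $Q_s^{(\theta)}$ with $s\in[\rho,2\rho]$ to be sub-intrinsic in the sense of~\eqref{sub-intrinsic}, so that Lemma~\ref{lem:Poincare-Sobolev} applies on all these cylinders.

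Next I would insert the Caccioppoli inequality of Lemma~\ref{lem:lateralcacciop}; using $s^{\frac{m+1}{m}}-r^{\frac{m+1}{m}}\ge\big(s^{\frac{m+1}{2m}}-r^{\frac{m+1}{2m}}\big)s^{\frac{m+1}{2m}}$ and $s^{\frac{m+1}{2m}}-r^{\frac{m+1}{2m}}\le(s-r)^{\frac{m+1}{2m}}$ it takes the form
\begin{equation*}
  \mathcal{E}(r)\le\frac{c\,\mathcal{R}_{r,s}^{\frac{4m}{m+1}}}{|Q_s^{(\theta)}|}\iint_{Q_s^{(\theta)}\cap\Omega_T}\frac{|\u^m-\g^m|^2}{s^2}\d x\d t+\frac{c\,\mathcal{R}_{r,s}}{|Q_s^{(\theta)}|}\iint_{Q_s^{(\theta)}\cap\Omega_T}\theta^{m-1}\frac{\b[\u^m,\g^m]}{s^{\frac{m+1}{m}}}\d x\d t+c\,\mathcal{G}_s=:\mathrm{I}+\mathrm{II}+c\,\mathcal{G}_s.
\end{equation*}
For $\mathrm{I}$ I would apply Lemma~\ref{lem:Poincare-Sobolev} on $Q_s^{(\theta)}$ with a parameter $\eps_0\in(0,1)$, bounding it by $c\,\mathcal{R}_{r,s}^{\frac{4m}{m+1}}\big[\eps_0\,\mathcal{E}(s)+c\,\eps_0^{-\sigma}B_s\big]$, where $\sigma>0$ is the exponent from that lemma and $B_s:=\big(\frac{1}{|Q_{4s}^{(\theta)}|}\iint_{Q_{4s}^{(\theta)}\cap\Omega_T}|D(\u^m-\g^m)|^q\d x\d t\big)^{2/q}$. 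For the crucial term $\mathrm{II}$ I would first replace $\b[\u^m,\g^m]$ by $c\,|\u^m-\g^m|^{\frac{m+1}{m}}$ via Lemma~\ref{lem:b}(ii), then apply Young's inequality pointwise with exponents $\frac{2m}{m-1},\frac{2m}{m+1}$ to split the integrand into $\eps_2\theta^{2m}$ plus $c\,\eps_2^{-\frac{m-1}{m+1}}|\u^m-\g^m|^2/s^2$, apply Lemma~\ref{lem:Poincare-Sobolev} once more to the resulting $L^2$-average, and finally invoke the second inequality of~\eqref{sub-intrinsic_coupling}, i.e.\ $\theta^{2m}\le c\,K\big(\mathcal{E}(s)+\mathcal{G}_s\big)$ (legitimate since $Q_{\rho,+}^{(\theta)}\subset Q_s^{(\theta)}$ with comparable measure). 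This gives $\mathrm{II}\le c\,\mathcal{R}_{r,s}\big[\eps_2K\big(\mathcal{E}(s)+\mathcal{G}_s\big)+\eps_2^{-\frac{m-1}{m+1}}\big(\eps\,\mathcal{E}(s)+c\,\eps^{-\sigma}B_s\big)\big]$ for arbitrary $\eps,\eps_2\in(0,1)$.

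To finish, I would add up the estimates for $\mathrm{I}$, $\mathrm{II}$ and $c\,\mathcal{G}_s$ and choose the smallness parameters \emph{in this order}: first $\eps_2$ so that $c\,\mathcal{R}_{r,s}\eps_2K=\tfrac16$; then $\eps$ so that $c\,\mathcal{R}_{r,s}\eps_2^{-\frac{m-1}{m+1}}\eps=\tfrac16$; and finally $\eps_0$ so that $c\,\mathcal{R}_{r,s}^{\frac{4m}{m+1}}\eps_0=\tfrac16$. With these choices the total coefficient of $\mathcal{E}(s)$ equals $\tfrac12$, while the coefficients of $B_s$ and $\mathcal{G}_s$ become of the form $c(K)\,\mathcal{R}_{r,s}^N$ for a fixed exponent $N=N(m,n,q)$. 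Since $B_s\le c\,B_{8\rho}$, $\mathcal{G}_s\le c\,\mathcal{G}_{2\rho}$ for $s\le2\rho$, and $\mathcal{R}_{r,s}^N\le c\,\rho^{N\frac{m+1}{2m}}\big(s^{\frac{m+1}{2m}}-r^{\frac{m+1}{2m}}\big)^{-N}$, the resulting inequality is exactly of the shape required by the iteration Lemma~\ref{lem:iteration} (with $\vartheta=\tfrac12$, $\alpha=\tfrac{m+1}{2m}$, $\beta=N$), which yields $\mathcal{E}(\rho)\le c(K)\big[B_{8\rho}+\mathcal{G}_{2\rho}\big]$, the powers of $\rho$ cancelling. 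The last step is routine: split $|D(\u^m-\g^m)|^q\le c\big(|D\u^m|^q+|D\g^m|^q\big)$ and use Jensen's inequality ($q<2$) to absorb the $\g$-part into the data term, observe $\mathcal{G}_{2\rho}\le c\,\mathcal{G}_{8\rho}$, and recall $\frac{1}{|Q_\rho^{(\theta)}|}\iint_{Q_\rho^{(\theta)}\cap\Omega_T}|D\u^m|^2\d x\d t\le\mathcal{E}(\rho)$.

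The main obstacle is precisely the term $\mathrm{II}$. The degenerate coupling controls $\theta^{2m}$ by an $L^2$-average of $|D\u^m|$ on a cylinder of \emph{the same scale} $\rho$ as the one appearing on the left-hand side, so a crude bound $\mathrm{II}\le c\,\mathcal{R}_{r,s}\theta^{2m}$ would reproduce the left-hand side with a constant $\ge1$ and the iteration would fail to close. The resolution is to extract only an $\eps_2$-fraction of $\theta^{2m}$ via Young's inequality after a Sobolev--Poincaré step, and to fix $\eps_2$, $\eps$, $\eps_0$ successively with controlled dependence on $\mathcal{R}_{r,s}$ and $K$, so that what is absorbed into $\mathcal{E}(s)$ carries a fixed coefficient $<1$ while the residual constants blow up only polynomially in $\mathcal{R}_{r,s}$ --- exactly what Lemma~\ref{lem:iteration} can digest.
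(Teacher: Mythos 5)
Your proposal is correct and follows essentially the same route as the paper: Young's inequality with exponents $\frac{2m}{m-1},\frac{2m}{m+1}$ applied to the time term to split off a small multiple of $\theta^{2m}$, the second inequality of \eqref{sub-intrinsic_coupling} to convert that into an absorbable gradient term plus data, Lemma \ref{lem:Poincare-Sobolev} on the remaining $L^2$-averages, and the iteration Lemma \ref{lem:iteration}. The only (immaterial) difference is bookkeeping: the paper absorbs the $|D\u^m|^2$ contribution from $\theta^{2m}$ directly into the left-hand side at radius $r$ with $\delta\sim 1/K$ independent of $\mathcal R_{r,s}$, whereas you feed it into $\mathcal E(s)$ and let the iteration lemma handle it, at the harmless cost of polynomially larger constants in $\mathcal R_{r,s}$.
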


\begin{proof}[Proof]
We consider again radii $r,s>0$ with $\rho \leq r<s \leq 2\rho$ and
take the Caccioppoli inequality from Lemma \ref{lem:lateralcacciop} as
starting point. We use the same short-hand notation as in the proof of
Lemma \ref{lem:reverse_lateral_intrinsic}. The first term in~\eqref{caccioppoli-boundary} can be estimated in the same way as before, whereas the second term will be treated in a different way. By using Young's inequality and Lemma~\ref{lem:b} (ii) we obtain
\begin{align*}
\mathrm{II} &\leq\frac{c\mathcal{R}_{r,s}^2}{|Q_s^{(\theta)}(z_o)|}\iint_{Q_s^{(\theta)}(z_o)\cap \Omega_T}\theta^{m-1} \frac{\b[\u^m,\g^m]}{s^{\frac{m+1}{m}}}  \d x\d t \\
&\leq \delta \theta^{2m} +\frac{c_\delta\mathcal{R}_{r,s}^{\frac{4m}{m+1}}}{|Q_s^{(\theta)}(z_o)|}\iint_{Q_s^{(\theta)}(z_o)\cap \Omega_T}\frac{\b[\u^m,\g^m]^{\frac{2m}{m+1}}}{s^2}  \d x\d t \\
&\leq \delta \theta^{2m} +\frac{c_\delta\mathcal{R}_{r,s}^{\frac{4m}{m+1}}}{|Q_s^{(\theta)}(z_o)|}\iint_{Q_s^{(\theta)}(z_o)\cap \Omega_T}\frac{|\u^m-\g^m|^{2}}{s^2}  \d x\d t.
\end{align*}
Using the intrinsic coupling \eqref{sub-intrinsic_coupling} allows us to absorb the term involving $D\u^m$ and moreover, exploiting Lemma \ref{lem:Poincare-Sobolev} leads to 
\begin{align*}
&\sup_{t\in \Lambda_r^{(\theta)}(t_o) \cap(0,T)} \frac{1}{|B_r(x_o)|} \int_{B_r(x_o)\cap \Omega} \theta^{m-1}\frac{\b[ \u^m(t),\g^m(t)]}{r^{\frac{m+1}{m}}} \d x \\
&\quad + \frac{1}{|Q_r^{(\theta)}(z_o)|}\iint_{Q_r^{(\theta)}(z_o)\cap \Omega_T} |D\u^m|^2 \d x\d t \\
&\leq \frac{c\mathcal{R}_{r,s}^{\frac{4m}{m+1}}}{|Q_s^{(\theta)}(z_o)|}\iint_{Q_{s,+}^{(\theta)}(z_o)} \Bigg[ \frac{|\u^m-\g^m|^{2}}{s^2} +|D\g^m|^2 +\left(|F|^2  + |\partial_t \g^m|^{\frac{2m}{2m-1}}\right)\chi_{\Omega_T} \Bigg] \d x\d t\\
&\leq  c\mathcal{R}_{r,s}^{\frac{4m}{m+1}} \left[ \varepsilon \sup_{t\in \Lambda_s^{(\theta)}(t_o) \cap(0,T)} \frac{1}{|B_s(x_o)|} \int_{B_s(x_o)\cap \Omega} \theta^{m-1}\frac{\b[ \u^m(t),\g^m(t)]}{s^{\frac{m+1}{m}}} \d x \right.\\
&\qquad +\varepsilon^{-\frac{4m-2mq}{mq+q}}\left(\frac{1}{|Q_{4s}^{(\theta)}(z_o)|}\iint_{Q_{4s}^{(\theta)}(z_o)\cap \Omega_T} |D(\u^m-\g^m)|^q  \d x \d t \right)^{\frac2q} \\
&\left. \qquad+\frac{1}{|Q_s^{(\theta)}(z_o)|}\iint_{Q_{s,+}^{(\theta)}(z_o)} \Big[ |D\g^m|^2+ \left(|F|^2  + |\partial_t \g^m|^{\frac{2m}{2m-1}} \right) \chi_{\Omega_T} \Big] \d x\d t\right].
\end{align*}
Proceeding as in the proof of Lemma \ref{lem:reverse_lateral_intrinsic} completes the proof.
\end{proof}

\subsection{The initial boundary}

Our next goal is the proof of reverse H\"older inequalities at the
initial boundary. Again, we have to distinguish between two cases. 

\begin{lemma}
\label{lem:reverse_initial_intrinsic}
Let $m>1$ and $u$ be a global weak solution to \eqref{equation:PME} in
the sense of Definition~\ref{global_solution}, where the vector field
$\A$ satisfies \eqref{assumption:A} and the Cauchy-Dirichlet datum $g$
fulfills \eqref{assumption:g}. Then on any cylinder
$Q_{2\rho}^{(\theta)}(z_o) \subset \Omega\times(-T,T)$ with 
$z_o\in\Omega\times[0,T)$, which satisfies the intrinsic coupling
\begin{align}\label{initial_intrinsic_coupling}
  \biint_{Q_{2\rho}^{(\theta)}(z_o)}
  \frac{|\hat u|^{2m}}{(2\rho)^2} \d x\d t\leq \theta^{2m}  
  \leq
  \biint_{Q_{\rho}^{(\theta)}(z_o)}\frac{|\hat u|^{2m}}{\rho^2} \d x\d t 
\end{align}
for some $0< \rho\leq 1$ and $\theta\geq 1$ we have the following reverse H\"older inequality
\begin{align*}
&\biint_{Q_\rho^{(\theta)}(z_o)} |D\power{\hat u}{m}|^2 \d x\d t \\
&\leq c\left(\biint_{Q_{2\rho}^{(\theta)}(z_o)} |D\power{\hat u}{m}|^{q}  \d x \d t \right)^{\frac2q}\\
&\qquad+
c\,\biint_{Q_{2\rho,+}^{(\theta)}(z_o)} \Big[
|F|^2+|\partial_t\power gm|^{\frac{2m}{2m-1}}+|D\power{g}{m}|^2 \Big] \d x\d t
\end{align*}
for a constant $c = c(n,m,\nu,L)$ and for $q:= \frac{2n}{d} < 2$.
\end{lemma}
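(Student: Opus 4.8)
The argument follows the pattern of Lemma~\ref{lem:reverse_lateral_intrinsic}, now built on the Caccioppoli estimate of Lemma~\ref{lem:initialcacciop} and the Sobolev--Poincar\'e estimate of Lemma~\ref{lem:initialSobolev} for the extension $\hat u$; I suppress the reference point $z_o$ in the notation. Fix radii $\rho\le r<s\le 2\rho$ and set $\power am:=(\power{\hat u}{m})^{(\theta)}_{\rho}\in\R^N$, kept fixed throughout. For every such $s$, the left inequality in \eqref{initial_intrinsic_coupling} together with $Q^{(\theta)}_s\subset Q^{(\theta)}_{2\rho}$ shows that $Q^{(\theta)}_s$ is sub-intrinsic in the sense of \eqref{initial-sub-intrinsic-2}, so that both lemmas apply on $Q^{(\theta)}_s$. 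Applying Lemma~\ref{lem:initialcacciop} on $Q^{(\theta)}_r\subset Q^{(\theta)}_s$ with this $a$ and dividing by $|Q^{(\theta)}_r|\approx|Q^{(\theta)}_s|$, we obtain, writing $\mathcal{R}_{r,s}:=s^{\frac{m+1}{2m}}\big(s^{\frac{m+1}{2m}}-r^{\frac{m+1}{2m}}\big)^{-1}$ as in the lateral case,
\begin{align*}
  \phi(r)
  &:=\sup_{t\in\Lambda^{(\theta)}_r(t_o)}\bint_{B_r(x_o)}\theta^{m-1}\frac{\b[\power{\hat u}{m}(t),\power am]}{r^{\frac{m+1}{m}}}\,\d x
  +\frac{1}{|Q^{(\theta)}_r|}\iint_{Q^{(\theta)}_r}\babs{D\power{\hat u}{m}}^2\,\d x\d t\\
  &\le c\,\mathrm{I}+c\,\mathrm{II}+c\,\mathrm{III},
\end{align*}
where
\begin{align*}
  \mathrm{I}&:=\frac{1}{|Q^{(\theta)}_s|}\iint_{Q^{(\theta)}_s}\frac{|\power{\hat u}{m}-\power am|^2}{(s-r)^2}\,\d x\d t,\\
  \mathrm{II}&:=\frac{1}{|Q^{(\theta)}_s|}\iint_{Q^{(\theta)}_s}\theta^{m-1}\frac{\b[\power{\hat u}{m},\power am]}{s^{\frac{m+1}{m}}-r^{\frac{m+1}{m}}}\,\d x\d t,\\
  \mathrm{III}&:=\biint_{Q^{(\theta)}_{s,+}}\big(|F|^2+|D\power gm|^2+|\partial_t\power gm|^{\frac{2m}{2m-1}}\big)\,\d x\d t.
\end{align*}
From $s^{\frac{m+1}{2m}}-r^{\frac{m+1}{2m}}\le(s-r)^{\frac{m+1}{2m}}$ we get $\mathrm{I}\le c\,\mathcal{R}_{r,s}^{\frac{4m}{m+1}}\biint_{Q^{(\theta)}_s}\frac{|\power{\hat u}{m}-\power am|^2}{s^2}\,\d x\d t$.

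The estimate of $\mathrm{II}$ is where the intrinsic coupling \eqref{initial_intrinsic_coupling} is used. Its right inequality gives $\theta^{m-1}\le\big(\biint_{Q^{(\theta)}_\rho}\frac{|\hat u|^{2m}}{\rho^2}\,\d x\d t\big)^{\frac{m-1}{2m}}$, and since $|\hat u|^{2m}=|\power{\hat u}{m}|^2$ and $\power am$ is the mean value of $\power{\hat u}{m}$ over $Q^{(\theta)}_\rho$, this yields $\theta^{m-1}\le c\big(\biint_{Q^{(\theta)}_\rho}\frac{|\power{\hat u}{m}-\power am|^2}{\rho^2}\,\d x\d t\big)^{\frac{m-1}{2m}}+c\,|a|^{m-1}\rho^{-\frac{m-1}{m}}$. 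Inserting this together with $s^{\frac{m+1}{m}}-r^{\frac{m+1}{m}}\ge\mathcal{R}_{r,s}^{-1}s^{\frac{m+1}{m}}$: for the first contribution one pulls out the constant factor, estimates $\biint_{Q^{(\theta)}_s}\frac{\b[\power{\hat u}{m},\power am]}{s^{\frac{m+1}{m}}}\,\d x\d t\le c\big(\biint_{Q^{(\theta)}_s}\frac{|\power{\hat u}{m}-\power am|^2}{s^2}\,\d x\d t\big)^{\frac{m+1}{2m}}$ by Lemma~\ref{lem:b}(ii) and H\"older, and uses $\biint_{Q^{(\theta)}_\rho}\frac{|\power{\hat u}{m}-\power am|^2}{\rho^2}\,\d x\d t\le c\biint_{Q^{(\theta)}_s}\frac{|\power{\hat u}{m}-\power am|^2}{s^2}\,\d x\d t$ and the exponent identity $\frac{m-1}{2m}+\frac{m+1}{2m}=1$; for the second contribution one moves $|a|^{m-1}$ under the integral and invokes Lemma~\ref{lem:b}(iii), $|a|^{m-1}\b[\power{\hat u}{m},\power am]\le c|\power{\hat u}{m}-\power am|^2$, together with $\rho\approx s$. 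Both contributions are thus bounded by $c\,\mathcal{R}_{r,s}^{\frac{4m}{m+1}}\biint_{Q^{(\theta)}_s}\frac{|\power{\hat u}{m}-\power am|^2}{s^2}\,\d x\d t$, and in particular no power of $\theta$ and no mean-value term survives. Finally, since $\power am=(\power{\hat u}{m})^{(\theta)}_\rho$ and $Q^{(\theta)}_\rho\subset Q^{(\theta)}_s$, an elementary mean-value comparison replaces $\power am$ by $(\power{\hat u}{m})^{(\theta)}_s$ at the cost of a constant, so that $\mathrm{I}+\mathrm{II}\le c\,\mathcal{R}_{r,s}^{\frac{4m}{m+1}}\biint_{Q^{(\theta)}_s}\frac{|\power{\hat u}{m}-(\power{\hat u}{m})^{(\theta)}_s|^2}{s^2}\,\d x\d t$.

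Now apply the Sobolev--Poincar\'e inequality of Lemma~\ref{lem:initialSobolev} on $Q^{(\theta)}_s$ to the right-hand side. Its $\eps$-term carries the reference $(\power{\hat u}{m})^{(\theta)}_s$; using the triangle-type inequality for $\b$ coming from Lemma~\ref{lem:b}(i), together with Lemma~\ref{lem:b}(ii),(iii) and the same coupling bound for $\theta^{m-1}$ as above, this term is controlled by $c\,\eps$ times the supremum part of $\phi(s)$ plus $c\,\eps\biint_{Q^{(\theta)}_s}\frac{|\power{\hat u}{m}-(\power{\hat u}{m})^{(\theta)}_s|^2}{s^2}\,\d x\d t$, the latter being reabsorbable for small $\eps$. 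Choosing $\eps=\eps(r,s)$ proportional to $\mathcal{R}_{r,s}^{-\frac{4m}{m+1}}$ so that the total coefficient of $\phi(s)$ becomes $\tfrac12$, and bounding $\biint_{Q^{(\theta)}_s}|D\power{\hat u}{m}|^q\,\d x\d t$ and $\biint_{Q^{(\theta)}_{s,+}}(\cdots)\,\d x\d t$ by the corresponding integrals over $Q^{(\theta)}_{2\rho}$, we arrive at an estimate of the form
\begin{equation*}
  \phi(r)\le\tfrac12\,\phi(s)+c\,\big(s^{\frac{m+1}{2m}}-r^{\frac{m+1}{2m}}\big)^{-\Gamma}\bigg[\Big(\biint_{Q^{(\theta)}_{2\rho}}|D\power{\hat u}{m}|^q\,\d x\d t\Big)^{\frac2q}+\biint_{Q^{(\theta)}_{2\rho,+}}\big(|F|^2+|\partial_t\power gm|^{\frac{2m}{2m-1}}+|D\power gm|^2\big)\,\d x\d t\bigg]
\end{equation*}
valid for all $\rho\le r<s\le2\rho$, with some exponent $\Gamma=\Gamma(m,n)>0$. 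The iteration Lemma~\ref{lem:iteration}, applied with $\vartheta=\tfrac12$, $\alpha=\frac{m+1}{2m}$, $\beta=\Gamma$ and $C=0$, then gives $\phi(\rho)\le c[\,\cdots\,]$ with the same bracket; since $\biint_{Q^{(\theta)}_\rho}|D\power{\hat u}{m}|^2\,\d x\d t\le\phi(\rho)$, this is the asserted reverse H\"older inequality, with $q=\frac{2n}{d}$.

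The main obstacle is exactly the estimate of $\mathrm{II}$ and of the reference switch in the Sobolev--Poincar\'e step: in both, $\theta^{m-1}$-weighted (or mean-value-weighted) integrals of $\b$ must be turned into plain integrals of $|\power{\hat u}{m}-\cdot|^2$, and the mechanism is Lemma~\ref{lem:b}(iii) applied pointwise once the weight has been brought under the integral sign, so that neither a power of $\theta$ nor a mean value is left behind. This is the analogue of the delicate $\mathrm{II}_2$-estimate in the proof of Lemma~\ref{lem:reverse_lateral_intrinsic}, but technically lighter because the coupling \eqref{initial_intrinsic_coupling} involves only $|\hat u|^{2m}$ and not, in addition, the boundary datum.
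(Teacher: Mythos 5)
Your proposal is correct and follows essentially the same route as the paper's proof: Caccioppoli with a mean value of $\power{\hat u}{m}$ as reference, estimate of the time term by splitting $\theta^{m-1}$ via the right-hand coupling in \eqref{initial_intrinsic_coupling} into an oscillation part (Lemma~\ref{lem:b}\,(ii) plus H\"older) and a mean-value part (Lemma~\ref{lem:b}\,(iii)), then the Sobolev--Poincar\'e inequality of Lemma~\ref{lem:initialSobolev} and absorption via the iteration Lemma~\ref{lem:iteration}. The only cosmetic difference is your choice of the fixed reference $(\power{\hat u}{m})^{(\theta)}_\rho$ instead of the $r$-dependent mean used in the paper, which is immaterial thanks to Lemma~\ref{lem:uavetoa}.
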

\begin{proof}
We omit the reference point $z_o$ in notation, and consider radii $\rho \leq r < s \leq 2\rho$. From the Caccioppoli estimate in Lemma~\ref{lem:initialcacciop} we obtain
\begin{align}\label{caccio-initial}
\sup_{t\in \Lambda_r^{(\theta)}}& \bint_{B_r} \theta^{m-1}
\frac{\b\big[\power{\hat u}{m}(t),(\power{\hat
    u}{m})_r^{(\theta)}\big]}{r^{\frac{m+1}{m}}} \d x +
\biint_{Q_r^{(\theta)} } |D\power{\hat u}{m}|^2 \d x\d t \\\nonumber
& \leq c\biint_{Q_s^{(\theta)}} \left[ \frac{\big|\power{\hat u}{m}-(\power{\hat u}{m})^{(\theta)}_{r} \big|^2}{(s-r)^2}+ \theta^{m-1} \frac{\b\big[\power{\hat u}{m},(\power{\hat u}{m})^{(\theta)}_{r}]}{s^{\frac{m+1}{m}}-r^{\frac{m+1}{m}}}  \right] \d x\d t \\\nonumber
&\hspace{3mm} +c\biint_{Q_{s,+}^{(\theta)}} \Big[
|F|^2+|D\power{g}{m}|^2+|\partial_t\power gm|^{\frac{2m}{2m-1}} \Big] \d x
\d t \\\nonumber
&=: \mathrm{I} + \mathrm{II} + \mathrm{III} .
\end{align}
In the same way as in Lemma~\ref{lem:reverse_lateral_intrinsic} we can estimate the first term as 
\begin{align*}
  \mathrm{I}
  \leq
    c\mathcal{R}_{r,s}^{\frac{4m}{m+1}}
    \biint_{Q_s^{(\theta)}} \frac{\babs{\power{\hat u}{m} - (\power{\hat
    u}{m})^{(\theta)}_{r}}^2}{s^2} \, \dx\dt 
  \leq c
    \mathcal{R}_{r,s}^{\frac{4m}{m+1}}
    \biint_{Q_s^{(\theta)}} \frac{\babs{\power{\hat u}{m} - (\power{\hat u}{m})^{(\theta)}_{s}}^2}{s^2} \, \dx\dt,
\end{align*}
where in the last step we applied Lemma~\ref{lem:uavetoa}.
For the second term we use the intrinsic coupling~\eqref{initial_intrinsic_coupling} and end up in having
\begin{align*}
  \mathrm{II}
  &\leq
  c \mathcal{R}_{r,s}^2
  \biint_{Q_s^{(\theta)}} \theta^{m-1}
  \frac{\b[\power{\hat u}{m},(\power{\hat u}{m})^{(\theta)}_{r}]}{s^{\frac{m+1}{m}}}
      \, \dx \dt \\
  &\leq
  c \mathcal{R}_{r,s}^2
  \Bigg(\biint_{Q_\rho^{(\theta)}}\frac{|\hat u|^{2m}}{\rho^2}\d x\d t\Bigg)^{\frac{m-1}{2m}}
  \biint_{Q_s^{(\theta)}} 
  \frac{\b[\power{\hat u}{m},(\power{\hat u}{m})^{(\theta)}_{r}]}{s^{\frac{m+1}{m}}}
      \, \dx \dt \\
&\leq c\mathcal{R}_{r,s}^2\Bigg(\biint_{Q_\rho^{(\theta)}}\frac{\big|\power{\hat
    u}{m}-(\power{\hat u}{m})^{(\theta)}_{r}\big|^2}{\rho^2}\d x\d
t\Bigg)^{\frac{m-1}{2m}}
  \biint_{Q_s^{(\theta)}} 
  \frac{\b[\power{\hat u}{m},(\power{\hat u}{m})^{(\theta)}_{r}]}{s^{\frac{m+1}{m}}}
  \, \dx \dt\\
  &\qquad+
  c\mathcal{R}_{r,s}^2
  \bigg(\frac{\big|(\power{\hat u}{m})^{(\theta)}_{r}\big|^2}{\rho^2}\bigg)^{\frac{m-1}{2m}}\biint_{Q_s^{(\theta)}} 
  \frac{\b[\power{\hat u}{m},(\power{\hat u}{m})^{(\theta)}_{r}]}{s^{\frac{m+1}{m}}}
  \, \dx \dt\\
&=: c \mathcal{R}_{r,s}^2 \mathrm{II}_1 + c\mathcal{R}_{r,s}^2 \mathrm{II}_2.
%  &\leq c
%    \mathcal{R}_{r,s}^2
%    \biint_{Q_s^{(\theta)}} \frac{\babs{\power{\hat u}{m} - (\power{\hat u}{m})^{(\theta)}_{s}}^2}{s^2} \, \dx\dt,
\end{align*}
Now we can estimate
\begin{align*}
\mathrm{II}_1 &\leq \Bigg(\biint_{Q_\rho^{(\theta)}}\frac{\big|\power{\hat
    u}{m}-(\power{\hat u}{m})^{(\theta)}_{r}\big|^2}{\rho^2}\d x\d
t\Bigg)^{\frac{m-1}{2m}}
  \Bigg(\biint_{Q_s^{(\theta)}} 
  \frac{\big|\power{\hat u}{m}-(\power{\hat u}{m})^{(\theta)}_{r} \big|^2}{s^2}
  \, \dx \dt\Bigg)^\frac{m+1}{2m}\\
&\leq c \biint_{Q_s^{(\theta)}} 
  \frac{\big|\power{\hat u}{m}-(\power{\hat u}{m})^{(\theta)}_{r} \big|^2}{s^2}
  \, \dx \dt\\
&\leq c \biint_{Q_s^{(\theta)}} 
  \frac{\big|\power{\hat u}{m}-(\power{\hat u}{m})^{(\theta)}_{s} \big|^2}{s^2}
  \, \dx \dt
\end{align*}
by first using Lemma~\ref{lem:b}\,(ii) and H\"older's inequality, and then Lemma~\ref{lem:uavetoa}. On the other hand we obtain
\begin{align*}
\mathrm{II}_2 &\leq c  \biint_{Q_s^{(\theta)}} \big|(\power{\hat u}{m})^{(\theta)}_{r}\big|^\frac{m-1}{m} 
  \frac{\b[\power{\hat u}{m},(\power{\hat u}{m})^{(\theta)}_{r}]}{s^2}
  \, \dx \dt\\
&\leq c \biint_{Q_s^{(\theta)}} \frac{\big| \power{\hat u}{m} - (\power{\hat u}{m})^{(\theta)}_{r}\big|^2}{s^2}
  \, \dx \dt\\
&\leq c \biint_{Q_s^{(\theta)}} \frac{\big| \power{\hat u}{m} - (\power{\hat u}{m})^{(\theta)}_{s}\big|^2}{s^2}
  \, \dx \dt
\end{align*}
by Lemmas~\ref{lem:b}\,(iii) and \ref{lem:uavetoa}.
%where we used Lemmas \ref{lem:estimates}\,(ii) and \ref{lem:uavetoa} for the
%last step. 
%
Collecting the estimates and applying Lemma~\ref{lem:initialSobolev}, we arrive at 
\begin{align*}
\sup_{t\in \Lambda_r^{(\theta)}}& \bint_{B_r} \theta^{m-1}
\frac{\b\big[\power{\hat u}{m}(t),(\power{\hat
    u}{m})_r^{(\theta)}\big]}{r^{\frac{m+1}{m}}} \d x +
\biint_{Q_r^{(\theta)} } |D\power{\hat u}{m}|^2 \d x\d t \\
& \leq
  c\mathcal{R}_{r,s}^{\frac{4m}{m+1}}
    \biint_{Q_s^{(\theta)}} \frac{\babs{\power{\hat u}{m} -
        (\power{\hat u}{m})^{(\theta)}_{s}}^2}{s^2} \, \dx\dt\\
    &\qquad
    +c\,\biint_{Q_{s,+}^{(\theta)}} \Big[ |F|^2+|\partial_t\power gm|^{\frac{2m}{2m-1}}+|D\power{g}{m}|^2 \Big] \d x \d
   t \\
   &\leq c\eps\mathcal{R}_{r,s}^{\frac{4m}{m+1}} \sup_{t\in \Lambda_{s}^{(\theta)}} \bint_{B_{s}}
\theta^{m-1} \frac{\b[\power{\hat u}{m}(\cdot,t), (\power{\hat u}{m})_s^{(\theta)}]}{s^{\frac{m+1}{m}}}\, \dx \\
&\hspace{0,5cm}+ \frac{c\mathcal{R}_{r,s}^{\frac{4m}{m+1}}}{\eps^{\frac{2}{n}}} \Bigg[
\biint_{Q_{2\rho}^{(\theta)}(z_o)} \babs{D\power{\hat u}{m}}^{q}\, \dx \dt \Bigg]^{\frac{2}{q}} \\
&\hspace{0,5cm}+ c\mathcal{R}_{r,s}^{\frac{4m}{m+1}} \biint_{Q_{2\rho,+}^{(\theta)} } \Big[ \babs{F}^{2}+|\partial_t\power gm|^{\frac{2m}{2m-1}}+|D\power{g}{m}|^2 \Big] \, \dx \dt.
\end{align*}
Choosing $\eps=\frac1{2c\mathcal{R}_{r,s}^{\frac{4m}{m+1}}}$ and using
the Iteration Lemma \ref{lem:iteration} in order to reabsorb the
$\sup$-term into the left-hand side, we deduce the asserted estimate. 
\end{proof}

Next we prove the reverse H\"older inequality in the degenerate case. 

\begin{lemma}
\label{lem:reverse_initial_intrinsic_2}
Let $m>1$ and $u$ be a weak solution to \eqref{equation:PME} where
the vector field $\A$ satisfies \eqref{assumption:A} and the
Cauchy-Dirichlet datum $g$ fulfills \eqref{assumption:g}. Then on any cylinder
$Q_{2\rho}^{(\theta)}(z_o) \subset \Omega\times(-T,T)$ with
$z_o\in\Omega\times[0,T)$
which satisfies the coupling
\begin{align}\label{initial_intrinsic_coupling_2} \notag
\biint_{Q_{2\rho}^{(\theta)}(z_o)} &\frac{|\hat u|^{2m}}{(2\rho)^2} \d x\d t\leq \theta^{2m} \\ 
&\leq  K\biint_{Q_{\rho}^{(\theta)}(z_o)} \Big[ \babs{D\power{\hat u}{m}}^2 + \Big( |F|^2+|\partial_t\power gm|^{\frac{2m}{2m-1}}+|D\power{g}{m}|^2 \Big) \chi_{\{t>0\}} \Big] \d x\d t
\end{align}
for some $0< \rho\leq 1$ and $\theta\geq 1$ we have the following reverse H\"older inequality
\begin{align*}
 &\biint_{Q_\rho^{(\theta)}(z_o)} |D\power{\hat u}{m}|^2 \d x\d t \\
 &\qquad\leq c\left(\biint_{Q_{2\rho}^{(\theta)}(z_o)}
 |D\power{\hat u}{m}|^{q}  \d x \d t \right)^{\frac2q}\\
 &\qquad\qquad+
 c\biint_{Q_{2\rho,+}^{(\theta)}(z_o)} \Big[ |F|^2 +
 |\partial_t\power gm|^{\frac{2m}{2m-1}}+|D\power gm|^2 \Big] \d x\d t 
\end{align*}
for a constant $c = c(n,m,\nu,L,K)$ and for $q:= \frac{2n}{d} < 2$.
\end{lemma}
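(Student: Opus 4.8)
The plan is to follow the proof of Lemma~\ref{lem:reverse_initial_intrinsic} step by step, but to handle the time term by the \emph{degenerate} argument already used in Lemma~\ref{lem:reverse_lateral_sub-intrinsic}. So first I would fix radii $\rho\le r<s\le 2\rho$, suppress the reference point $z_o$ in the notation, and apply the Caccioppoli estimate of Lemma~\ref{lem:initialcacciop} on $Q_s^{(\theta)}$ with the constant $a$ chosen so that $\power am=(\power{\hat u}{m})_r^{(\theta)}$. This produces the right-hand side $\mathrm I+\mathrm{II}+\mathrm{III}$ of \eqref{caccio-initial}. The term $\mathrm I$ is estimated verbatim as in Lemma~\ref{lem:reverse_initial_intrinsic}: the bound $(s-r)^{-2}\le\mathcal R_{r,s}^{4m/(m+1)}s^{-2}$, which follows from $s^{\frac{m+1}{2m}}-r^{\frac{m+1}{2m}}\le(s-r)^{\frac{m+1}{2m}}$, together with Lemma~\ref{lem:uavetoa} gives
$$\mathrm I\le c\,\mathcal R_{r,s}^{\frac{4m}{m+1}}\,\biint_{Q_s^{(\theta)}}\frac{\big|\power{\hat u}{m}-(\power{\hat u}{m})_s^{(\theta)}\big|^2}{s^2}\,\dx\dt ,$$
while $\mathrm{III}=c\,\biint_{Q_{s,+}^{(\theta)}}\big(|F|^2+|D\power gm|^2+|\partial_t\power gm|^{\frac{2m}{2m-1}}\big)\,\dx\dt$ is already of the desired form.

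The only genuinely new point is the time term $\mathrm{II}=c\,\mathcal R_{r,s}^2\biint_{Q_s^{(\theta)}}\theta^{m-1}s^{-\frac{m+1}{m}}\,\b\big[\power{\hat u}{m},(\power{\hat u}{m})_r^{(\theta)}\big]\,\dx\dt$. Instead of coupling $\theta$ to an $|\hat u|^{2m}$-integral as in Lemma~\ref{lem:reverse_initial_intrinsic}, I would apply Young's inequality \emph{pointwise} to the product of $\theta^{m-1}$ with the factor $c\,\mathcal R_{r,s}^2 s^{-\frac{m+1}{m}}\b[\power{\hat u}{m},(\power{\hat u}{m})_r^{(\theta)}]$ (so that the whole power $\mathcal R_{r,s}^2$ is absorbed into the latter), using conjugate exponents $\frac{2m}{m-1}$ and $\frac{2m}{m+1}$, and then invoke Lemma~\ref{lem:b}\,(ii) and Lemma~\ref{lem:uavetoa} to arrive at
$$\mathrm{II}\le\delta\,\theta^{2m}+c_\delta\,\mathcal R_{r,s}^{\frac{4m}{m+1}}\,\biint_{Q_s^{(\theta)}}\frac{\big|\power{\hat u}{m}-(\power{\hat u}{m})_s^{(\theta)}\big|^2}{s^2}\,\dx\dt ,$$
where $\delta\in(0,1]$ is free and, crucially, independent of $\mathcal R_{r,s}$. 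Now the degenerate coupling \eqref{initial_intrinsic_coupling_2} bounds $\theta^{2m}$ by $K\biint_{Q_\rho^{(\theta)}}\big[|D\power{\hat u}{m}|^2+\big(|F|^2+|\partial_t\power gm|^{\frac{2m}{2m-1}}+|D\power gm|^2\big)\chi_{\{t>0\}}\big]\,\dx\dt$; since $Q_\rho^{(\theta)}\subset Q_r^{(\theta)}$ and the two cylinders have comparable measures, the gradient contribution is at most $2^{d}\biint_{Q_r^{(\theta)}}|D\power{\hat u}{m}|^2\,\dx\dt$, so fixing $\delta=\delta(n,m,K)$ small enough lets me reabsorb it into the left-hand side $\biint_{Q_r^{(\theta)}}|D\power{\hat u}{m}|^2$ of the Caccioppoli inequality, while the remaining data part joins $\mathrm{III}$ (after enlarging $Q_{\rho,+}^{(\theta)}$ to $Q_{s,+}^{(\theta)}$, which costs only a constant).

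Writing $\phi(r):=\sup_{t\in\Lambda_r^{(\theta)}}\bint_{B_r}\theta^{m-1}\frac{\b[\power{\hat u}{m}(t),(\power{\hat u}{m})_r^{(\theta)}]}{r^{(m+1)/m}}\,\dx+\biint_{Q_r^{(\theta)}}|D\power{\hat u}{m}|^2\,\dx\dt$, the preceding steps give, for $\rho\le r<s\le 2\rho$,
$$\phi(r)\le c\,\mathcal R_{r,s}^{\frac{4m}{m+1}}\biint_{Q_s^{(\theta)}}\frac{\big|\power{\hat u}{m}-(\power{\hat u}{m})_s^{(\theta)}\big|^2}{s^2}\,\dx\dt+c\,\mathcal R_{r,s}^{\frac{4m}{m+1}}\biint_{Q_{s,+}^{(\theta)}}\big(|F|^2+|\partial_t\power gm|^{\frac{2m}{2m-1}}+|D\power gm|^2\big)\,\dx\dt .$$
Then I would insert the Sobolev--Poincar\'e inequality of Lemma~\ref{lem:initialSobolev} on $Q_s^{(\theta)}$ to replace the first integral by $\eps\cdot(\text{the }\sup\text{-part of }\phi(s))+c\,\eps^{-2/n}\big(\biint_{Q_{2\rho}^{(\theta)}}|D\power{\hat u}{m}|^q\big)^{2/q}+c\,\biint_{Q_{2\rho,+}^{(\theta)}}(\text{data})$, using $s\le 2\rho$ to enlarge cylinders, choose $\eps=\frac{1}{2c}\,\mathcal R_{r,s}^{-4m/(m+1)}$, and reach an inequality $\phi(r)\le\frac12\phi(s)+A\big(s^{\frac{m+1}{2m}}-r^{\frac{m+1}{2m}}\big)^{-\beta}+C$ on $[\rho,2\rho]$. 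The Iteration Lemma~\ref{lem:iteration} then yields $\phi(\rho)\le c\big(\biint_{Q_{2\rho}^{(\theta)}}|D\power{\hat u}{m}|^q\big)^{2/q}+c\,\biint_{Q_{2\rho,+}^{(\theta)}}\big(|F|^2+|\partial_t\power gm|^{\frac{2m}{2m-1}}+|D\power gm|^2\big)\,\dx\dt$, and discarding the nonnegative $\sup$-term in $\phi(\rho)$ gives the assertion, with $q=\frac{2n}{d}$ as provided by Lemma~\ref{lem:initialSobolev}.

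I expect the only delicate point to be the ordering of the two absorption parameters: $\delta$ (depending only on $n,m,K$) must be chosen to absorb the gradient coming from $\theta^{2m}$ \emph{before} the $\mathcal R_{r,s}$-dependent parameter $\eps$ is introduced, so that Lemma~\ref{lem:iteration} can be applied with the fixed coefficient $\frac12$ in front of $\phi(s)$; beyond this bookkeeping, the proof is a routine transcription of Lemmas~\ref{lem:reverse_initial_intrinsic} and~\ref{lem:reverse_lateral_sub-intrinsic}, with $\hat u$ and the initial-boundary versions of the Caccioppoli and Sobolev--Poincar\'e inequalities replacing the interior or lateral ones.
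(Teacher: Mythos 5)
Your proposal is correct and follows essentially the same route as the paper's proof: Caccioppoli with $\power am=(\power{\hat u}{m})_r^{(\theta)}$, the pointwise Young splitting of the time term into $\delta\theta^{2m}+c_\delta\mathcal R_{r,s}^{4m/(m+1)}\biint|\power{\hat u}{m}-(\power{\hat u}{m})_s^{(\theta)}|^2/s^2$ via Lemma~\ref{lem:b}\,(ii) and Lemma~\ref{lem:uavetoa}, absorption of $\theta^{2m}$ through the coupling \eqref{initial_intrinsic_coupling_2} with $\delta$ fixed first (depending on $K$), then Lemma~\ref{lem:initialSobolev} and the choice $\eps\sim\mathcal R_{r,s}^{-4m/(m+1)}$ before invoking Lemma~\ref{lem:iteration}. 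Your explicit tracking of the measure ratio $|Q_r^{(\theta)}|/|Q_\rho^{(\theta)}|$ in the reabsorption step is, if anything, slightly more careful than the paper's choice $\delta=\frac1{4K}$.
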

\begin{proof}
  Similarly as in the proof of the preceding lemma, we start with
  estimate \eqref{caccio-initial}. The term $\mathrm{I}$ is estimated
  in the same way as before, but now we estimate $\mathrm{II}$ by
  \begin{align*}
    \mathrm{II}
    &\leq
    c \mathcal{R}_{r,s}^2
    \biint_{Q_s^{(\theta)}} \theta^{m-1}
     \frac{\b[\power{\hat u}{m},(\power{\hat
         u}{m})^{(\theta)}_{r}]}{s^{\frac{m+1}{m}}} \, \dx \dt \\
     &\le
     \delta\theta^{2m}
     +
     c_\delta\mathcal{R}_{r,s}^{\frac{4m}{m+1}}\biint_{Q_s^{(\theta)}(z_o)}\frac{\b[\power{\hat u}{m},(\power{\hat
         u}{m})^{(\theta)}_{r}]^{\frac{2m}{m+1}}}{s^2}\d x\d t\\
          &\le
     \delta\theta^{2m}
     +
     c_\delta\mathcal{R}_{r,s}^{\frac{4m}{m+1}}\biint_{Q_s^{(\theta)}(z_o)}\frac{\big|\power{\hat u}{m}-(\power{\hat u}{m})^{(\theta)}_{s}\big|^2}{s^2}\d x\d t.
  \end{align*}
 Using assumption~\eqref{initial_intrinsic_coupling_2}$_2$ to bound
 the first term and Lemma~\ref{lem:initialSobolev} for the estimate of
 the second, we deduce
 \begin{align*}
   \mathrm{II}
   &\le
   K\delta\biint_{Q_{\rho}^{(\theta)}(z_o)} \Big[ \babs{D\power{\hat
       u}{m}}^2 + \Big( |F|^2+|\partial_t\power gm|^{\frac{2m}{2m-1}}+|D\power{g}{m}|^2 \Big) \chi_{\{t>0\}} \Big] \, \d x\d t\\
   &\qquad+
   c_\delta\mathcal{R}_{r,s}^{\frac{4m}{m+1}}\eps \sup_{t\in \Lambda_{s}^{(\theta)}} \bint_{B_{s}}
\theta^{m-1} \frac{\b[\power{\hat u}{m}(\cdot,t), (\power{\hat u}{m})_s^{(\theta)}]}{s^{\frac{m+1}{m}}}\, \dx \\
&\qquad+ \frac{c_\delta\mathcal{R}_{r,s}^{\frac{4m}{m+1}}}{\eps^{\frac{2}{n}}} \Bigg[
\biint_{Q_{2\rho}^{(\theta)}(z_o)} \babs{D\power{\hat u}{m}}^{q}\, \dx \dt \Bigg]^{\frac{2}{q}} \\
&\qquad+ c\biint_{Q_{2\rho,+}^{(\theta)}(z_o)} \Big[ |F|^2 +
 |\partial_t\power gm|^{\frac{2m}{2m-1}}+|D\power gm|^2 \Big] \d x\d t.
 \end{align*}
 Choosing first $\delta$ and then $\eps$ small in the form
 $\delta=\frac1{4K}$ and
 $\eps=\frac1{2c_\delta\mathcal{R}_{r,s}^{\frac{4m}{m+1}}}$ allows to
 re-absorb the $\sup$-term and the term with $|D\power{\hat u}{m}|^2$
 with the help of Lemma~\ref{lem:iteration}. Therefore, we arrive at
 the claim similarly as in the proof of Lemma~\ref{lem:reverse_initial_intrinsic}.
\end{proof}

\section{Proof of higher integrability}
\label{sec:hi-int}
\subsection{Extension of the boundary values}
We consider the cylinder
$Q_{8R}(y_o,\tau_o)
% =B_{8R}(y_o)\times
% (\tau_o-(8R)^{\frac{m+1}{m}},\tau_o+(8R)^{\frac{m+1}{m}})
\subset\R^n\times(-T,T)$ with $R\in
(0,1]$ and $(y_o,\tau_o)\in\Omega_T\cup\partial_{\mathrm{par}}\Omega_T$. Since the center will be fixed
throughout this section,
we will simply write $Q_\rho:=Q_\rho(y_o,\tau_o)$ for $\rho>0$.
We fix a specific extension of the boundary values in order to derive
%\textcolor{blue}{
an estimate on the cylinder $Q_{R}$. To this end, we choose a
standard cut-off function $\eta\in C_0^\infty(B_{8R},[0,1])$ with
$\eta\equiv1$ in $B_{4R}$ and $|D\eta|\le\frac1R$ in $B_{8R}$. We
assume that the extension of the boundary values is given
by $\power gm=E(\eta\power gm)$ on $Q_{8R,+}\setminus\Omega_T$,
where the extension operator $E$ 
from Definition~\ref{def:sobolev-extension} is applied separately on
each time slice. Then, for each
fixed time $t\in\Lambda_{4R}(\tau_o)\cap(0,T)$
we have the estimates
\begin{align}
  \label{extension1}
  \int_{B_{4R}\times\{t\}}|D\power gm|^{2+\eps}\d x
  &\le
  \|E(\eta\power gm)\|_{W^{1,2+\eps}(\R^n\times\{t\})}^{2+\eps}\\\nonumber
  &\le
  c_E^{2+\eps} \|\eta\power gm\|_{W^{1,2+\eps}(\Omega\times\{t\})}^{2+\eps}\\\nonumber
  &\le
  c(c_E)\int_{\Omega\cap B_{8R}\times\{t\}}\Big(|D\power
  gm|^{2+\eps}+\frac{|\power gm|^{2+\eps}}{R^{2+\eps}}\Big)\d x.
\end{align}
In the case $n>2$, we use H\"older's inequality and Sobolev's
embedding to infer
\begin{align*}%\label{extension2:n>2}
  \int_{B_{4R}\times\{t\}}\frac{|\power gm|^{2+\eps}}{R^{2+\eps}}\d x
  &\le
  c(n)\|\power gm\|_{L^{(2+\eps)_*}(B_{4R}\times\{t\})}^{2+\eps}\\\nonumber
  &\le
  c(n)\|E(\eta\power gm)\|_{W^{1,2+\eps}(\R^n\times\{t\})}^{2+\eps}\\\nonumber
  &\le
  c(n,c_E)\int_{\Omega\cap B_{8 R}\times\{t\}}\Big(|D\power
  gm|^{2+\eps}+\frac{|\power gm|^{2+\eps}}{R^{2+\eps}}\Big)\d x,
\end{align*}
where the last estimate follows from~\eqref{extension1}.
In dimension $n=2$, we use the Sobolev embedding
$W^{1,2+\eps}(\R^n)\subset C^{0,\alpha}(\R^n)$ with
$\alpha=\frac{\eps}{2+\eps}$, which yields
\begin{align*}%\label{extension2:n=2}
  &\int_{B_{4R}\times\{t\}}\frac{|\power gm|^{2+\eps}}{R^{2+\eps}}\d x
  \le c(n)R^{-\eps}\|\power gm\|_{L^\infty(B_{4R}\times\{t\})}^{2+\eps}\\\nonumber
  &\qquad \le
  c(n)R^{-\eps}\bigg(\bint_{\Omega\cap B_{4R}\times\{t\}}|\power
gm|\,\d x+\mathrm{osc}_{B_{4R}}(\power gm)\bigg)^{2+\eps}\\\nonumber
  &\qquad \le
  c(n)R^{-\eps}\bigg(\bint_{\Omega\cap B_{4R}\times\{t\}}|\power
  gm|\d x
  +
  R^{\alpha}[E(\eta\power  gm)]_{C^{0,\alpha}(\R^n\times\{t\})}\bigg)^{2+\eps}\\\nonumber
  &\qquad\le
  c(n)R^{-\eps}\bint_{\Omega\cap B_{4R}\times\{t\}}|\power
  gm|^{2+\eps}\d x
  +
  c(n)\|E(\eta\power gm)\|_{W^{1,2+\eps}(\R^n\times\{t\})}^{2+\eps}\\\nonumber
  &\qquad\le
  c(n,\alpha,c_E)\int_{\Omega\cap B_{8R}\times\{t\}}\Big(|D\power
  gm|^{2+\eps}+\frac{|\power gm|^{2+\eps}}{R^{2+\eps}}\Big)\d x,
\end{align*}
where we used the measure density property~\eqref{measure_density} and
\eqref{extension1} in the last step. From the three preceding
estimates, we deduce the bound 
\begin{equation}\label{extension-bound}
  \biint_{Q_{4R}}\Big(|D\power gm|^{2+\eps}+\frac{|\power
    gm|^{2+\eps}}{R^{2+\eps}}\Big)\d x\,\d t
  \le
  c\,\biint_{Q_{8R}\cap\Omega_T}\Big(|D\power
  gm|^{2+\eps}+\frac{|\power gm|^{2+\eps}}{R^{2+\eps}}\Big)\d x\,\d t.
\end{equation}
Using the extension of the boundary values specified above, we now define
%}
\begin{align*}
\lambda_o :=1+ \left(  \biint_{Q_{4R}} \left[ 2\frac{|\hat\u^m-\hat\g^m|^2+|\hat g|^{2m}}{(4R)^2}+|D\u^m|^2\chi_{\Omega_T}+ G^2 \right] \d x\d t \right)^{\frac{1}{m+1}}
\end{align*}
where $\hat u$ and $\hat g$ are defined in \eqref{def:uhat} and  
\begin{align*}
 G^2 := |D\g^m|^2\chi_{\{t>0\}}+\big(|F|^2+ |\partial_t \g^m|^{\frac{2m}{2m-1}}\big) \chi_{\Omega_T}
\end{align*}
We use~\eqref{extension-bound} and Lemma~\ref{lem:lateralcacciop} with
$\theta  = 1$ in order to estimate
 \begin{align}\label{bound-lambda0}
   \lambda_o^{m+1}
   &\leq
   c\Bigg(1+ 
   \,\biint_{Q_{8R}\cap\Omega_T}\frac{|\u^m-\g^m|^2}{R^2}\,\d x \d t\Bigg)\\\nonumber
   &\qquad
   +c\bigg(\biint_{Q_{8R}\cap\Omega_T} \left( G^{2+\eps}
   +\frac{|g |^{m(2+\eps)}}{R^{2+\eps}} \right) \, \d x \d t\bigg)^{\frac2{2+\eps}}.
 \end{align}
For the estimates we also used the measure density condition
\eqref{measure_density}, which implies $|Q_{8R}\cap \Omega_T|\ge
c|Q_{8R}|$.

\subsection{Construction of a non-uniform system of cylinders}
\label{construction_cylinder}

The following construction is inspired by the one in
\cite{Boegelein-Duzaar-Korte-Scheven,Gianazza-Schwarzacher,Schwarzacher}.
However, the boundary case becomes much more involved
due to the fact that the notion of intrinsic cylinder is different at
the lateral boundary compared to the interior of the domain. The
transition between both cases requires additional carefulness.

For $z_o \in Q_{2R}$, we write $d_o:= \frac 12 \dist(x_o,\partial
\Omega)$.
We observe that $Q_{\rho}^{(\theta)}(z_o) \subset Q_{4R}$ whenever $\rho \in (0,R]$ and $\theta\geq 1$.

For $\rho\in (0,R]$, we define the parameter $\tilde \theta_\rho \equiv \tilde
\theta_{z_o;\rho}$  by 
\begin{align*}
\tilde \theta_\rho 
:=
\inf\bigg\{\theta\in [\lambda_o,\infty)
\,:\,\frac{1}{|Q_\rho|} \iint_{Q_{\rho}^{(\theta)}(z_o)} \frac{|\hat
  u|^{2m}}{\rho^2} \d x\d t\leq \theta^{m+1}\bigg\},
\end{align*}
if $\rho<d_o$, while in the case $\rho\geq d_o$, we let
\begin{align*}
  \tilde \theta_\rho
  :=
  \inf\bigg\{\theta\in [\lambda_o,\infty)\,:\,
  \frac{1}{|Q_\rho|} \iint_{Q_{\rho}^{(\theta)}(z_o)}2 \frac{|\hat\u^m-\hat\g^m|^{2}+|\hat g|^{2m}}{\rho^2} \d x\d t\leq \theta^{m+1}\bigg\}.
\end{align*}
Observe that $\tilde \theta_\rho$ is well defined, since the integral
condition is satisfied for some $\theta \geq \lambda_o$. This follows
from the fact that in the limit $\theta \to \infty$, the integral on the left-hand side converges to zero, while the right-hand side blows up. Note that we can rewrite the condition for the integral in the definition of $\tilde \theta_\rho$ as 
\begin{align*}
\left\{
\begin{array}{ll}
 \biint_{Q_{\rho}^{(\theta)}(z_o)} \frac{|\hat u|^{2m}}{\rho^2} \d x\d t\leq \theta^{2m}, & \text{if } \rho< d_o \vspace{2mm}\\
 \biint_{Q_{\rho}^{(\theta)}(z_o)} 2\frac{|\hat\u^m-\hat\g^m|^{2}+|\hat g|^{2m}}{\rho^2} \d x\d t\leq \theta^{2m} , & \text{if } \rho\geq d_o.
\end{array}
\right.
\end{align*}
By the very definition of $\tilde \theta_\rho$ we either have % $\tilde \theta_\rho =\lambda_o$ and
\begin{align*}
\tilde \theta_\rho =\lambda_o  \quad \text{ and } \quad
\left\{
\begin{array}{ll}
 \biint_{Q_{\rho}^{(\tilde\theta\rho)}(z_o)} \frac{|\hat u|^{2m}}{\rho^2} \d x\d t\leq \tilde\theta_\rho^{2m} =\lambda_o^{2m},& \text{if } \rho< d_o \vspace{2mm} \\
\biint_{Q_{\rho}^{(\theta)}(z_o)} 2\frac{|\hat\u^m-\hat\g^m|^{2}+|\hat g|^{2m}}{\rho^2} \d x\d t\leq \tilde\theta_\rho^{2m} =\lambda_o^{2m}, & \text{if } \rho\geq d_o 
\end{array}
\right.
\end{align*}
or 
\begin{align}
\label{est:tilde_theta}
\tilde \theta_\rho >\lambda_o \quad \text{ and } \quad
\left\{
\begin{array}{ll}
  \biint_{Q_{\rho}^{(\tilde\theta_\rho)}(z_o)} \frac{|\hat u|^{2m}}{\rho^2} \d x\d t= \tilde \theta_\rho^{2m}, & \text{if } \rho< d_o \vspace{2mm} \\
 \biint_{Q_{\rho}^{(\tilde\theta_\rho)}(z_o)} 2\frac{|\hat\u^m-\hat\g^m|^{2}+|\hat g|^{2m}}{\rho^2} \d x\d t= \tilde \theta_\rho^{2m} , & \text{if } \rho\geq d_o. 
 \end{array}
\right.
\end{align}
In any case we have $\tilde\theta_R \geq \lambda_o\geq 1$. If $\lambda_o<\tilde \theta_R$ and $R\geq d_o$ then we obtain 
\begin{align*}
\tilde \theta^{m+1}_R &= \frac{1}{|Q_R|}\iint_{Q_{R}^{(\tilde\theta_R)}(z_o)} 2\frac{|\hat\u^m-\hat \g^m|^2+|\hat g|^{2m}}{R^2} \d x\d t\\
&\leq \frac{4^2}{|Q_R|}\iint_{Q_{R}^{(\tilde\theta_R)}(z_o)} 2\frac{|\hat\u^m-\hat\g^m|^2+|\hat g|^{2m}}{(4R)^2} \d x\d t \leq 4^{d+2}\lambda_o^{m+1},
\end{align*}
where we used the fact that $Q_R^{(\tilde\theta_R)}\subset Q_{4R}$. If
$R<d_o$ we argue similarly. In any case, we obtain
\begin{equation}\label{bound-theta-R}
\tilde \theta_R \leq 4^{\frac{d+2}{m+1}} \lambda_o.
\end{equation}

Next, we prove that the function $\tilde \theta$ is piecewise continuous: 

\begin{lemma}\label{lem:jump-up}
For fixed $z_o$ the function $\rho \mapsto\tilde \theta_\rho$ is continuous on $(0,d_o)$ and $[d_o,R)$, and we have
$$
\lim_{\rho\uparrow d_o}\tilde \theta_\rho \leq \lim_{\rho \downarrow d_o} \tilde \theta_\rho.
$$
\end{lemma}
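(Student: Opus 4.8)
The plan is to reduce the statement to the monotonicity and continuity of the integral functionals defining $\tilde\theta_\rho$, together with the pointwise inequality $|\hat u|^{2m}=|\hat\u^m|^2\le 2|\hat\u^m-\hat\g^m|^2+2|\hat g|^{2m}$, which says precisely that the $g$-coupling condition used for $\rho\ge d_o$ is the stronger of the two conditions. For $\rho\in(0,R]$ and $\theta\ge\lambda_o$ set
\begin{align*}
  \Phi_\rho^u(\theta):=\biint_{Q_\rho^{(\theta)}(z_o)}\frac{|\hat u|^{2m}}{\rho^2}\d x\d t,
  \qquad
  \Phi_\rho^g(\theta):=\biint_{Q_\rho^{(\theta)}(z_o)}2\frac{|\hat\u^m-\hat\g^m|^2+|\hat g|^{2m}}{\rho^2}\d x\d t,
\end{align*}
and $\tilde\theta_\rho^u:=\inf\{\theta\ge\lambda_o:\Phi_\rho^u(\theta)\le\theta^{2m}\}$, $\tilde\theta_\rho^g:=\inf\{\theta\ge\lambda_o:\Phi_\rho^g(\theta)\le\theta^{2m}\}$. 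Since $|Q_\rho^{(\theta)}(z_o)|=\theta^{1-m}|Q_\rho|$, exactly as in the discussion preceding the lemma one has $\tilde\theta_\rho=\tilde\theta_\rho^u$ for $\rho<d_o$ and $\tilde\theta_\rho=\tilde\theta_\rho^g$ for $\rho\ge d_o$; moreover, for $\rho$ in a neighbourhood of $d_o$ one has $B_\rho(x_o)\subset\Omega$ (because $\dist(x_o,\partial\Omega)=2d_o$), so there $Q_\rho^{(\theta)}(z_o)\subset Q_{4R}\cap(\Omega\times(-T,T))$, and the integrands above are genuine $L^1$-functions, being squares of the $L^2$-functions $\hat\u^m$, $\hat\u^m-\hat\g^m$ and $\hat\g^m$. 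It therefore suffices to prove that $\rho\mapsto\tilde\theta_\rho^u$ and $\rho\mapsto\tilde\theta_\rho^g$ are continuous on $(0,R]$, and that $\tilde\theta_{d_o}^u\le\tilde\theta_{d_o}^g$.

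The first ingredient is monotonicity in $\theta$: since $m>1$, the half-length $\theta^{1-m}\rho^{\frac{m+1}{m}}$ of $\Lambda_\rho^{(\theta)}(t_o)$ is non-increasing in $\theta$, so $Q_\rho^{(\theta)}(z_o)$ shrinks as $\theta$ increases and both $\theta\mapsto\Phi_\rho^u(\theta)$ and $\theta\mapsto\Phi_\rho^g(\theta)$ are non-increasing, tend to $0$ as $\theta\to\infty$, and are continuous by dominated convergence. Since $\theta\mapsto\theta^{2m}$ is continuous, strictly increasing on $[\lambda_o,\infty)$ and unbounded, the set $\{\theta\ge\lambda_o:\Phi_\rho(\theta)\le\theta^{2m}\}$ is a closed half-line $[\tilde\theta_\rho,\infty)$, so either $\tilde\theta_\rho=\lambda_o$ or $\Phi_\rho(\tilde\theta_\rho)=\tilde\theta_\rho^{2m}$. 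The second ingredient is the joint continuity of $(\rho,\theta)\mapsto\Phi_\rho^u(\theta)$ and $(\rho,\theta)\mapsto\Phi_\rho^g(\theta)$ on a neighbourhood of $\{d_o\}\times[\lambda_o,\infty)$: as $(\rho,\theta)\to(\rho_*,\theta_*)$ one has $\chi_{Q_\rho^{(\theta)}(z_o)}\to\chi_{Q_{\rho_*}^{(\theta_*)}(z_o)}$ pointwise a.e.\ (using continuity of $(\rho,\theta)\mapsto\theta^{1-m}\rho^{\frac{m+1}{m}}$) and the indicator is dominated by $\chi_{Q_{4R}}$, so the integrals converge by dominated convergence, while the normalisation $|Q_\rho^{(\theta)}(z_o)|=\theta^{1-m}|Q_\rho|$ and the factor $\rho^{-2}$ depend continuously on $(\rho,\theta)$; equivalently, after rescaling to the fixed cylinder $B_1\times(-1,1)$ this is continuity of translations and dilations in $L^1$.

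With these two facts in hand, the continuity of $\rho\mapsto\tilde\theta_\rho^u$ (and, verbatim, of $\rho\mapsto\tilde\theta_\rho^g$) follows by a standard argument. Fix $\rho_*$ and set $\Theta:=\tilde\theta_{\rho_*}^u$. Given $\delta>0$, monotonicity gives $\Phi_{\rho_*}^u(\Theta+\delta)\le\Phi_{\rho_*}^u(\Theta)\le\Theta^{2m}<(\Theta+\delta)^{2m}$, so by joint continuity $\Phi_\rho^u(\Theta+\delta)<(\Theta+\delta)^{2m}$ for $\rho$ close to $\rho_*$, whence $\tilde\theta_\rho^u\le\Theta+\delta$; and if $\Theta>\lambda_o$ then $\Phi_{\rho_*}^u(\Theta-\delta)\ge\Phi_{\rho_*}^u(\Theta)=\Theta^{2m}>(\Theta-\delta)^{2m}$, so by joint continuity $\Phi_\rho^u(\Theta-\delta)>(\Theta-\delta)^{2m}$ for $\rho$ close to $\rho_*$, and since $\Phi_\rho^u$ is non-increasing this yields $\Phi_\rho^u(\theta)>\theta^{2m}$ for all $\theta\in[\lambda_o,\Theta-\delta]$, hence $\tilde\theta_\rho^u\ge\Theta-\delta$ (the bound $\tilde\theta_\rho^u\ge\lambda_o=\Theta$ being automatic when $\Theta=\lambda_o$). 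Thus $\tilde\theta$ is continuous on $(0,d_o)$ and on $[d_o,R)$, with $\lim_{\rho\uparrow d_o}\tilde\theta_\rho=\lim_{\rho\uparrow d_o}\tilde\theta_\rho^u=\tilde\theta_{d_o}^u$ and $\lim_{\rho\downarrow d_o}\tilde\theta_\rho=\tilde\theta_{d_o}^g$. Finally, $|\hat u|^{2m}=|\hat\u^m|^2\le 2|\hat\u^m-\hat\g^m|^2+2|\hat g|^{2m}$ gives $\Phi_{d_o}^u(\theta)\le\Phi_{d_o}^g(\theta)$ for every $\theta$, so any $\theta$ admissible in the definition of $\tilde\theta_{d_o}^g$ is admissible for $\tilde\theta_{d_o}^u$; passing to the infimum we get $\tilde\theta_{d_o}^u\le\tilde\theta_{d_o}^g$, which is the asserted inequality.

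I expect the only genuinely delicate point to be the continuity step: because $\rho\mapsto\tilde\theta_\rho$ need not be monotone, one cannot argue via monotone limits and must instead extract continuity from the implicit characterisation, combining joint continuity of $\Phi_\rho(\theta)$ with the strict monotonicity of $\theta\mapsto\theta^{2m}$, and be careful that $\rho$ enters the definition simultaneously through $B_\rho(x_o)$, the time scale $\theta^{1-m}\rho^{\frac{m+1}{m}}$, the factor $\rho^{-2}$ and the normalisation $|Q_\rho|$. Once this is in place, the fact that the transition at $\rho=d_o$ can only be a jump upward is immediate from the pointwise bound $|\hat u|^{2m}\le 2|\hat\u^m-\hat\g^m|^2+2|\hat g|^{2m}$, i.e.\ from the $g$-coupling being the stronger condition.
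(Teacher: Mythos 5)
Your proposal is correct and follows essentially the same route as the paper's proof: both arguments rest on the monotonicity of $\theta\mapsto\Phi_\rho(\theta)$ (so that the admissible set is an upward-closed half-line and either $\tilde\theta_\rho=\lambda_o$ or equality holds at $\tilde\theta_\rho$), a continuity-in-$\rho$ perturbation of the strict inequalities at $\tilde\theta_\rho\pm\varepsilon$, and the pointwise bound $|\hat u|^{2m}\le 2\big(|\hat\u^m-\hat\g^m|^2+|\hat g|^{2m}\big)$ to see that the transition at $d_o$ can only jump upwards. Your write-up is in fact somewhat more detailed than the paper's (which delegates the case $\rho<d_o$ to a reference and asserts the continuity of both sides in $r$ without comment), and no gaps remain.
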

\begin{proof}[Proof.]
Without loss of generality, we may assume that $d_o\in (0,R)$. If $\rho \in (0,d_o)$ the proof works as in \cite[Section 6.1]{Boegelein-Duzaar-Korte-Scheven}. If $\rho\in [d_o,R]$ the idea still remains the same, but we will present the proof for convenience.
Therefore, we consider $\rho\in [d_o,R]$ and $\varepsilon>0$ and define $ \theta_+:=\tilde \theta_\rho+\varepsilon$. Then there exists $\delta=\delta(\varepsilon,\rho)>0$ such that 
$$
\frac{1}{|Q_r|} \iint_{Q_r^{(\theta_+)}(z_o)} 2\frac{|\hat\u^m-\hat\g^m|^2+|\hat g|^{2m}}{r^2} \d x\d t < \theta_+^{m+1}
$$
for all $r\in [d_o,R]$ with $|r-\rho|<\delta$. This can be verified by
observing that the strict inequality holds true if $r=\rho$ and that
both sides are continuous with respect to the radius. This shows
$\tilde \theta_r \leq \theta_+=\tilde \theta_\rho+\varepsilon$ if
$|r-\rho|<\delta$. To prove the reverse inequality we set
$\theta_-:=\tilde \theta_\rho-\varepsilon$. If $\theta_-\leq
\lambda_o$ the desired estimate follows directly from the construction. In the other case we obtain
\begin{align*}
\frac{1}{|Q_r|} \iint_{Q_r^{(\theta_-)}(z_o)} 2\frac{|\hat\u^m-\hat\g^m|^2+|\hat g|^{2m}}{r^2} \d x\d t > \theta_-^{m+1}
\end{align*}
for all $r\in [d_o,R]$ with $|r-\rho|<\delta$, where $\delta=\delta(\varepsilon,\rho)$ was possibly diminished. For $r=\rho$, this follows again directly from the definition, since otherwise we would have $\tilde \theta_\rho \leq \theta_-$, which is a contradiction. For $r$ with $|r-\rho|<\delta$ the claim follows from the continuity of both sides as a function of $r$. This implies $\tilde \theta_r \geq \theta_-=\tilde \theta_\rho-\varepsilon$ and consequently the map $\rho\mapsto \tilde\theta_\rho$ is continuous on $[d_o,R]$.

The fact that $\tilde \theta_\rho$ jumps upwards at $d_o$ follows
directly from the definition of $\tilde \theta$, since
$|\hat u|^{2m}\le 2(|\hat\u^m-\hat\g^m|^2+|\hat g|^{2m})$.
\end{proof}

Unfortunately, the mapping $\rho\mapsto \tilde\theta_\rho$ might not be monotone or continuous at the point $d_o$. This forces us to modify $\tilde\theta_\rho$ in the following way
$$
\theta_\rho \equiv\theta_{z_o,\rho} := \max_{r\in [\rho,R]} \tilde \theta_{z_o,r}.
$$
Then, by Lemma \ref{lem:jump-up} and the construction, the map
$\rho\mapsto\theta_\rho$ is continuous and monotonically decreasing. This construction can be considered as a rising sun construction (see Figure~\ref{fig:sunrise}).

\begin{figure}[h] 
\centering
\includegraphics[width=280pt]{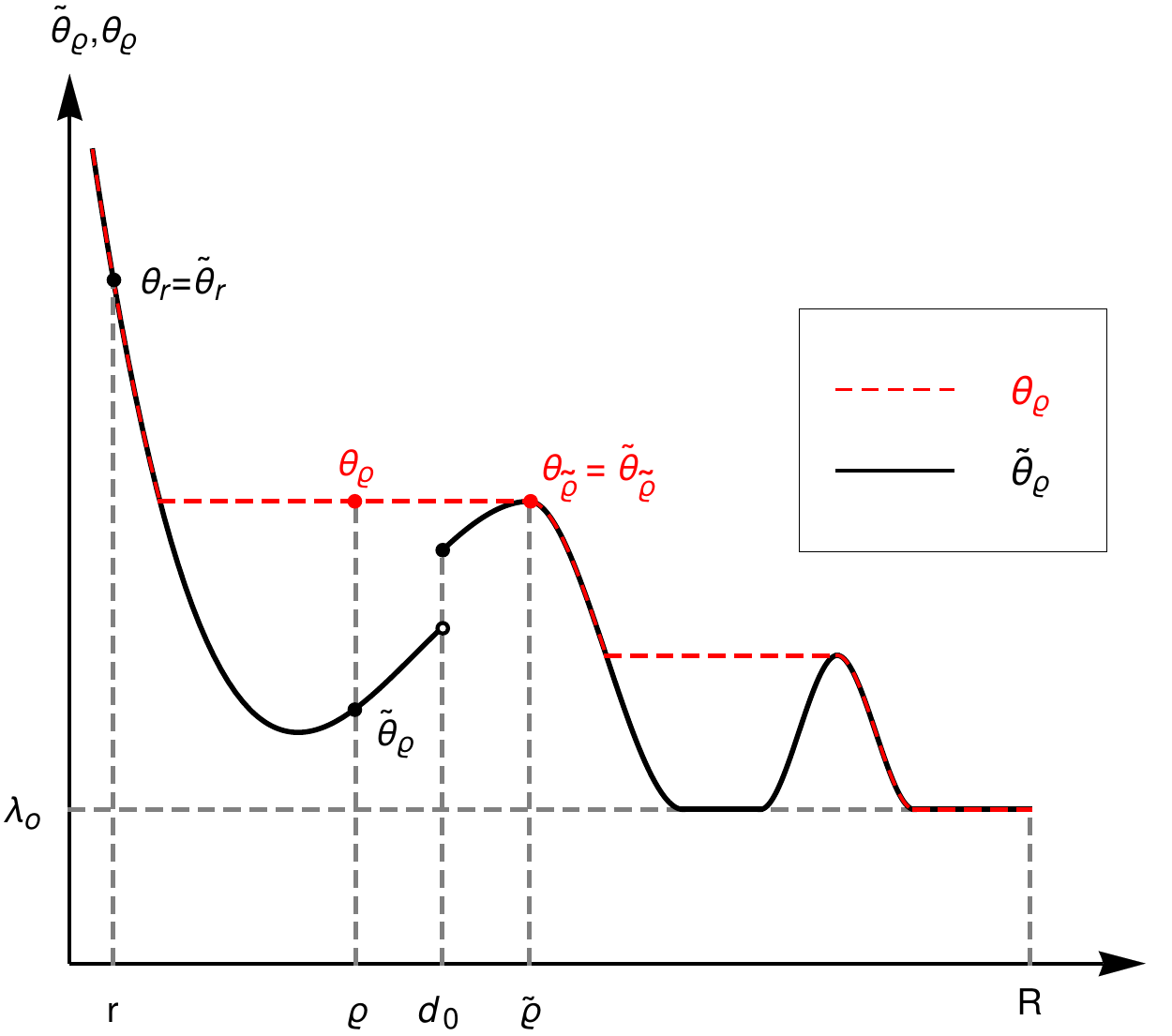}
\caption{Illustration of the rising sun construction.}\label{fig:sunrise}
\end{figure}

%\begin{center}
%\includegraphics[width=0.8\textwidth]{Plot_Sunrise.pdf}
%\end{center}
Next, we define 
\begin{align}
\label{change_radius}
\tilde \rho :=\left\{
\begin{array}{cl}
R& \text{ if } \theta_\rho =\lambda_o \vspace{2mm},\\
\min \{s\in [\rho,R]:\theta_s =\tilde\theta_s\} & \text{ if } \theta_\rho >\lambda_o,
\end{array}
\right.
\end{align}
	i.e.\ we have $\theta_r=\theta_{\tilde \rho}$ for any $r\in[\rho,\tilde \rho]$.

\begin{lemma}
\label{lem:properties_theta}
\begin{itemize}
\item[(i)] For any  $0<\rho\leq s\leq R$, the constructed cylinders
  $Q_s^{(\theta_\rho)}$ are sub-intrinsic in the sense 
\begin{align*}
\left\{
\begin{array}{ll}
\biint_{Q_s^{(\theta_\rho)}(z_o) } \frac{|\hat u|^{2m}}{s^2} \d x\d t \leq \theta_\rho^{2m}, & \text{if } s< d_o, \vspace{2mm}\\
\biint_{Q_s^{(\theta_\rho)}(z_o) } 2\frac{|\hat\u^m-\hat\g^m|^2+|\hat g|^{2m}}{s^2} \d x\d t \leq \theta_\rho^{2m}, & \text{if } s\geq d_o.
\end{array}
\right. 
\end{align*} 
\item[(ii)] For any $s\in (\rho,R]$ we have
\begin{align*}
\theta_\rho \leq \left(\frac{s}{\rho} \right)^{\frac{d+2}{m+1}} \theta_s  .
\end{align*}	
\item[(iii)] For  any $0<\rho\leq R$ there holds
\begin{align*}
\theta_\rho \leq \left(\frac{R}{\rho} \right)^{\frac{d+2}{m+1}} \theta_R \leq \left(\frac{4R}{\rho} \right)^{\frac{d+2}{m+1}} \lambda_o.
\end{align*}
\end{itemize}
\end{lemma}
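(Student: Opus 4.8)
The plan is to deduce all three items from one scaling estimate for the ``minimal admissible parameter'' of a fixed weight, and then to patch together the two sub-intrinsic conditions occurring below and above $d_o$. Recall that $|Q_r^{(\theta)}(z_o)|=2|B_1|\,\theta^{1-m}r^{d}$, so that for a nonnegative weight $w$ the inequality $\biint_{Q_r^{(\theta)}(z_o)}\frac{w}{r^2}\,\d x\d t\le\theta^{2m}$ is equivalent to $\iint_{Q_r^{(\theta)}(z_o)}w\,\d x\d t\le 2|B_1|\,\theta^{m+1}r^{d+2}$. Since $m>1$, $\theta\mapsto\theta^{1-m}$ is decreasing, hence $Q_\rho^{(\theta)}(z_o)\subset Q_r^{(\theta')}(z_o)$ whenever $0<\rho\le r$ and $\theta\ge\theta'$. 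Write $w_\circ:=|\hat u|^{2m}$ and $\bar w:=2\bigl(|\hat\u^m-\hat\g^m|^2+|\hat g|^{2m}\bigr)$; then $w_\circ\le\bar w$ pointwise, because $|\hat\u^m|^2=|\hat u|^{2m}$ and $|\hat\g^m|^2=|\hat g|^{2m}$. For $w\in\{w_\circ,\bar w\}$ and $r\in(0,R]$ put $\hat\theta_r^{w}:=\inf\bigl\{\theta\in[\lambda_o,\infty):\frac1{r^2}\biint_{Q_r^{(\theta)}(z_o)}w\,\d x\d t\le\theta^{2m}\bigr\}$; exactly as discussed before \eqref{est:tilde_theta}, this is finite and the inequality holds at $\theta=\hat\theta_r^{w}$. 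By the definition of $\tilde\theta_r$ one has $\tilde\theta_r=\hat\theta_r^{w_\circ}$ for $r<d_o$ and $\tilde\theta_r=\hat\theta_r^{\bar w}$ for $r\ge d_o$, and $w_\circ\le\bar w$ forces $\hat\theta_r^{w_\circ}\le\hat\theta_r^{\bar w}$ for every $r$.

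The first step is the \emph{scaling lemma}: for $w\in\{w_\circ,\bar w\}$ and $0<\rho\le r\le R$ one has $\hat\theta_\rho^{w}\le(r/\rho)^{\frac{d+2}{m+1}}\hat\theta_r^{w}$. To prove it I would set $\theta^\ast:=(r/\rho)^{\frac{d+2}{m+1}}\hat\theta_r^{w}$, which satisfies $\theta^\ast\ge\hat\theta_r^{w}\ge\lambda_o$, and show that $\theta^\ast$ is admissible in the definition of $\hat\theta_\rho^{w}$. Indeed, $\rho\le r$ and $\theta^\ast\ge\hat\theta_r^{w}$ give $Q_\rho^{(\theta^\ast)}(z_o)\subset Q_r^{(\theta^\ast)}(z_o)\subset Q_r^{(\hat\theta_r^{w})}(z_o)$, so by $w\ge0$ and the equivalent form above,
\[
  \iint_{Q_\rho^{(\theta^\ast)}}w\,\d x\d t\le\iint_{Q_r^{(\hat\theta_r^{w})}}w\,\d x\d t\le 2|B_1|\,(\hat\theta_r^{w})^{m+1}r^{d+2}.
\]
Dividing by $\rho^2\,|Q_\rho^{(\theta^\ast)}|=2|B_1|\,(\theta^\ast)^{1-m}\rho^{d+2}$ and inserting the definition of $\theta^\ast$ yields precisely $\frac1{\rho^2}\biint_{Q_\rho^{(\theta^\ast)}}w\,\d x\d t\le(\theta^\ast)^{2m}$, hence $\hat\theta_\rho^{w}\le\theta^\ast$.

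For part (i), fix $0<\rho\le s\le R$ and let $w_s$ be the weight relevant at radius $s$ (so $w_s=w_\circ$ if $s<d_o$ and $w_s=\bar w$ if $s\ge d_o$); the claim reads $\biint_{Q_s^{(\theta_\rho)}(z_o)}\frac{w_s}{s^2}\,\d x\d t\le\theta_\rho^{2m}$. Since $s\in[\rho,R]$ we have $\theta_\rho=\max_{r\in[\rho,R]}\tilde\theta_r\ge\tilde\theta_s$, whence $Q_s^{(\theta_\rho)}(z_o)\subset Q_s^{(\tilde\theta_s)}(z_o)$, and therefore
\[
  \biint_{Q_s^{(\theta_\rho)}}\frac{w_s}{s^2}\,\d x\d t
  \le\frac{|Q_s^{(\tilde\theta_s)}|}{|Q_s^{(\theta_\rho)}|}\biint_{Q_s^{(\tilde\theta_s)}}\frac{w_s}{s^2}\,\d x\d t
  \le\Bigl(\frac{\theta_\rho}{\tilde\theta_s}\Bigr)^{m-1}\tilde\theta_s^{2m}
  =\theta_\rho^{m-1}\,\tilde\theta_s^{m+1}\le\theta_\rho^{2m},
\]
using that $\tilde\theta_s=\hat\theta_s^{w_s}$ realises its own sub-intrinsic inequality and that $\tilde\theta_s\le\theta_\rho$.

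For part (ii), let $\rho<s\le R$. From $[\rho,R]=[\rho,s]\cup[s,R]$ we get $\theta_\rho=\max\bigl\{\max_{r\in[\rho,s]}\tilde\theta_r,\ \theta_s\bigr\}$, so it suffices to bound $\tilde\theta_r$ for $r\in[\rho,s]$. If $\tilde\theta_r=\lambda_o$ then $\tilde\theta_r\le\theta_s$ at once. Otherwise I distinguish according to the position of $r,s$ relative to $d_o$: if $r\ge d_o$ (hence $s\ge d_o$), the scaling lemma applied to the radii $r\le s$ with $w=\bar w$ gives $\tilde\theta_r=\hat\theta_r^{\bar w}\le(s/r)^{\frac{d+2}{m+1}}\hat\theta_s^{\bar w}=(s/r)^{\frac{d+2}{m+1}}\tilde\theta_s$; if $r<d_o$ and $s<d_o$, the scaling lemma with $w=w_\circ$ gives the same bound; and if $r<d_o\le s$, then $\tilde\theta_r=\hat\theta_r^{w_\circ}\le\hat\theta_r^{\bar w}\le(s/r)^{\frac{d+2}{m+1}}\hat\theta_s^{\bar w}=(s/r)^{\frac{d+2}{m+1}}\tilde\theta_s$ once more. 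In every case, using $r\ge\rho$ and $\tilde\theta_s\le\theta_s$, we obtain $\tilde\theta_r\le(s/\rho)^{\frac{d+2}{m+1}}\theta_s$, which is (ii). Item (iii) follows by taking $s=R$ in (ii), noting $\theta_R=\tilde\theta_R$, and invoking the bound $\tilde\theta_R\le 4^{\frac{d+2}{m+1}}\lambda_o$ from \eqref{bound-theta-R}; the case $\rho=R$ is trivial. I expect the only genuine obstacle to be this case distinction, forced by the two inequivalent notions of sub-intrinsicity at radii below and above $d_o$: the scaling lemma itself is elementary, and the bridging across $d_o$ succeeds precisely because $w_\circ\le\bar w$ and hence $\hat\theta_r^{w_\circ}\le\hat\theta_r^{\bar w}$.
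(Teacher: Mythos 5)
Your proof is correct; parts (i) and (iii) coincide with the paper's argument. For part (ii) you take a slightly different route: instead of passing to the rising-sun radius $\tilde\rho$ from \eqref{change_radius}, where $\theta_\rho=\tilde\theta_{\tilde\rho}$ is attained and the intrinsic equality \eqref{est:tilde_theta} holds, you decompose $\theta_\rho=\max\bigl\{\max_{r\in[\rho,s]}\tilde\theta_r,\ \theta_s\bigr\}$ and bound each $\tilde\theta_r$ via a standalone scaling estimate for the per-weight infima $\hat\theta^{w}$, which only requires showing that the candidate $(s/r)^{\frac{d+2}{m+1}}\hat\theta_s^{w}$ is admissible at radius $r$, rather than invoking the equality at the attained infimum. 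The underlying computation --- the inclusion $Q_r^{(\theta)}\subset Q_s^{(\theta')}$ for $r\le s$, $\theta\ge\theta'$, combined with the volume ratio $(s/r)^{d+2}$ --- is identical in both arguments, and your explicit use of $|\hat u|^{2m}\le 2\bigl(|\hat\u^m-\hat\g^m|^2+|\hat g|^{2m}\bigr)$ to pass between the two sub-intrinsic conditions makes transparent a step the paper leaves implicit in the case $\tilde\rho<d_o\le s$. Your version buys a marginally cleaner logical structure (no appeal to the attainment of the infimum in \eqref{est:tilde_theta}), at the cost of having to control all radii in $[\rho,s]$ rather than the single radius $\tilde\rho$.
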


\begin{proof}[Proof.]
(i) By definition we have $\tilde \theta_s \leq \theta_\rho$  so that $Q_s^{(\theta_\rho)}(z_o)\subset Q_s^{(\tilde \theta_s)}(z_o)$ and therefore if $s<d_o$
\begin{align*}
\biint_{Q_s^{(\theta_\rho)}(z_o) } \frac{|\hat u|^{2m}}{s^2} \d x\d t &\leq \left(\frac{\theta_\rho}{\tilde \theta_s}\right)^{m-1} \biint_{Q_s^{(\tilde\theta_s)}(z_o) } \frac{|\hat u|^{2m}}{s^2} \d x\d t \\
& \leq \left(\frac{\theta_\rho}{\tilde \theta_s}\right)^{m-1} \tilde \theta_s^{2m} =\theta_\rho^{m-1} \tilde \theta_s^{m+1} \leq \theta_\rho^{2m}.
\end{align*}
If $s \geq d_o$ we obtain 
\begin{align*}
&\biint_{Q_s^{(\theta_\rho)}(z_o) } 2\frac{|\hat \u^m -\hat
  \g^m|+|\hat g|^{2m}}{s^2} \d x\d t \\
&\qquad\leq \left(\frac{\theta_\rho}{\tilde \theta_s}\right)^{m-1} \biint_{Q_s^{(\tilde\theta_s)}(z_o) } 2\frac{|\hat \u^m -\hat \g^m|+|\hat g|^{2m}}{s^2} \d x\d t \\
&\qquad\leq \left(\frac{\theta_\rho}{\tilde \theta_s}\right)^{m-1} \tilde \theta_s^{2m} =\theta_\rho^{m-1} \tilde \theta_s^{m+1} \leq \theta_\rho^{2m}.
\end{align*}
(ii) If $\theta_\rho=\lambda_o$ we know that also $\theta_s=\lambda_o$, so that the claim holds true. In the case $\theta_\rho>\lambda_o$ we first consider radii $s$ with $s\in (\rho,\tilde \rho]$. Then, $\theta_\rho=\theta_s$ and there is nothing to prove. In contrary, if $s\in (\tilde \rho,R]$ the monotonicity of $\rho\mapsto \theta_\rho$, \eqref{est:tilde_theta} and the first part of the Lemma imply  in the case $\tilde \rho<d_o$
\begin{align*}
\theta_\rho&=\tilde \theta_{\tilde\rho} = \left[ \frac{1}{|Q_{\tilde \rho}|}\iint_{Q_{\tilde \rho}^{(\theta_{\tilde \rho})}(z_o)} \frac{|\hat u|^{2m}}{\tilde \rho^2} \d x\d t\right]^{\frac{1}{m+1}} \\
&\leq \left( \frac{s}{\tilde \rho}\right)^{\frac{d+2}{m+1}} \left[ \frac{1}{|Q_{s}|} \iint_{Q_{s}^{(\theta_{s})}(z_o)} \frac{|\hat u|^{2m}}{s^2} \d x\d t\right]^{\frac{1}{m+1}}\\
& \leq \left( \frac{s}{ \rho}\right)^{\frac{d+2}{m+1}} \theta_s.
\end{align*}
On the other hand, if $\tilde \rho \geq d_o$
\begin{align*}
\theta_\rho&=\tilde \theta_{\tilde\rho} = \left[ \frac{1}{|Q_{\tilde \rho}|}\iint_{Q_{\tilde \rho}^{(\theta_{\tilde \rho})}(z_o)} 2\frac{|\hat \u^m-\hat \g^m|^2+|\hat g|^{2m}}{\tilde \rho^2} \d x\d t\right]^{\frac{1}{m+1}} \\
&\leq \left( \frac{s}{\tilde \rho}\right)^{\frac{d+2}{m+1}} \left[ \frac{1}{|Q_{s}|} \iint_{Q_{s}^{(\theta_{s})}(z_o)} 2\frac{|\hat \u^m-\hat \g^m|^2+|\hat g|^{2m}}{s^2} \d x\d t\right]^{\frac{1}{m+1}}\\
& \leq \left( \frac{s}{ \rho}\right)^{\frac{d+2}{m+1}} \theta_s.
\end{align*}
(iii) Combining (ii) for $s=R$ with estimate~\eqref{bound-theta-R}
yields the claim, since $\theta_R=\tilde \theta_R$. 
\end{proof}

We note that due to the monotonicity of the map $\rho\mapsto \theta_{\rho,z_o}$ the above constructed cylinders are nested in the sense that
$$
Q_r^{(\theta_{z_o,r})}(z_o) \subset Q_s^{(\theta_{z_o,s})}(z_o) \quad \text{ whenever } 0<r<s\leq R.
$$
However, these cylinders in general only fulfill the sub-intrinsic coupling condition from Lemma \ref{lem:properties_theta} (i).

\subsection{Covering property} Next, we will present a Vitali type
covering property for the above constructed cylinders. Using the just
established bounds from Lemma \ref{lem:properties_theta}, this
result can be established by a slight adaptation of the arguments
in \cite[Lemma 6.1]{Boegelein-Duzaar-Korte-Scheven} which is based on the ideas of~\cite[Lemma 5.2]{Gianazza-Schwarzacher}. 

\begin{lemma} \label{lem:vitali}
There exists a constant $\hat c=\hat c(n,m)\geq 160$ such that the following holds true: Let $\mathcal F$ be any collection of cylinders $Q_{32r}^{(\theta_{z,r})}(z)$ where $Q_{r}^{(\theta_{z,r})}(z)$ is a cylinder of the form as constructed in section \ref{construction_cylinder} with radius $r\in (0,\frac{R}{\hat c})$. Then there exists a countable subfamily $\mathcal G$ of disjoint cylinders in $\mathcal F$ such that
\begin{align*}
\bigcup_{Q\in \mathcal F} Q \subset \bigcup_{Q\in \mathcal G} \hat Q,
\end{align*}
where $\hat Q$ denotes the $\frac{\hat c}{32}$-times enlarged cylinder $Q$, i.e.\ if $Q=Q_{32r}^{(\theta_{r,z})}(z)$, then $\hat Q= Q_{\hat c r}^{(\theta_{z,r})}(z)$.
\end{lemma}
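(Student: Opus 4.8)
The plan is to follow the Vitali-type covering scheme of \cite[Lemma~6.1]{Boegelein-Duzaar-Korte-Scheven} (itself modelled on \cite[Lemma~5.2]{Gianazza-Schwarzacher}), adapting it to the rising-sun cylinders constructed in Section~\ref{construction_cylinder}. First I would fix a constant $\hat c=\hat c(n,m)\ge 160$ to be specified at the end, and split $\mathcal F$ into the countable union of the subfamilies
\begin{equation*}
  \mathcal F_j:=\Big\{Q_{32r}^{(\theta_{z,r})}(z)\in\mathcal F\ :\ \tfrac{R}{2^{j}\hat c}<r\le\tfrac{R}{2^{j-1}\hat c}\Big\},\qquad j\in\N,
\end{equation*}
which is possible since every radius lies in $(0,R/\hat c)$. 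As all cylinders in $\mathcal F$ are contained in the bounded cylinder $Q_{4R}$, a greedy selection (equivalently, Zorn's lemma) produces a maximal disjoint subfamily $\mathcal G_1\subset\mathcal F_1$, and inductively a maximal disjoint subfamily $\mathcal G_j$ of those $Q\in\mathcal F_j$ that are disjoint from every member of $\mathcal G_1\cup\dots\cup\mathcal G_{j-1}$. Setting $\mathcal G:=\bigcup_j\mathcal G_j$, this is a disjoint collection of sets of positive measure inside $Q_{4R}$, hence countable.

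The substance of the proof is the covering step. Given $Q=Q_{32r}^{(\theta_{z,r})}(z)\in\mathcal F_j$ with $z=(x,t)$, maximality of $\mathcal G_j$ forces $Q$ to meet some $Q'=Q_{32r'}^{(\theta_{z',r'})}(z')\in\mathcal G_1\cup\dots\cup\mathcal G_j$, say $z'=(x',t')$; since $Q'\in\mathcal F_i$ with $i\le j$ one gets $r'>R/(2^{j}\hat c)$, so $r<2r'$. From the intersection of the spatial balls, $|x-x'|<32r+32r'<96\,r'$, hence $B_{32r}(x)\subset B_{160\,r'}(x')\subset B_{\hat c r'}(x')$. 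Picking a point of $Q\cap Q'$ and using the form of $\Lambda_\rho^{(\theta)}$ yields $|t-t'|<2\,\theta_{z,r}^{1-m}(32r)^{\frac{m+1}{m}}+\theta_{z',r'}^{1-m}(32r')^{\frac{m+1}{m}}$, so that $Q\subset\hat Q'=Q_{\hat c r'}^{(\theta_{z',r'})}(z')$ will follow, upon choosing $\hat c=\hat c(n,m)$ sufficiently large, once one establishes the scale comparison
\begin{equation*}
  \theta_{z,r}^{1-m}\,r^{\frac{m+1}{m}}\ \le\ A\,\theta_{z',r'}^{1-m}\,(r')^{\frac{m+1}{m}}
  \qquad\text{with } A=A(n,m),
\end{equation*}
because it bounds the time half-length of $Q$ by $32^{\frac{m+1}{m}}A$ times that of $Q_{r'}^{(\theta_{z',r'})}(z')$.

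I would prove this comparison in two steps. Changing the radius at the fixed centre $z$ from $r'$ to $r$ costs only an $(n,m)$-constant: for $r\le r'$ this is free by monotonicity of $\rho\mapsto\theta_{z,\rho}$ (recall $1-m<0$), and for $r'<r<2r'$ it follows from Lemma~\ref{lem:properties_theta}(ii). The genuinely new input is the comparison of $\theta_{z,r'}$ with $\theta_{z',r'}$ when $|x-x'|<96\,r'$: here one uses that an enlarged cylinder over $z$ of radius comparable to $r'$ is contained in one over $z'$ of radius comparable to $r'$ and conversely, and then feeds this into the sub-intrinsic bounds of Lemma~\ref{lem:properties_theta}(i), the near-intrinsic identities \eqref{est:tilde_theta}, and the scaling bound of Lemma~\ref{lem:properties_theta}(ii) to obtain $\theta_{z,r'}\approx\theta_{z',r'}$ with constants depending only on $n$ and $m$. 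This last step must be carried out according to whether $r'$ lies below or above the boundary distances $d_o(z)$ and $d_o(z')$ (which themselves differ by at most $48\,r'$), and it is precisely at this point that the piecewise definition of $\tilde\theta$ and its upward jump at $d_o$ recorded in Lemma~\ref{lem:jump-up} enter and have to be treated with care.

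I expect this centre-comparison of the intrinsic parameters, rather than the covering combinatorics, to be the main obstacle: the time intervals of the cylinders over $z$ and over $z'$ are not automatically nested, so the two-sided bounds \eqref{est:tilde_theta} and the doubling property of Lemma~\ref{lem:properties_theta} must be combined at a scale that is comparable to $r'$ yet large enough that overlap of the relevant cylinders is guaranteed without circular dependence on the $\theta$'s being compared. Once \eqref{est:tilde_theta} together with Lemma~\ref{lem:properties_theta} and Lemma~\ref{lem:jump-up} are in hand, chaining the two steps gives the displayed scale comparison, and hence $\bigcup_{Q\in\mathcal F}Q\subset\bigcup_{Q\in\mathcal G}\hat Q$, which is the assertion.
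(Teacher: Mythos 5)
Your combinatorial skeleton is exactly the paper's: the dyadic decomposition into the families $\mathcal F_j$, the greedy selection of maximal disjoint subfamilies $\mathcal G_j$, the deduction $r\le 2r_*$ and the spatial inclusion $B_{32r}(x)\subset B_{160r_*}(x_*)$ all match. You also correctly isolate where the real work lies, namely in comparing the intrinsic parameters at the two centres. But that is precisely the step you do not carry out: you state the needed scale comparison, observe that the time intervals are not nested and that there is a risk of ``circular dependence on the $\theta$'s being compared,'' and then leave the resolution of that circularity as a plan. Since the entire content of the lemma beyond standard Vitali combinatorics is this comparison, the proposal has a genuine gap.

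For the record, the paper resolves it as follows, and each ingredient is one you would need. First, only the one-sided bound $\theta_{z_*;r_*}\le c(n,m)\,\theta_{z;r}$ is required (because $1-m<0$), and one may assume $\theta_{z;r}\le\theta_{z_*;r_*}$, since otherwise there is nothing to prove; this assumption is what breaks the circularity, because it gives the monotone chain $\theta_{z_*;r_*}\ge\theta_{z;r}\ge\theta_{z;\mu\tilde r_*}$ with $\mu=128$, which in turn yields the inclusion $Q_{32\tilde r_*}^{(\theta_{z_*;r_*})}(z_*)\subset Q_{\mu\tilde r_*}^{(\theta_{z;\mu\tilde r_*})}(z)$ \emph{before} any quantitative comparison of the $\theta$'s. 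Second, the comparison must be performed at the radius $\tilde r_*$ from \eqref{change_radius}, not at a scale comparable to $r'$ as you suggest: the two-sided identity \eqref{est:tilde_theta} is available only at $\tilde r_*$ (elsewhere Lemma~\ref{lem:properties_theta}\,(i) gives only the sub-intrinsic inequality), and $\tilde r_*$ may be as large as $R$. This forces the separate case $\tilde r_*>\frac R\mu$, which is handled directly through the definition of $\lambda_o$ and the containment in $Q_{4R}$. Third, the transition between the interior-type and boundary-type couplings is not automatic: one uses that $\tilde r_*\ge\frac12\dist(x_*,\partial\Omega)$ forces $\mu\tilde r_*\ge\frac12\dist(x,\partial\Omega)$, and in the remaining mixed case the pointwise inequality $|\hat u|^{2m}\le 2\big(|\power{\hat u}{m}-\power{\hat g}{m}|^2+|\hat g|^{2m}\big)$ lets one pass from the interior functional at $z_*$ to the boundary functional at $z$. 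Without these four points (the WLOG reduction, the choice of scale $\tilde r_*$, the large-$\tilde r_*$ case, and the boundary transition), the estimate $\theta_{z_*;r_*}\le(4\mu)^{\frac{d+2}{m+1}}\theta_{z;r}$, and hence the time-interval inclusion, is not established.
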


\begin{proof}[Proof.]
For $j \in \N$ define a sub-collection of $\mathcal F$ as 
$$
	\mathcal F_j
	:=
	\big\{Q_{32r}^{(\theta_{z;r})}(z)\in \mathcal F: 
	\tfrac{R}{2^j\hat c}<r\le \tfrac{R}{2^{j-1}\hat c} \big\}.
$$
Next we construct a countable collection of disjoint cylinders
$\mathcal G \subset \mathcal F$ inductively as follows. Let $\mathcal
G_1$ be a maximal disjoint collection of cylinders in $\mathcal
F_1$. Observe that the measure of every cylinder in $\mathcal G_1$ is bounded from below by Lemma~\ref{lem:properties_theta} (iii), which implies that $\mathcal G_1$ is finite. For $k\geq 2$, define $\mathcal G_k$ to be any maximal sub-collection of 
$$
	\bigg\{Q\in \mathcal F_k: 
	Q\cap Q^\ast=\emptyset \mbox{ for any $ \displaystyle Q^\ast\in \bigcup_{j=1}^{k-1} \mathcal G_j $}
	\bigg\}.
$$
Collection $\mathcal G_k$ is again finite, and we can define 
$$
\mathcal G := \bigcup_{j\in \N} \mathcal G_j.
$$
Now $\mathcal G$ is a countable disjoint sub-collection of $\mathcal F$. To conclude the result we show that for any $Q \in \mathcal F$ there exists a cylinder $Q^* \in \mathcal G$ such that $Q \subset \hat Q^*$.

Fix $Q=Q_{32r}^{(\theta_{z;r})}(z) \in \mathcal F$. Then $Q \in \mathcal F_j$ for some $j \in \N$. Since $\mathcal G_{j}$ is maximal, there exists $Q^*=Q_{32r_*}^{(\theta_{z_*;r_*})}(z_*) \in \bigcup_{i=1}^j \mathcal G_j$ satisfying $Q \cap Q^* \neq \emptyset$. From the definitions it follows that $r \leq \frac{R}{2^{j-1}\hat c}$ and $r_* > \frac{R}{2^j \hat c}$, which implies $r \leq 2r_*$. Then clearly $B_{32r}(x) \subset B_{160r_*}(x_*)$. The main objective of the rest of the proof is to deduce the inclusion 
\begin{equation} \label{Lambda_inclusion}
\Lambda_{32r}^{(\theta_{z,r})} (t) \subset \Lambda_{\hat c r_*}^{(\theta_{z_*,r_*})} (t_*).
\end{equation}

Next we show that
\begin{align}
\label{est:theta} \theta_{z_*;r_*} \leq (4\mu)^{\frac{d+2}{m+1}} \theta_{z;r},
\end{align}
where $\mu = 128$. By $\tilde r_* \in [r_*,R]$ we denote the radius from \eqref{change_radius} associated to the cylinder $Q_{r_*}^{(\theta_{z_*;r_*})}(z_*)$. Recall that either $Q_{\tilde r_*}^{(\theta_{z_*;r_*})}(z_*)$ is intrinsic or $\tilde r_*=R$ and $\theta_{z_*;r_*}=\lambda_o$. In the latter case we have 
$$
\theta_{z_*;r_*} =\lambda_o \leq \theta_{z;r}.
$$ 
Therefore, we can assume that $Q_{\tilde r_*}^{(\theta_{z_*;r_*})}(z_*)$ is intrinsic, which means
\begin{align}
\label{intrinsic_cylinder}
\left\{
\begin{array}{ll}
\frac{1}{|Q_{\tilde r_*}|} \iint_{Q_{\tilde r_*}^{(\theta_{z_*;r_*})}(z_*)} \frac{|\hat u|^{2m}}{\tilde r_*^2} \d y\d \tau=\theta_{z_*;r_*}^{m+1} , & \text{if } \tilde r_*<\frac 12\dist(x_*,\partial \Omega) \vspace{2mm}\\
\frac{1}{|Q_{\tilde r_*}|} \iint_{Q_{\tilde r_*}^{(\theta_{z_*,r_*})}(z_*)}2 \frac{|\hat\u^m-\hat\g^m|^{2}+|\hat g|^{2m}}{\tilde r_*^2} \d y\d \tau=  \theta^{m+1}_{z_*;r_*}, & \text{if } \tilde r_*\geq \frac 12 \dist(x_*,\partial \Omega)
\end{array}
\right.
\end{align}

We first consider the case where $\tilde r_*>\frac R \mu$. Here we
obtain by triangle inequality in both of the cases $ \tilde
r_*<\frac12\dist(x_*,\partial \Omega) $ and $ \tilde
r_*\geq\frac12\dist(x_*,\partial \Omega) $ that 
\begin{align*}
  \theta^{m+1}_{z_*;r_*}
  &=
\frac{1}{|Q_{\tilde r_*}|} \iint_{Q_{\tilde r_*}^{(\theta_{z_*,r_*})}(z_*)}2 \frac{|\hat\u^m-\hat\g^m|^{2}+|\hat g|^{2m}}{\tilde r_*^2} \d y\d \tau \\
& \leq \left(\frac{4R}{\tilde r_*}\right)^2 \frac{1}{|Q_{\tilde r_*}|} \iint_{Q_{4R}}2 \frac{|\hat\u^m-\hat\g^m|^{2}+|\hat g|^{2m}}{(4R)^2} \d y\d \tau \\
&\leq  \left(\frac{4R}{\tilde r_*}\right)^{d+2} \lambda_o^{m+1} \leq (4\mu)^{d+2} \theta^{m+1}_{z;r}.
\end{align*}
This shows \eqref{est:theta} if $\tilde r_*>\frac R \mu$. Next, we assume that $\tilde r_*\leq\frac R \mu$. We can assume that $\theta_{z;r} \leq \theta_{z_*;r_*}$ since otherwise~\eqref{est:theta} follows directly. Since the cylinders $Q$ and $Q^*$ intersect, we have 
\begin{equation}\label{t-t_*}
	|t-t_*|\le \theta_{z;r}^{1-m}(32r)^{\frac{m+1}{m}}
	+
	\theta_{z_*;r_*}^{1-m}(32r_*)^{\frac{m+1}{m}}.
\end{equation}
Because $\rho \mapsto \theta_{z;\rho}$ is decreasing and $r \leq 2r_*\leq 2 \tilde r_* \leq \mu \tilde r_*$, we have that
$$
	\theta_{z_*;r_*}
	\ge 
	\theta_{z;r}
	\ge
	\theta_{z;\mu \widetilde r_*}.
$$
This implies that 
\begin{align*}
	\theta_{z_\ast;r_*}^{1-m}(32\widetilde r_*)^{\frac{m+1}{m}} + |t-t_*|
	&\le
    2\theta_{z_*;r_*}^{1-m}(32\widetilde r_*)^{\frac{m+1}{m}}+
    \theta_{z;r}^{1-m}(32r)^{\frac{m+1}{m}} \\
	&\le
	2\cdot 64^{\frac{m+1}{m}} 
	\theta_{z;\mu\widetilde r_*}^{1-m}\widetilde r_*^{\frac{m+1}{m}}
	\le
	\theta_{z;\mu \widetilde r_\ast}^{1-m}(\mu\widetilde r_\ast)^{\frac{m+1}{m}},
\end{align*}
which shows that 
$$
\Lambda_{32\tilde r_*}^{(\theta_{z_*; r_*})}(t_*) \subset \Lambda_{\mu\tilde r_*}^{(\theta_{z;\mu \tilde r_*})}(t)
$$
holds true. Since $| x - x_* | \leq 96 \tilde r_*$, we also have that $B_{32\tilde r_*}(x_*) \subset B_{\mu \tilde r_*}(x)$. Thus we have the inclusion 
\begin{equation} \label{Q_inclusion}
Q_{32 \tilde r_*}^{(\theta_{z_*;r_*})} (z_*) \subset Q_{\mu \tilde r_*}^{(\theta_{z;\mu \tilde r_*})} (z).
\end{equation}
If $\tilde r_*\geq \frac 12 \dist(x_*,\partial \Omega)$, then we also get
\begin{align*}
\frac 12 \dist(x,\partial \Omega) &\leq \frac 12 \dist(x_*,\partial \Omega)+\frac 12 |x-x_*|
 \leq \tilde r_* +\frac 12 (32 r+32 r_*)\leq 49\tilde r_* \leq \mu \tilde r_*.
\end{align*}
Therefore, using the intrinsic scaling and Lemma~\ref{lem:properties_theta}\,(i) leads to 
\begin{align*}
\theta^{m+1}_{z_*,r_*}& =\frac{1}{|Q_{\tilde r_*}|} \iint_{Q_{\tilde r_*}^{(\theta_{z_*,r_*})}(z_*)}2 \frac{|\hat\u^m-\hat\g^m|^{2}+|\hat g|^{2m}}{\tilde r_*^2} \d y\d \tau\\
&\leq \frac{\mu^{d+2}}{|Q_{\mu\tilde r_*}|} \iint_{Q_{\mu\tilde r_*}^{(\theta_{z,\mu \tilde r_*})}(z)}2 \frac{|\hat\u^m-\hat\g^m|^{2}+|\hat g|^{2m}}{(\mu\tilde r_*)^2} \d y\d \tau \\
&\leq
  \mu^{d+2}\theta^{m+1}_{z;\mu \tilde r_*}
  \le
  \mu^{d+2}\theta^{m+1}_{z;r}.
\end{align*}
On the other hand if $\tilde r_*<\frac 12 \dist(x_*,\partial \Omega)$ we obtain the same estimate. This can be seen as follows: If $\mu \tilde r_* <\frac 12 \dist(x,\partial \Omega)$ we have
\begin{align*}
\theta^{m+1}_{z_*,r_*}& =\frac{1}{|Q_{\tilde r_*}|} \iint_{Q_{\tilde r_*}^{(\theta_{z_*,r_*})}(z_*)} \frac{|\hat u|^{2m}}{\tilde r_*^2} \d y\d \tau\\
&\leq \frac{\mu^{d+2}}{|Q_{\mu\tilde r_*}|} \iint_{Q_{\mu\tilde r_*}^{(\theta_{z,\mu \tilde r_*})}(z)} \frac{|\hat u|^{2m}}{(\mu\tilde r_*)^2} \d y\d \tau \\
&\leq   \mu^{d+2}\theta^{m+1}_{z;\mu \tilde r_*}
  \le
\mu^{d+2}\theta^{m+1}_{z;r},
\end{align*}
and for $ \mu \tilde r_* \geq \frac 12 \dist(x,\partial \Omega)$ we
can use the triangle inequality to obtain
\begin{align*}
\theta^{m+1}_{z_*,r_*}& =\frac{1}{|Q_{\tilde r_*}|} \iint_{Q_{\tilde r_*}^{(\theta_{z_*,r_*})}(z_*)} \frac{|\hat u|^{2m}}{\tilde r_*^2} \d y\d \tau\\
&\leq \frac{\mu^{d+2}}{|Q_{\mu\tilde r_*}|} \iint_{Q_{\mu\tilde r_*}^{(\theta_{z,\mu \tilde r_*})}(z)}2 \frac{|\hat\u^m-\hat\g^m|^{2}+|\hat g|^{2m}}{(\mu\tilde r_*)^2} \d y\d \tau \\
&\leq  \mu^{d+2}\theta^{m+1}_{z;\mu \tilde r_*}
  \le
 \mu^{d+2}\theta^{m+1}_{z;r}.
\end{align*}
This finally shows \eqref{est:theta}. Now by using~\eqref{t-t_*}, $r \leq 2r_*$ and~\eqref{est:theta} we can estimate
\begin{align*}
	\theta_{z;r}^{1-m}(32r)^{\frac{m+1}{m}} + |t-t_*|
	&\le
	2\theta_{z;r}^{1-m}(32r)^{\frac{m+1}{m}} + 
	\theta_{z_*;r_*}^{1-m}(32r_*)^{\frac{m+1}{m}} \\
	&\le
	32^\frac{m+1}{m}\Big[ 1+2\cdot 2^\frac{m+1}{m}\cdot 512^\frac{(m-1)(d+2)}{m+1}\Big]
	\theta_{z_\ast;r_\ast}^{1-m}r_\ast^{\frac{m+1}{m}} \\
	&\le
	\theta_{z_\ast;r_\ast}^{1-m}(\hat c r_\ast)^{\frac{m+1}{m}},
\end{align*}
for a constant $\hat c = \hat c(n,m) > 32$, which shows the inclusion~\eqref{Lambda_inclusion}. After possibly enlarging $\hat c$ such that $\hat c \geq 160$ is satisfied, we have $Q \subset \hat Q^*$ which completes the proof.
\end{proof}

\subsection{Stopping time argument}
For $\lambda> \lambda_o$ and $r \in (0,2R]$ we define the super-level set 
$$
\mathbf{E}(r,\lambda):= \left\{z \in Q_r\cap \Omega_T: z \text{ is a Lebesgue point of } |D\u^m| \text{ and } |D\u^m|(z)>\lambda^m \right\},
$$
  where the notion of Lebesgue point has to be understood with respect
  to the system of cylinders constructed in
  Section~\ref{construction_cylinder}. Because of the Vitali type
  covering property from Lemma~\ref{lem:vitali}, almost every point is a Lebesgue
  point also in this sense,
  see \cite[2.9.1]{Federer}.
For fixed $0<R\leq R_1 <R_2 \leq 2R$ we consider the cylinders
$$
Q_R \subseteq Q_{R_1} \subset Q_{R_2} \subseteq Q_{2R}  .
$$
Let $z_o \in \mathbf{E}(R_1, \lambda)$. By definition of this set, we have
\begin{align*}
\liminf_{s\downarrow 0} \biint_{Q_s^{(\theta_s)}(z_o)} \Big[ |D\u^m|^2\chi_{\Omega_T}+G^2  \Big] \d x\d t \geq |D\u^m|^2(z_o)> \lambda^{2m}.
\end{align*} 
In the following, we consider values of $\lambda$ satisfying
\begin{align*}
 \lambda >B\lambda_o \quad \text{ where } \quad B:=\left(\frac{4\hat c R}{R_2-R_1} \right)^{\frac{n+2}{m+1}}>1,
\end{align*}
in which $\hat c$ is the constant from the Vitali covering lemma~\ref{lem:vitali}. For radii $s$ with
\begin{align*}
\frac{R_2-R_1}{\hat c} \leq s\leq R
\end{align*}
we obtain by using Lemma \ref{lem:properties_theta} (iii)
\begin{align*}
\biint_{Q_s^{(\theta_s)}(z_o)} \Big[ |D\u^m|^2\chi_{\Omega_T}+G^2 \Big]  \d x\d t &\leq \frac{|Q_{4R}|}{|Q_s^{(\theta_s)}|}\biint_{Q_{4R}} \Big[ |D\u^m|^2\chi_{\Omega_T}+G^2 \Big]  \d x\d t \\
&\leq  \frac{|Q_{4R}|}{|Q_s|} \theta_s^{m-1} \lambda_o^{m+1} \\
&\leq \left(\frac{4R}{s} \right)^{d+\frac{(d+2)(m-1)}{m+1}} \lambda_o^{2m} \\
&\leq   \left(\frac{4\hat cR}{R_2-R_1} \right)^{d+\frac{(d+2)(m-1)}{m+1}}\lambda_o^{2m}\\
&=B^{2m}\lambda_o^{2m}<\lambda^{2m}.
\end{align*}
By absolute continuity of the integral and the continuity of $s\mapsto
\theta_s$, there exists a maximal radius $\rho_{z_o}\in (0,\frac{R_2-R_1}{\hat c})$ such that
\begin{align} \label{lambda_rho}
\biint_{Q_{\rho_{z_o}}^{(\theta_{\rho_{z_o}})}(z_o)} \Big[ |D\u^m|^2\chi_{\Omega_T}+G^2 \Big]  \d x\d t =\lambda^{2m}.
\end{align}
The maximality of $\rho_{z_o}$ implies 
\begin{align} \label{lambda_s}
\biint_{Q_{s}^{(\theta_{s})}(z_o)} \Big[ |D\u^m|^2\chi_{\Omega_T}+G^2 \Big]  \d x\d t <\lambda^{2m}
\end{align}
for any $s \in (\rho_{z_o},R]$.

\subsection{Reverse H\"older inequalities}\label{sec:reverse-hoelder}
As before we consider $z_o\in \mathbf{E}(R_1,\lambda)$ and abbreviate
$\theta_{\rho_{z_o}}\equiv \theta_{z_o,\rho_{z_o}}$. From now on we
denote the exponent $q<2$ as the maximum of the Sobolev exponents $q$ in Lemma~\ref{lem:Poincare-Sobolev} and $ \frac{2n}{d}$ in Lemma~\ref{lem:initialSobolev}.
We distinguish between the non-degenerate and the degenerate case,
which correspond to the cases $\tilde \rho_{z_o}\leq 2 \rho_{z_o}$ and $\tilde \rho_{z_o}>2\rho_{z_o}$.

\subsubsection{The non-degenerate case $\tilde \rho_{z_o}\leq 2 \rho_{z_o}$.}

In this case, we note that the cylinder $Q_{\tilde
  \rho_{z_o}}^{(\theta_{\rho_{z_o}})}(z_o)$ is intrinsic since $\tilde
\rho_{z_o}<R$.
We first consider the boundary case $\tilde \rho_{z_o}\geq d_o$.
Lemma \ref{lem:properties_theta} (i) and the
fact that $Q_{\tilde \rho_{z_o}}^{(\theta_{\rho_{z_o}})}(z_o)$ is intrinsic imply
\begin{align*}
  \biint_{Q_{4 \tilde\rho_{z_o}}^{(\theta_{\rho_{z_o})}}(z_o)}
  &2\frac{|\hat\u^m-\hat\g^m|^2+|\hat g|^{2m}}{(4 \tilde\rho_{z_o})^2} \d x\d t
  \le
  \theta_{\rho_{z_o}}^{2m}  \\
  &\le 
  2^{d+2}\biint_{Q_{2 \tilde\rho_{z_o}}^{(\theta_{\rho_{z_o})} }(z_o)}
  2\frac{|\hat\u^m-\hat\g^m|^2+|\hat g|^{2m}}{(2\tilde\rho_{z_o})^2} \d x\d t.
\end{align*}
Since $\tilde \rho_{z_o} \geq d_o$, the
cylinder $Q_{2\tilde\rho_{z_o}}^{(\theta_{\rho_{z_o}})}(z_o)$
intersects or touches the lateral boundary. Hence, we are in position to use
Lemma \ref{lem:reverse_lateral_intrinsic} on this cylinder
 to obtain
\begin{align*}
\frac{1}{|Q_{\rho_{z_o}}^{(\theta_{\rho_{z_o}})}|}&\iint_{Q_{\rho_{z_o}}^{(\theta_{\rho_{z_o}})}(z_o)\cap \Omega_T} |D\u^m|^2 \d x\d t \\
&\leq \frac{c}{|Q_{2\tilde\rho_{z_o}}^{(\theta_{\rho_{z_o}})}|}\iint_{Q_{2\tilde\rho_{z_o}}^{(\theta_{\rho_{z_o}})}(z_o)\cap \Omega_T} |D\u^m|^2 \d x\d t \\
&\leq \left( \frac{c}{|Q_{16\tilde\rho_{z_o}}^{(\theta_{\rho_{z_o}})}|}\iint_{Q_{16\tilde\rho_{z_o}}^{(\theta_{\rho_{z_o}})}(z_o)\cap \Omega_T} |D\u^m|^q \d x\d t \right)^{\frac2q}+ c\biint_{Q_{16\tilde\rho_{z_o},+}^{(\theta_{\rho_{z_o}})}} G^2\d x\d t\\
&\leq \left( \frac{c}{|Q_{32\rho_{z_o}}^{(\theta_{\rho_{z_o}})}|}\iint_{Q_{32\rho_{z_o}}^{(\theta_{\rho_{z_o}})}(z_o)\cap \Omega_T} |D \u^m|^q \d x\d t \right)^{\frac2q}+ c\biint_{Q_{32\rho_{z_o},+}^{(\theta_{\rho_{z_o}})}} G^2\d x\d t,
\end{align*}
which is the desired reverse H\"older inequality. 
In the remaining case
$\tilde \rho_{z_o}<d_o$, we are either in the interior case
(if $Q_{2\tilde\rho_{z_o}}^{(\theta_{\rho_{z_o}})}\Subset\Omega_T$) or we
might intersect with the initial boundary.
Therefore, Lemma \ref{lem:properties_theta} (i) and the
fact that $Q_{\tilde \rho_{z_o}}^{(\theta_{\rho_{z_o}})}(z_o)$ is intrinsic imply
\begin{align*}
  \biint_{Q_{2\tilde\rho_{z_o}}^{(\theta_{\rho_{z_o}})}(z_o)} 
  \frac{|\hat u|^{2m}}{(2 \tilde\rho_{z_o})^2} \d x\d t
  \le
  \theta_{\rho_{z_o}}^{2m}  
  =
  \biint_{Q_{\tilde\rho_{z_o}}^{(\theta_{\rho_{z_o}})}(z_o) }
  \frac{|\hat u|^{2m}}{\tilde\rho_{z_o}^2} \d x\d t,
\end{align*}
 so that the conditions in
Lemma~\ref{lem:reverse_initial_intrinsic} are satisfied for the
cylinder $Q_{\tilde \rho_{z_o}}^{(\theta_{\rho_{z_o}})}(z_o)$. Hence, we obtain
\begin{align*}
\frac{1}{|Q_{\rho_{z_o}}^{(\theta_{\rho_{z_o}})}|}&\iint_{Q_{\rho_{z_o},+}^{(\theta_{\rho_{z_o}})}(z_o)} |D \u^m|^2 \d x\d t \\
&\leq \frac{c}{|Q_{\tilde\rho_{z_o}}^{(\theta_{\rho_{z_o}})}|}\iint_{Q_{\tilde\rho_{z_o}}^{(\theta_{\rho_{z_o}})}(z_o)} |D \hat \u^m|^2 \d x\d t \\
&\leq \left( \frac{c}{|Q_{2\tilde\rho_{z_o}}^{(\theta_{\rho_{z_o}})}|}\iint_{Q_{2\tilde\rho_{z_o}}^{(\theta_{\rho_{z_o}})}(z_o)} |D \hat \u^m|^q \d x\d t \right)^{\frac2q}+ c\biint_{Q_{2\tilde\rho_{z_o},+}^{(\theta_{\rho_{z_o}})}} G^2\d x\d t\\
&\leq \left( \frac{c}{|Q_{4\rho_{z_o}}^{(\theta_{\rho_{z_o}})}|}\iint_{Q_{4\rho_{z_o}}^{(\theta_{\rho_{z_o}})}(z_o)} |D\hat \u^m|^q \d x\d t \right)^{\frac2q}+ c\biint_{Q_{4\rho_{z_o},+}^{(\theta_{\rho_{z_o}})}} G^2\d x\d t.
\end{align*}
This completes the treatment of the case $\tilde \rho_{z_o}\le 2\rho_{z_o}$.

\subsubsection{The degenerate case $\tilde \rho_{z_o}>2\rho_{z_o}$}

The main objective in this case is the proof of the claim
\begin{equation}
  \label{claim-degenerate}
  \theta_{\rho_{z_o}}^{2m}
  \le
  c\,\biint_{Q_{\rho_{z_o}}^{(\theta_{\rho_{z_o}})}(z_o)} \Big[ |D\u^m|^2\chi_{\Omega_T}+G^2 \Big]  \d x\d t
\end{equation}
for a universal constant $c$. For the derivation of this property, we
distinguish between various cases. 
First, we observe that in the case $\theta_{\rho_{z_o}}=\lambda_o$,
the claim is immediate because 
\begin{align*}
\theta_{\rho_{z_o}}^{2m} =\lambda_o^{2m} <\lambda^{2m}=\biint_{Q_{\rho_{z_o}}^{(\theta_{\rho_{z_o}})}(z_o)} \Big[ |D\u^m|^2\chi_{\Omega_T}+G^2 \Big]  \d x\d t
\end{align*}
by \eqref{lambda_rho}.
Therefore, we may assume that $\theta_{\rho_{z_o}}>\lambda_o$, in which
case the cylinder $Q_{\tilde \rho_{z_o}}^{(\theta_{\rho_{z_o}})}(z_o)$
is intrinsic. We first consider the case $\tilde\rho_{z_o}<d_o$. We
use the Poincar\'e inequality from
Lemma~\ref{lem:initialSobolev}, inequality~\eqref{lambda_s} and
Lemma~\ref{lem:properties_theta}\,(i) with $s=\frac12
\tilde\rho_{z_o}>\rho_{z_o}$ to obtain 
%\begin{align*}
%  \theta_{\rho_{z_o}}^m
%  &=
%  \left[
%  \biint_{Q_{\tilde\rho_{z_o}}^{(\theta_{\rho_{z_o}})}(z_o)}
%  \frac{|\hat u|^{2m}}{\tilde \rho_{z_o}^2}\d x\d t\right]^{\frac{1}{2}}\\
%&\leq\left[
%  \biint_{Q_{\tilde\rho_{z_o}}^{(\theta_{\rho_{z_o}})}(z_o)}\frac{\big|\hat
%    \u^m-(\hat \u^m)_{z_o;\frac 12 \tilde
%      \rho_{z_o}}^{(\theta_{\rho_{z_o}})}\big|^{2}}{\tilde
%    \rho_{z_o}^2} \d x\d t\right]^{\frac{1}{2}}
%    +
%    \frac{|(\hat \u^m)_{z_o;\frac 12 \tilde \rho_{z_o}}^{(\theta_{\rho_{z_o}})}|}{\tilde\rho_{z_o}} \\
%&\leq c\left[  \biint_{Q_{\tilde\rho_{z_o}}^{(\theta_{\rho_{z_o}})}(z_o)} \Big[ |D\u^m|^2\chi_{\Omega_T}+G^2 \Big] \d x \d t \right]^{\frac{1}{2}} +\tfrac12\left[ \biint_{Q_{\frac 12\tilde\rho_{z_o}}^{(\theta_{\rho_{z_o}})}(z_o)}\frac{|\hat u|^{2m}}{(\frac 12\tilde \rho_{z_o})^2} \d x\d t\right]^{\frac{1}{2}}\\
%&\leq c \lambda^m +\tfrac12 \theta_{\rho_{z_o}}^m.
%\end{align*}
\begin{align*}
  \theta_{\rho_{z_o}}^m
  &=
  \left[
  \biint_{Q_{\tilde\rho_{z_o}}^{(\theta_{\rho_{z_o}})}(z_o)}
  \frac{|\hat u|^{2m}}{\tilde \rho_{z_o}^2}\d x\d t\right]^{\frac{1}{2}}\\
&\leq\left[
  \biint_{Q_{\tilde\rho_{z_o}}^{(\theta_{\rho_{z_o}})}(z_o)}\frac{\big|\hat
    \u^m-(\hat \u^m)_{z_o;\frac 12 \tilde
      \rho_{z_o}}^{(\theta_{\rho_{z_o}})}\big|^{2}}{\tilde
    \rho_{z_o}^2} \d x\d t\right]^{\frac{1}{2}}
    +\frac{\big|(\hat \u^m)_{z_o;\frac 12 \tilde \rho_{z_o}}^{(\theta_{\rho_{z_o}})}\big|}{\tilde\rho_{z_o}}
     \\
&\leq c\left[  \biint_{Q_{\tilde\rho_{z_o}}^{(\theta_{\rho_{z_o}})}(z_o)} \Big[ |D\u^m|^2\chi_{\Omega_T}+G^2 \Big] \d x \d t \right]^{\frac{1}{2}} + \biint_{Q_{\frac 12 \tilde\rho_{z_o}}^{(\theta_{\rho_{z_o}})}(z_o)} \frac{|\hat u|^m}{\tilde\rho_{z_o}}\d x \d t\\
&\leq c \lambda^m + \tfrac12\left[ \biint_{Q_{\frac 12\tilde\rho_{z_o}}^{(\theta_{\rho_{z_o}})}(z_o)}\frac{|\hat u|^{2m}}{(\frac 12\tilde \rho_{z_o})^2} \d x\d t\right]^{\frac{1}{2}} \\
&\leq c \lambda^m +\tfrac12 \theta_{\rho_{z_o}}^m.
\end{align*}
This implies $\theta_{\rho_{z_o}}^{2m}\le c\lambda^{2m}$, which in
turn yields claim
\eqref{claim-degenerate} in view of property~\eqref{lambda_rho}.
Next, we turn our attention to the case $\tilde \rho_{z_o}\geq
d_o$. Now we use Lemma~\ref{lem:Poincare} on the cylinder
$Q_{8\tilde\rho_{z_o}}^{(\theta_{\tilde\rho_{z_o}})}(z_o)$, which is
possible since $\tilde\rho_{z_o}\ge d_o$ implies $B_{8\tilde\rho_{z_o}/3}\setminus\Omega\neq\emptyset$.
Moreover, we use Lemmas~\ref{lem:uavetoa} and \ref{lem:poincare_g} and
then inequality \eqref{lambda_s}. This leads us to the estimate
\begin{align}\label{degenerate-pre-claim}
\theta_{\rho_{z_o}}^m&= \left[ \biint_{Q_{\tilde\rho_{z_o}}^{(\theta_{\rho_{z_o}})}(z_o)} 2\frac{|\hat\u^m-\hat \g^m|^2+|\hat g|^{2m}}{\tilde \rho_{z_o}^2}\d x\d t\right]^{\frac{1}{2}}\\\nonumber
&\leq \left[ \biint_{Q_{\tilde\rho_{z_o}}^{(\theta_{\rho_{z_o}})}(z_o)}2\frac{|\hat\u^m-\hat \g^m|^2}{\tilde \rho_{z_o}^2} \d x\d t\right]^{\frac{1}{2}}\\\nonumber
&\quad+\left[
  \biint_{Q_{\tilde\rho_{z_o}}^{(\theta_{\rho_{z_o}})}(z_o)}2\frac{\big|\hat
    \g^m-(\hat \g^m)_{z_o;\frac 12 \tilde
      \rho_{z_o}}^{(\theta_{\rho_{z_o}})}\big|^{2}}{\tilde
    \rho_{z_o}^2} \d x\d t\right]^{\frac{1}{2}}+ \sqrt 2\,\frac{\babs{(\hat \g^m)_{z_o;\frac 12 \tilde \rho_{z_o}}^{(\theta_{\rho_{z_o}})} } }{\tilde\rho_{z_o}} \\\nonumber
&\leq c\left[  \biint_{Q_{8\tilde\rho_{z_o}}^{(\theta_{8\tilde\rho_{z_o}})}(z_o)} \Big[ |D\u^m|^2\chi_{\Omega_T}+G^2 \Big] \d x \d t \right]^{\frac{1}{2}} +\tfrac12\left[ \biint_{Q_{\frac 12\tilde\rho_{z_o}}^{(\theta_{\rho_{z_o}})}(z_o)}2\frac{|\hat g|^{2m}}{(\frac 12\tilde \rho_{z_o})^2} \d x\d t\right]^{\frac{1}{2}}\\\nonumber
&\leq c \lambda^m +\tfrac12\left[ \biint_{Q_{\frac 12\tilde\rho_{z_o}}^{(\theta_{\rho_{z_o}})}(z_o)}2\frac{|\hat g|^{2m}}{(\frac 12\tilde \rho_{z_o})^2} \d x\d t\right]^{\frac{1}{2}}.
\end{align}
For the estimate of the last integral, we distinguish further between
the cases $\frac12 \tilde\rho_{z_o}\ge d_o$ and $\frac12 \tilde\rho_{z_o}<d_o\leq \tilde \rho_{z_o}$. 
In the first case, Lemma~\ref{lem:properties_theta}\,(i) with $s=\frac12
\tilde\rho_{z_o}\ge\rho_{z_o}$, which satisfies
$s\ge d_o$, yields
\begin{equation*}
  \tfrac12\left[ \biint_{Q_{\frac
        12\tilde\rho_{z_o}}^{(\theta_{\rho_{z_o}})}(z_o)}2\frac{|\hat
      g|^{2m}}{(\frac 12\tilde \rho_{z_o})^2} \d x\d
    t\right]^{\frac{1}{2}}
   \le
   \tfrac12\theta_{\rho_{z_o}}^m.
\end{equation*}
In the second case $\frac12 \tilde\rho_{z_o}<d_o\leq \tilde
\rho_{z_o}$, we estimate 
\begin{align*}
  &\tfrac12\left[ \biint_{Q_{\frac
        12\tilde\rho_{z_o}}^{(\theta_{\rho_{z_o}})}(z_o)}2\frac{|\hat
      g|^{2m}}{(\frac 12\tilde \rho_{z_o})^2} \d x\d
    t\right]^{\frac{1}{2}}\\
  &\qquad\le
  \tfrac12\left[ \biint_{Q_{\frac
        12\tilde\rho_{z_o}}^{(\theta_{\rho_{z_o}})}(z_o)}2\frac{|\hat\g^m-\hat
     \u^m|^{2}}{(\frac 12\tilde \rho_{z_o})^2} \d x\d
    t\right]^{\frac{1}{2}}
  +
  \tfrac1{\sqrt2} \left[ \biint_{Q_{\frac 12\tilde\rho_{z_o}}^{(\theta_{\rho_{z_o}})}(z_o)}\frac{|\hat u|^{2m}}{(\frac 12\tilde \rho_{z_o})^2} \d x\d t \right]^{\frac{1}{2}}\\
&\qquad\leq c\left[  \biint_{Q_{8\tilde\rho_{z_o}}^{(\theta_{8\tilde\rho_{z_o}})}(z_o)}\Big[ |D\u^m|^2\chi_{\Omega_T}+G^2 \Big] \d x \d t \right]^{\frac{1}{2}} + \tfrac1{\sqrt2}\,\theta_{\rho_{z_o}}^m\\
  &\qquad\le c\lambda^m+\tfrac1{\sqrt2}\,\theta_{\rho_{z_o}}^m,
\end{align*}
where we applied Lemma~\ref{lem:Poincare} on 
$Q_{8\tilde\rho_{z_o}}^{(\theta_{\tilde\rho_{z_o}})}(z_o)$,
Lemma~\ref{lem:properties_theta}\,(i) with $s=\frac
12\tilde\rho_{z_o}<d_o$ and \eqref{lambda_s}. In view of the last two
estimates, we infer from~\eqref{degenerate-pre-claim} that in both
cases, we have
\begin{equation*}
  \theta_{\rho_{z_o}}^m\le c\lambda^m+\tfrac1{\sqrt2}\,\theta_{\rho_{z_o}}^m.
\end{equation*}
By absorbing $\tfrac1{\sqrt2}\,\theta_{\rho_{z_o}}^m$ into the left-hand
side and recalling~\eqref{lambda_rho}, we obtain the claim
\eqref{claim-degenerate} in the remaining case $\tilde\rho_{z_o}\ge
d_o$. Hence, we have established~\eqref{claim-degenerate} in any
case.

Now we are in position to derive the reverse H\"older inequality in
the degenerate case. If $\rho_{z_o} < d_o$, we observe that
Lemma~\ref{lem:properties_theta}\,(i) implies
\begin{equation*}
  \biint_{Q_{2\rho_{z_o}}^{(\theta_{\rho_{z_o}})}}
  \frac{|\hat u|^{2m}}{(2\rho_{z_o})^2}\d x\d t
  \le
  \theta_{\rho_{z_o}}^{2m}.
\end{equation*}
Combined with~\eqref{claim-degenerate} and the fact
$B_{2\rho_{z_o}}(x_o)\subset\Omega$, this shows that
the assumptions of Lemma~\ref{lem:reverse_initial_intrinsic_2} are
satisfied, which provides us with the reverse H\"older inequality 
\begin{align*}
\frac{1}{|Q_{\rho_{z_o}}^{(\theta_{\rho_{z_o}})}|}&\iint_{Q_{\rho_{z_o}}^{(\theta_{\rho_{z_o}})}(z_o)} |D\hat \u^m|^2 \d x\d t \\
&\leq \left( \frac{c}{|Q_{2\rho_{z_o}}^{(\theta_{\rho_{z_o}})}|}\iint_{Q_{2\rho_{z_o}}^{(\theta_{\rho_{z_o}})}(z_o)} |D\hat \u^m|^q \d x\d t \right)^{\frac2q}+ c\,\biint_{Q_{2\rho_{z_o},+}^{(\theta_{\rho_{z_o}})}} G^2\d x\d t.
\end{align*}
On the other hand if $\rho_{z_o} \geq d_o$, we infer from
Lemma~\ref{lem:properties_theta}\,(i) that 
\begin{equation*}
  \biint_{Q_{4\rho_{z_o}}^{(\theta_{\rho_{z_o}})}}
  2\frac{|\hat \u^m-\hat \g^m|^2+|\hat g|^{2m}}{(4\rho_{z_o})^2}\d x\d t
  \le
  \theta_{\rho_{z_o}}^{2m}.
\end{equation*}
Because of~\eqref{claim-degenerate} and $\rho_{z_o} \geq d_o$, we can
thus apply Lemma~\ref{lem:reverse_lateral_sub-intrinsic}
with $\rho = 2\rho_{z_o}$ and obtain
\begin{align*}
\frac{1}{|Q_{\rho_{z_o}}^{(\theta_{\rho_{z_o}})}|}&\iint_{Q_{\rho_{z_o}}^{(\theta_{\rho_{z_o}})}(z_o) \cap \Omega_T} |D \u^m|^2 \d x\d t \\
&\leq \frac{c}{|Q_{2\rho_{z_o}}^{(\theta_{\rho_{z_o}})}|}\iint_{Q_{2\rho_{z_o}}^{(\theta_{\rho_{z_o}})}(z_o) \cap \Omega_T} |D \u^m|^2 \d x\d t \\
&\leq \left(
  \frac{c}{|Q_{16\rho_{z_o}}^{(\theta_{\rho_{z_o}})}|}\iint_{Q_{16\rho_{z_o}}^{(\theta_{\rho_{z_o}})}(z_o)
    \cap \Omega_T} |D \u^m|^q \d x\d t \right)^{\frac2q}
+
c\,\biint_{Q_{16\rho_{z_o},+}^{(\theta_{\rho_{z_o}})}} G^2\d x\d t.
\end{align*}
This concludes the proof for the degenerate case.
In summary, in any case we have established the reverse H\"older inequality 
\begin{align} \label{reverse_holder} \notag
\frac{1}{\babs{Q_{\rho_{z_o}}^{(\theta_{\rho_{z_o}})}}}&\iint_{Q_{\rho_{z_o}}^{(\theta_{\rho_{z_o}})}(z_o) \cap \Omega_T} |D \u^m|^2 \d x\d t \\
&\leq \left( \frac{c}{\babs{Q_{32\rho_{z_o}}^{(\theta_{\rho_{z_o}})}}}\iint_{Q_{32\rho_{z_o}}^{(\theta_{\rho_{z_o}})}(z_o) \cap \Omega_T} |D \u^m|^q \d x\d t \right)^{\frac2q}+ c\biint_{Q_{32\rho_{z_o},+}^{(\theta_{\rho_{z_o}})}} G^2\d x\d t.
\end{align}

\subsection{Estimate on super-level sets}

We define the super-level set for function $G$ as 
$$
\G(r,\lambda):= \left\{z \in Q_r: z \text{ is a Lebesgue point of } G \text{ and } |G(z)|>\lambda^m \right\}.
$$

For $\eta \in (0,1]$ we have

\begin{align*}
\lambda^{2m} &= \biint_{Q_{\rho_{z_o}}^{(\theta_{\rho_{z_o}})}(z_o)} \Big[ |D\u^m|^2\chi_{\Omega_T}+G^2 \Big]  \d x\d t \\
&\leq c\left(\biint_{Q_{32\rho_{z_o}}^{(\theta_{\rho_{z_o}})}(z_o)} |D \u^m|^q \chi_{\Omega_T} \d x\d t \right)^{\frac2q}+ c\,\biint_{Q_{32\rho_{z_o},+}^{(\theta_{\rho_{z_o}})}} G^2\d x\d t \\
&\leq c \eta^{2m} \lambda^{2m} + \left(  \frac{c}{\babs{Q_{32\rho_{z_o}}^{(\theta_{\rho_{z_o}})}}} \iint_{Q_{32\rho_{z_o}}^{(\theta_{\rho_{z_o}})}(z_o) \cap \mathbf{E}(R_2,\eta \lambda)} |D \u^m|^q \d x\d t \right)^{\frac2q} \\
&\quad +  \frac{c}{\babs{Q_{32\rho_{z_o}}^{(\theta_{\rho_{z_o}})}}} \iint_{Q_{32\rho_{z_o},+}^{(\theta_{\rho_{z_o}})} \cap \G(R_2,\eta \lambda)} G^2\d x\d t,
\end{align*}
by using~\eqref{lambda_rho} and~\eqref{reverse_holder}. Now by choosing $\eta^{2m} = \frac{1}{2c}$ we can absorb the first term into the left-hand side. In order to treat the second term we estimate
\begin{align*}
\Bigg( \frac{c}{\babs{Q_{32\rho_{z_o}}^{(\theta_{\rho_{z_o}})}}} &\iint_{Q_{32\rho_{z_o}}^{(\theta_{\rho_{z_o}})}(z_o) \cap \mathbf{E}(R_2,\eta \lambda)} |D \u^m|^q \d x\d t \Bigg)^{\frac{2}{q} -1} \\
&\leq \left( \biint_{Q_{32\rho_{z_o}}^{(\theta_{\rho_{z_o}})}(z_o)} |D \u^m|^2 \chi_{\Omega_T} \d x\d t \right)^{1-\frac{q}{2}} \\
&\leq c \lambda^{m(2-q)},
\end{align*} 
where we used H\"older's inequality and inequality~\eqref{lambda_s}.
Collecting the estimates above we have
\begin{align*}
\lambda^{2m} \babs{Q_{32\rho_{z_o}}^{(\theta_{\rho_{z_o}})}} \leq c &\iint_{Q_{32\rho_{z_o}}^{(\theta_{\rho_{z_o}})}(z_o) \cap \mathbf{E}(R_2,\eta \lambda)} \lambda^{m(2-q)} \babs{D \u^m}^2 \d x\d t \\
&+ c \iint_{Q_{32\rho_{z_o},+}^{(\theta_{\rho_{z_o}})} \cap \G(R_2,\eta \lambda)} G^2\d x\d t.
\end{align*}
On the other hand, inequality~\eqref{lambda_s}, the monotonicity of the mapping $\rho \mapsto \theta_{\rho}$ and Lemma~\ref{lem:properties_theta}\,(ii) imply that
\begin{align*}
\lambda^{2m} > \biint_{Q_{\hat c \rho_{z_o}}^{(\theta_{\hat c \rho_{z_o}})}(z_o)} |D\u^m|^2 \chi_{\Omega_T}\,  \d x\d t \geq \hat{c}^\frac{(1-m)(d+2)}{m+1} \biint_{Q_{\hat c \rho_{z_o}}^{(\theta_{\rho_{z_o}})}(z_o)} |D\u^m|^2 \chi_{\Omega_T}\,  \d x\d t.
\end{align*}
The two previous estimates lead to
\begin{align} \label{covering_estimate}  \notag
\iint_{Q_{\hat c \rho_{z_o}}^{(\theta_{\rho_{z_o}})}(z_o)} |D\u^m|^2 \chi_{\Omega_T}\,  \d x\d t \leq c &\iint_{Q_{32\rho_{z_o}}^{(\theta_{\rho_{z_o}})}(z_o) \cap \mathbf{E}(R_2,\eta \lambda)} \lambda^{m(2-q)} \babs{D \u^m}^2 \d x\d t \\ 
&+ c \iint_{Q_{32\rho_{z_o},+}^{(\theta_{\rho_{z_o}})} \cap \G(R_2,\eta \lambda)} G^2\d x\d t
\end{align} 
for every $z_o\in\mathbf{E}(R_1,\lambda)$. 
Next we cover the set $\mathbf{E}(R_1,\lambda)$ by the collection of cylinders $\mathcal{F} := \{ Q_{32\rho_{z_o}}^{(\theta_{z_o;\rho_{z_o}})}(z_o) \}_{z_o \in \mathbf{E}(R_1,\lambda)}$. By Vitali-type covering lemma~\ref{lem:vitali} there exists a countable disjoint sub-collection
$$
\left\{ Q_{32\rho_{zi}}^{(\theta_{z_i;\rho_{z_i}})}(z_i) \right\}_{i \in \N} \subset \mathcal{F},
$$
such that 
$$
\mathbf{E}(R_1,\lambda) \subset \bigcup_{i\in \N} Q_{\hat c \rho_{z_i}}^{(\theta_{z_i;\rho_{z_i}})} (z_i) \subset Q_{R_2}
$$
holds true. This and~\eqref{covering_estimate} imply
\begin{align*}
\iint_{\mathbf{E}(R_1,\lambda)} \babs{D\u^m}^2\, \d x \d t &\leq \sum_{i=1}^\infty \iint_{Q_{\hat c \rho_{z_i}}^{(\theta_{z_i;\rho_{z_i}})} (z_i)} \babs{D\u^m}^2 \chi_{\Omega_T} \, \d x \d t \\
&\leq c \sum_{i=1}^\infty \iint_{Q_{32\rho_{z_i}}^{(\theta_{z_i;\rho_{z_i}})}(z_i) \cap \mathbf{E}(R_2,\eta \lambda)} \lambda^{m(2-q)} \babs{D \u^m}^2 \d x\d t \\
&\quad + c \sum_{i=1}^\infty \iint_{Q_{32\rho_{z_i},+}^{(\theta_{z_i;\rho_{z_i}})} (z_i) \cap \G(R_2,\eta \lambda)} G^2\d x\d t \\
&\leq c \iint_{\mathbf{E}(R_2,\eta \lambda)} \lambda^{m(2-q)} \babs{D \u^m}^2 \d x\d t + c \iint_{\G(R_2,\eta \lambda)} G^2\d x\d t.
\end{align*}
In the set $\mathbf{E}(R_1,\eta \lambda) \setminus \mathbf{E}(R_1,\lambda)$ by definition $\babs{D \u^m}^2 \leq \lambda^{2m}$ a.e.. Thus we can estimate
\begin{align*}
\iint_{\mathbf{E}(R_1,\eta \lambda) \setminus \mathbf{E}(R_1,\lambda)} \babs{D\u^m}^2\, \d x \d t \leq \iint_{\mathbf{E}(R_2,\eta \lambda)} \lambda^{m(2-q)} \babs{D\u^m}^q\, \d x \d t.
\end{align*}
Now by combining the previous two inequalities and replacing $\eta \lambda $ by $\lambda$, we obtain that 
\begin{align} \label{superlevel-reverse-holder} \notag
\iint_{\mathbf{E}(R_1,\lambda)} &\babs{D\u^m}^2 \, \d x \d t  \\
&\leq c \iint_{\mathbf{E}(R_2,\lambda)} \lambda^{m(2-q)} \babs{D\u^m}^q \, \d x \d t + c \iint_{\G(R_2,\lambda)} G^2\, \d x \d t
\end{align}
holds true for any $\lambda \geq \eta B \lambda_o =: \lambda_1$.

\subsection{Proof of the gradient estimate}

With estimate~\eqref{superlevel-reverse-holder} on super-level sets and using Fubini-type arguments we are finally able to prove the higher integrability for the gradient of the solution. In order to ensure that quantities we end up re-absorbing are finite we consider truncations. For $k > \lambda_1$ we define 
$$
\babs{D\u^m }_k := \min \Big\{ \babs{D\u^m}, k^m \Big\},
$$ 
and the corresponding super-level set as
$$
\mathbf{E}_k(r,\lambda) := \Big\{  z\in Q_r \cap \Omega_T: \babs{D\u^m}_k (z) > \lambda^m  \Big\}.
$$
With this notation and estimate~\eqref{superlevel-reverse-holder} we have 
\begin{align*}
\iint_{\mathbf{E}_k(R_1,\lambda)} &\babs{D\u^m}_k^{2-q}  \babs{D\u^m}^q \, \d x \d t  \\
&\leq c \iint_{\mathbf{E}_k(R_2,\lambda)} \lambda^{m(2-q)} \babs{D\u^m}^q \, \d x \d t + c \iint_{\G(R_2,\lambda)} G^2\, \d x \d t
\end{align*}
for $k > \lambda_1$. Here we exploited the facts that $\babs{D\u^m}_k \leq \babs{D\u^m}$ a.e., $\mathbf{E}_k(r,\lambda) = \mathbf{E}(r,\lambda)$ if $k > \lambda$ and $\mathbf{E}_k(r,\lambda) = \emptyset$ if $k \leq \lambda$.

Let $\eps \in (0,1]$. We multiply the inequality above by
$\lambda^{\eps m -1}$ and integrate over the interval $(\lambda_1, \infty)$. By using Fubini's theorem, on the left-hand side we have
\begin{align*}
\int_{\lambda_1}^{\infty} &\lambda^{\eps m -1} \Bigg(\iint_{\mathbf{E}_k(R_1,\lambda)} \babs{D\u^m}_k^{2-q}  \babs{D\u^m}^q \, \d x \d t  \Bigg)\, \d \lambda \\
&= \iint_{\mathbf{E}_k(R_1,\lambda_1)} \babs{D\u^m}_k^{2-q}  \babs{D\u^m}^q  \Bigg( \int_{\lambda_1}^{|D\u^m|_k^\frac1m} \lambda^{\eps m - 1}\, \d \lambda \Bigg)  \, \d x \d t \\
&= \frac{1}{\eps m} \iint_{\mathbf{E}_k(R_1,\lambda_1)} \Big( \babs{D\u^m}_k^{2-q + \eps}  \babs{D\u^m}^q - \lambda_1^{\eps m} \babs{D\u^m}_k^{2-q}  \babs{D\u^m}^q  \Big)  \, \d x \d t.
\end{align*}
For the first term on the right-hand side we obtain
\begin{align*}
\int_{\lambda_1}^{\infty} &\lambda^{m(2-q + \eps) - 1} \Bigg(\iint_{\mathbf{E}_k(R_2,\lambda)} \babs{D\u^m}^q \, \d x \d t  \Bigg)\, \d \lambda \\
&= \iint_{\mathbf{E}_k(R_2,\lambda_1)} \babs{D\u^m}^q  \Bigg( \int_{\lambda_1}^{|D\u^m|_k^\frac1m} \lambda^{m(2-q + \eps) - 1}   \, \d \lambda \Bigg)  \, \d x \d t \\
&\leq \frac{1}{m (2-q + \eps)} \iint_{\mathbf{E}_k(R_2,\lambda_1)} \babs{D\u^m}_k^{2-q+\eps} \babs{D\u^m}^q \, \d x \d t \\
&\leq \frac{1}{m (2-q)} \iint_{\mathbf{E}_k(R_2,\lambda_1)} \babs{D\u^m}_k^{2-q+\eps} \babs{D\u^m}^q \, \d x \d t,
\end{align*}
and for the last term
\begin{align*}
\int_{\lambda_1}^{\infty} \lambda^{\eps m -1} \Bigg(\iint_{\G(R_2,\lambda)} G^2 \, \d x \d t  \Bigg)\, \d \lambda &= \iint_{\G(R_2,\lambda_1)} G^2  \Bigg( \int_{\lambda_1}^{G^\frac1m} \lambda^{\eps m -1}\, \d \lambda \Bigg) \, \d x \d t \\
&\leq \frac{1}{\eps m} \iint_{\G(R_2,\lambda_1)} G^{2+\eps} \, \d x \d t \\
&\leq \frac{1}{\eps m} 	 \iint_{Q_{2R,+}} G^{2+\eps} \, \d x \d t.
\end{align*}
Combining the estimates and multiplying by $\eps m$ we obtain
\begin{align*}
\iint_{\mathbf{E}_k(R_1,\lambda_1)} &\babs{D\u^m}_k^{2-q + \eps}  \babs{D\u^m}^q \, \d x \d t \\
&\leq \lambda_1^{\eps m} \iint_{\mathbf{E}_k(R_1,\lambda_1)} \babs{D\u^m}_k^{2-q}  \babs{D\u^m}^q   \, \d x \d t \\
&\quad + \frac{c\, \eps}{2-q} \iint_{\mathbf{E}_k(R_2,\lambda_1)} \babs{D\u^m}_k^{2-q+\eps} \babs{D\u^m}^q \, \d x \d t \\
&\quad + c \iint_{Q_{2R,+}} G^{2+\eps} \, \d x \d t.
\end{align*}
For the complement $(Q_{R_1} \cap \Omega_T) \setminus \mathbf{E}_k(R_1,\lambda_1)$ we estimate
\begin{align*}
\iint_{Q_{R_1} \setminus \mathbf{E}_k(R_1,\lambda_1)} &\babs{D\u^m}_k^{2-q + \eps}  \babs{D\u^m}^q \chi_{\Omega_T} \, \d x \d t \\
&\leq \lambda_1^{\eps m} \iint_{Q_{R_1} \setminus \mathbf{E}_k(R_1,\lambda_1)} \babs{D\u^m}_k^{2-q }  \babs{D\u^m}^q \chi_{\Omega_T} \, \d x \d t.
\end{align*}
Adding the two previous estimates we deduce
\begin{align*}
\iint_{Q_{R_1}} &\babs{D\u^m}_k^{2-q + \eps}  \babs{D\u^m}^q \chi_{\Omega_T} \, \d x \d t \\
&\leq \frac{c_* \eps}{2-q} \iint_{Q_{R_2}} \babs{D\u^m}_k^{2-q+\eps} \babs{D\u^m}^q \chi_{\Omega_T} \, \d x \d t \\
&\quad + \lambda_1^{\eps m} \iint_{Q_{2R}}  \babs{D\u^m}^2  \chi_{\Omega_T} \, \d x \d t + c \iint_{Q_{2R,+}} G^{2+\eps} \, \d x \d t
\end{align*}
for $c_* = c_*(m,n,N,\nu,L,\alpha,\mu, \rho_o) \geq 1$. Next we choose 
\begin{align*}
\eps_o := \frac{2-q}{2c_*} < 1
\end{align*}
and assume that $\eps\le\eps_o$. 
Now $\lambda_1^\eps = (\eta B\lambda_o)^\eps \leq B \lambda_o^\eps$ since $\eta \leq 1$, $B > 1$ and $\eps < 1$. We obtain
\begin{align*}
\iint_{Q_{R_1}} &\babs{D\u^m}_k^{2-q + \eps}  \babs{D\u^m}^q \chi_{\Omega_T} \, \d x \d t \\
&\leq \frac12 \iint_{Q_{R_2}} \babs{D\u^m}_k^{2-q+\eps} \babs{D\u^m}^q \chi_{\Omega_T} \, \d x \d t \\
&+c \Bigg( \frac{R}{R_2-R_1}  \Bigg)^\frac{m(n+2)}{m+1} \lambda_o^{\eps m} \iint_{Q_{2R}}  \babs{D\u^m}^2  \chi_{\Omega_T} \, \d x \d t + c \iint_{Q_{2R,+}} G^{2+\eps} \, \d x \d t,
\end{align*}
for any $R_1,R_2$ satisfying $R \leq R_1 < R_2 \leq 2R$. By using Iteration Lemma~\ref{lem:iteration} we can re-absorb the first term into the left-hand side. Then by passing to the limit $k \to \infty$ and using Fatou's Lemma we can conclude
\begin{align*}
\iint_{Q_R \cap \Omega_T} &\babs{D\u^m}^{2+\eps} \, \d x \d t \\
&\leq c \lambda_o^{\eps m} \iint_{Q_{2R} \cap \Omega_T} \babs{D\u^m}^2 \, \d x\d t + c \iint_{Q_{2R,+}} G^{2+\eps} \, \d x \d t.
\end{align*}
%
% By using Lemma~\ref{lem:lateralcacciop} with $\theta  = 1$ and H\"older's inequality we can estimate
% %
% \begin{align*}
% \lambda_o \leq c \Bigg( 1 + \biint_{Q_{8R}} \left[ \frac{|\hat \u^m - \hat \g^m|^2 + |\hat g |^{2m}}{R^2} + G^2 \right] \, \d x \d t \Bigg)^\frac{1}{m+1}.
% \end{align*}
%
Estimating $\lambda_o$ by means of \eqref{bound-lambda0} and the last
integral by \eqref{extension-bound} proves the estimate
\begin{align*}
&\iint_{Q_R \cap \Omega_T}\babs{D\u^m}^{2+\eps} \, \d x \d t \\
&\leq c  \Bigg( 1 + \biint_{Q_{8R}\cap\Omega_T} \frac{|\u^m -
  \g^m|^2}{R^2} \, \d x \d t \Bigg)^{\frac{\eps m}{m+1}}
\iint_{Q_{2R} \cap \Omega_T} \babs{D\u^m}^2 \, \d x\d t \\
&\quad
+c\bigg(\biint_{Q_{8R}\cap\Omega_T} \left[ G^{2+\eps}
   +\frac{|g |^{m(2+\eps)}}{R^{2+\eps}} \right] \, \d x \d
   t\bigg)^{\frac{2\eps m}{(2+\eps)(m+1)}}\iint_{Q_{2R} \cap \Omega_T} \babs{D\u^m}^2 \, \d x\d t \\
&\quad + c \iint_{Q_{2R}\cap\Omega_T} \left[ G^{2+\eps}+\frac{|g |^{m(2+\eps)}}{R^{2+\eps}} \right] \, \d x \d t,
\end{align*}
with $c=c(m,n,N,\nu,L,\alpha,\mu, \rho_o,c_E)$.
Finally, we note that we can replace the integrals over $Q_{8R}$ by
integrals over $Q_{2R}$ by a standard covering argument. More
precisely, we cover the cylinder $Q_R$ by smaller
cylinders $Q_{R/8}(z_i)$ with centers $z_i\in Q_R$, apply the preceding
estimate on each of the smaller cylinders and sum up the resulting
inequalities. This procedure leads to the asserted
estimate~\eqref{local-estimate}. The local estimate implies $|D\power um|\in
  L^{2+\eps}(\Omega_\tau)$ for every $\tau<T$. However, we can assume
  that the solution is given on the larger cylinder $\Omega_{2T}$ by
  reflecting the boundary values across the time slice
  $\Omega\times\{T\}$ and solving a Cauchy-Dirichlet problem on
  $\Omega\times[T,2T]$. Applying the preceding result on
  $\Omega_{2T}$, we deduce the remaining assertion $|D\power um|\in
  L^{2+\eps}(\Omega_T)$.
This completes the proof of Theorem
\ref{thm:main}.

\end{document}